\documentclass[10pt]{article}
\usepackage[english]{babel}
\usepackage{natbib}
\usepackage{url}
\usepackage[utf8]{inputenc}
\usepackage{amsfonts}
\usepackage{amsmath}
\usepackage{mathtools}
\usepackage{empheq}
\usepackage{graphicx}
\usepackage{subcaption}
\usepackage{parskip}
\usepackage{fancyhdr}
\usepackage{bbold}
\usepackage{amssymb}
\usepackage{vmargin}
\usepackage{array}
\usepackage{amsthm}
\usepackage[ruled,vlined]{algorithm2e}
\usepackage{hyperref}
\usepackage{caption}
\usepackage{float}
\usepackage{fourier-orns}
\usepackage{listings}
\usepackage{epsfig}
\usepackage[scr=boondoxo,scrscaled=1.05]{mathalfa}
\usepackage[shortlabels]{enumitem}
\mathtoolsset{showonlyrefs=true}

\newtheorem{theorem}{Theorem}[section]
\newtheorem{corollary}{Corollary}[section]
\newtheorem{lemma}[theorem]{Lemma}
\newtheorem{remark}{Remark}[section]
\theoremstyle{definition}
\newtheorem{definition}{Definition}[section]

\newtheorem{prop}{Proposition}[section]
\usepackage{listings}

\newcommand{\R}{{\mathbb{R}}}

\newcommand{\bpi}{\text{\boldmath{$\Pi$}}}
\newcommand{\blambda}{\text{\boldmath{$\lambda$}}}
\newcommand{\obpi}{\overline{\bpi}}
\newcommand{\bydef}{\stackrel{\mbox{\tiny\textnormal{\raisebox{0ex}[0ex][0ex]{def}}}}{=}}
\usepackage[dvipsnames]{xcolor}

\makeatletter
\newcommand\subsubsubsection{\@startsection{paragraph}{4}{\z@}{-2.5ex\@plus -1ex \@minus -.25ex}{1.25ex \@plus .25ex}{\normalfont\normalsize\bfseries}}
\newcommand\subsubsubsubsection{\@startsection{subparagraph}{5}{\z@}{-2.5ex\@plus -1ex \@minus -.25ex}{1.25ex \@plus .25ex}{\normalfont\normalsize\bfseries}}
\makeatother
\title{Rigorous Validation of Cusp Bifurcations of Stationary Periodic Patterns in Partial Differential Equations}
\author{
Dominic Blanco
\footnote{McGill University, Department of Mathematics and Statistics, 805 Sherbrooke Street West, Montreal, QC, H3A 0B9, Canada. {\tt dominic.blanco@mail.mcgill.ca}}
\and 
Jean-Philippe Lessard \footnote{McGill University, Department of Mathematics and Statistics, 805 Sherbrooke Street West, Montreal, QC, H3A 0B9, Canada. {\tt jp.lessard@mcgill.ca}}}
\begin{document}

\maketitle

\begin{abstract}
In this paper, we present a computer-assisted framework for the rigorous validation of cusp bifurcations of spatially symmetric stationary periodic patterns in parabolic semilinear partial differential equations. Our approach extends to an infinite-dimensional setting the cusp map formulation previously developed in finite dimensions. We formulate the cusp conditions as a zero-finding problem on a Hilbert space of Fourier coefficients, whose non-degenerate solutions correspond to cusp bifurcation points. A key technical ingredient is a careful treatment of the adjoint multiplication operator in the resulting sequence space, which is more involved than in the finite-dimensional case. Starting from a numerically computed approximation, we develop a constructive Newton-Kantorovich argument to prove the existence and local uniqueness of a nearby zero of the cusp map. The non-degeneracy of this solution directly yields the non-vanishing of the cubic normal form coefficient $c$. To complete the verification, we rigorously enclose the spectrum of the linearized operator, confirming that exactly one eigenvalue has zero real part via Gershgorin-type estimates adapted to the symmetry structure of the problem. As a further consequence, the number $k$ of eigenvalues with strictly positive real part at the cusp and the sign of $c$ (both rigorously certified by the framework) together determine the stability structure of the three coexisting solutions inside the cusp region: bistability arises when $k=0$ and $c<0$, monostability when $k=0$ and $c>0$, and no stable solution exists when $k \geq 1$. We apply the method to the Swift-Hohenberg equation and the Gray--Scott system in both one and two spatial dimensions, obtaining rigorous proofs of cusp bifurcations in all five distinct settings; in all five the framework certifies $k=0$, yielding bistability in one case and monostability in the other four.  
\end{abstract}

\begin{center}
{\bf \small Key words.} 
{ \small Cusp bifurcations $\cdot$ Computer-assisted proofs $\cdot$ Partial differential equations $\cdot$ Stationary periodic patterns $\cdot$ Swift-Hohenberg $\cdot$ Gray-Scott $\cdot$ Dihedral symmetries}
\end{center}

\section{Introduction} \label{sec:introduction}

A \emph{cusp bifurcation} is a codimension-two bifurcation that serves as a fundamental organizing center in the global bifurcation landscape of parameter-dependent dynamical systems. It marks the point in two-dimensional parameter space at which two branches of fold bifurcation curves meet tangentially and terminate. The local solution structure near a cusp is universal: in a neighborhood of the cusp point, the parameter plane is divided by two fold curves into a region with three solutions, a region with one solution, and the curves themselves where two solutions collide. In many situations of practical importance, this solution-counting picture gives rise to the fundamental notion of \emph{bistability}: when the two outer equilibria are stable (sinks) and the intermediate one is an unstable saddle with a one-dimensional unstable manifold, the region of three coexisting solutions corresponds to a region of coexistence of two stable states, separated by an unstable one. The system then exhibits a \emph{hysteresis loop}: as parameters are varied across the fold curves, the state undergoes abrupt, irreversible transitions between the two stable branches. This phenomenon has been identified as an organizing principle across a wide range of applications, including ecological pattern formation and desertification transitions \cite{SITEUR201481}, wave pinning and localization phenomena in reaction--diffusion systems \cite{Champneys_Bistability}, and the organization of complex spot and stripe patterns in models of chemical autocatalysis \cite{pearson_gs,gs_original,doleman_pulse}. 

In this paper, we study cusp bifurcations of stationary periodic patterns arising in systems of semi-linear parabolic partial differential equations (PDEs) of the form
\begin{equation}\label{eq : gen PDE}
    \partial_t \mathbf{u} = f(\boldsymbol{\lambda},\mathbf{u}) \bydef
    l_{\boldsymbol{\lambda}}\mathbf{u} + g(\boldsymbol{\lambda},\mathbf{u}), \quad
    \mathbf{u} = (u_1(x,t),\dots,u_p(x,t)), \quad x \in \mathbb{R}^m,
\end{equation}
where $\boldsymbol{\lambda} = (\lambda_1,\lambda_2) \in \mathbb{R}^2$ is a two-dimensional parameter, $l_{\boldsymbol{\lambda}}$ is a linear differential operator depending on $\boldsymbol{\lambda}$, and $g(\boldsymbol{\lambda},\mathbf{u})$ is a nonlinear operator that may also involve spatial derivatives of $\mathbf{u}$. Stationary periodic patterns are time-independent solutions satisfying $\partial_t \mathbf{u}=0$. Equivalently, writing $2r$ for the order of the leading differential operator in $l_{\boldsymbol{\lambda}}$, they are solutions $\mathbf{u} \in H^{2r}(\mathscr{D})^\mathscr{p}$ of
\[
f(\boldsymbol{\lambda},\mathbf{u}) = 0,
\]
posed on a periodic domain $\mathscr{D}$ (e.g. see \eqref{def:mathscr_D} for the formal definition). We choose the domain in such a way that we can encode the spatial symmetry of the pattern (e.g. rolls, hexagons, squares, and more complex dihedral structures in two spatial dimensions) and the periodic setting provides a natural functional framework for spatially extended periodic states. 
Such patterns arise ubiquitously in models of pattern formation, including the Swift-Hohenberg equation \cite{radial2009,hexagon2021lloyd,hexagon2008,stability1997Mielke,squareSakaguchi_1997}, the Gray--Scott reaction--diffusion system \cite{gs_cadiot_blanco,castelli_gs,doleman_pulse,gs_original,mcgough_riley_gs,pearson_gs}, and phase-field crystal models \cite{gabriel_pfc}, among many others.

From a dynamical systems perspective, the bifurcation structure of these periodic states in the two-dimensional parameter space is highly intricate. In particular, cusp points play a central role in organizing regions of multistability and mediating transitions between distinct pattern types \cite{ladder2009Beck,jason_review_paper,jason_spot_paper,jason_ring_paper}. Despite their importance, establishing rigorously that a cusp bifurcation occurs at a specific point in the infinite-dimensional phase space of a PDE (i.e.\ verifying all required non-degeneracy conditions with mathematical certainty) remains largely open. The goal of this work is to rigorously prove the existence and local uniqueness of cusp bifurcation points $(\boldsymbol{\lambda},\mathbf{u})$ of~\eqref{eq : gen PDE}.

Let us recall the conditions that characterize a cusp bifurcation by first briefly discussing a fold (saddle-node) bifurcation. At an equilibrium $(\boldsymbol{\lambda},\mathbf{u})$ of~\eqref{eq : gen PDE}, a fold bifurcation occurs when the Fréchet derivative $D_u f(\boldsymbol{\lambda},\mathbf{u})$ has a simple zero eigenvalue and all remaining eigenvalues have nonzero real part. We denote by $L^2(\mathscr{D})$ the usual $L^2$ space on a bounded domain $\mathscr{D}$. We also let $\mathcal{G}$ be a finite discrete symmetry group. We then denote by $L^2_{\mathcal{G}}(\mathscr{D})$ the restriction of $L^2(\mathscr{D})$ to $\mathcal{G}$-symmetric functions. That is,
\begin{align}
    L^2_{\mathcal{G}}(\mathscr{D}) \bydef \{u \in L^2(\mathscr{D}) ~:~ u(x) = u(\mathscr{g} \cdot x), ~ \text{for all} ~ x \in \mathscr{D}, \mathscr{g} \in \mathcal{G}\},
\end{align}
which is a closed subspace of $L^2(\mathscr{D})$.
We denote by $(\cdot,\cdot)_{L^2}$ the inner product on $L^2_{\mathcal{G}}(\mathscr{D})^{\mathscr{p}}$. That is, given $\mathbf{f} \bydef (f_1,\dots,f_{\mathscr{p}})$ and $\mathbf{g} \bydef (g_1,\dots,g_{\mathscr{p}}) \in (L^2_{\mathcal{G}}(\mathscr{D}))^{\mathscr{p}}$, we define
\begin{align}\label{def : L2 inner product}
    (\mathbf{f},\mathbf{g})_{L^2} \bydef \frac{1}{|\mathscr{D}|} \sum_{j = 1}^{\mathscr{p}} \int_{\mathscr{D}} f_j(x) \mathrm{conj}(g_j(x)) dx,
\end{align}
where $\mathrm{conj}$ denotes complex conjugation and $|\mathscr{D}|$ is the Lebesgue measure of $\mathscr{D}$. The normalisation by $|\mathscr{D}|$ is a pure convention, and in fact none of the quantities below depends on it: replacing $(\cdot,\cdot)_{L^2}$ by $\varrho\,(\cdot,\cdot)_{L^2}$ for some $\varrho>0$ leaves the adjoint $\cdot^\dagger$ unchanged and, because of the constraint $(\mathbf{w},\mathbf{v})_{L^2}=1$, replaces $\mathbf{w}$ by $\varrho^{-1}\mathbf{w}$; the quantities $b$, $c$, $\sigma_i$, $\tau_i$ and $\Delta$ defined below are therefore unchanged. (They do depend on the normalisation chosen for $\mathbf{v}$, which in this paper is fixed by the first row of the cusp map \eqref{def : cusp map}; the conditions $b=0$, $c\neq0$, $\Delta\neq0$ and the sign of $c$ are unaffected.) The advantage of the factor $1/|\mathscr{D}|$ is that it makes the passage to Fourier coefficients an exact identity rather than an identity up to a constant; see Remark~\ref{rmk : L2 vs ell2} in Section~\ref{sec : cusp map}, where $(\cdot,\cdot)_{L^2}$ is expressed in terms of the sequence-space inner product $(\cdot,\cdot)_2$ actually used in the computations.
Throughout, all the function spaces above are understood as \emph{real} Hilbert spaces (the solutions and null vectors we compute are real-valued functions); complex conjugation is retained in \eqref{def : L2 inner product} only because we shall represent real functions by their complex Fourier coefficients. In particular $(\cdot,\cdot)_{L^2}$ is symmetric and bilinear on the real subspace, which is what is used when differentiating the maps below.
We also denote by $\cdot^\dagger$ the adjoint operator defined with respect to this inner product (we reserve $\cdot^*$ for the adjoint with respect to the sequence-space inner product $(\cdot,\cdot)_2$ of Section~\ref{sec : adjoint mult}; the two differ, see Remark~\ref{rmk : L2 vs ell2}). Denoting by $\mathbf{v}$ and $\mathbf{w}$ the (suitably normalized) right and left null vectors ($D_u f(\blambda,\mathbf{u})\mathbf{v} = 0$, $D_u f(\blambda,\mathbf{u})^\dagger\mathbf{w} = 0$, and $(\mathbf{v},\mathbf{w})_{L^2} = 1$), the fold is characterized by the nonvanishing of the quadratic coefficient
\begin{equation} \label{eq:normal_form_coeff_b}
  b \bydef \tfrac{1}{2} \big( \mathbf{w}, D_{uu}f(\boldsymbol{\lambda},\mathbf{u})(\mathbf{v},\mathbf{v}) \big)_{L^2}
\end{equation}
together with a transversality condition with respect to the unfolding parameter.

A cusp bifurcation occurs at a point where $b=0$ while the cubic coefficient below is nonzero, so that two fold curves meet. In this case, the dynamics on the one-dimensional center manifold is governed by the cubic coefficient
\begin{equation} \label{eq:normal_form_coeff_c}
  c \bydef \tfrac{1}{6} \bigl( \mathbf{w},D_{uuu}f(\boldsymbol{\lambda},\mathbf{u})(\mathbf{v},\mathbf{v},\mathbf{v})
    + 3\,D_{uu}f(\boldsymbol{\lambda},\mathbf{u})(\mathbf{v},\mathbf{h})\bigr)_{L^2},
\end{equation}
where $\mathbf{h}$ solves
\[
D_u f(\boldsymbol{\lambda},\mathbf{u})\mathbf{h}
=
-D_{uu}f(\boldsymbol{\lambda},\mathbf{u})(\mathbf{v},\mathbf{v})
\]
in the complement $\{\boldsymbol{\psi} : (\mathbf{w},\boldsymbol{\psi})_{L^2} = 0\}$ of $\operatorname{span}(\mathbf{v})$. This normalization makes $\mathbf{h}$ unique, and it is the one enforced by the third row of the cusp map \eqref{def : cusp map}. Moreover, the value of $c$ is independent of the choice of solution: replacing $\mathbf{h}$ by $\mathbf{h}+t\mathbf{v}$ changes $c$ by $t b=0$. We refer to $c\neq0$ as the nondegeneracy condition. Under this condition (e.g. see \cite{kuznetsov_2,kuznetsov_book}), the center-manifold dynamics takes the canonical cusp normal form $\dot{y}=cy^3+O(y^4)$.

Since a cusp is a codimension-two phenomenon, one further condition is required: the two-parameter family must unfold the degeneracy transversally. To state it, denote by $D_{uu}f(\blambda,\mathbf{u})^\dagger(\mathbf{w},\mathbf{v})$ the vector characterized by $\bigl(D_{uu}f(\blambda,\mathbf{u})^\dagger(\mathbf{w},\mathbf{v}),\mathbf{y}\bigr)_{L^2} = \bigl(\mathbf{w},D_{uu}f(\blambda,\mathbf{u})(\mathbf{v},\mathbf{y})\bigr)_{L^2}$ for all $\mathbf{y}$, and let $\mathbf{z}$ be any solution of
\begin{equation}\label{eq : z vector}
    D_u f(\blambda,\mathbf{u})^\dagger\mathbf{z} = D_{uu}f(\blambda,\mathbf{u})^\dagger(\mathbf{w},\mathbf{v}),
\end{equation}
which is solvable precisely because $b = 0$: by the Fredholm alternative the solvability condition is $\bigl(\mathbf{v},D_{uu}f(\blambda,\mathbf{u})^\dagger(\mathbf{w},\mathbf{v})\bigr)_{L^2} = 2b = 0$. Setting, for $i=1,2$,
\begin{equation}\label{eq : transversality}
   \sigma_i \bydef \bigl(\mathbf{w},\partial_{\lambda_i}f(\blambda,\mathbf{u})\bigr)_{L^2},
   \qquad
   \tau_i \bydef \bigl(\mathbf{w},\partial_{\lambda_i}\!\left[D_u f\right]\!(\blambda,\mathbf{u})\,\mathbf{v}\bigr)_{L^2} - \bigl(\mathbf{z},\partial_{\lambda_i}f(\blambda,\mathbf{u})\bigr)_{L^2},
\end{equation}
the \emph{transversality condition} reads
\begin{equation}\label{eq : Delta}
   \Delta \bydef \sigma_1\tau_2 - \sigma_2\tau_1 \neq 0.
\end{equation}
Note that $\Delta$ does not depend on the choice of $\mathbf{z}$: two solutions of \eqref{eq : z vector} differ by an element of $\ker D_u f(\blambda,\mathbf{u})^\dagger = \mathrm{span}(\mathbf{w})$, which changes $\tau_i$ into $\tau_i - \beta\sigma_i$ for some $\beta \in \R$, and $\Delta$ is invariant under $\tau \mapsto \tau - \beta\sigma$. In terms of the bifurcation function $\phi$ introduced below, $\sigma_i = \partial_{\lambda_i}\phi$ and $\tau_i = \partial_{\lambda_i}\partial_\xi\phi$ (up to such a shift) at the cusp, so \eqref{eq : Delta} states that $\det \frac{\partial(\phi,\partial_\xi\phi)}{\partial(\lambda_1,\lambda_2)} \neq 0$: the parameters unfold the degeneracy in a non-degenerate way \cite{kuznetsov_book}. As shown in Theorem~\ref{thm : cusp map nondegen}, \emph{both} $c \neq 0$ and $\Delta \neq 0$ are automatic consequences of the computer-assisted argument developed in this paper, so neither requires a separate verification. We summarize these conditions in the following lemma, which is the central criterion verified constructively throughout this paper.

\begin{lemma}[\bf Cusp bifurcation] \label{lemma : Cusp bifurcation}
Let $\blambda \in \R^2$ and $\mathbf{u},\mathbf{v},\mathbf{w},\mathbf{h} \in H^{2r}_{\mathcal{G}}(\mathscr{D})^\mathscr{p}$ satisfy $f(\blambda,\mathbf{u}) = 0$, $(\mathbf{w}, \mathbf{v})_{L^2} = 1$, $D_u f(\blambda,\mathbf{u})\mathbf{v} = 0$, $D_u f(\blambda,\mathbf{u})^\dagger\mathbf{w} = 0$, and $D_u f(\blambda,\mathbf{u})\mathbf{h} + D_{uu}f(\blambda,\mathbf{u})(\mathbf{v},\mathbf{v}) = 0$. Suppose that $D_u f(\blambda,\mathbf{u})$ has exactly one zero eigenvalue, which is algebraically simple, and that its remaining spectrum is contained in $\{\mu \in \mathbb{C} : |\mathrm{Re}(\mu)| \geq \varepsilon\}$ for some $\varepsilon > 0$ (such a spectral gap is what the Gershgorin enclosures of Sections~\ref{sec : gershgorin swift hohenberg} and~\ref{sec : gershgorin gray scott} produce, and it is what is used in the proof of Lemma~\ref{lem : bistability}); assume that $b,c$ given in \eqref{eq:normal_form_coeff_b} and \eqref{eq:normal_form_coeff_c} satisfy $b=0$ and $c \ne 0$; and assume that the transversality condition \eqref{eq : Delta} holds. Then a cusp bifurcation occurs at $(\blambda,\mathbf{u})$: there are neighbourhoods $\mathcal{U}$ of $\mathbf{u}$ and $\mathcal{V}$ of $\blambda$ and a smooth change of coordinates and parameters taking $\{(\tilde{\blambda},\tilde{\mathbf{u}}) \in \mathcal{V}\times\mathcal{U} : f(\tilde{\blambda},\tilde{\mathbf{u}}) = 0\}$ onto the canonical cusp $\{\beta_1 + \beta_2 y + \mathrm{sign}(c)\,y^3 = 0\}$.
\end{lemma}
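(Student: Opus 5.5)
Since $D_u f(\blambda,\mathbf{u})$ is Fredholm of index zero with one-dimensional kernel $\mathrm{span}(\mathbf{v})$ and cokernel spanned by $\mathbf{w}$, we reduce to a scalar equation by Lyapunov--Schmidt. Let $P\boldsymbol{\psi} \bydef (\mathbf{w},\boldsymbol{\psi})_{L^2}\mathbf{v}$ and $Q = I-P$. Decompose $\tilde{\mathbf{u}} = \mathbf{u} + \xi\mathbf{v} + \boldsymbol{\eta}$ with $(\mathbf{w},\boldsymbol{\eta})_{L^2}=0$, and split $f=0$ into $Qf(\tilde\blambda,\mathbf{u}+\xi\mathbf{v}+\boldsymbol{\eta})=0$ and $(\mathbf{w},f(\tilde\blambda,\mathbf{u}+\xi\mathbf{v}+\boldsymbol{\eta}))_{L^2}=0$. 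The derivative of the first equation in $\boldsymbol{\eta}$ at the base point is $QD_uf$ restricted to $\{(\mathbf{w},\cdot)_{L^2}=0\}\cap H^{2r}_{\mathcal{G}}$, mapping onto $\mathrm{Ran}\,Q = \mathrm{Ran}\,D_uf$. It is an isomorphism because the zero eigenvalue is algebraically simple, so $\mathbf{v}\notin\mathrm{Ran}\,D_uf$ and $(\mathbf{w},\mathbf{v})_{L^2}=1\neq0$. The spectral gap hypothesis guarantees closed range and Fredholmness. Since $f$ is smooth (polynomial in $\mathbf{u}$ for our models), the implicit function theorem gives a smooth $\boldsymbol{\eta}=\boldsymbol{\eta}(\xi,\tilde\blambda)$ with $\boldsymbol{\eta}(0,\blambda)=0$. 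The zero set near $(\blambda,\mathbf{u})$ is then in smooth bijection with the zero set of the bifurcation function $\phi(\xi,\tilde\blambda) \bydef (\mathbf{w},f(\tilde\blambda,\mathbf{u}+\xi\mathbf{v}+\boldsymbol{\eta}(\xi,\tilde\blambda)))_{L^2}$.

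Next we compute the Taylor coefficients of $\phi$ at $(0,\blambda)$. Differentiating the $Q$-equation gives $\partial_\xi\boldsymbol{\eta}=0$ and $\partial_\xi^2\boldsymbol{\eta}=\mathbf{h}$, where $\mathbf{h}$ is exactly the normalized solution in the statement (its $P$-component vanishes by the constraint). This yields
\[
\phi=0,\quad \partial_\xi\phi=0,\quad \tfrac12\partial_\xi^2\phi=b=0,\quad \tfrac16\partial_\xi^3\phi=c\neq 0.
\]
For the parameters, $\partial_{\lambda_i}\phi=\sigma_i$. For the mixed derivative, differentiate in $\lambda_i$: $\partial_{\lambda_i}\boldsymbol{\eta}$ solves $QD_uf\,\partial_{\lambda_i}\boldsymbol{\eta}=-Q\partial_{\lambda_i}f$. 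Then
\[
\partial_{\lambda_i}\partial_\xi\phi=(\mathbf{w},\partial_{\lambda_i}[D_uf]\mathbf{v})_{L^2}+(\mathbf{w},D_{uu}f(\mathbf{v},\partial_{\lambda_i}\boldsymbol{\eta}))_{L^2}.
\]
Using the adjoint $D_{uu}f^\dagger(\mathbf{w},\mathbf{v})=D_uf^\dagger\mathbf{z}$, the second term equals $(\mathbf{z},D_uf\,\partial_{\lambda_i}\boldsymbol{\eta})_{L^2}=-(\mathbf{z},Q\partial_{\lambda_i}f)_{L^2}$. This differs from $-(\mathbf{z},\partial_{\lambda_i}f)_{L^2}$ by $\beta\sigma_i$ with $\beta=(\mathbf{z},\mathbf{v})_{L^2}$. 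Hence $\partial_{\lambda_i}\partial_\xi\phi=\tau_i+\beta\sigma_i$, and $\det\partial(\phi,\partial_\xi\phi)/\partial(\lambda_1,\lambda_2)=\Delta\neq0$.

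Finally we apply the standard finite-dimensional cusp recognition argument (as in Kuznetsov, or Golubitsky--Schaeffer). Because $\partial_\xi^3\phi\neq0$, the map $\tilde\blambda\mapsto(\phi(0,\tilde\blambda),\partial_\xi\phi(0,\tilde\blambda))$ is a local diffeomorphism, which we use to define new parameters. A Tschirnhaus shift $\xi\mapsto\xi+s(\tilde\blambda)$, with $s$ obtained by the implicit function theorem from $\partial_\xi^2\phi=0$ since $\partial_\xi^3\phi\neq0$, removes the quadratic term. Malgrange preparation (or a direct division argument) then writes $\phi=a(\xi,\tilde\blambda)\,(\xi^3+\mu_2\xi+\mu_1)$ with $a(0,\blambda)=c$. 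Scaling $y=|c|^{1/3}\xi$ and the $\beta_j$ accordingly gives $\beta_1+\beta_2y+\mathrm{sign}(c)y^3$, with $(\mu_1,\mu_2)$ a diffeomorphic image of $\tilde\blambda$ by the transversality computation.

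The main obstacle is the mixed-derivative identity linking $\partial_{\lambda_i}\partial_\xi\phi$ to $\tau_i$ through $\mathbf{z}$. It requires care with the projection $Q$, the real-bilinear structure of $(\cdot,\cdot)_{L^2}$, and with checking that the change of parameters is a diffeomorphism. The preparation step is standard and will be cited.
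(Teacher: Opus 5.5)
Your proposal is correct and follows essentially the same route as the paper's proof: Lyapunov--Schmidt reduction with $P\boldsymbol{\psi}=(\mathbf{w},\boldsymbol{\psi})_{L^2}\mathbf{v}$, then $\partial_\xi\boldsymbol{\eta}=0$, $\partial_\xi^2\boldsymbol{\eta}=\mathbf{h}$ and $\partial_\xi^3\phi=6c$, then the same $\mathbf{z}$-computation giving $\partial_{\lambda_i}\partial_\xi\phi=\tau_i+\beta\sigma_i$ and Jacobian $\Delta$, and finally a cited cusp recognition theorem. There are two small slips in that last step: the map $\tilde{\blambda}\mapsto(\phi(0,\tilde{\blambda}),\partial_\xi\phi(0,\tilde{\blambda}))$ is a local diffeomorphism because $\Delta\neq0$, not because $\partial_\xi^3\phi\neq0$; and the shift removing the quadratic term must be applied to the prepared cubic $\xi^3+p_2\xi^2+p_1\xi+p_0$ (shift by $-p_2/3$ after Malgrange), since choosing $s$ from $\partial_\xi^2\phi=0$ before preparation does not make $p_2$ vanish.
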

\begin{remark}
The algebraic simplicity of the zero eigenvalue is in fact automatic: a generalized eigenvector would be a solution $\mathbf{y}$ of $D_u f(\blambda,\mathbf{u})\mathbf{y} = \mathbf{v}$, whose existence would force $(\mathbf{w},\mathbf{v})_{L^2} = (D_u f(\blambda,\mathbf{u})^\dagger\mathbf{w},\mathbf{y})_{L^2} = 0$, contradicting $(\mathbf{w},\mathbf{v})_{L^2} = 1$. We keep it in the statement for emphasis. Note also that $H^{2r}_{\mathcal{G}}(\mathscr{D})$ (rather than $L^2$) is the correct domain: the equations $f(\blambda,\mathbf{u}) = 0$ and $D_u f(\blambda,\mathbf{u})\mathbf{v} = 0$ only make sense there.
\end{remark}

The proof of Lemma~\ref{lemma : Cusp bifurcation} (which is presented in Appendix~\ref{sec:proof_lemma_cusp}) shows that the Lyapunov--Schmidt reduction at the cusp point $(\blambda,\mathbf{u})$ yields a scalar bifurcation function $\phi(\xi,\tilde{\blambda})$, where $\xi \in \mathbb{R}$ is the coordinate along the kernel direction $\mathbf{v}$ and $\tilde{\blambda}$ is a parameter value near $\blambda$. Locally near $(\blambda,\mathbf{u})$, stationary solutions of~\eqref{eq : gen PDE} correspond bijectively to roots of $\phi(\cdot,\tilde{\blambda}) = 0$ near $\xi = 0$, with $\phi(\xi,\blambda) = c\xi^3 + O(\xi^4)$. The \emph{fold set} is
\[
  \mathcal{F} \bydef \bigl\{\tilde{\blambda} \text{ near } \blambda \;:\; \phi(\cdot,\tilde{\blambda}) \text{ has a multiple root near } \xi = 0\bigr\},
\]
a closed set containing $\blambda$ itself, which near the cusp consists of two smooth curves meeting tangentially at $\blambda$ (the \emph{fold curves}). The \emph{cusp region} is the connected component of $\{\tilde{\blambda}\text{ near }\blambda\}\setminus\mathcal{F}$ in which $\phi(\cdot,\tilde{\blambda})$ has \emph{exactly} three real roots near $\xi = 0$ (necessarily simple, since $\tilde{\blambda}\notin\mathcal{F}$). These roots $\xi_1 < \xi_2 < \xi_3$ give three nearby stationary solutions
\begin{equation}\label{eq : three solutions}
  \tilde{\mathbf{u}}_j = \mathbf{u} + \xi_j\mathbf{v} + \Psi(\xi_j,\tilde{\blambda}), \quad j = 1,2,3,
\end{equation}
where $\Psi(\xi_j,\tilde{\blambda})$ lies in $\{\boldsymbol{\psi} : (\mathbf{w},\boldsymbol{\psi})_{L^2} = 0\}$, a complement of $\mathrm{span}(\mathbf{v})$ (we write $\Psi$, and not $\mathbf{w}$, to avoid a clash with the left null vector $\mathbf{w}$; $\Psi$ is the map produced by the Lyapunov--Schmidt reduction in Appendix~\ref{sec:proof_lemma_cusp}). For $\tilde{\blambda}$ near $\blambda$ but outside the closure of the cusp region, $\phi(\cdot,\tilde{\blambda})$ has a unique real root near $\xi = 0$, while on the fold curves themselves (away from the cusp point $\tilde{\blambda}=\blambda$, where the root is triple) it has exactly two. We emphasise that this is a purely local picture near $(\blambda,\mathbf{u})$: other bifurcations and additional solutions may well exist elsewhere in the two-dimensional parameter space.

The following lemma (whose proof can be found in Appendix~\ref{sec:proof_lemma_stability}) gives the complete stability picture for the three solutions~\eqref{eq : three solutions}. Its conclusion depends on two quantities: the number $k \geq 0$ of eigenvalues of $D_u f(\blambda,\mathbf{u})$ with strictly positive real part (counted with algebraic multiplicity), and the sign of $c$. Both will be certified rigorously by our computer-assisted framework, as described below.

\begin{lemma}[\bf Stability structure near a cusp]\label{lem : bistability}
Under the hypotheses of Lemma~\ref{lemma : Cusp bifurcation}, let $k \geq 0$ be the (necessarily finite) number of eigenvalues of $D_u f(\blambda,\mathbf{u})$ with strictly positive real part (counted with algebraic multiplicity; the spectral gap $\varepsilon>0$ is part of the hypotheses of Lemma~\ref{lemma : Cusp bifurcation}, while $k<\infty$ follows from sectoriality of $D_uf(\blambda,\mathbf{u})$, whose spectrum is discrete and contained in a left half-plane sector, so that $\sigma(D_uf(\blambda,\mathbf{u}))\cap\{\mathrm{Re}\,\mu\geq-\varepsilon\}$ is bounded), and let $\tilde{\blambda}$ lie in the cusp region, sufficiently close to $\blambda$. The three solutions $\tilde{\mathbf{u}}_1, \tilde{\mathbf{u}}_2, \tilde{\mathbf{u}}_3$ defined in~\eqref{eq : three solutions} have the following unstable eigenvalue counts:
\begin{itemize}[leftmargin=1.5em]
\item if $c < 0$, then $D_u f(\tilde{\blambda},\tilde{\mathbf{u}}_1)$ and $D_u f(\tilde{\blambda},\tilde{\mathbf{u}}_3)$ each have exactly $k$ eigenvalues with positive real part, while $D_u f(\tilde{\blambda},\tilde{\mathbf{u}}_2)$ has exactly $k+1$;
\item if $c > 0$, then $D_u f(\tilde{\blambda},\tilde{\mathbf{u}}_1)$ and $D_u f(\tilde{\blambda},\tilde{\mathbf{u}}_3)$ each have exactly $k+1$ eigenvalues with positive real part, while $D_u f(\tilde{\blambda},\tilde{\mathbf{u}}_2)$ has exactly $k$.
\end{itemize}
In particular, when $k = 0$,
\begin{itemize}[leftmargin=1.5em]
\item if $c < 0$, then the system exhibits \emph{bistability} ($\tilde{\mathbf{u}}_1$ and $\tilde{\mathbf{u}}_3$ are asymptotically stable, $\tilde{\mathbf{u}}_2$ is unstable) (e.g. see Figure~\ref{fig : cusp});
\item if $c > 0$, then only $\tilde{\mathbf{u}}_2$ is asymptotically stable and the other two are unstable (\emph{monostability}).
\end{itemize}
When $k \geq 1$, none of the three solutions is asymptotically stable.
\end{lemma}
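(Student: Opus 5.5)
The plan is to combine spectral perturbation theory with the scalar reduction $\phi(\xi,\tilde{\blambda})$ from the proof of Lemma~\ref{lemma : Cusp bifurcation}, and to show three things for $\tilde{\blambda}$ near $\blambda$ and $\tilde{\mathbf{u}}$ near $\mathbf{u}$:
\begin{enumerate}[(i)]
\item the spectrum of $D_uf(\tilde{\blambda},\tilde{\mathbf{u}})$ splits into exactly $k$ eigenvalues with real part $\geq \varepsilon/2$, one simple real \emph{critical} eigenvalue $\mu(\tilde{\blambda},\tilde{\mathbf{u}})$ near $0$, and the rest with real part $\leq -\varepsilon/2$;
\item at each $\tilde{\mathbf{u}}_j$ the sign of $\mu$ equals the sign of $\partial_\xi\phi(\xi_j,\tilde{\blambda})$;
\item on the cusp region $\partial_\xi\phi(\xi_j,\tilde\blambda)$ has the sign of $c$ for $j=1,3$ and the opposite sign for $j=2$.
\end{enumerate}
Together these give $k$ or $k+1$ unstable eigenvalues exactly as in the statement. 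The specializations to $k=0$ and $k\geq1$ then follow, using the principle of linearized stability for sectorial operators: all spectrum in $\{\mathrm{Re}\,\mu\le-\varepsilon/2\}$ gives asymptotic stability, and any eigenvalue with positive real part gives instability.

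\textbf{Step (i).} This is a Kato-type argument. Since $D_uf$ is sectorial with compact resolvent and depends continuously on parameters in the graph norm (as an operator $H^{2r}_{\mathcal G}\to L^2_{\mathcal G}$), $\sigma(D_uf(\blambda,\mathbf{u}))\cap\{\mathrm{Re}\,\mu\ge-\varepsilon\}$ consists of the $k$ unstable eigenvalues together with $0$. I would enclose these by contours: a small circle $\Gamma_0$ around $0$, and a contour $\Gamma_+$ around the bounded unstable part, both lying in the resolvent set. The Riesz projections then depend continuously on $(\tilde\blambda,\tilde{\mathbf u})$, so their ranks, $k$ and $1$, are preserved. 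The sector estimate, uniform in nearby parameters, excludes new spectrum with $\mathrm{Re}\,\mu>-\varepsilon/2$ outside these contours. Because the operator is real, the single eigenvalue inside $\Gamma_0$ is real.

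\textbf{Step (ii).} This is the step I expect to be the main obstacle. Write $D_uf(\tilde\blambda,\tilde{\mathbf u})\tilde{\mathbf v}=\mu\tilde{\mathbf v}$ with $\tilde{\mathbf v}=\mathbf v+O(\cdot)$, and normalize $(\mathbf w,\tilde{\mathbf v})_{L^2}=1$. Let $P$ be the projection onto $\{(\mathbf w,\cdot)_{L^2}=0\}$ along $\mathbf v$. The reduction gives $\phi(\xi,\tilde\blambda)=(\mathbf w,f(\tilde\blambda,\mathbf u+\xi\mathbf v+\Psi))_{L^2}$ with $Pf=0$. Differentiating the identity $Pf=0$ in $\xi$ shows that $\mathbf y=\mathbf v+\partial_\xi\Psi$ satisfies
\[
D_uf\,\mathbf y=\partial_\xi\phi\,\mathbf v .
\]
Hence $D_uf$ restricted to the invariant direction behaves like $\partial_\xi\phi$. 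To make this precise I would use a Schur-complement (bordered operator) argument. Decompose $\mu-D_uf$ with respect to $\mathrm{span}(\mathbf v)\oplus\mathrm{Range}\,P$; the block on $\mathrm{Range}\,P$ is invertible near $\mu=0$. Then $\mu$ is an eigenvalue if and only if a scalar function $s(\mu,\xi,\tilde\blambda)$ vanishes, where $s(0,\xi,\tilde\blambda)=-\partial_\xi\phi(\xi,\tilde\blambda)$ and $\partial_\mu s(0)=1+o(1)$. Consequently $\mu=\partial_\xi\phi\,(1+o(1))$, so $\mu$ and $\partial_\xi\phi$ have the same sign, and in particular they vanish simultaneously.

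\textbf{Step (iii).} This is elementary. On the cusp region, $\phi(\cdot,\tilde\blambda)$ is a smooth function with exactly three simple zeros $\xi_1<\xi_2<\xi_3$ in a neighborhood $|\xi|\le\delta$, on which $\phi(\pm\delta,\tilde\blambda)$ has the sign of $\pm c$, since $\phi\approx c\xi^3$ there. Simple zeros alternate the sign of $\partial_\xi\phi$, and the sign changes of $\phi$ at the endpoints force $\partial_\xi\phi(\xi_1),\partial_\xi\phi(\xi_3)$ to have the sign of $c$ and $\partial_\xi\phi(\xi_2)$ to have the opposite sign. Combining (i)–(iii): for $c<0$ the outer solutions have $\mu<0$, hence $k$ unstable eigenvalues, while the middle one has $\mu>0$, hence $k+1$. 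The case $c>0$ is the reverse.
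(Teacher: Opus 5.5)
Your proposal is correct, and Steps (i) and (iii) are essentially the paper's Steps~1 and~3: persistence of the $k$ unstable eigenvalues and of a single real simple critical eigenvalue, and alternation of $\mathrm{sign}\,\partial_\xi\phi$ at the three simple zeros, anchored by $\mathrm{sign}\,\phi(\delta,\tilde{\blambda})=\mathrm{sign}(c)$.

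The genuine difference is Step (ii).

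\emph{The paper's route.} It takes the perturbed \emph{left} eigenvector $\mathbf{w}_j$ of $D_uf(\tilde{\blambda},\tilde{\mathbf{u}}_j)^\dagger$ supplied by Kato's theory, normalised by $(\mathbf{w}_j,\mathbf{v})_{L^2}=1$. Pairing it with $D_uf(\tilde{\blambda},\tilde{\mathbf{u}}_j)\mathbf{y}_j=\partial_\xi\phi(\xi_j,\tilde{\blambda})\,\mathbf{v}$ gives the exact identity $\mu_j(\mathbf{w}_j,\mathbf{y}_j)_{L^2}=\partial_\xi\phi(\xi_j,\tilde{\blambda})$, with $(\mathbf{w}_j,\mathbf{y}_j)_{L^2}\to1$.

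\emph{Your route.} You reduce the eigenvalue problem itself by a Schur complement, and your key claim checks out. Let $A$ be the linearisation along the branch, let $P$ be your projection (the paper's $Q$), and set $B_\mu\bydef(\mu-PA)|_{\mathrm{Range}\,P}$. Then $s(\mu)=\mu-(\mathbf{w},A\mathbf{v})_{L^2}-(\mathbf{w},AB_\mu^{-1}PA\mathbf{v})_{L^2}$. Differentiating $Pf=0$ gives $\partial_\xi\Psi=B_0^{-1}PA\mathbf{v}$, hence $s(0)=-(\mathbf{w},A(\mathbf{v}+\partial_\xi\Psi))_{L^2}=-\partial_\xi\phi$. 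Moreover $\partial_\mu s=1+(\mathbf{w},AB_\mu^{-2}PA\mathbf{v})_{L^2}=1+O(\|PA\mathbf{v}\|_{L^2})$, uniformly on a small disc. That uniformity, not just the value at $\mu=0$, is what your mean-value step $\mu\int_0^1\partial_\mu s(t\mu)\,dt=\partial_\xi\phi$ needs, so state it that way.

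\emph{Comparison.} Your route uses only the invertibility of the complementary block, already obtained in the Lyapunov--Schmidt reduction. It never needs adjoint eigenvectors at the perturbed points, and it gives the quantitative relation $\mu=\partial_\xi\phi\,(1+o(1))$ along the whole branch. The paper's route is shorter once Step~1 has produced $\mathbf{w}_j$.

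One small slip: for the stable solutions the critical eigenvalue $\mu_j<0$ is close to $0$, so the spectrum is not contained in $\{\mathrm{Re}\,\mu\le-\varepsilon/2\}$. Invoke linearised stability with $\sup\mathrm{Re}\,\sigma=\max(\mu_j,-\varepsilon/2)<0$ instead.
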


The statements about asymptotic stability are understood with respect to the semiflow generated by \eqref{eq : gen PDE}; they follow from the eigenvalue counts by the principle of linearized stability, since $l_{\blambda}$ generates an analytic semigroup and the Gershgorin enclosures of Sections~\ref{sec : gershgorin swift hohenberg} and \ref{sec : gershgorin gray scott} place the remaining spectrum at a positive distance from the imaginary axis \cite{henry_geometric,lunardi_analytic}.

\begin{figure}[ht]
\centering \includegraphics[width=0.5\textwidth]{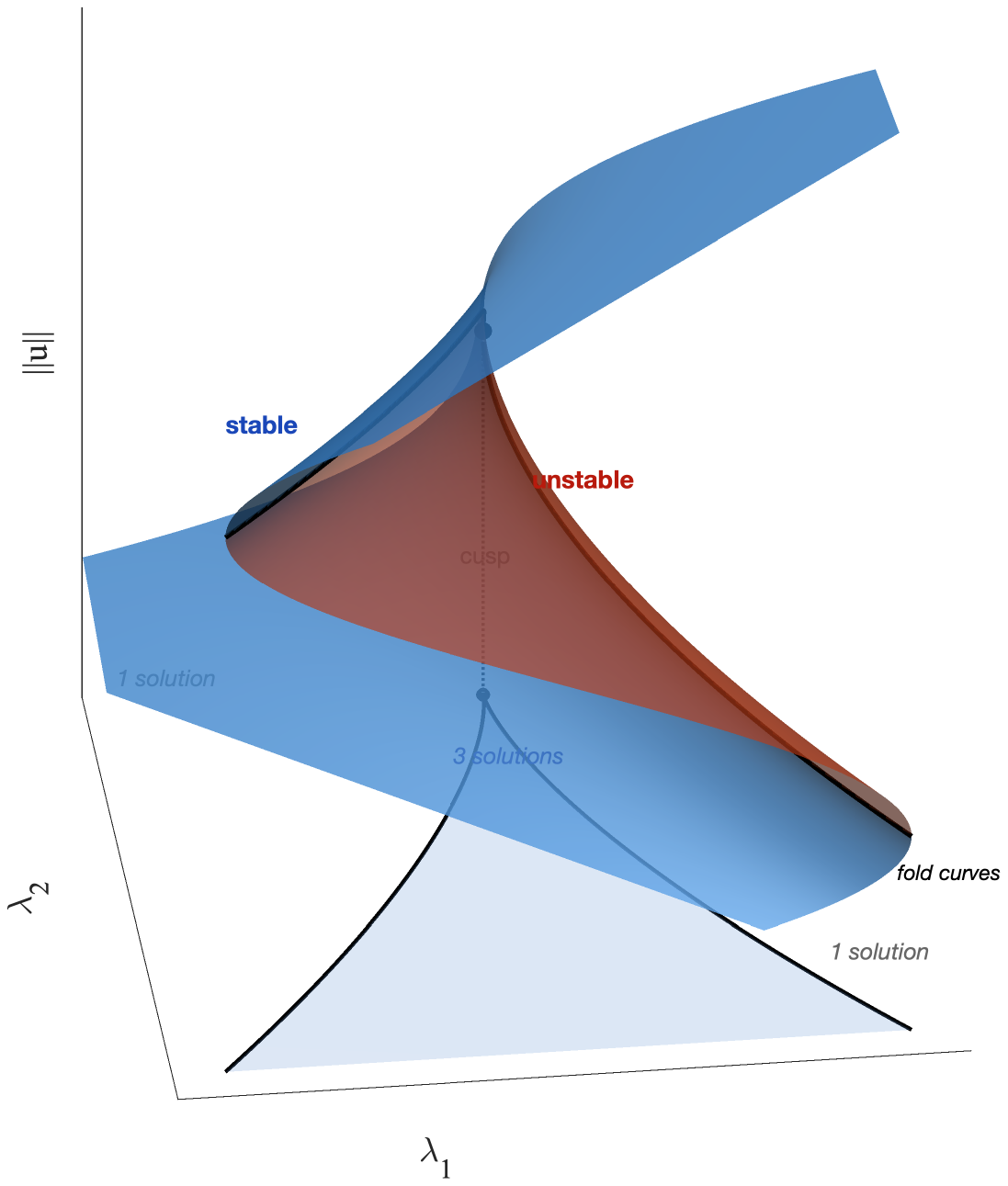} \caption{Equilibrium manifold for the cusp normal form $\dot{u}=f(\boldsymbol{\lambda},u) \bydef -u^3 + \lambda_2 u + \lambda_1$ (equilibria being the solutions of $f(\boldsymbol{\lambda},u)=0$) (case $k=0$, $c<0$). The surface shows the set of stationary solutions $(\lambda_1,\lambda_2, u)$ satisfying $f(\boldsymbol{\lambda},u)=0$, folded over the two-dimensional parameter plane $(\lambda_1,\lambda_2)$. \emph{Blue}: asymptotically stable solutions ($k=0$ eigenvalues with positive real part); \emph{red}: unstable solution (one eigenvalue with positive real part). The two black curves on the surface are the fold curves, whose projection onto the parameter plane forms the cusp. Inside the shaded region the system exhibits bistability ($\mathbf{u}_1$ and $\mathbf{u}_3$ stable, $\mathbf{u}_2$ unstable); outside it only one solution exists. } \label{fig : cusp}
\end{figure}

The use of computer-assisted proofs in bifurcation theory is now well established, with applications spanning saddle-node bifurcations \cite{continue3,blanco_cadiot_fassler_saddle,d13ba7de9191432e9c6e4912a64690bb,kepley_saddle,jp_saddle_node1,jp_saddle_node,tom_bifurcation,PiotrZgliczynsk2015JournalofComputationalDynamics}, symmetry-breaking bifurcations \cite{d13ba7de9191432e9c6e4912a64690bb,jp_saddle_node1,cyclic_sym,tom_bifurcation,PiotrZgliczynsk2015JournalofComputationalDynamics}, period-doubling bifurcations \cite{period_doubling}, cocoon bifurcations \cite{Kokubu_2007}, and Hopf bifurcations \cite{kevin_hopf,CHURCH2022133072,elena_otherhopf,elena_dissertation,wright_conjecture,jbjpelena_hopf,VANDENBERG2022106133}. These developments demonstrate the effectiveness of rigorous numerics for validating bifurcation phenomena in both finite- and infinite-dimensional dynamical systems. In the specific context of cusp bifurcations, the work of \cite{cusp_jp} developed a framework for verifying the hypotheses of Lemma~\ref{lemma : Cusp bifurcation} in finite-dimensional settings. The present paper extends this framework to stationary periodic solutions of PDEs. For broader introductions to computer-assisted proofs, we refer the reader to \cite{MR3444942,plum_numerical_verif,MR3990999,maxime_general}.

The strategy is as follows. All algebraic conditions of Lemma~\ref{lemma : Cusp bifurcation}, except the non-degeneracy condition $c \neq 0$ and the spectral hypothesis on $D_u f(\blambda,\mathbf{u})$, are encoded as zeros of an augmented system (the \emph{cusp map} $F(\mathbf{x})=0$, defined in Section~\ref{sec : cusp map}). Non-degenerate zeros of this system which satisfy the spectral hypothesis are cusp bifurcation points. Starting from a numerically computed approximation $\bar{\mathbf{x}}$, we apply a Newton--Kantorovich argument (Theorem~\ref{th : radii polynomial theorem periodic}) to establish the existence and local uniqueness of a true solution $\tilde{\mathbf{x}}$ nearby. A key feature of this approach is that non-degeneracy of $\tilde{\mathbf{x}}$ follows directly from the NK theorem, which in turn implies both $c \neq 0$ and the transversality condition \eqref{eq : Delta} by Theorem~\ref{thm : cusp map nondegen}. In addition, the NK construction provides a rigorous enclosure of the solution, which allows the sign of $c$ to be certified using interval arithmetic. The remaining spectral condition in Lemma~\ref{lemma : Cusp bifurcation}, namely that $D_u f$ has exactly one center eigenvalue, is verified independently via a Gershgorin circle argument (Sections~\ref{sec : gershgorin swift hohenberg} and \ref{sec : gershgorin gray scott}). This argument also determines the number $k$ of eigenvalues with positive real part. Finally, with $k$ and the sign of $c$ established, Lemma~\ref{lem : bistability} yields the complete stability characterization of the three nearby solutions.

To make the strategy above concrete (constructing the cusp map, verifying the contraction condition for the NK argument, and computing the Gershgorin enclosures) one needs a representation of the problem in which all quantities can be computed and bounded explicitly. For stationary periodic patterns, the natural setting is a \emph{sequence space of Fourier coefficients}. Since $\mathbf{u}$ is periodic on $\mathscr{D}$, it admits a Fourier series expansion, and the equation $f(\boldsymbol{\lambda},\mathbf{u})=0$ can be reformulated as an infinite-dimensional algebraic system on the sequence space $\ell^2(\mathbb{Z}^m)$. By Plancherel's theorem and Parseval's identity, $L^2(\mathscr{D})$ and $\ell^2(\mathbb{Z}^m)$ are naturally isometrically isomorphic (with the normalisation \eqref{def : L2 inner product}). Under $\mathcal{G}$-symmetry the picture is slightly more subtle: a $\mathcal{G}$-symmetric function is stored through the reduced set of coefficients indexed by $\mathcal{Z}_{\mathrm{red}}(\mathcal{G}) \subsetneq \mathbb{Z}^m$ introduced in Section~\ref{sec : periodic spaces}, and under this identification the $L^2_{\mathcal{G}}(\mathscr{D})$ inner product becomes the \emph{orbit-weighted} sequence inner product $(\mathcal{O}_{\mathcal{G}}\cdot,\cdot)_2$, where $\mathcal{O}_{\mathcal{G}}$ is the diagonal operator of orbit sizes defined in \eqref{def : D operator}; see Remark~\ref{rmk : L2 vs ell2}. The computations, on the other hand, are carried out in the polynomially weighted space $\ell^2_{\mathcal{G},\omega}$, which is \emph{not} isometric to $L^2_{\mathcal{G}}(\mathscr{D})$ but is continuously embedded in it. Keeping track of the operator $\mathcal{O}_{\mathcal{G}}$ is precisely what makes it possible to verify the $L^2$-conditions of Lemma~\ref{lemma : Cusp bifurcation} by solving a sequence-space equation; this is made explicit in Remark~\ref{rmk : L2 vs ell2}. The Fourier representation is particularly advantageous computationally: differential operators become diagonal multiplications, polynomial nonlinearities become discrete convolutions, and the Newton--Kantorovich estimates reduce to explicit algebraic bounds on sequences.

A crucial structural feature emerges in the tail: for sufficiently high Fourier modes, the diagonal operator $l_{\boldsymbol{\lambda}}$ dominates the convolution terms, so that $D_u f$ becomes asymptotically diagonally dominant. More precisely, the tail of $D_u f$ is represented by an infinite-dimensional matrix whose diagonal dominance can be quantified explicitly. This structure permits the construction of an approximate inverse $A$ consisting of a finite-dimensional block, computed and inverted numerically, together with an explicitly controlled diagonal tail. Variants of this strategy have proved highly effective for a broad class of PDEs \cite{dominic_sh_periodic,dominic_thomas,gabriel_pfc,breden2022computer,doi:10.1137/17M111938X,MR3633778,period_kuramoto,lessard_2,lessard_1,Sander_equilibrium,JB_symmetries_1,JB_symmetries_2,lindsay_suspensionbrdige}. In situations where diagonal dominance fails in the tail, alternative constructions can be employed \cite{MR3392647,MR4379799,cyranka2018construction}.

Extending the finite-dimensional cusp map framework of \cite{cusp_jp} to the PDE setting introduces two key challenges that do not arise in finite dimensions. 

The first concerns the treatment of the adjoint operator. In $L^2_{\mathcal{G}}(\mathscr{D})$ the left null vector is defined by $D_u f(\boldsymbol{\lambda},\mathbf{u})^\dagger\mathbf{w}=0$; in the sequence space it is the $(\cdot,\cdot)_2$-adjoint $D_uf(\boldsymbol{\lambda},\mathbf{u})^*$ that is computed, and the corresponding null vector is $\mathcal{O}_{\mathcal{G}}\mathbf{w}$ (Remark~\ref{rmk : L2 vs ell2}). In finite dimensions, the adjoint is simply given by matrix transposition. In the present sequence-space setting, however, the adjoint of the linearized operator involves the adjoint of a multiplication operator by a Fourier series. Characterizing this operator requires a careful analysis, particularly when symmetry constraints are imposed on the Fourier coefficients. In Section~\ref{sec : adjoint mult}, we develop the necessary theory and derive explicit formulas for the adjoint multiplication operator on $\ell^2$, which are subsequently used in the Newton--Kantorovich estimates. 

The second challenge concerns spatial symmetry. Solutions of~\eqref{eq : gen PDE} with periodic boundary conditions are generally not isolated, since every spatial translate of a solution is itself a solution. To obtain a well-posed zero-finding problem, we restrict our attention to solutions possessing a prescribed {\em finite discrete symmetry group} $\mathcal{G}$. This symmetry reduction significantly decreases the computational cost, as the Fourier coefficients of a $\mathcal{G}$-symmetric function are determined by a {\em reduced set} of independent modes $\mathcal{Z}_{\mathrm{red}}(\mathcal{G}) \subsetneq \mathbb{Z}^m$, corresponding to orbit representatives under the action of $\mathcal{G}$. The resulting framework, developed in \cite{JB_symmetries_1,JB_symmetries_2} and adapted here to the cusp setting, applies uniformly to dihedral, rectangular, hexagonal, and other crystallographic symmetries. The associated Banach space $\ell^2_{\mathcal{G},\omega}$ is described in Section~\ref{sec : periodic spaces}.

Beyond its computational advantages, symmetry reduction can also remove translational degeneracies and thereby isolate solutions. In all applications considered in this work, the imposed symmetry is sufficient to achieve this isolation. More generally, when symmetry alone does not eliminate the degeneracy, one may instead introduce unfolding parameters and analyze the resulting augmented problem. Although this situation lies beyond the scope of the present paper, we refer the interested reader to \cite{symmetry_blanco_cadiot,jay_unfold1,jay_unfold4,jay_unfold3,jay_unfold2} for further discussion.

The same $\mathcal{G}$-symmetry structure also shapes the spectral analysis needed to complete the verification of Lemma~\ref{lemma : Cusp bifurcation}: certifying that $D_u f$ has exactly one eigenvalue on the imaginary axis. For this we employ a \emph{Gershgorin circle argument} adapted to the infinite-dimensional, symmetry-structured setting. Originally introduced for finite matrices in \cite{gershgorin_original}, the Gershgorin circle theorem provides explicit regions containing the spectrum of an operator. Extensions to infinite diagonally dominant matrices were developed in \cite{FARID19917,Farid_Lancaster_1989,kato2013perturbation}, and related techniques have been used in computer-assisted proofs in \cite{continue3,gershgorin_bredencadiotzurek,cadiot2025stabilityanalysislocalizedsolutions,cusp_jp,marco_thesis,gershgorin_maxime2,JB_symmetries_2}. The present setting introduces an additional difficulty: after imposing $\mathcal{G}$-symmetry, the linearized operator no longer possesses the standard Fourier--convolution structure, and the resulting symmetry constraints couple Fourier modes according to the orbit structure of $\mathcal{G}$. To overcome this issue, we decompose the spectral analysis into three distinct components, each requiring a different treatment (see Sections~\ref{sec : gershgorin swift hohenberg} and \ref{sec : gershgorin gray scott}).

We apply this framework to two paradigmatic models. The first is the \emph{Swift-Hohenberg equation} \cite{radial2009,hexagon2008,stability1997Mielke}, a scalar PDE ($\mathscr{p}=1$). The second is the \emph{Gray--Scott reaction--diffusion system} \cite{doleman_pulse,gs_original,pearson_gs} ($\mathscr{p}=2$), which illustrates the applicability of the method to systems of PDEs. In both cases, we consider one- and two-dimensional spatial domains, thereby obtaining rigorous proofs of cusp bifurcations in five distinct settings. One of our main results is the following.

\begin{theorem}[$D_2$ cusp bifurcation in 2D Gray-Scott]\label{th : GS 2D cusp intro}
Fix $\delta_1 = 0.08583846158364704$, $\delta_2 = 1$ and $d = 1.5$. There exist $\tilde{\blambda} \approx (1,4.183827930189122)$ and a $D_2$-symmetric (rectangular) stationary solution $\tilde{\mathbf{u}}$ of the 2D Gray-Scott system, unique in an explicit ball of radius $3\times 10^{-12}$, such that $(\tilde{\blambda},\tilde{\mathbf{u}})$ is a non-degenerate cusp bifurcation point with $k = 0$ and $c > 0.1796$. In particular the three coexisting solutions exhibit monostability. The precise statement is Theorem~\ref{th : GS 2D cusp}.
\end{theorem}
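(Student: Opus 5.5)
The plan is to follow the strategy described in the introduction, specialised to the 2D Gray--Scott system with $D_2$ symmetry. First, I will write the stationary Gray--Scott equations with $\delta_1,\delta_2,d$ fixed and $\blambda=(\lambda_1,\lambda_2)$ free. I will pose them on a rectangular periodic domain $\mathscr{D}$ and represent $D_2$-symmetric (even in each variable) functions by cosine-type Fourier coefficients indexed by $\mathcal{Z}_{\mathrm{red}}(D_2)$, in the weighted space $\ell^2_{\mathcal{G},\omega}$. In this space the cusp map $F(\mathbf{x})$, with $\mathbf{x}=(\blambda,\mathbf{u},\mathbf{v},\mathbf{w},\mathbf{h})$, encodes $f=0$, $D_uf\,\mathbf{v}=0$, $D_uf^*\mathcal{O}_{\mathcal{G}}\mathbf{w}=0$, the normalisations, the equation for $\mathbf{h}$, and $b=0$. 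The adjoint multiplication operator from Section~\ref{sec : adjoint mult} makes the $D_uf^*$ row explicit as convolutions with the symmetry-adapted kernel. A numerical approximation $\bar{\mathbf{x}}$ with $\bar{\blambda}\approx(1,4.1838\ldots)$ will be computed by Newton's method on a Galerkin truncation. Where convenient I will regularise $\mathbf{u}$, by pairing $\bar{\mathbf{u}}$ with a projection onto the truncation, so that all tail convolutions are exactly computable.

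Next, I will apply Theorem~\ref{th : radii polynomial theorem periodic} to $F$ at $\bar{\mathbf{x}}$. The approximate inverse $A$ will consist of the numerical inverse of the finite block of $DF(\bar{\mathbf{x}})$ together with the diagonal tail $l_{\blambda}^{-1}$, applied componentwise for the $\mathbf{u},\mathbf{v},\mathbf{h}$ rows and through its adjoint for the $\mathbf{w}$ row. For the Gray--Scott Laplacian with $\delta_1,\delta_2>0$, $l_{\blambda}$ is invertible and diagonally dominant in the tail. I will then compute the bounds:
\begin{itemize}
\item $Y$, a finite computation because $F(\bar{\mathbf{x}})$ has finitely many nonzero modes;
\item $Z_1$, split into the finite part and the tail estimate given by the norm of the multiplication operators divided by the smallest tail eigenvalue of $l_{\blambda}$;
\item $Z_2$, a Lipschitz bound from the cubic nonlinearity, obtained via the Banach algebra property of $\ell^2_{\omega}$.
\end{itemize}
Finding $r$ with the radii polynomial negative near $3\times10^{-12}$ gives existence, local uniqueness and invertibility of $DF(\tilde{\mathbf{x}})$. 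Theorem~\ref{thm : cusp map nondegen} then yields $c\neq0$ and $\Delta\neq0$. The enclosure of $\tilde{\mathbf{x}}$ will be used to evaluate $c$ through \eqref{eq:normal_form_coeff_c}, in its sequence-space form with $\mathcal{O}_{\mathcal{G}}$, using interval arithmetic. Bounding the error from $r$ through the continuous dependence of $c$ on $\mathbf{x}$ should certify $c>0.1796$.

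Finally, I will verify the spectral hypothesis of Lemma~\ref{lemma : Cusp bifurcation} with the Gershgorin argument of Section~\ref{sec : gershgorin gray scott}. I will conjugate $D_uf(\tilde{\blambda},\tilde{\mathbf{u}})$ by the matrix of approximate eigenvectors of the finite block. The enclosure error is then absorbed into the radii, and the tail is handled by diagonal dominance, since the Gershgorin discs there lie in $\{\mathrm{Re}\,\mu\le-\varepsilon\}$. The finite discs must be shown to separate, with exactly one disc containing $0$ and isolated from the imaginary axis elsewhere, and all other discs in the open left half-plane; this gives $k=0$. Lemma~\ref{lem : bistability} with $c>0$ and $k=0$ then gives monostability. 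The symmetry coupling between the orbit-representative blocks requires the three-component decomposition mentioned earlier.

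I expect the main obstacle to be the $Z_1$ bound for the $\mathbf{w}$ row and the Gershgorin tail. Both involve the adjoint multiplication operator under $D_2$ symmetry, whose orbit weights $\mathcal{O}_{\mathcal{G}}$ make the operator non-symmetric with respect to $(\cdot,\cdot)_2$. For the $\mathbf{w}$ row, I would first try to bound the adjoint operator's norm by conjugating with $\mathcal{O}_{\mathcal{G}}^{\pm1}$, which reduces it to the standard multiplication operator on $\ell^2(\mathbb{Z}^2)$, where the $\ell^1$ norm of the coefficients controls it. A further concern is the small diffusion $\delta_1\approx0.086$, which makes the tail dominance kick in late; a large truncation size may therefore be needed to close the proof at radius $3\times10^{-12}$.
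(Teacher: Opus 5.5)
Your plan is the paper's own route. It uses the cusp map on $X_{D_2,\omega}$, Theorem~\ref{th : radii polynomial theorem periodic} with $A^N$ plus the diagonal tail $l_{\overline{\blambda}}^{-1}$, Theorem~\ref{thm : cusp map nondegen} for $c\neq0$ and $\Delta\neq0$, an interval enclosure of \eqref{c : GS}, Proposition~\ref{prop : gershgorin enclosure} for $k=0$, and finally Lemma~\ref{lem : bistability}. The paper's proof is exactly this computation with $N=50$ and $d=1.5$, giving $Y_0\leq 1.389\times10^{-13}$, $Z_1\leq 0.9143$ and $Z_2(r_0)\leq 1.13\times10^{8}$ at $r_0=3\times10^{-12}$. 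There is, however, one concrete (easily repaired) defect. Your unknown $\mathbf{x}=(\blambda,\mathbf{u},\mathbf{v},\mathbf{w},\mathbf{h})$ omits the bordering scalars $\mathbf{s}=(s_1,s_2)$, and without them the Newton--Kantorovich step cannot close.

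\emph{Why it fails.} The four sequence blocks of $DF$ ($D_uf$, $D_uf$, $D_uf^*$, $D_uf$, up to relatively compact couplings) have index $0$. You have only two scalar unknowns $\lambda_1,\lambda_2$, against four scalar rows: $(\mathbf{v},\mathbf{v})_2=1$, $(\mathbf{v},\mathbf{w})_2=1$, $(\mathbf{w},\mathbf{h})_2=0$ and $b=0$. So $DF$ is Fredholm of index $-2$ and is never invertible. Now $\|I_d-ADF(\overline{\mathbf{x}})\|<1$ with $A$ injective would force $A$, and hence $DF(\overline{\mathbf{x}})$, to be bijective. Therefore no bounds $Y_0,Z_1,Z_2$ can satisfy the hypotheses of Theorem~\ref{th : radii polynomial theorem periodic}.

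Removing scalar rows does not help. Without a normalisation, a rescaling of $(\mathbf{v},\mathbf{w},\mathbf{h})$ lies in the kernel. Without $(\mathbf{w},\mathbf{h})_2=0$, the direction $\mathbf{h}\mapsto\mathbf{h}+\alpha\mathbf{v}$ does. Without $b=0$, you no longer locate a cusp.

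\emph{The fix.} As in \eqref{gs cusp map}, add $s_1\mathbf{v}$ to the $\mathbf{v}$-equation and $s_2\mathbf{v}$ to the $\mathbf{h}$-equation. Pairing these rows with $\mathbf{w}$ gives $s_1=s_2=0$ at any zero, so the conclusions are unchanged. Moreover, $\hat s_1,\hat s_2$ are precisely what absorb the solvability conditions of rows 6 and 8 in the proof that $\ker DF\cong\ker\mathbb{T}$ with $\det\mathbb{T}=6c\Delta$.

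\emph{A smaller point on normalisation.} The number $0.1796$ refers to the normalisation $(\mathbf{v},\mathbf{v})_2=1$. Under $(\mathbf{v},\mathbf{w},\mathbf{h})\mapsto(t\mathbf{v},t^{-1}\mathbf{w},t^2\mathbf{h})$, $c$ scales by $t^2$. You must therefore use that normalisation, not an $L^2$ one, to reproduce the bound. The sign of $c$, and hence monostability, does not depend on it.
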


\begin{figure}[H]
\centering
 \begin{minipage}{.5\linewidth}
  \centering\epsfig{figure=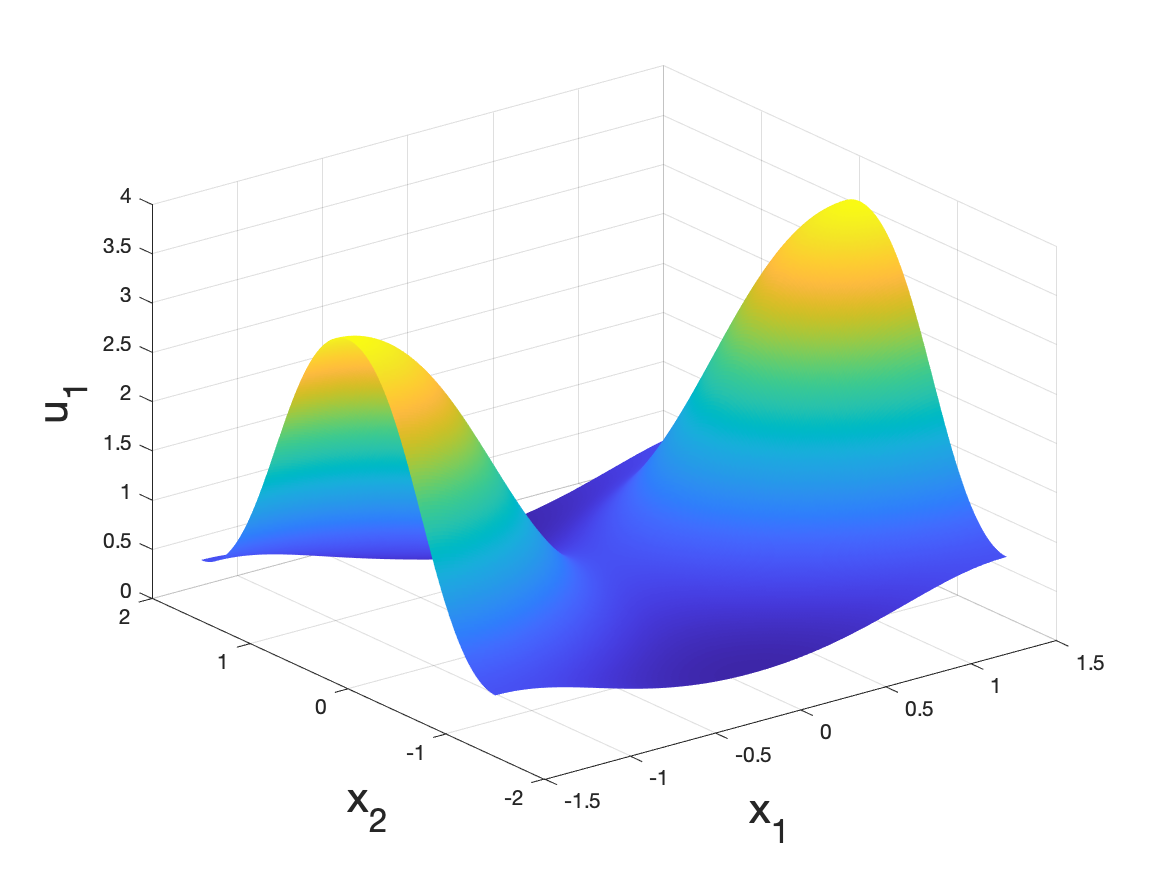,width=\linewidth}
  \end{minipage}%
 \begin{minipage}{.5\linewidth}
  \centering\epsfig{figure=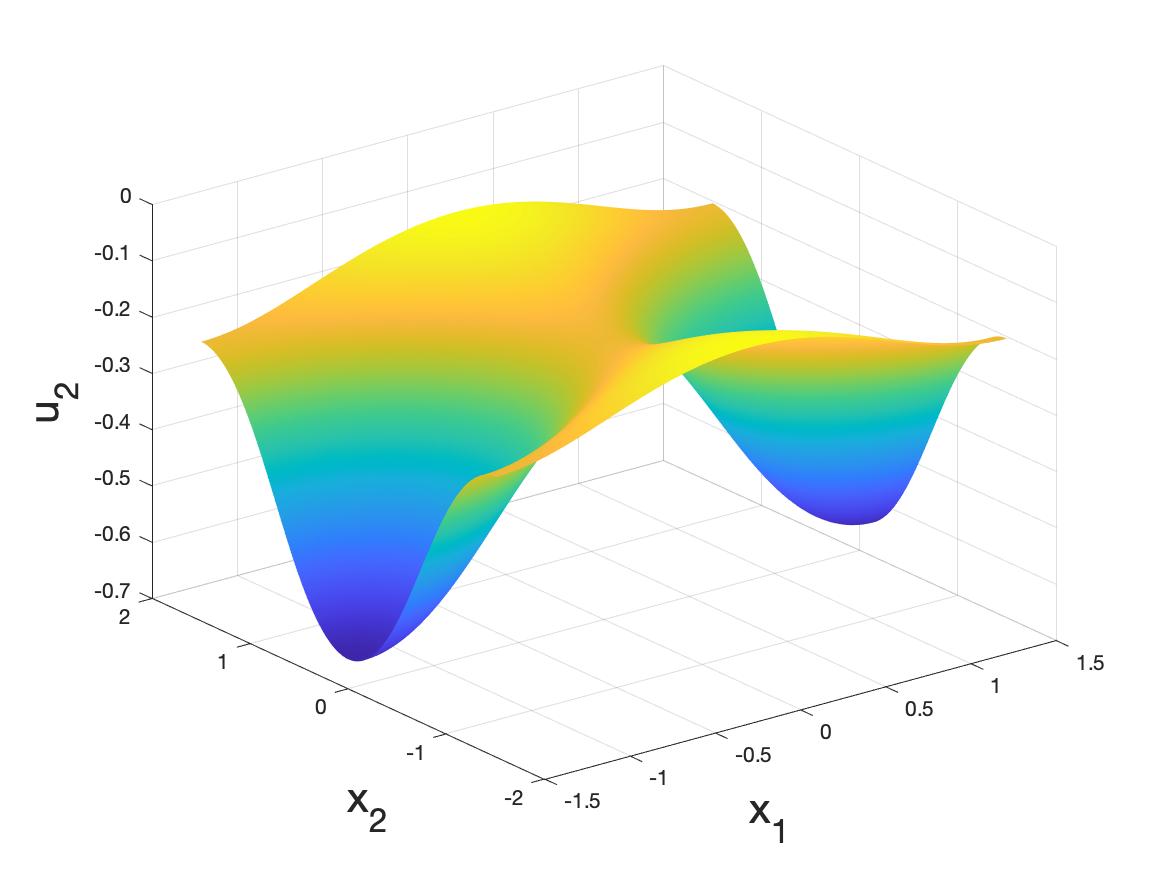,width=\linewidth}
 \end{minipage} 
 \caption{Plot of an approximation of $\tilde{\mathbf{u}}$ used in Theorem \ref{th : GS 2D cusp intro}. We denote this approximation $\overline{\mathbf{u}} = (\overline{u}_1,\overline{u}_2)$. We have $\overline{u}_1$ (L) and $\overline{u}_2$ (R) on $(-1.5,1.5)^2$.}\label{fig : th intro}
 \end{figure}%

Analogous theorems are proved for the 1D Swift-Hohenberg equation with $\mathbb{Z}_2$ symmetry (Theorem~\ref{th : SH 1D cusp}), the 2D Swift-Hohenberg equation with square $D_4$ and hexagonal $D_6$ symmetry (Theorems~\ref{th : SH 2D cusp D4} and \ref{th : SH 2D cusp D6}), and the 1D Gray-Scott system with $\mathbb{Z}_2$ symmetry (Theorem~\ref{th : GS 1D cusp}), for a total of five settings. In all five the framework certifies $k = 0$; the 1D Swift-Hohenberg cusp has $c < 0$ and hence exhibits bistability, while the other four have $c > 0$ and exhibit monostability. The computer code implementing all the proofs is publicly available at \cite{CuspProofs.jl}.

The remainder of this paper is organized as follows. Section~\ref{sec : set-up} introduces the necessary notation and background results: the symmetry-reduced Fourier sequence space $\ell^2_{\mathcal{G},\omega}$ and its convolution estimates (Section~\ref{sec : periodic spaces}), the adjoint multiplication operator in $\ell^2_{\mathcal{G},\omega}$ (Section~\ref{sec : adjoint mult}), the cusp map (Section~\ref{sec : cusp map}), and the Newton--Kantorovich theorem used throughout (Section~\ref{sec : radii poly}). Section~\ref{sec : swift hohenberg} presents the application to the Swift-Hohenberg equation: the numerical approximation and approximate inverse are constructed in Section~\ref{sec : numerical aspects}, the Newton--Kantorovich bounds are computed in Section~\ref{sec : bounds sh}, the Gershgorin spectral enclosure is developed in Section~\ref{sec : gershgorin swift hohenberg}, and the computer-assisted proofs are given in Section~\ref{sec : caps sh}. Section~\ref{sec : gray scott} presents the application to the Gray--Scott system, following the same structure; Section~\ref{sec : gershgorin gray scott} extends the Gershgorin enclosure of Section~\ref{sec : gershgorin swift hohenberg} to the block setting required when the linearization is not self-adjoint, so that the spectrum need not be real. We conclude in Section~\ref{sec : conclusion} with a discussion of open questions.

\section{Functional Setting and the Cusp Map}\label{sec : set-up}
In this section, we collect the notations and results needed for our computer-assisted approach. Section~\ref{sec : periodic spaces} introduces the symmetry-reduced Fourier sequence spaces $\ell^2_{\mathcal{G},\omega}$. Section~\ref{sec:conv_estimates} establishes convolution estimates on these spaces; since the PDEs under consideration have polynomial nonlinearities, products of sequences appear throughout and controlling their norms in $\ell^2_{\mathcal{G},\omega}$ is essential for all subsequent nonlinear analysis. Section~\ref{sec : adjoint mult} characterizes the adjoint multiplication operator in $\ell^2_{\mathcal{G},\omega}$, an ingredient needed to formulate the cusp map in Section~\ref{sec : cusp map}, whose non-degenerate zeros correspond to cusp bifurcation points. Finally, Section~\ref{sec : radii poly} states the Newton--Kantorovich theorem used to validate these zeros.

\subsection{Symmetry-Reduced Fourier Sequence Spaces}\label{sec : periodic spaces}
We seek solutions with a prescribed $\mathcal{G}$-symmetry. In this manuscript, we focus on the case where $\mathcal{G}$ is a finite discrete group which corresponds to a certain symmetry (i.e. squares, hexagons, etc). Since  $\mathcal{G}$ is a finite discrete symmetry group, it has a unitary representation. That is, for each $\mathscr{g} \in \mathcal{G}$, there exists an $\mathcal{A}_{\mathscr{g}} \in M_{m \times m}(\mathbb{R})$ and $ b_{\mathscr{g}} \in [0,1]^m$ such that
\[
\mathscr{g} \cdot x = \mathcal{A}_{\mathscr{g}} x + b_{\mathscr{g}},
\]
for every  $x \in \mathbb{R}^m$. We will base our theory around these types of groups. We now fix an invertible matrix $\mathcal{L} : \mathbb{R}^m \to \mathbb{R}^m$ such that $\mathcal{L}^{-1} \mathcal{A}_{\mathscr{g}}^T \mathcal{L} \in M_{m \times m}(\mathbb{Z})$ for every $\mathscr{g} \in \mathcal{G}$. This is exactly what is required for the map $\beta_{\mathscr{g}}(n) = \mathcal{L}^{-1}\mathcal{A}_{\mathscr{g}}^T\mathcal{L}n$ of Lemma~\ref{lem : G-fourier series} to preserve the Fourier index lattice $\mathbb{Z}^m$. (Quantified over all of $\mathcal{G}$, the requirement is unchanged if $\mathcal{A}_{\mathscr{g}}^T$ is replaced by $\mathcal{A}_{\mathscr{g}}$, since the representation is orthogonal, so that $\mathcal{A}_{\mathscr{g}}^T = \mathcal{A}_{\mathscr{g}}^{-1} = \mathcal{A}_{\mathscr{g}^{-1}}$, and $\mathscr{g}\mapsto\mathscr{g}^{-1}$ permutes $\mathcal{G}$; for a fixed $\mathscr{g}$, however, the two matrices are inverse to one another and are not the same. The condition stated with $\mathcal{L}^{-T}$ in place of $\mathcal{L}^{-1}$ is \emph{not} equivalent, and in fact fails here: for the hexagonal lattice $\mathcal{L} = \left[\begin{smallmatrix}1 & -1/2\\ 0 & \sqrt{3}/2\end{smallmatrix}\right]$ of Theorem~\ref{th : SH 2D cusp D6} and $\mathcal{A}$ the rotation by $60^\circ$ one computes $\mathcal{L}^{-T}\mathcal{A}\mathcal{L} = \left[\begin{smallmatrix}1/2 & -1\\ 1+\frac{1}{2\sqrt3} & -\frac{1}{\sqrt3}\end{smallmatrix}\right] \notin M_{2}(\mathbb{Z})$, whereas $\mathcal{L}^{-1}\mathcal{A}^T\mathcal{L} = \left[\begin{smallmatrix}0&1\\-1&1\end{smallmatrix}\right] \in M_2(\mathbb{Z})$.) This is so that the action of $\mathcal{G}$ preserves the Fourier index lattice $\mathbb{Z}^m$. This condition is satisfied for all symmetry groups considered in this work when $\mathcal{L}$ is chosen as the corresponding lattice generator. Now, let $ d > 0$ and introduce the shorthand
\[
    \tilde{n} \bydef \left(\frac{n_1 \pi}{d},\frac{n_2 \pi}{d},\dots,\frac{n_m \pi}{d}\right),
\]
used throughout. We also let the domain, $\mathscr{D}$ be defined as
\begin{equation} \label{def:mathscr_D}
    \mathscr{D} \bydef \{\mathcal{L}^{-T}x : ~ x \in (-d,d)^m\}.
\end{equation}
Henceforth we identify $\mathbf{u}$ with its Fourier coefficient sequence, that is, $\mathbf{u} = (\mathbf{u}_n)_{n \in \mathbb{Z}^m}$ where $\mathbf{u}_n \bydef ((u_1)_n,(u_2)_n,\dots,(u_p)_n)$. We now state the following key lemma.
\begin{lemma}\label{lem : G-fourier series}
Let $u \in L^2(\mathscr{D})$ be a function with a Fourier series representation of the form 
\begin{equation}\label{usual_fourier_series}
    u(x) = \sum_{n \in \mathbb{Z}^m} u_n e^{i (\mathcal{L} \tilde{n}) \cdot x}.
\end{equation}
Define $\beta_{\mathscr{g}} : \mathbb{Z}^m \to \mathbb{Z}^m$ and $\alpha_{\mathscr{g}} : \mathbb{Z}^m \to \mathbb{R}$ as
\begin{align}
    \beta_{\mathscr{g}}(n) \bydef \mathcal{L}^{-1} \mathcal{A}_{\mathscr{g}}^T \mathcal{L}n
    \qquad \text{and} \qquad \alpha_{\mathscr{g}}(n) \bydef \exp\left(\frac{\pi i}{d} \beta_{\mathscr{g}}(n) \cdot b_{\mathscr{g}}\right).
\end{align}
Then, it follows that $u(x) = u(\mathscr{g} \cdot x)$ for all $\mathscr{g} \in \mathcal{G}$ and $x \in \mathbb{R}^m$ if and only if $ \alpha_{\mathscr{g}}(n) u_{\beta_{\mathscr{g}}(n)} = u_n$. In this case, we say that $u$ has a $\mathcal{G}$-Fourier series representation.
\end{lemma}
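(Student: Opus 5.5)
The plan is to compute the Fourier series of the composed function $x \mapsto u(\mathscr{g}\cdot x)$ directly, re-index it with $\beta_{\mathscr{g}}$, and compare coefficients with those of $u$ using uniqueness of Fourier coefficients. Throughout, $u$ is identified with its $\mathscr{D}$-periodic extension to $\mathbb{R}^m$ given by \eqref{usual_fourier_series}.

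\textbf{Step 1: the exponentials form an orthogonal basis.} With $\mathscr{D} = \mathcal{L}^{-T}(-d,d)^m$, the change of variables $x = \mathcal{L}^{-T}y$ gives $(\mathcal{L}\tilde n)\cdot x = \tilde n \cdot y$. Hence $\{e^{i(\mathcal{L}\tilde n)\cdot x}\}_{n\in\mathbb{Z}^m}$ is the standard Fourier basis of $L^2((-d,d)^m)$ pulled back to $\mathscr{D}$. It is orthonormal for the normalised inner product \eqref{def : L2 inner product} and complete, so two $L^2$ series agree if and only if all their coefficients agree.

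\textbf{Step 2: $\beta_{\mathscr{g}}$ is a bijection of $\mathbb{Z}^m$.} By hypothesis $\mathcal{L}^{-1}\mathcal{A}_{\mathscr{g}}^T\mathcal{L}$ is an integer matrix for every $\mathscr{g}\in\mathcal{G}$. Since the representation is orthogonal, its inverse is $\mathcal{L}^{-1}\mathcal{A}_{\mathscr{g}^{-1}}^T\mathcal{L}$, which is again integer. Thus $\beta_{\mathscr{g}}^{-1} = \beta_{\mathscr{g}^{-1}}$, and $\beta_{\mathscr{g}}$ permutes $\mathbb{Z}^m$, so re-indexing sums over $\mathbb{Z}^m$ by $\beta_{\mathscr{g}}$ is legitimate.

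\textbf{Step 3: expand $u(\mathscr{g}\cdot x)$.} Substituting $\mathscr{g}\cdot x = \mathcal{A}_{\mathscr{g}}x + b_{\mathscr{g}}$ into the series gives
\[
u(\mathscr{g}\cdot x) = \sum_{n} u_n\, e^{i(\mathcal{L}\tilde n)\cdot b_{\mathscr{g}}}\, e^{i(\mathcal{A}_{\mathscr{g}}^T\mathcal{L}\tilde n)\cdot x}.
\]
Since $\mathcal{A}_{\mathscr{g}}^T\mathcal{L}\tilde n = \mathcal{L}\,\widetilde{\beta_{\mathscr{g}}(n)}$, each term is a basis exponential with index $\beta_{\mathscr{g}}(n)$ times a unimodular phase. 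This series converges in $L^2$, because composition with the affine isometry maps the periodic $L^2$ function to another one. Re-indexing by $k=\beta_{\mathscr{g}}(n)$ and applying Step 1, the identity $u = u(\mathscr{g}\cdot{})$ becomes an identity between coefficients: each $u_k$ equals a phase times $u_{\beta_{\mathscr{g}}^{-1}(k)}$.

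\textbf{Step 4: quantify over $\mathcal{G}$.} Because the invariance is required for all $\mathscr{g}\in\mathcal{G}$, and $\mathscr{g}\mapsto\mathscr{g}^{-1}$ permutes $\mathcal{G}$, I can apply Step 3 with $\mathscr{g}^{-1}$ in place of $\mathscr{g}$. Here $\mathcal{A}_{\mathscr{g}^{-1}} = \mathcal{A}_{\mathscr{g}}^T$ and $b_{\mathscr{g}^{-1}} = -\mathcal{A}_{\mathscr{g}}^T b_{\mathscr{g}}$, up to the lattice translations that leave periodic functions unchanged. This turns the relation into the form $u_n = \alpha_{\mathscr{g}}(n)\,u_{\beta_{\mathscr{g}}(n)}$. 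Both directions of the equivalence follow at once, since every step is an equivalence. The converse direction needs no separate argument, because equal coefficients give equal $L^2$ functions, and hence pointwise equality for the smooth functions considered in the paper.

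\textbf{Expected obstacle.} The main difficulty is the bookkeeping of the phase $\alpha_{\mathscr{g}}(n)$. The expansion naturally produces $e^{i(\mathcal{L}\tilde n)\cdot b_{\mathscr{g}}}$, and this must be matched with $\exp\!\big(\tfrac{\pi i}{d}\beta_{\mathscr{g}}(n)\cdot b_{\mathscr{g}}\big)$. I would attempt this with the inversion identity for $b_{\mathscr{g}^{-1}}$ together with the convention by which $b_{\mathscr{g}}$ is expressed in lattice coordinates. Specifically, I would write $b_{\mathscr{g}} = \mathcal{L}^{-T}(\text{coordinates in }[0,1]^m)$ and check that the inner products reduce to $\beta_{\mathscr{g}}(n)\cdot b_{\mathscr{g}}$ modulo $2\pi$-periodic terms. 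If the conventions do not line up exactly, the statement should be read with $b_{\mathscr{g}}$ given in these lattice coordinates.
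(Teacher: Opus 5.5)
Your Steps~1--3 are sound and are the natural argument (the paper gives no proof of its own, only a pointer to \cite{JB_symmetries_2}). Step~4, however, has a genuine gap, and no convention for $b_{\mathscr{g}}$ closes it.

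Step~3 correctly gives, for each fixed $\mathscr{g}$, that $u=u(\mathscr{g}\cdot{})$ if and only if $u_k=\exp\bigl(\tfrac{\pi i}{d}(\mathcal{L}\beta_{\mathscr{g}}^{-1}(k))\cdot b_{\mathscr{g}}\bigr)\,u_{\beta_{\mathscr{g}}^{-1}(k)}$ for all $k$. Set $k=\beta_{\mathscr{g}}(n)$; equivalently, carry out your substitution $\mathscr{g}\mapsto\mathscr{g}^{-1}$ using $b_{\mathscr{g}^{-1}}=-\mathcal{A}_{\mathscr{g}}^Tb_{\mathscr{g}}$ and $\mathcal{A}_{\mathscr{g}}\mathcal{L}\beta_{\mathscr{g}}(n)=\mathcal{L}n$. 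Either way the relation becomes $u_n=\exp\bigl(-\tfrac{\pi i}{d}(\mathcal{L}n)\cdot b_{\mathscr{g}}\bigr)\,u_{\beta_{\mathscr{g}}(n)}$.

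That phase is not $\alpha_{\mathscr{g}}(n)=\exp\bigl(\tfrac{\pi i}{d}\beta_{\mathscr{g}}(n)\cdot b_{\mathscr{g}}\bigr)$. Your proposed fix of lattice coordinates $c=\mathcal{L}^Tb_{\mathscr{g}}$ only turns it into $\exp(-\tfrac{\pi i}{d}\,n\cdot c)$. The discrepancy is $n$ versus $\beta_{\mathscr{g}}(n)$ together with a sign, i.e.\ $\beta_{\mathscr{g}}$ versus $\beta_{\mathscr{g}}^{-1}$.

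In fact, with the conventions fixed here, the statement is false once translations are present. Take $m=2$, $\mathcal{L}=I_2$, and $\mathcal{G}$ the cyclic group of order four generated by $\mathscr{g}\cdot x=(-x_2+\tfrac12,\,x_1)$, the rotation by $90^\circ$ about $(\tfrac14,\tfrac14)$. Its translation parts are $(0,0),(\tfrac12,0),(\tfrac12,\tfrac12),(0,\tfrac12)$, all in $[0,1]^2$. The function $u(x)=\cos\bigl(\tfrac{\pi}{d}(x_1-\tfrac14)\bigr)+\cos\bigl(\tfrac{\pi}{d}(x_2-\tfrac14)\bigr)$ is $\mathcal{G}$-invariant, with $u_{(1,0)}=\tfrac12e^{-\pi i/(4d)}$ and $u_{(0,-1)}=\tfrac12e^{\pi i/(4d)}$. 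Since $\beta_{\mathscr{g}}(1,0)=(0,-1)$ and $\alpha_{\mathscr{g}}(1,0)=1$, the lemma would force $u_{(1,0)}=u_{(0,-1)}$, which fails for every $d>\tfrac14$.

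What your argument does establish is the following. When $b_{\mathscr{g}}=0$ for every $\mathscr{g}$, we have $\alpha_{\mathscr{g}}\equiv1$; this is the only case the paper uses (see the remark after Corollary~\ref{corr : reduced_set}). Then Step~3 with $k=\beta_{\mathscr{g}}(n)$ gives exactly $u_n=u_{\beta_{\mathscr{g}}(n)}$ for each $\mathscr{g}$ separately, so Steps~1--3 are a complete proof and Step~4 is unnecessary.

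For general translation parts, the correct relation is the one obtained above. It takes the lemma's form $u_n=\exp\bigl(\tfrac{\pi i}{d}\beta'_{\mathscr{g}}(n)\cdot b'_{\mathscr{g}}\bigr)\,u_{\beta'_{\mathscr{g}}(n)}$ only after two replacements: $\beta_{\mathscr{g}}$ by $\beta'_{\mathscr{g}}\bydef\beta_{\mathscr{g}}^{-1}=\mathcal{L}^{-1}\mathcal{A}_{\mathscr{g}}\mathcal{L}$, and $b_{\mathscr{g}}$ by $b'_{\mathscr{g}}\bydef\mathcal{L}^Tb_{\mathscr{g}}$.

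So rather than asserting that the $\mathscr{g}^{-1}$ substitution produces $\alpha_{\mathscr{g}}$, state which version you prove: the translation-free case, or the lemma with this corrected phase.
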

\begin{proof}
The proof can be found in \cite{JB_symmetries_2}. 
\end{proof} 
Lemma~\ref{lem : G-fourier series} translates symmetry of a function into an algebraic condition on its Fourier coefficients, and applies to any finite discrete symmetry group in any dimension. Building on this, the authors of \cite{JB_symmetries_1,JB_symmetries_2} developed an algorithm for identifying a minimal collection of independent Fourier coefficients sufficient to represent any $\mathcal{G}$-symmetric function. This relies on choosing one representative from each orbit (cf.~\cite{gallianalgebra}), where 
\begin{equation} \label{def : orbit}
    \mathrm{orb}_{\mathcal{G}}(n) = \{\mathscr{g} \cdot n, \ \mathrm{for \ all} \ \mathscr{g} \in \mathcal{G}\}.
\end{equation}
A set containing exactly one representative from each orbit is called a \emph{fundamental domain}, denoted $\mathcal{Z}_{\mathrm{dom}}(\mathcal{G}) \subset \mathbb{Z}^m$. Its utility is immediate: for any $\mathcal{G}$-symmetric function $u$ with coefficients $(u_n)_{n \in \mathbb{Z}^m}$, the relation $u_n = \alpha_{\mathscr{g}}(n)u_{\beta_{\mathscr{g}}(n)}$ for all $\mathscr{g} \in \mathcal{G}$ means that the coefficients indexed by $\mathcal{Z}_{\mathrm{dom}}(\mathcal{G})$ already determine the full sequence. Additionally, the same framework identifies the \emph{trivial set} $\mathcal{Z}_{\mathrm{triv}}(\mathcal{G})$, consisting of all indices $n$ whose coefficient is forced to be zero by the symmetry constraints imposed by $\mathcal{G}$. Setting
\begin{align}
    \mathcal{Z}_{\mathrm{sym}}(\mathcal{G}) \bydef \mathbb{Z}^m \setminus \mathcal{Z}_{\mathrm{triv}}(\mathcal{G}),
\end{align}
the reduced set of Fourier coefficients is then defined as
 \begin{align}
     \mathcal{Z}_{\mathrm{red}}(\mathcal{G}) \bydef \mathcal{Z}_{\mathrm{dom}}(\mathcal{G}) \cap \mathcal{Z}_{\mathrm{sym}}(\mathcal{G}).\label{def : reduced set}
 \end{align}
We now have the following corollary.
\begin{corollary}\label{corr : reduced_set}
Let $u$ be a $\mathcal{G}$-Fourier series where $\mathcal{G}$ is a finite discrete symmetry group. Suppose that $u_j = u_n$ for all $j \in \mathrm{orb}_{\mathcal{G}}(n)$.
Then, the $\mathcal{G}$-Fourier series of $u$ can be written with indices in $\mathcal{Z}_{\mathrm{red}}(\mathcal{G})$. More specifically,
\begin{equation} \label{def : H_fourier series}
    u(x) = \sum_{n \in \mathcal{Z}_{\mathrm{red}}(\mathcal{G})} u_n \sum_{k \in \mathrm{orb}_{\mathcal{G}}(n)} e^{i \mathcal{L}\tilde{k} \cdot x}.
\end{equation}
\end{corollary}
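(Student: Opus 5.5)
The plan is to start from the full Fourier expansion \eqref{usual_fourier_series} and regroup its terms by orbits. The group $\mathcal{G}$ acts on $\mathbb{Z}^m$ through $n \mapsto \beta_{\mathscr{g}}(n)$; by the choice of $\mathcal{L}$ this map preserves $\mathbb{Z}^m$. Since $\beta_{\mathscr{g}\mathscr{h}} = \beta_{\mathscr{h}} \circ \beta_{\mathscr{g}}$ (or the reverse order, depending on convention), it is a genuine action up to relabelling. Hence $\mathbb{Z}^m$ is the disjoint union of the orbits $\mathrm{orb}_{\mathcal{G}}(n)$, and the fundamental domain $\mathcal{Z}_{\mathrm{dom}}(\mathcal{G})$ contains exactly one representative of each orbit. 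Because $u \in L^2(\mathscr{D})$, the series converges unconditionally in $L^2$: the coefficients are square summable and the exponentials are orthogonal. So we may rearrange it as
\[
u(x) = \sum_{n \in \mathcal{Z}_{\mathrm{dom}}(\mathcal{G})} \sum_{k \in \mathrm{orb}_{\mathcal{G}}(n)} u_k e^{i \mathcal{L}\tilde{k}\cdot x}.
\]
Each orbit is finite, with at most $|\mathcal{G}|$ elements, so the inner sums are finite. The rearrangement is justified by Parseval's identity, or by absolute convergence if one works in the weighted space.

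Next, we use the hypothesis $u_k = u_n$ for all $k \in \mathrm{orb}_{\mathcal{G}}(n)$ to pull the coefficient out of each inner sum. This gives $u(x) = \sum_{n \in \mathcal{Z}_{\mathrm{dom}}} u_n \sum_{k \in \mathrm{orb}_{\mathcal{G}}(n)} e^{i\mathcal{L}\tilde{k}\cdot x}$. One point needs care here. Orbits must be counted as sets, so that repeated group elements sending $n$ to the same $k$ are not double-counted; this matches definition \eqref{def : orbit} as a set.

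Finally, we discard the trivial indices. If $n \in \mathcal{Z}_{\mathrm{triv}}(\mathcal{G})$, the symmetry constraint $\alpha_{\mathscr{g}}(n) u_{\beta_{\mathscr{g}}(n)} = u_n$ from Lemma~\ref{lem : G-fourier series} forces $u_n = 0$ by definition of the trivial set. We also need that $\mathcal{Z}_{\mathrm{triv}}$ is a union of orbits, so that whole orbits vanish. This follows from the constraint $|u_{\beta_{\mathscr{g}}(n)}| = |u_n|$, since $|\alpha_{\mathscr{g}}(n)| = 1$. Removing these representatives restricts the outer sum to $\mathcal{Z}_{\mathrm{dom}} \cap \mathcal{Z}_{\mathrm{sym}} = \mathcal{Z}_{\mathrm{red}}(\mathcal{G})$, which yields \eqref{def : H_fourier series}.

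The main obstacle is bookkeeping rather than analysis. The delicate points are checking that the $\beta$-action defines the orbit partition used in \eqref{def : orbit} (the notation $\mathscr{g}\cdot n$ there must be read as $\beta_{\mathscr{g}}(n)$) and confirming that triviality is orbit-invariant. For the latter I would rely on the unimodularity of $\alpha_{\mathscr{g}}$ as described above.
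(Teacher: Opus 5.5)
Your argument is correct. You partition $\mathbb{Z}^m$ into $\beta$-orbits indexed by $\mathcal{Z}_{\mathrm{dom}}(\mathcal{G})$, regroup the $L^2$-convergent orthogonal series into these finite blocks, factor out the common coefficient, and drop the representatives lying in $\mathcal{Z}_{\mathrm{triv}}(\mathcal{G})$; this is exactly the bookkeeping behind the corollary, which the paper does not prove itself but defers to \cite{JB_symmetries_2}.

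One simplification: your unimodularity step showing that $\mathcal{Z}_{\mathrm{triv}}(\mathcal{G})$ is a union of orbits is true but not needed. For a representative $n \in \mathcal{Z}_{\mathrm{dom}}(\mathcal{G}) \cap \mathcal{Z}_{\mathrm{triv}}(\mathcal{G})$ one has $u_n = 0$, and your hypothesis $u_k = u_n$ already makes the whole block $u_n\sum_{k \in \mathrm{orb}_{\mathcal{G}}(n)} e^{i\mathcal{L}\tilde{k}\cdot x}$ vanish.
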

\begin{proof}
The proof can be found in \cite{JB_symmetries_2}.
\end{proof}
\begin{remark}
The formula \eqref{def : H_fourier series} uses $u_j = u_n$ for all $j \in \mathrm{orb}_{\mathcal{G}}(n)$, which follows from $\mathcal{G}$-symmetry only when $\alpha_{\mathscr{g}}(n) = 1$ for all $\mathscr{g} \in \mathcal{G}$ and $n \in \mathbb{Z}^m$. This holds when all group elements act without a translational component (i.e., $b_{\mathscr{g}} = 0$ in $\mathscr{g} \cdot x = \mathcal{A}_{\mathscr{g}}x + b_{\mathscr{g}}$), as is satisfied by all symmetry groups considered in this work.
\end{remark}

Thus, $\mathcal{Z}_{\mathrm{red}}(\mathcal{G})$ provides a minimal set of Fourier modes from which any $\mathcal{G}$-symmetric function can be reconstructed. Restricting to this reduced index set offers two advantages. First, since $\mathcal{Z}_{\mathrm{red}}(\mathcal{G})$ is typically a strict subset of $\mathbb{Z}^m$, only the coefficients indexed by $\mathcal{Z}_{\mathrm{red}}(\mathcal{G})$ need to be stored and manipulated computationally. The remaining coefficients are uniquely determined by the symmetry relations, resulting in reduced memory requirements and computational cost. Second, this reduction eliminates the translational degeneracy inherent in Fourier-based formulations. Indeed, any spatial translate of a solution is again a solution, so zeros of $F(\mathbf{x}) = 0$, where $F$ denotes the cusp map introduced in Section~\ref{sec : cusp map}, are generally non-isolated. By imposing a sufficiently rich symmetry group $\mathcal{G}$, these translational degrees of freedom are removed. The symmetry groups considered in this work are chosen precisely to ensure this property. Consequently, we solve for
\[
\mathbf{u} = (\mathbf{u}_n)_{n \in \mathcal{Z}_{\mathrm{red}}(\mathcal{G})}.
\]

We now introduce the sequence space that serves as our functional setting. Let $\omega = (\omega_n)_{n \in \mathbb{Z}^m}$ be a sequence of weights. Let $\ell^p_{\omega}$ denote the Banach space
\begin{equation}\label{eq:ell_2_nu}
\ell^p_{\omega}
\bydef
\left\{
u=(u_n)_{n\in\mathbb{Z}^m} : \|u\|_{p,\omega}
\bydef
\left( \sum_{n\in\mathbb{Z}^m}
\,|u_n|^p\omega_n \right)^{\frac{1}{p}}
<\infty
\right\}.
\end{equation} 
We denote by  $\ell^p_{\mathcal{G},\omega}$ the restriction of $\ell^p_{\omega}$ to  $\mathcal{G}$-symmetric sequences. In order to provide a more formal definition of $\ell^p_{\mathcal{G},\omega}$, it will be useful to recover the full Fourier sequence from the $\mathcal{G}$-Fourier coefficients. We characterize this with the injective mapping $\iota : \ell^p_{\mathcal{G},\omega} \to \ell^p_{\omega}$ defined by
\begin{align}
    (\iota(u))_n \bydef \begin{cases} u_{\mathscr{n}}, & \mathscr{n} \in \mathcal{Z}_{\mathrm{red}}(\mathcal{G}) \\ 0, & \mathscr{n} \in \mathcal{Z}_{\mathrm{triv}}(\mathcal{G})\end{cases} ~ \text{where} ~ \mathscr{n} \in \mathrm{orb}_{\mathcal{G}}(n) \cap \mathcal{Z}_{\mathrm{dom}}(\mathcal{G}).
\end{align}
To clarify, for each $n$, there is a representative from its orbit chosen in $\mathcal{Z}_{\mathrm{dom}}(\mathcal{G})$. Suppose this index is denoted $\mathscr{n}$. Then the value of $(\iota(u))_n$ is defined to be $u_{\mathscr{n}}$, unless the symmetry constraints force the coefficient to vanish (i.e.\ $\mathscr{n} \in \mathcal{Z}_{\mathrm{triv}}(\mathcal{G})$), in which case it is defined to be $0$. This means $\iota$ is a map which formalizes the unfolding of a sequence with $\mathcal{G}$-Fourier coefficients to its equivalent with full Fourier coefficients.
We now define 
\begin{equation}\label{eq:ell_2_Gnu}
\ell^p_{\mathcal{G},\omega}
\bydef
\left\{
u=(u_n)_{n\in\mathcal{Z}_{\mathrm{red}}(\mathcal{G})} : \|u\|_{p,\omega}
\bydef
\left(\sum_{n\in\mathbb{Z}^m}
\,|\iota(u)_n|^p\omega_n\right)^{\frac{1}{p}}
<\infty
\right\}.
\end{equation}
By this definition, given $u \in \ell^p_{\mathcal{G},\omega}$ with its corresponding unfolding $\iota(u) \in \ell^p_{\omega}$, observe that
\begin{align}\label{eq : norm equality iota}
     \|u\|_{p,\omega} = \|\iota(u)\|_{p,\omega}
\end{align}
showing that the norms on $\ell^p_{\mathcal{G},\omega}$ and $\ell^p_{\omega}$ are equal. Equivalently, grouping the indices of $\mathbb{Z}^m$ into $\mathcal{G}$-orbits,
\begin{equation}\label{eq : orbit weights}
    \|u\|_{p,\omega}^p = \sum_{n \in \mathcal{Z}_{\mathrm{red}}(\mathcal{G})} W_n\,|u_n|^p, \qquad
    W_n \bydef \sum_{j \in \mathrm{orb}_{\mathcal{G}}(n)} \omega_j \;>\; 0 .
\end{equation}
We stress that $W_n = |\mathrm{orb}_{\mathcal{G}}(n)|\,\omega_n$ \emph{only} when $\omega$ is constant on $\mathcal{G}$-orbits. This is the case for $\mathbb{Z}_2$, $D_2$ and $D_4$ with $\mathcal{L}$ orthogonal, but it \emph{fails} for the hexagonal group $D_6$ of Theorem~\ref{th : SH 2D cusp D6}: there the action on Fourier indices is $n = (n_1,n_2) \mapsto (n_2, n_2 - n_1)$, which does not preserve $\omega_n = (1+|n_1|)^s(1+|n_2|)^s$ (e.g.\ $(1,1) \mapsto (1,0)$). All statements below are therefore formulated with the orbit weights $W_n$ of \eqref{eq : orbit weights}, which requires no invariance assumption on $\omega$.
\par We now introduce some additional notation. Let $\mathbb{Z}_1$ be the group only consisting of the identity element. We define the shorthand notations
\begin{align}
    &\ell^p_{\mathbb{Z}_1,\omega} \bydef \ell^p_{\omega}, 
    \qquad \ell^p_{\mathcal{G},1} \bydef \ell^p_{\mathcal{G}} \qquad \text{and} \qquad
    \ell^p_{\mathbb{Z}_1,1} \bydef \ell^p.
    \end{align} 
    
In particular, in the absence of symmetry we denote the space by $\ell^p_{\omega}$. When symmetry is present but no weighting is used (that is weights equal one), we write $\ell^p_{\mathcal{G}}$. Finally, in the absence of both symmetry and weighting, we recover the standard space $\ell^p$. Accordingly, we write $\|u\|_p \bydef \|u\|_{p,1}$ for the norm on $\ell^p$, that is
\begin{align}
    \|u\|_p \bydef \left(\sum_{n \in \mathbb{Z}^m} |u_n|^p\right)^{1/p}.
\end{align}
    
\subsection{Convolution Estimates} \label{sec:conv_estimates}

The PDEs considered in this work possess polynomial nonlinearities which, when expressed in Fourier sequence space, give rise to discrete convolution products. To carry out the nonlinear analysis, and in particular to construct the bounds $Y_0$, $Z_1$, and $Z_2$ required by the Newton--Kantorovich theorem of Section~\ref{sec : radii poly}, it is therefore necessary to obtain convolution estimates in the space $\ell^2_{\omega}$. The principal result of this subsection is Lemma~\ref{lem : conv_estimates}, which establishes that $\ell^2_{\omega}$ is a Banach algebra under convolution. We also derive Lemma~\ref{lem : 1 bounded by 2 nu}, which plays a key role in the construction of the $Z_2$ bound appearing in Theorem~\ref{th : radii polynomial theorem periodic}.

Given full Fourier coefficients $u = (u_n)_{n \in \mathbb{Z}^m}$ and $v= (v_n)_{n \in \mathbb{Z}^m}$ corresponding to the standard complex exponential expansion, we define the discrete convolution $\star$ by 
\[
(u\star v)_n = \sum_{k \in \mathbb{Z}^m} u_{n-k}v_k
\]
For symmetric sequences in $\ell^p_{\mathcal{G},\omega}$, we retain the notation $u \star v$. We also let $\Omega$ be a diagonal operator defined as
\begin{align}\label{def : V}
    \Omega_{n,m} \bydef \begin{cases}
        \sqrt{\omega_n} & n = m \\
        0 & \mathrm{else}.
    \end{cases}
\end{align}
We now focus our attention on $\ell^2_{\omega}$, which is what we will use throughout this paper. With this in mind, we define what it means for $\omega$ to be an admissible sequence of weights.
\begin{definition}\label{def:admissible_weights}
We say that a sequence of weights $\omega = (\omega_n)_{n \in \mathbb{Z}^m}$ is \emph{admissible} if 
\begin{itemize}
    \item[(i)] $\omega_n > 0, \qquad$ for all $n$ (positivity);
    \item[(ii)] $\omega_n  \le \omega_{n-k} \omega_{k}, \qquad \text{for all } k,n \in \mathbb{Z}^m$ (sub-multiplicativity);
    \item[(iii)] $\displaystyle \sup_{n \in \mathbb{Z}^m} \sum_{k \in \mathbb{Z}^m} \frac{\omega_n}{\omega_{n-k} \omega_k} < \infty$;
    \item[(iv)] $\displaystyle \sum_{n \in \mathbb{Z}^m} \frac{1}{\omega_n} < \infty$.
\end{itemize}
\end{definition}


\begin{lemma}[\bf Convolution Estimates] \label{lem : conv_estimates}
Assume that the sequence of weights $\omega=(\omega_n)_{n \in \mathbb{Z}^m}$ is admissible (in accordance with Definition~\ref{def:admissible_weights}). Then, we can define
\begin{equation} \label{eq:kappa_def_and_assumption}
\kappa \bydef \sqrt{\sup_{n \in \mathbb{Z}^m} \sum_{k \in \mathbb{Z}^m} \frac{\omega_n}{\omega_{n-k}\omega_k}} < \infty.
\end{equation}
Then, for any $u,v \in \ell^2_{\omega}$, it follows that
\begin{equation}
    \|u \star v\|_{2,\omega} \leq \kappa \|u\|_{2,\omega}\|v\|_{2,\omega}.
\end{equation}
\end{lemma}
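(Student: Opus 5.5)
The proof is a weighted Cauchy--Schwarz argument applied coefficientwise to the convolution, followed by summation over $n$. Finiteness of $\kappa$ is exactly condition (iii) of Definition~\ref{def:admissible_weights}, so only the inequality needs proof. Fix $n \in \mathbb{Z}^m$ and write each summand of $(u\star v)_n$ as
\[
u_{n-k}v_k = \left(\frac{\sqrt{\omega_n}}{\sqrt{\omega_{n-k}\omega_k}}\right)\left(\frac{\sqrt{\omega_{n-k}\omega_k}}{\sqrt{\omega_n}}\,u_{n-k}v_k\right).
\]
Cauchy--Schwarz in $k$ then gives
\[
\omega_n |(u\star v)_n|^2 \le \omega_n \left(\sum_{k} \frac{\omega_n}{\omega_{n-k}\omega_k}\right)\left(\sum_k \frac{\omega_{n-k}\omega_k}{\omega_n}|u_{n-k}|^2|v_k|^2\right) \le \kappa^2 \sum_k \omega_{n-k}|u_{n-k}|^2\,\omega_k|v_k|^2 .
\]
This uses positivity (i) to divide by the weights and the definition of $\kappa$ to bound the first factor uniformly in $n$.

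Next, sum over $n \in \mathbb{Z}^m$. The right-hand side is a double sum of nonnegative terms, so Tonelli lets us exchange the order of summation. Substituting $j = n-k$ for fixed $k$ factorizes it as
\[
\kappa^2 \Big(\sum_j \omega_j|u_j|^2\Big)\Big(\sum_k \omega_k |v_k|^2\Big) = \kappa^2\|u\|_{2,\omega}^2\|v\|_{2,\omega}^2 .
\]
Taking square roots yields the claim. The same computation shows the series defining $(u\star v)_n$ converges absolutely for every $n$, so the convolution is well defined.

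I expect no real obstacle here; the one delicate point is the exact split of the weights, so that the $n$-dependence cancels and condition (iii) appears precisely as the supremum defining $\kappa^2$. Sub-multiplicativity (ii) and summability (iv) are not needed for this inequality; (ii) alone only gives $\kappa\ge 1$-type information and would matter for $\ell^1_\omega$ estimates. The statement is phrased in the unsymmetrized space $\ell^2_\omega$. If one wants it on $\ell^2_{\mathcal{G},\omega}$, I would add that $\iota(u\star v)=\iota(u)\star\iota(v)$ by the definition of $\star$ on symmetric sequences, and then invoke the norm identity \eqref{eq : norm equality iota}.
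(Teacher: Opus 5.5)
Your proof is correct and is the paper's argument: Cauchy--Schwarz in $k$ for each fixed $n$ using the weight split $\sqrt{\omega_n/(\omega_{n-k}\omega_k)}$, then bounding the first factor by $\kappa^2$ uniformly in $n$, and finally Tonelli plus the shift $j=n-k$ to factor the double sum. One side remark is off, though it plays no role in the proof: condition (ii) bounds each summand $\omega_n/(\omega_{n-k}\omega_k)$ above by $1$ and says nothing about $\kappa\ge 1$ (rescaling $\omega$ by a large constant preserves (ii) while making $\kappa<1$).
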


\begin{proof}
To begin, observe that
\begin{align}
    \|u \star v\|_{2,\omega}^2 &= \sum_{n \in \mathbb{Z}^m} \left|\sum_{k \in \mathbb{Z}^m} u_{n-k} v_k \right|^2 \omega_n
    \\
    &= \sum_{n \in \mathbb{Z}^m} \left|\sum_{k \in \mathbb{Z}^m} \left(u_{n-k} \sqrt{\omega_{n-k}}\right) \left(v_k \sqrt{\omega_k}\right) \sqrt{\frac{\omega_n}{\omega_{n-k}\omega_k}}\right|^2 \\
    &\leq \sum_{n \in \mathbb{Z}^m} \left(\sum_{k \in \mathbb{Z}^m} |u_{n-k}|^2 \omega_{n-k} |v_k|^2 \omega_k\right)\left(\sum_{k \in \mathbb{Z}^m} \frac{\omega_n}{\omega_{n-k}\omega_k}\right),
\end{align}
where we used Cauchy--Schwarz for each fixed value of $n$. Now, recalling \eqref{eq:kappa_def_and_assumption},
\begin{align}
    \sum_{n \in \mathbb{Z}^m} \left(\sum_{k \in \mathbb{Z}^m} |u_{n-k}|^2 \omega_{n-k} |v_k|^2 \omega_k\right)\left(\sum_{k \in \mathbb{Z}^m} \frac{\omega_n}{\omega_{n-k}\omega_{k}}\right) &\leq \kappa^2 \sum_{n \in \mathbb{Z}^m} \sum_{k \in \mathbb{Z}^m} |u_{n-k}|^2 \omega_{n-k} |v_k|^2 \omega_{k} \\
    &= \kappa^2 \sum_{k \in \mathbb{Z}^m} |v_k|^2 \omega_{k} \sum_{n \in \mathbb{Z}^m} |u_{n-k}|^2 \omega_{n-k}
\end{align}
where we flipped the order of summation since every term is non-negative. Performing the index change $j = n - k$ gives
\begin{align}
    \kappa^2 \sum_{k \in \mathbb{Z}^m} |v_k|^2 \omega_k \sum_{n \in \mathbb{Z}^m} |u_{n-k}|^2 \omega_{n-k} &= \kappa^2 \sum_{k \in \mathbb{Z}^m} |v_k|^2 \omega_k \sum_{j \in \mathbb{Z}^m} |u_{j}|^2 \omega_{j} \\
    &= \kappa^2  \left(\sum_{j \in \mathbb{Z}^m} |u_{j}|^2 \omega_{j}\right)\left(\sum_{k \in \mathbb{Z}^m} |v_k|^2 \omega_k\right) \\
    &= \kappa^2 \|u\|_{2,\omega}^2 \|v\|_{2,\omega}^2.
\end{align}
Taking the square root of both sides concludes the proof.
\end{proof}

The following results play a key role in the construction of the $Z_2$ bound appearing in Theorem~\ref{th : radii polynomial theorem periodic}.
The first is the classical Young's Inequality
\begin{equation} \label{eq:youngs inequality}
    \|u \star v\|_{2} \leq \|u\|_{2} \|v\|_{1},
\end{equation}

which is used in the proof of Lemma~\ref{lem : gen young} below. 
Given the sequence of weights $\omega=(\omega_n)_{n \in \mathbb{Z}^m}$ satisfying Definition~\eqref{def:admissible_weights}, denote $\sqrt{\omega} \bydef (\sqrt{\omega_n})_{n \in \mathbb{Z}^m}$.

\begin{lemma}\label{lem : gen young}
Let $u \in \ell^2_{\omega}$ and $v \in \ell^1_{\sqrt{\omega}}$. Then, it follows that
\begin{align}
    \|u \star v\|_{2,\omega} \leq \|u\|_{2,\omega} \|v\|_{1,\sqrt{\omega}}.
\end{align}
\end{lemma}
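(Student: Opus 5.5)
The plan is to reduce the weighted estimate to the classical Young inequality \eqref{eq:youngs inequality} by transferring the weights onto the factors, using the sub-multiplicativity (ii) of Definition~\ref{def:admissible_weights}. Set $U_n \bydef |u_n|\sqrt{\omega_n}$ and $V_n \bydef |v_n|\sqrt{\omega_n}$. Then $\|U\|_2 = \|u\|_{2,\omega}$ and $\|V\|_1 = \|v\|_{1,\sqrt{\omega}}$, so once $|(u\star v)_n|\sqrt{\omega_n}$ is dominated pointwise by $(U\star V)_n$, the claim follows from \eqref{eq:youngs inequality} applied to $U$ and $V$.

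The key step is this pointwise domination. For each fixed $n$, sub-multiplicativity gives $\omega_n \le \omega_{n-k}\omega_k$, and positivity (i) lets us take square roots, so $\sqrt{\omega_n} \le \sqrt{\omega_{n-k}}\sqrt{\omega_k}$. Hence
\begin{align}
|(u\star v)_n|\sqrt{\omega_n} \le \sum_{k\in\mathbb{Z}^m} |u_{n-k}|\,|v_k|\sqrt{\omega_n} \le \sum_{k\in\mathbb{Z}^m} |u_{n-k}|\sqrt{\omega_{n-k}}\,|v_k|\sqrt{\omega_k} = (U\star V)_n .
\end{align}
Squaring and summing over $n$ yields $\|u\star v\|_{2,\omega} = \big\|(|(u\star v)_n|\sqrt{\omega_n})_n\big\|_2 \le \|U\star V\|_2 \le \|U\|_2\|V\|_1$, which is the desired inequality.

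The only point needing care is that the convolution defining $(u\star v)_n$ converges absolutely, so that the triangle inequality in the first step is legitimate. This comes out of the same argument: the nonnegative series $(U\star V)_n$ is finite for every $n$, because $\|U\star V\|_2 \le \|U\|_2\|V\|_1<\infty$ (or directly, since $U\in\ell^2$ and $V\in\ell^1\subset\ell^2$ give a finite sum by Cauchy--Schwarz), and it dominates $\sum_k |u_{n-k}||v_k|$ up to the factor $\sqrt{\omega_n}>0$. I do not expect a genuine obstacle here. Only properties (i) and (ii) of the weights are used, not (iii) or (iv).
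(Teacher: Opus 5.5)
Your proof is correct and follows the same route as the paper: it also uses $\sqrt{\omega_n}\le\sqrt{\omega_{n-k}}\,\sqrt{\omega_k}$ to dominate $\sqrt{\omega_n}\,|(u\star v)_n|$ pointwise by $(|\Omega u|\star|\Omega v|)_n$, and then applies the classical Young inequality \eqref{eq:youngs inequality}. Your remark on the absolute convergence of the convolution is a small extra point that the paper leaves implicit.
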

\begin{proof}
Since $\omega$ is admissible (Definition~\eqref{def:admissible_weights}), we have $\omega_n \leq \omega_{n-k}\,\omega_k$ for all $n,k\in\mathbb{Z}^m$, hence $\sqrt{\omega_n}\leq\sqrt{\omega_{n-k}}\,\sqrt{\omega_k}$. Therefore, for each $n \in \mathbb{Z}^m$,
\begin{align*}
    \sqrt{\omega_n}\,|(u\star v)_n|
    \leq \sum_{k\in\mathbb{Z}^m} \sqrt{\omega_n}\,|u_{n-k}|\,|v_k|
    \leq \sum_{k\in\mathbb{Z}^m} |(\Omega u)_{n-k}|\,|(\Omega v)_k|
    = \bigl(|\Omega u|\star|\Omega v|\bigr)_n,
\end{align*}
where we used the triangle inequality and $(\Omega u)_n = \sqrt{\omega_n}\,u_n$, $(\Omega v)_n = \sqrt{\omega_n}\,v_n$. Taking $\ell^2$-norms and applying the classical Young inequality \eqref{eq:youngs inequality} gives
\begin{align}
    \|u\star v\|_{2,\omega}
    = \|\Omega(u\star v)\|_{2}
    \leq \bigl\||\Omega u|\star|\Omega v|\bigr\|_{2}
    \leq \|\Omega u\|_{2}\,\|\Omega v\|_{1}
    = \|u\|_{2,\omega}\,\|v\|_{1,\sqrt{\omega}}. \qquad \qedhere
\end{align}
\end{proof}

\begin{lemma}\label{lem : 1 bounded by 2 nu}
Assume that the sequence of weights $\omega=(\omega_n)_{n \in \mathbb{Z}^m}$ is admissible as in Definition~\eqref{def:admissible_weights}. Then, we have that
\begin{align}
    \kappa_0 \bydef \sqrt{\sum_{n\in\mathbb{Z}^m} \frac{1}{\omega_n}} < \infty.
\end{align}
Then, for any $v \in \ell^2_{\omega}$,
\begin{align}
    \|v\|_{1} \leq \kappa_0 \|v\|_{2,\omega}.
\end{align}
\end{lemma}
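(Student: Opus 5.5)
The first claim, $\kappa_0<\infty$, is exactly condition (iv) of Definition~\ref{def:admissible_weights}, so only the inequality needs an argument. The plan is a single application of the Cauchy--Schwarz inequality after splitting each term with the weight, in the same spirit as the proof of Lemma~\ref{lem : conv_estimates}.

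Fix $v\in\ell^2_\omega$. By positivity (condition (i)) we may write, for every $n\in\mathbb{Z}^m$,
\[
|v_n| = \left(|v_n|\sqrt{\omega_n}\right)\frac{1}{\sqrt{\omega_n}}.
\]
Summing over $n$ and applying Cauchy--Schwarz in $\ell^2(\mathbb{Z}^m)$ gives
\[
\|v\|_1=\sum_{n\in\mathbb{Z}^m}|v_n|\leq\left(\sum_{n\in\mathbb{Z}^m}|v_n|^2\omega_n\right)^{1/2}\left(\sum_{n\in\mathbb{Z}^m}\frac{1}{\omega_n}\right)^{1/2}=\|v\|_{2,\omega}\,\kappa_0 .
\]
Both factors on the right are finite: the first by the assumption $v\in\ell^2_\omega$, the second by condition (iv). Hence the series defining $\|v\|_1$ converges absolutely and the stated bound holds.

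There is no real obstacle here. The only point to note is that the estimate needs nothing beyond conditions (i) and (iv); sub-multiplicativity is not used. If one wants the analogous statement for symmetric sequences in $\ell^2_{\mathcal{G},\omega}$, it follows at once by applying the same bound to $\iota(v)$ and using the norm identity \eqref{eq : norm equality iota}.
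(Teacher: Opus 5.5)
Your proof is correct and is essentially the paper's argument: write $|v_n| = \bigl(|v_n|\sqrt{\omega_n}\bigr)\cdot\omega_n^{-1/2}$ and apply Cauchy--Schwarz once. You also note explicitly that $\kappa_0<\infty$ is just condition (iv) of Definition~\ref{def:admissible_weights}, and that the symmetric case follows through $\iota$, exactly as in Corollary~\ref{cor : conv estimates symmetric}.
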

\begin{proof}
Applying Cauchy--Schwarz to $\|v\|_{1}$ for $v = (v_n)_{n \in \mathbb{Z}^m}$,
\begin{align}\label{kappa estimate for 1 norm}
\|v\|_{1} = \sum_{n\in\mathbb{Z}^m} |v_n| = \sum_{n\in\mathbb{Z}^m}\sqrt{\omega_n} \sqrt{\frac{1}{\omega_n}} |v_n| \leq \left(\sum_{n\in\mathbb{Z}^m} \frac{1}{\omega_n} \right)^{\frac{1}{2}} \left(\sum_{n\in\mathbb{Z}^m} \omega_n |v_n|^2\right)^{\frac{1}{2}} = \kappa_0 \|v\|_{2,\omega}. \qquad \qedhere
\end{align}
%

\end{proof}

Lemmas~\ref{lem : conv_estimates}--\ref{lem : 1 bounded by 2 nu} are stated on the full space $\ell^2_\omega$. They transfer verbatim to the symmetry-reduced space, as we now record.
\begin{corollary}\label{cor : conv estimates symmetric}
Let $\omega$ be admissible, assume $\alpha_{\mathscr{g}} \equiv 1$ (as holds for every group considered in this work, see the Remark following Corollary~\ref{corr : reduced_set}), and let $u,v \in \ell^2_{\mathcal{G},\omega}$. Then $u \star v \in \ell^2_{\mathcal{G},\omega}$ and
\[
\|u\star v\|_{2,\omega} \leq \kappa\,\|u\|_{2,\omega}\|v\|_{2,\omega}, \qquad \|v\|_1 \leq \kappa_0\|v\|_{2,\omega},
\]
where, consistently with \eqref{eq:ell_2_Gnu}, $\|v\|_1 \bydef \|\iota(v)\|_1$.
\end{corollary}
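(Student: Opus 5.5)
The corollary follows by unfolding through $\iota$ and then applying Lemmas~\ref{lem : conv_estimates} and~\ref{lem : 1 bounded by 2 nu} on the full space $\ell^2_\omega$, using the norm identity \eqref{eq : norm equality iota}. The only real content is to show that the convolution of two $\mathcal{G}$-symmetric sequences is again $\mathcal{G}$-symmetric. This is what gives meaning to ``$u\star v\in\ell^2_{\mathcal{G},\omega}$'': the product is the element of $\ell^2_{\mathcal{G},\omega}$ whose unfolding is $\iota(u)\star\iota(v)$.

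\textbf{Step 1: symmetry of the convolution.} Write $U=\iota(u)$ and $V=\iota(v)$. Since $\alpha_{\mathscr{g}}\equiv 1$, Lemma~\ref{lem : G-fourier series} says that a full sequence $X$ is $\mathcal{G}$-symmetric if and only if $X_{\beta_{\mathscr{g}}(n)}=X_n$ for all $\mathscr{g}$ and $n$. The map $\beta_{\mathscr{g}}$ is linear and maps $\mathbb{Z}^m$ bijectively onto itself, because $\beta_{\mathscr{g}^{-1}}$ inverts it. Hence
\[
(U\star V)_{\beta_{\mathscr{g}}(n)}=\sum_{k}U_{\beta_{\mathscr{g}}(n)-k}V_k=\sum_{j}U_{\beta_{\mathscr{g}}(n-j)}V_{\beta_{\mathscr{g}}(j)}=\sum_j U_{n-j}V_j=(U\star V)_n ,
\]
where we substituted $k=\beta_{\mathscr{g}}(j)$. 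The series converge absolutely because $U,V\in\ell^2_\omega\subset\ell^1$ by Lemma~\ref{lem : 1 bounded by 2 nu}. Alternatively one can argue on the function side: a product of $\mathcal{G}$-invariant functions is $\mathcal{G}$-invariant. Either way, $U\star V$ is constant on orbits.

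We also need $U\star V$ to vanish on $\mathcal{Z}_{\mathrm{triv}}(\mathcal{G})$. With $\alpha\equiv1$, an index is forced to vanish only through the symmetry relations, and these are satisfied by $U\star V$ exactly as for any symmetric sequence. So the restriction $w$ of $U\star V$ to $\mathcal{Z}_{\mathrm{red}}(\mathcal{G})$ satisfies $\iota(w)=U\star V$. This is the step I expect to need the most care: it depends on the precise definition of $\mathcal{Z}_{\mathrm{triv}}$ in \cite{JB_symmetries_2}. If $\alpha\equiv1$ makes the trivial set empty, or makes it consist only of indices forced to zero by the relations, the argument is immediate.

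\textbf{Step 2: the estimates.} By \eqref{eq : norm equality iota} and Lemma~\ref{lem : conv_estimates},
\[
\|u\star v\|_{2,\omega}=\|U\star V\|_{2,\omega}\le\kappa\|U\|_{2,\omega}\|V\|_{2,\omega}=\kappa\|u\|_{2,\omega}\|v\|_{2,\omega}.
\]
Likewise, $\|v\|_1=\|\iota(v)\|_1\le\kappa_0\|\iota(v)\|_{2,\omega}=\kappa_0\|v\|_{2,\omega}$ by Lemma~\ref{lem : 1 bounded by 2 nu}. Membership in $\ell^2_{\mathcal{G},\omega}$ then follows from the finiteness of this norm.
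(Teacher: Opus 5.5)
Your proposal is correct and follows the paper's proof. You unfold through $\iota$, use that the convolution of $\mathcal{G}$-symmetric sequences is again $\mathcal{G}$-symmetric so that $\iota(u\star v)=\iota(u)\star\iota(v)$, and transfer Lemmas~\ref{lem : conv_estimates} and~\ref{lem : 1 bounded by 2 nu} through the norm identity \eqref{eq : norm equality iota}. Your Step~1 only spells out what the paper asserts in one line. Your worry about $\mathcal{Z}_{\mathrm{triv}}(\mathcal{G})$ is harmless: when $\alpha_{\mathscr{g}}\equiv 1$, the indicator of any single orbit is itself $\mathcal{G}$-symmetric, so no coefficient is forced to vanish. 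Hence the restriction of $\iota(u)\star\iota(v)$ to $\mathcal{Z}_{\mathrm{red}}(\mathcal{G})$ unfolds back to it.
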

\begin{proof}
Since $\alpha_{\mathscr{g}} \equiv 1$, the unfolding map $\iota$ satisfies $\iota(u\star v) = \iota(u)\star\iota(v)$, and the convolution of two $\mathcal{G}$-symmetric sequences is again $\mathcal{G}$-symmetric. Hence, by the norm equality \eqref{eq : norm equality iota} and Lemma~\ref{lem : conv_estimates},
\[
\|u\star v\|_{2,\omega} = \|\iota(u)\star\iota(v)\|_{2,\omega} \leq \kappa\|\iota(u)\|_{2,\omega}\|\iota(v)\|_{2,\omega} = \kappa\|u\|_{2,\omega}\|v\|_{2,\omega},
\]
and similarly for the second estimate using Lemma~\ref{lem : 1 bounded by 2 nu}.
\end{proof}

With the convolution estimates on $\ell^2_{\mathcal{G},\omega}$ established, we now turn to the adjoint multiplication operator.
\begin{remark}
In order to apply Lemmas \ref{lem : 1 bounded by 2 nu} and \ref{lem : conv_estimates}, we will need to compute $\kappa$ and $\kappa_0$. This will be done explicitly in Appendix \ref{apen : algebra estimates} for the choice of $\omega_n \bydef \prod_{j=1}^m (1 + |n_j|)^s$ for some $s > 1$. 
\end{remark} 
\subsection{Adjoint multiplication in \texorpdfstring{\boldmath$\ell^2_{\mathcal{G},\omega}$\unboldmath}{ell2Gomega}} \label{sec : adjoint mult}

As we will now focus on $p = 2$, for given $\mathbf{u} = (u_1,\dots,u_{\mathscr{p}}) \in (\ell^2_{\mathcal{G}})^{\mathscr{p}}$ and $\mathbf{v} = (v_1,\dots,v_{\mathscr{p}}) \in (\ell^2_{\mathcal{G}})^{\mathscr{p}}$, we define the inner product in consideration as
\begin{align}\label{def : ell2 inner product}
    (\mathbf{u},\mathbf{v})_2 \bydef \sum_{j = 1}^{\mathscr{p}}\sum_{n \in \mathcal{Z}_{\mathrm{red}}(\mathcal{G})} (u_j)_n \mathrm{conj}((v_j)_n).
\end{align}
We emphasize that $(\cdot,\cdot)_2$ carries \emph{no} orbit weight; it is not the inner product inherited from $L^2_{\mathcal{G}}(\mathscr{D})$, and the adjoint $\cdot^*$ used in this subsection is always the adjoint with respect to \eqref{def : ell2 inner product}. The precise relation between $(\cdot,\cdot)_2$ and $(\cdot,\cdot)_{L^2}$, and hence between the two notions of adjoint, is established in Remark~\ref{rmk : L2 vs ell2} below; it is exactly the operator $\mathcal{O}_{\mathcal{G}}$ introduced in \eqref{def : D operator} that mediates between them.
Throughout this subsection, all sequences $u \in \ell^2_{\mathcal{G},\omega}$ are assumed to be \emph{real-valued}, meaning their Fourier coefficients satisfy $(\iota(u))_{-n} = \mathrm{conj}((\iota(u))_n)$ for all $n \in \mathbb{Z}^m$. On the real subspace so defined, $(\cdot,\cdot)_2$ is a symmetric $\mathbb{R}$-bilinear form, which is the property used when differentiating the cusp map in Section~\ref{sec : cusp map} and in Appendix~\ref{sec:proof_thm_cusp_map_nondegen}. We also assume that $\alpha_{\mathscr{g}}(n) = 1$ for all $\mathscr{g} \in \mathcal{G}$ and $n \in \mathbb{Z}^m$, so that the $\mathcal{G}$-symmetry condition reduces to $(\iota(u))_{\mathscr{g} \cdot n} = (\iota(u))_n$. Both conditions are satisfied for all symmetry groups and applications considered in this work.

As discussed in Section~\ref{sec:introduction}, and in particular in Lemma~\ref{lemma : Cusp bifurcation}, our approach to proving the existence of a cusp bifurcation relies on the computation of the adjoint operator $D_u f^*$, taken with respect to $(\cdot,\cdot)_2$ (the passage to the $L^2_{\mathcal{G}}(\mathscr{D})$-adjoint $D_uf^\dagger$ of Section~\ref{sec:introduction} is Remark~\ref{rmk : L2 vs ell2}). For reaction--diffusion systems with polynomial nonlinearities, the Fréchet derivative $D_u f$ involves multiplication operators, making the characterization of their adjoints a central issue. Since all computations and computer-assisted estimates are performed in the symmetry-reduced Fourier sequence space $\ell^2_{\mathcal{G},\omega}$ defined in \eqref{eq:ell_2_Gnu}, we must derive explicit formulas and norm estimates for the adjoints of multiplication operators. This is the objective of the present section. Doing so requires first introducing some notation and operators.

Let $\mathcal{B}(X,Y)$ be the set of bounded linear operators from $X \to Y$. We shorthand and write $\mathcal{B}(X)$ as the set of bounded linear operators from $X$ to itself. Given $u \in \ell^2_{\omega}$, we define the {\em multiplication operator}
\begin{equation} \label{def : discrete conv operator}
    \mathbb{M}_u : \ell^2_{\omega} \to \ell^2_{\omega}: v \mapsto \mathbb{M}_u v \bydef  u\star v,
\end{equation}
i.e., the discrete convolution operator associated with $u$. Note that Lemma~\ref{lem : conv_estimates} guarantees that $\mathbb{M}_u$ maps $\ell^2_{\omega}$ into itself.  
We also abuse notation and denote $\mathbb{M}_u$ the multiplication operator associated to $u \in \ell^2_{\mathcal{G},\omega}$.
Now, the key is the diagonal operator $\mathcal{O}_{\mathcal{G}} \in \mathcal{B}(\ell^2_{\mathcal{G},\omega})$ defined by
\begin{equation} \label{def : D operator}
    (\mathcal{O}_{\mathcal{G}})_{n_1,n_2} \bydef \begin{cases} |\mathrm{orb}_{\mathcal{G}}(n)| & n_1 = n_2 = n \\
    0 & \mathrm{else}
    \end{cases}
\end{equation}
where $\mathrm{orb}_{\mathcal{G}}(n)$ is defined as in \eqref{def : orbit}. 
In particular, notice that the diagonal entries of $\mathcal{O}_{\mathcal{G}}$ are the orbit sizes for each $n$. We also define the quantity $\mathscr{O}_{\mathrm{max}} \in \mathbb{R}$ as 
\begin{equation} \label{def : omax}
    \mathscr{O}_{\mathrm{max}} \bydef \max_{n \in \mathcal{Z}_{\mathrm{red}}(\mathcal{G})} |\mathrm{orb}_{\mathcal{G}}(n)|.
\end{equation}

In the rest of this section, the main results that are established are the four identities
\begin{equation*}
    \mathbb{M}_u^* = \mathcal{O}_{\mathcal{G}} \mathbb{M}_{u} \mathcal{O}_{\mathcal{G}}^{-1}, \qquad
    \mathbb{M}_u^* v = \mathcal{O}_{\mathcal{G}}\bigl(u\star (\mathcal{O}_{\mathcal{G}}^{-1} v)\bigr), \qquad
    D_u(\mathbb{M}_u^* v) = \mathbb{Q}_v, \qquad
    \mathbb{Q}_u v = \mathcal{O}_{\mathcal{G}}\bigl((\mathcal{O}_{\mathcal{G}}^{-1}u)\star v\bigr),
\end{equation*}
collected as \eqref{adjoint mult action 1}--\eqref{bbQ action} below, where $\mathbb{Q}_u \bydef \mathcal{O}_{\mathcal{G}}\mathbb{M}_{\mathcal{O}_{\mathcal{G}}^{-1}u}$. These identities are the key computational ingredients in the formulation of the cusp map in Section~\ref{sec : cusp map}.

We now establish the operator norm bounds for $\mathcal{O}_{\mathcal{G}}$ needed later.

\begin{lemma}\label{lem : O operator norm maximum}
Let $\mathcal{O}_{\mathcal{G}} \in \mathcal{B}(\ell^2_{\mathcal{G},\omega})$ be defined as in \eqref{def : D operator} and $\mathscr{O}_{\mathrm{max}}$ be defined as in \eqref{def : omax}. Then, it follows that
\begin{equation}
    \|\mathcal{O}_{\mathcal{G}}\|_{\mathcal{B}(\ell^2_{\mathcal{G},\omega})} = \mathscr{O}_{\mathrm{max}} \qquad \text{and} \qquad \|\mathcal{O}_{\mathcal{G}}^{-1}\|_{\mathcal{B}(\ell^2_{\mathcal{G},\omega})} = 1.
\end{equation}
\end{lemma}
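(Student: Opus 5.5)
The plan is to use the fact that $\mathcal{O}_{\mathcal{G}}$ is diagonal in the coordinates $(u_n)_{n\in\mathcal{Z}_{\mathrm{red}}(\mathcal{G})}$, and that by \eqref{eq : orbit weights} the norm on $\ell^2_{\mathcal{G},\omega}$ is a weighted $\ell^2$ norm in these coordinates:
\[
\|u\|_{2,\omega}^2=\sum_{n\in\mathcal{Z}_{\mathrm{red}}(\mathcal{G})}W_n|u_n|^2 ,
\]
with all $W_n>0$. So $\ell^2_{\mathcal{G},\omega}$ is isometric to a weighted $\ell^2$ space on the countable index set $\mathcal{Z}_{\mathrm{red}}(\mathcal{G})$. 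On such a space a diagonal operator with entries $d_n$ has operator norm $\sup_n|d_n|$. The diagonal entries here are the orbit sizes $o_n\bydef|\mathrm{orb}_{\mathcal{G}}(n)|$, which satisfy $1\le o_n\le|\mathcal{G}|$. Both claims then follow from this general fact, and the weights $W_n$ drop out entirely. This is important because, as stressed after \eqref{eq : orbit weights}, $\omega$ need not be orbit-invariant (for instance for $D_6$).

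For the upper bounds, I compute for $u\in\ell^2_{\mathcal{G},\omega}$:
\[
\|\mathcal{O}_{\mathcal{G}}u\|_{2,\omega}^2=\sum_{n}W_n\,o_n^2|u_n|^2\le \mathscr{O}_{\mathrm{max}}^2\|u\|_{2,\omega}^2 .
\]
Similarly, $\mathcal{O}_{\mathcal{G}}^{-1}$ is diagonal with entries $1/o_n\le 1$, which gives $\|\mathcal{O}_{\mathcal{G}}^{-1}u\|_{2,\omega}\le\|u\|_{2,\omega}$. This computation also shows that both operators are bounded and map $\ell^2_{\mathcal{G},\omega}$ into itself. Here I am using that $\mathcal{G}$ is finite, so $\mathscr{O}_{\mathrm{max}}\le|\mathcal{G}|<\infty$ and the maximum in \eqref{def : omax} is attained. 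Since $\mathcal{O}_{\mathcal{G}}$ acts only on the reduced coordinates, $\mathcal{O}_{\mathcal{G}}u$ is again an element of $\ell^2_{\mathcal{G},\omega}$, namely the sequence $(o_nu_n)_{n\in\mathcal{Z}_{\mathrm{red}}(\mathcal{G})}$.

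For the lower bounds, I test on unit vectors. Let $e^{(n_0)}$ be the element of $\ell^2_{\mathcal{G},\omega}$ with a single nonzero reduced coefficient at $n_0$. Then
\[
\mathcal{O}_{\mathcal{G}}e^{(n_0)}=o_{n_0}e^{(n_0)},\qquad \|\mathcal{O}_{\mathcal{G}}e^{(n_0)}\|_{2,\omega}=o_{n_0}\|e^{(n_0)}\|_{2,\omega}.
\]
Choosing $n_0$ to attain $\mathscr{O}_{\mathrm{max}}$ gives equality in the first claim. For the second claim I need an index with orbit size $1$. The natural choice is $n_0=0$: every $\mathcal{A}_{\mathscr{g}}$ is linear, so $\beta_{\mathscr{g}}(0)=0$ and $\mathrm{orb}_{\mathcal{G}}(0)=\{0\}$.

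The main obstacle is checking that $0\in\mathcal{Z}_{\mathrm{red}}(\mathcal{G})$, i.e. that $0$ is not a trivial index. With $\alpha_{\mathscr{g}}\equiv1$ (the standing assumption, see the Remark after Corollary~\ref{corr : reduced_set}), the constraint at $n=0$ reads $u_0=u_0$, so it imposes nothing and $0\notin\mathcal{Z}_{\mathrm{triv}}(\mathcal{G})$. I would state this assumption explicitly. Without it, the second identity should be read as $\|\mathcal{O}_{\mathcal{G}}^{-1}\|=1/\min_n o_n$.
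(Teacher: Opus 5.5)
Your proof is correct and follows the paper's argument. In the weighted norm of \eqref{eq : orbit weights} the weights $W_n$ cancel for a diagonal operator, so the norm of $\mathcal{O}_{\mathcal{G}}$ is the attained maximum orbit size, and the norm of $\mathcal{O}_{\mathcal{G}}^{-1}$ is $1$ because the index $0$ has orbit size one and lies in $\mathcal{Z}_{\mathrm{red}}(\mathcal{G})$. Your closing caveat is unnecessary: $\beta_{\mathscr{g}}(0)=0$ forces $\alpha_{\mathscr{g}}(0)=1$ for every $\mathscr{g}$, whether or not $\alpha_{\mathscr{g}}\equiv 1$ is assumed, so $0\notin\mathcal{Z}_{\mathrm{triv}}(\mathcal{G})$ always (the paper phrases this as constants always being $\mathcal{G}$-symmetric).
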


\begin{proof}
By \eqref{eq : orbit weights}, the norm of $\ell^2_{\mathcal{G},\omega}$ is the weighted norm
\[
\|u\|_{2,\omega}^2 = \sum_{n\in\mathcal{Z}_{\mathrm{red}}(\mathcal{G})} W_n\,|u_n|^2, \qquad W_n = \sum_{j\in\mathrm{orb}_{\mathcal{G}}(n)}\omega_j > 0 ,
\]
on sequences indexed by $\mathcal{Z}_{\mathrm{red}}(\mathcal{G})$. (We emphasise that one should \emph{not} argue through the map $\Omega$ of \eqref{def : V}: $\Omega : \ell^2_{\mathcal{G},\omega} \to \ell^2_{\mathcal{G}}$ is an isometry if and only if $W_n = |\mathrm{orb}_{\mathcal{G}}(n)|\,\omega_n$ for every $n \in \mathcal{Z}_{\mathrm{red}}(\mathcal{G})$, which holds when $\omega$ is constant on $\mathcal{G}$-orbits but fails for $D_6$, see \eqref{eq : orbit weights}; the argument below needs no such assumption.)
For any diagonal operator $D$ with entries $d_n$ on a weighted $\ell^2$ space with weight $W_n$, the operator norm equals $\sup_n|d_n|$ since the weights cancel in $\|Da\|^2/\|a\|^2 = \sum_n W_n|d_n|^2|a_n|^2 / \sum_n W_n|a_n|^2$. Applying this to $\mathcal{O}_{\mathcal{G}}$ with entries $|\mathrm{orb}_{\mathcal{G}}(n)|$ gives $\|\mathcal{O}_{\mathcal{G}}\|_{\mathcal{B}(\ell^2_{\mathcal{G},\omega}}) = \max_{n}|\mathrm{orb}_{\mathcal{G}}(n)| = \mathscr{O}_{\mathrm{max}}$, establishing the first bound. For the second, we use that $|\mathrm{orb}_{\mathcal{G}}(n)| \ge 1$. This follows trivially from the fact that $n\in \mathrm{orb}_{\mathcal{G}}(n)$, and hence $|\mathrm{orb}_{\mathcal{G}}(n)| \ge 1$. Hence every orbit has size at least one, so $\mathcal{O}_{\mathcal{G}}$ is invertible with diagonal entries $\frac{1}{|\mathrm{orb}_{\mathcal{G}}(n)|}$. Note that for every group $\mathcal{G}$, we have $|\mathrm{orb}_{\mathcal{G}}(\mathbf{0})| = 1$, and $\mathbf{0} \in \mathcal{Z}_{\mathrm{red}}(\mathcal{G})$ (constant functions are $\mathcal{G}$-symmetric, so $\mathbf{0}\notin\mathcal{Z}_{\mathrm{triv}}(\mathcal{G})$), so that the minimum below is attained. Therefore,
\begin{align}
    \|\mathcal{O}_{\mathcal{G}}^{-1}\|_{\mathcal{B}(\ell^2_{\mathcal{G},\omega})} &= \max_{n \in \mathcal{Z}_{\mathrm{red}}(\mathcal{G})} \frac{1}{|\mathrm{orb}_{\mathcal{G}}(n)|} = 1
\end{align}
 as desired.
\end{proof}
Now, given $\mathcal{N} \in \mathbb{N}$, let us introduce the following projection operators 
 \begin{align}
(\Pi^{\leq\mathcal{N}}q)_n  =  \begin{cases}
          q_n,  & n \in I^{\mathcal{N}} \\
              0, &n \notin I^{\mathcal{N}}
    \end{cases} ~~ \text{ and } ~~
     (\Pi^{>\mathcal{N}}q)_n  =  \begin{cases}
          0,  & n \in I^{\mathcal{N}} \\
              q_n, &n \notin I^{\mathcal{N}}
    \end{cases}\label{def : piN and pisubN}
 \end{align}
where $I^{\mathcal{N}} \bydef \{n \in \mathcal{Z}_{\mathrm{red}}(\mathcal{G}) :  |n|_\infty \leq \mathcal{N}\}$ for all $q = (q_n)_{n \in  \mathcal{Z}_{\mathrm{red}}(\mathcal{G})} \in \ell^2_{\mathcal{G},\omega}.$ We now justify some results regarding the adjoint multiplication operator. Throughout this section, let $u, v \in \ell^2_{\mathcal{G},\omega}$. We now decompose $\mathbb{M}_u$ and write
\begin{align}
    \mathbb{M}_u \bydef 
        u_0 + u^* \mathcal{O}_{\mathcal{G}}\Pi^{> 0} +
        \Pi^{>0}u + \mathcal{M}_u,\label{mult decomp gen}
\end{align}
where $u^*$ is the linear functional defined by 
\begin{equation}\label{def : u star}
  u^* h \bydef (h,u)_2 = \sum_{n \in \mathcal{Z}_{\mathrm{red}}(\mathcal{G})} h_n\,\mathrm{conj}(u_n),
\end{equation}
which is the correct form of the $n=\mathbf{0}$ row of $\mathbb{M}_u$: grouping $(u\star v)_{\mathbf{0}} = \sum_{j\in\mathbb{Z}^m}(\iota(u))_{-j}(\iota(v))_j$ into orbits and using $-\mathrm{orb}_{\mathcal{G}}(k) = \mathrm{orb}_{\mathcal{G}}(-k)$ together with $(\iota(u))_{-k} = \mathrm{conj}((\iota(u))_k)$ gives $(u\star v)_{\mathbf{0}} = u_{\mathbf{0}}v_{\mathbf{0}} + \sum_{k\neq\mathbf{0}}|\mathrm{orb}_{\mathcal{G}}(k)|\,v_k\,\mathrm{conj}(u_k) = u_{\mathbf{0}}v_{\mathbf{0}} + u^*(\mathcal{O}_{\mathcal{G}}\Pi^{>0}v)$. In all the applications of this paper $\mathcal{G}$ contains $x\mapsto -x$, so that $\mathcal{G}$-symmetric real-valued functions are even, their Fourier coefficients are real, and $u^*h = (h,u)_2 = (u,h)_2$; this is also what makes $(\cdot,\cdot)_2$ a symmetric $\mathbb{R}$-bilinear form, as used from Section~\ref{sec : cusp map} onwards. Here $\mathcal{M}_u \bydef \Pi^{> 0}\mathbb{M}_{u}\Pi^{> 0}$. Hence, it follows that
\[
    \mathbb{M}_u^* = 
        u_0 + u^*\Pi^{> 0} +
        \Pi^{> 0}\mathcal{O}_{\mathcal{G}}u +\mathcal{M}_u^*.
\]
We also define
\begin{align}
    \mathbb{Q}_u \bydef \mathcal{O}_{\mathcal{G}} \mathbb{M}_{\mathcal{O}_{\mathcal{G}}^{-1} u}.\label{def : Qa}
\end{align}
With these definitions in place, we establish the four identities announced above.
\begin{prop}\label{prop : adjoint mult}
Let $u, v \in \ell^2_{\mathcal{G},\omega}$. Then
\begin{align}
    \mathbb{M}_u^* &= \mathcal{O}_{\mathcal{G}} \mathbb{M}_{u} \mathcal{O}_{\mathcal{G}}^{-1}, \label{adjoint mult action 1}\\
    \mathbb{M}_u^* v &= \mathcal{O}_{\mathcal{G}}\bigl(u\star (\mathcal{O}_{\mathcal{G}}^{-1} v)\bigr), \label{adjoint mult action 2}\\
    D_u(\mathbb{M}_u^* v) &= \mathbb{Q}_v, \label{adjoint mult action 3}\\
    \mathbb{Q}_u v &= \mathcal{O}_{\mathcal{G}}\bigl((\mathcal{O}_{\mathcal{G}}^{-1}u)\star v\bigr). \label{bbQ action}
\end{align}
\end{prop}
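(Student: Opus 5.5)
The key identity is \eqref{adjoint mult action 1}; the other three follow from it. I would prove it entrywise, working with the matrix representation of $\mathbb{M}_u$ on the reduced index set $\mathcal{Z}_{\mathrm{red}}(\mathcal{G})$.

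\textbf{Matrix entries of $\mathbb{M}_u$.} For $n,k\in\mathcal{Z}_{\mathrm{red}}(\mathcal{G})$, unfolding through $\iota$ and grouping the convolution sum into orbits (using $\alpha_{\mathscr{g}}\equiv 1$) gives
\[
(\mathbb{M}_u)_{n,k} = \sum_{j\in\mathrm{orb}_{\mathcal{G}}(k)} (\iota(u))_{n-j}.
\]
The target identity reads, entrywise,
\[
\mathrm{conj}\bigl((\mathbb{M}_u)_{k,n}\bigr) = |\mathrm{orb}_{\mathcal{G}}(n)|\,(\mathbb{M}_u)_{n,k}\,|\mathrm{orb}_{\mathcal{G}}(k)|^{-1}.
\]
This is because the adjoint with respect to the unweighted form $(\cdot,\cdot)_2$ is the conjugate transpose.

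\textbf{Double-counting step (the main obstacle).} I would establish
\[
|\mathrm{orb}_{\mathcal{G}}(k)|\sum_{j\in\mathrm{orb}(n)}(\iota(u))_{k-j} = |\mathrm{orb}_{\mathcal{G}}(n)|\sum_{j\in\mathrm{orb}(k)}\mathrm{conj}\bigl((\iota(u))_{n-j}\bigr).
\]
Both sides should equal $\frac{|\mathrm{orb}(n)||\mathrm{orb}(k)|}{|\mathcal{G}|}\sum_{\mathscr{g},\mathscr{h}\in\mathcal{G}}(\iota(u))_{\mathscr{g}k-\mathscr{h}n}$. The argument has three ingredients:
\begin{itemize}[leftmargin=1.5em]
\item Orbit–stabilizer: each element of $\mathrm{orb}(n)$ is hit exactly $|\mathcal{G}|/|\mathrm{orb}(n)|$ times by $\mathscr{h}\mapsto\mathscr{h}n$.
\item Linearity of the action on indices, $\beta_{\mathscr{g}}$, so that $\mathscr{g}(k-j)=\mathscr{g}k-\mathscr{g}j$, combined with $\mathcal{G}$-invariance of $\iota(u)$.
\item Realness, $(\iota(u))_{-m}=\mathrm{conj}((\iota(u))_m)$, together with $-\mathrm{orb}(k)=\mathrm{orb}(-k)$.
\end{itemize}
These let me move the group action from one index to the other, which handles the conjugation and the sign flip. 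Care is needed for indices in $\mathcal{Z}_{\mathrm{triv}}$, but there $\iota(u)$ vanishes, so those terms contribute zero on both sides.

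As a sanity check, the $n=\mathbf{0}$ row should reproduce the decomposition \eqref{mult decomp gen} already computed in the paper. Boundedness on $\ell^2_{\mathcal{G},\omega}$ comes from Corollary~\ref{cor : conv estimates symmetric} and Lemma~\ref{lem : O operator norm maximum}. I would verify the identity on finitely supported sequences first and then extend by density.

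\textbf{The remaining identities.}
\begin{itemize}[leftmargin=1.5em]
\item \eqref{adjoint mult action 2} is \eqref{adjoint mult action 1} applied to $v$, since $\mathbb{M}_u w = u\star w$.
\item \eqref{bbQ action} is the definition \eqref{def : Qa} unwound.
\item For \eqref{adjoint mult action 3}, note that $u\mapsto \mathcal{O}_{\mathcal{G}}(u\star\mathcal{O}_{\mathcal{G}}^{-1}v)$ is linear and bounded in $u$, so its Fréchet derivative is the map itself. Commutativity of convolution gives $u\star(\mathcal{O}_{\mathcal{G}}^{-1}v)=(\mathcal{O}_{\mathcal{G}}^{-1}v)\star u$, hence $D_u(\mathbb{M}_u^*v)h=\mathcal{O}_{\mathcal{G}}\mathbb{M}_{\mathcal{O}_{\mathcal{G}}^{-1}v}h=\mathbb{Q}_v h$.
\end{itemize}
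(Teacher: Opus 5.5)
Your proposal is correct and is essentially the paper's argument: an entrywise comparison of $\mathbb{M}_u^*$ with $\mathcal{O}_{\mathcal{G}}\mathbb{M}_u\mathcal{O}_{\mathcal{G}}^{-1}$ using orbit--stabilizer, linearity of the index action together with $\mathcal{G}$-invariance of $\iota(u)$, and realness, after which \eqref{adjoint mult action 2}--\eqref{bbQ action} follow exactly as you say. The differences are minor: your entry formula covers the $\mathbf{0}$ row and column directly, so you do not need the paper's split via \eqref{mult decomp gen}; you symmetrize with a double sum rather than reindexing $\mathscr{g}\mapsto\mathscr{g}^{-1}$; and your common value has a small slip, since it should be $\frac{|\mathrm{orb}_{\mathcal{G}}(n)|\,|\mathrm{orb}_{\mathcal{G}}(k)|}{|\mathcal{G}|^{2}}\sum_{\mathscr{g},\mathscr{h}\in\mathcal{G}}(\iota(u))_{\mathscr{g}k-\mathscr{h}n}$ (denominator $|\mathcal{G}|^2$, not $|\mathcal{G}|$), which does not affect the conclusion.
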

\begin{proof}
We begin the proof first by showing that $\mathcal{M}_u^* = \mathcal{O}_{\mathcal{G}}\mathcal{M}_u\mathcal{O}_{\mathcal{G}}^{-1}$.
For $n, k \in \mathcal{Z}_{\mathrm{red}}(\mathcal{G}) \setminus \{\mathbf{0}\}$, the $\mathcal{G}$-symmetry of $v$ gives $(u\star v)_n = \sum_{k \in \mathcal{Z}_{\mathrm{red}}(\mathcal{G})} v_k \sum_{j \in \mathrm{orb}_{\mathcal{G}}(k)} (\iota(u))_{n-j}$. Each element of $\mathrm{orb}_{\mathcal{G}}(k)$ is reached $|\mathrm{stab}_{\mathcal{G}}(k)|$ times as $\mathscr{g}$ ranges over $\mathcal{G}$. Since $\mathcal{G}$ is finite, we can use the Orbit-Stabilizer Theorem (cf. \cite{gallianalgebra}) and write
\begin{align}
    (\mathcal{M}_u)_{n,k} = \frac{|\mathrm{orb}_{\mathcal{G}}(k)|}{|\mathcal{G}|}\sum_{\mathscr{g} \in \mathcal{G}} (\iota(u))_{n - \mathscr{g} \cdot k}.
\end{align}
By definition of the adjoint and real-valuedness of $u$ (i.e., $\mathrm{conj}((\iota(u))_j) = (\iota(u))_{-j}$),
\begin{align}
    (\mathcal{M}_u^*)_{n,k} = \frac{|\mathrm{orb}_{\mathcal{G}}(n)|}{|\mathcal{G}|}\sum_{\mathscr{g} \in \mathcal{G}} \mathrm{conj}((\iota(u)))_{k - \mathscr{g} \cdot n}.
\end{align}
Expanding the conjugated operator, replacing $\mathscr{g}$ by $\mathscr{g}^{-1}$, and invoking $\mathcal{G}$-symmetry of $u$ (i.e. $(\iota(u))_{\mathscr{g}\cdot j} = (\iota(u))_j$) followed by real-valuedness,
\begin{align}
    (\mathcal{O}_{\mathcal{G}}\mathcal{M}_u\mathcal{O}_{\mathcal{G}}^{-1})_{n,k}
    &= \frac{|\mathrm{orb}_{\mathcal{G}}(n)|}{|\mathrm{orb}_{\mathcal{G}}(k)|}(\mathcal{M}_u)_{n,k}
    = \frac{|\mathrm{orb}_{\mathcal{G}}(n)|}{|\mathcal{G}|}\sum_{\mathscr{g}\in\mathcal{G}} (\iota(u))_{n-\mathscr{g}\cdot k}
    = \frac{|\mathrm{orb}_{\mathcal{G}}(n)|}{|\mathcal{G}|}\sum_{\mathscr{g}\in\mathcal{G}} (\iota(u))_{\mathscr{g}\cdot n - k} \\
    &= \frac{|\mathrm{orb}_{\mathcal{G}}(n)|}{|\mathcal{G}|}\sum_{\mathscr{g}\in\mathcal{G}} \mathrm{conj}((\iota(u)))_{k - \mathscr{g}\cdot n}
    = (\mathcal{M}_u^*)_{n,k}.
\end{align}

Second, we prove that \eqref{adjoint mult action 1} hold.
Writing $\mathcal{O}_{\mathcal{G}} = 1 + \Pi^{>0}\mathcal{O}_{\mathcal{G}}\Pi^{>0}$, expanding via the decomposition \eqref{mult decomp gen}, and using that $\mathcal{M}_u^* = \mathcal{O}_{\mathcal{G}}\mathcal{M}_u\mathcal{O}_{\mathcal{G}}^{-1}$,
\[
    \mathcal{O}_{\mathcal{G}}\mathbb{M}_u\mathcal{O}_{\mathcal{G}}^{-1}
    = u_0 + u^*\Pi^{>0} + \Pi^{>0}\mathcal{O}_{\mathcal{G}}u + \Pi^{>0}\mathcal{O}_{\mathcal{G}}\mathcal{M}_u\mathcal{O}_{\mathcal{G}}^{-1}\Pi^{>0}
    = u_0 + u^{*}\Pi^{> 0} + \Pi^{> 0}\mathcal{O}_{\mathcal{G}}u + \mathcal{M}_{u}^* = \mathbb{M}_{u}^*.
\]

The proof of \eqref{adjoint mult action 2} follows by applying \eqref{adjoint mult action 1} to $v$:
\begin{align}
    \mathbb{M}_u^* v = \mathcal{O}_{\mathcal{G}}\mathbb{M}_u(\mathcal{O}_{\mathcal{G}}^{-1}v) = \mathcal{O}_{\mathcal{G}}\bigl(u \star (\mathcal{O}_{\mathcal{G}}^{-1}v)\bigr).
\end{align}

Since the right-hand side of \eqref{adjoint mult action 2} is linear in $u$, differentiating with respect to $u$ gives
\begin{align}
    D_u(\mathbb{M}_u^* v) = \mathcal{O}_{\mathcal{G}}\mathbb{M}_{\mathcal{O}_{\mathcal{G}}^{-1}v} = \mathbb{Q}_v
\end{align}
which proves \eqref{adjoint mult action 3}. Finally, \eqref{bbQ action} follows since $\mathbb{Q}_u v = \mathcal{O}_{\mathcal{G}}\mathbb{M}_{\mathcal{O}_{\mathcal{G}}^{-1}u}v = \mathcal{O}_{\mathcal{G}}\bigl((\mathcal{O}_{\mathcal{G}}^{-1}u)\star v\bigr)$.
\end{proof}
With the adjoint multiplication operator characterized, we now introduce the cusp map.

\subsection{The Cusp Map} \label{sec : cusp map}

The conditions for a cusp bifurcation were collected in Lemma~\ref{lemma : Cusp bifurcation}: in addition to the equilibrium condition $f(\blambda,\mathbf{u})=0$ and the spectral hypothesis on $D_u f(\blambda,\mathbf{u})$, one must exhibit a right null vector $\mathbf{v}$ and a left null vector $\mathbf{w}$ (normalized so that $(\mathbf{w},\mathbf{v})_{L^2}=1$ in Lemma~\ref{lemma : Cusp bifurcation}; the cusp map below imposes instead $(\mathbf{v},\mathbf{w})_2=1$ on the rescaled vector $\mathcal{O}_{\mathcal{G}}\mathbf{w}$, which by Remark~\ref{rmk : L2 vs ell2} is the same condition), verify that the quadratic coefficient $b$ vanishes, and confirm that the cubic coefficient $c$ is nonzero. The strategy underlying computer-assisted proofs of bifurcations is to encode all of these requirements (except the non-degeneracy $c\neq 0$, which will be handled separately) as the zero of a single map, whose non-degenerate zeros can then be located and validated via a Newton--Kantorovich argument.

To this end, we follow \cite{cusp_jp} and introduce an augmented unknowns vector $\mathbf{x} \bydef (\mathbf{s},\boldsymbol{\lambda},\mathbf{u},\mathbf{v},\mathbf{w},\mathbf{h})$, where $\mathbf{s} \bydef (s_1,s_2) \in \mathbb{R}^2$ are scalar bordering parameters, $\boldsymbol{\lambda} \in \mathbb{R}^2$ is the bifurcation parameter, $\mathbf{u}$ is the equilibrium state, $\mathbf{v}$ and $\mathbf{w}$ are the right and left null vectors of $D_u f(\boldsymbol{\lambda},\mathbf{u})$, and $\mathbf{h}$ is the solution of the \emph{bordering system}
\begin{align}
    \begin{bmatrix}
        D_u f(\boldsymbol{\lambda},\mathbf{u})\mathbf{h} + s_2\mathbf{v} + D_{uu} f(\boldsymbol{\lambda},\mathbf{u})(\mathbf{v},\mathbf{v}) \\
        (\mathbf{w}, \mathbf{h})_2
    \end{bmatrix} = \begin{bmatrix}
        0 \\ 0
    \end{bmatrix}.
\end{align}
The parameter $s_2$ regularizes this system when $D_u f(\blambda,\mathbf{u})$ is singular (at a cusp $s_2 = 0$, and $\mathbf{h}$ reduces to the vector already appearing in Lemma~\ref{lemma : Cusp bifurcation}); $s_1$ plays an analogous role for the eigenvalue equation. The \emph{cusp map} $F$ is then defined component-wise as 
\begin{equation}\label{def : cusp map}
    F(\mathbf{x}) \bydef \begin{bmatrix}
        (\mathbf{v},\mathbf{v})_2 - 1 \\
        (\mathbf{v},\mathbf{w})_2 - 1 \\
        (\mathbf{w},\mathbf{h})_2 \\
        (D_{uu} f(\boldsymbol{\lambda},\mathbf{u})(\mathbf{v},\mathbf{v}),\mathbf{w})_2 \\
        f(\boldsymbol{\lambda},\mathbf{u}) \\
        D_u f(\boldsymbol{\lambda},\mathbf{u}) \mathbf{v} + s_1 \mathbf{v} \\
        D_u f(\boldsymbol{\lambda},\mathbf{u})^* \mathbf{w} \\
        D_u f(\boldsymbol{\lambda},\mathbf{u}) \mathbf{h} + s_2 \mathbf{v} + D_{uu} f(\boldsymbol{\lambda},\mathbf{u}) (\mathbf{v},\mathbf{v})
    \end{bmatrix}.
\end{equation}
The rows of $F$ encode, in order: the normalizations $(\mathbf{v},\mathbf{v})_2 = 1$ and $(\mathbf{v},\mathbf{w})_2 = 1$; the orthogonality condition $(\mathbf{w},\mathbf{h})_2 = 0$ from the bordering system; the vanishing of the quadratic coefficient $b = 0$; the equilibrium condition $f(\blambda,\mathbf{u}) = 0$; the right and left null-vector equations; and the bordering equation for $\mathbf{h}$. 

It is essential to observe that $F$ is written in terms of the sequence-space inner product $(\cdot,\cdot)_2$ of \eqref{def : ell2 inner product} and of the corresponding adjoint $\cdot^*$, whereas Lemma~\ref{lemma : Cusp bifurcation} is formulated in $L^2_{\mathcal{G}}(\mathscr{D})$ with the inner product \eqref{def : L2 inner product} and the adjoint $\cdot^\dagger$. These do \emph{not} coincide, and the following remark is the dictionary between them; it is the translation announced in Section~\ref{sec:introduction}.

\begin{remark}[\bf From $L^2_{\mathcal{G}}(\mathscr{D})$ to $\ell^2_{\mathcal{G}}$]\label{rmk : L2 vs ell2}
Identify $\mathcal{G}$-symmetric functions with their reduced Fourier coefficient sequences through Corollary~\ref{corr : reduced_set}. Substituting $y = \mathcal{L}^{-T}x$ in \eqref{def:mathscr_D} shows that the exponentials $e^{i\mathcal{L}\tilde{n}\cdot y}$ are orthogonal on $\mathscr{D}$ with $\int_{\mathscr{D}}|e^{i\mathcal{L}\tilde{n}\cdot y}|^2dy = |\mathscr{D}| = (2d)^m/|\det\mathcal{L}|$, so Parseval on the full lattice together with the constancy of $\iota(\mathbf{f})$ on $\mathcal{G}$-orbits gives, with $\mathcal{O}_{\mathcal{G}}$ as in \eqref{def : D operator},
\begin{equation}\label{eq : L2 vs ell2}
    (\mathbf{f},\mathbf{g})_{L^2} = \bigl(\mathcal{O}_{\mathcal{G}}\mathbf{f},\mathbf{g}\bigr)_2,
    \qquad\text{hence}\qquad
    A^\dagger = \mathcal{O}_{\mathcal{G}}^{-1}A^*\mathcal{O}_{\mathcal{G}}
\end{equation}
for every linear operator $A$ acting on such sequences ($\mathcal{O}_{\mathcal{G}}$ being diagonal with positive entries, hence self-adjoint for $(\cdot,\cdot)_2$). In particular $\mathbb{M}_u^\dagger = \mathbb{M}_u$: Proposition~\ref{prop : adjoint mult} says precisely that multiplication by a real $\mathcal{G}$-symmetric function is self-adjoint in $L^2_{\mathcal{G}}(\mathscr{D})$, as it must be.

Consequently, at a zero of $F$ the vector $\mathbf{w}^\flat \bydef \mathcal{O}_{\mathcal{G}}^{-1}\mathbf{w}$ satisfies $(\mathbf{w}^\flat,\mathbf{y})_{L^2} = (\mathbf{w},\mathbf{y})_2$ for all $\mathbf{y}$, so that rows 2, 3, 4 and 7 of \eqref{def : cusp map} are exactly the $L^2_{\mathcal{G}}(\mathscr{D})$ conditions $(\mathbf{w}^\flat,\mathbf{v})_{L^2}=1$, $(\mathbf{w}^\flat,\mathbf{h})_{L^2}=0$, $b=0$ and $D_uf(\blambda,\mathbf{u})^\dagger\mathbf{w}^\flat = 0$ of Lemma~\ref{lemma : Cusp bifurcation}; applying $(\mathbf{w},\cdot)_2$ to rows 6 and 8 gives $s_1=s_2=0$, whereupon rows 5, 6, 8 supply the remaining equations. Likewise $\mathcal{O}_{\mathcal{G}}^{-1}\mathbf{z}$ solves the $L^2$-equation \eqref{eq : z vector} whenever $D_uf^*\mathbf{z} = D_{uu}f^*(\mathbf{w},\mathbf{v})$, and $b$, $c$, $\sigma_i$, $\tau_i$, $\Delta$ take the \emph{same} values in the two conventions --- so all of them may be computed with $(\cdot,\cdot)_2$. Thus $F(\mathbf{x})=0$ delivers every hypothesis of Lemma~\ref{lemma : Cusp bifurcation} except $c\neq0$, $\Delta\neq0$ (Theorem~\ref{thm : cusp map nondegen}) and the spectral condition (Sections~\ref{sec : gershgorin swift hohenberg}--\ref{sec : gershgorin gray scott}). Two further consequences, used in Appendix~\ref{sec:proof_thm_cusp_map_nondegen}: $\ker D_uf(\blambda,\mathbf{u})^\dagger = \mathrm{span}(\mathbf{w}^\flat)$ and $\ker D_uf(\blambda,\mathbf{u})^* = \mathrm{span}(\mathbf{w})$; and, by elliptic regularity, $\mathbf{u},\mathbf{v},\mathbf{w},\mathbf{h} \in H^{2r}_{\mathcal{G}}(\mathscr{D})^{\mathscr{p}}$ (rewrite rows 5--8 as $l_{\blambda}(\cdot) = $ an element of $\ell^2_{\mathcal{G},\omega}$, which is a Banach algebra by Corollary~\ref{cor : conv estimates symmetric}), while conversely $H^{2r}_{\mathcal{G}}(\mathscr{D})^{\mathscr{p}}\subset\ell^2_{\mathcal{G},\omega}$ as soon as $ms\leq4r$, as in all our applications.

Finally, a word on notation: $\mathbf{w}$ denotes the left null vector in $L^2_{\mathcal{G}}(\mathscr{D})$ in Section~\ref{sec:introduction} and the one for $(\cdot,\cdot)_2$ from Section~\ref{sec : adjoint mult} onwards; the two differ by $\mathcal{O}_{\mathcal{G}}$, and the above says that this is the only adjustment ever needed.
\end{remark}

We note that Lemma~\ref{lemma : Cusp bifurcation} requires only the single normalization $(\mathbf{w}^\flat,\mathbf{v})_{L^2} = 1$; the additional condition $(\mathbf{v},\mathbf{v})_2 = 1$ (row~1) pins down the scale of $\mathbf{v}$ independently of $\mathbf{w}$, ensuring the augmented system is square and non-degenerate, as required by the Newton--Kantorovich argument of Section~\ref{sec : radii poly}. Note also that $s_1 = s_2 = 0$ is automatic at a zero of $F$ and need not be imposed.

We seek zeros $\mathbf{x} \bydef (\mathbf{s},\boldsymbol{\lambda},\mathbf{u},\mathbf{v},\mathbf{w},\mathbf{h})$ of $F$ in the Banach space 
\[
X_{\mathcal{G},\omega} \bydef \mathbb{R}^4 \times (\ell^2_{\mathcal{G},\omega})^{4\mathscr{p}},
\]
where $\mathscr{p}$ is the number of equations in $f(\blambda,\mathbf{u})$, equipped with the norm
\[
     \|\mathbf{x}\|_{X_{\mathcal{G},\omega}} \bydef |s_1| + |s_2| + |\lambda_1| + |\lambda_2| + \sum_{j = 1}^\mathscr{p} (\|u_j\|_{2,\omega} + \|v_j\|_{2,\omega} + \|w_j\|_{2,\omega} + \|h_j\|_{2,\omega}).
\]
For a tuple $\mathbf{z} \bydef (z_1,\dots,z_{p_0}) \in (\ell^2_{\mathcal{G},\omega})^{p_0}$ we also write
 \begin{align}
     \|\mathbf{z}\|_{2,\omega} \bydef \sum_{j = 1}^{p_0} \|z_j\|_{2,\omega},
 \end{align}
an abuse of notation (this is a sum of $\ell^2_{\mathcal{G},\omega}$-norms, not itself a $\|\cdot\|_{2,\omega}$-norm) that we retain for brevity.

With the functional setting in place, we now state the central property justifying the cusp map formulation: non-degenerate zeros of $F$ are cusp bifurcation points. The following is the infinite-dimensional analogue of Theorem~3.2 in \cite{cusp_jp}. 
\begin{theorem}[{\bf Non-degenerate zeros of the cusp map are cusps}] \label{thm : cusp map nondegen}
Let $f$ be of class $C^3$ in $\mathbf{u}$ and of class $C^2$ in $\blambda$. Suppose that $\mathbf{x} = (\mathbf{s},\blambda,\mathbf{u},\mathbf{v},\mathbf{w},\mathbf{h}) \in X_{\mathcal{G},\omega}$ is a non-degenerate zero of the cusp map $F$ (i.e.\ $F(\mathbf{x}) = 0$ and $DF(\mathbf{x})$ is boundedly invertible), and that $D_u f(\blambda,\mathbf{u})$ has exactly one eigenvalue with zero real part, counted with algebraic multiplicity (so that, $D_uf(\blambda,\mathbf{u})$ being Fredholm of index zero, $\ker D_uf(\blambda,\mathbf{u}) = \mathrm{span}(\mathbf{v})$ is one-dimensional), and that its remaining spectrum lies in $\{\mu\in\mathbb{C} : |\mathrm{Re}(\mu)|\geq\varepsilon\}$ for some $\varepsilon>0$ --- that is, exactly the spectral hypothesis of Lemma~\ref{lemma : Cusp bifurcation}, which is what the Gershgorin enclosures of Sections~\ref{sec : gershgorin swift hohenberg} and~\ref{sec : gershgorin gray scott} deliver. Then
\begin{align}\label{eq : c nonzero}
  c \bydef \frac{1}{6} \big( \mathbf{w} , D_{uuu}f(\blambda,\mathbf{u})(\mathbf{v},\mathbf{v},\mathbf{v}) + 3\,D_{uu}f(\blambda,\mathbf{u})(\mathbf{v},\mathbf{h}) \big)_{2} \neq 0,
\end{align}
and the transversality condition \eqref{eq : Delta} holds, that is $\Delta \neq 0$. Consequently, a cusp bifurcation occurs at $(\blambda,\mathbf{u})$ with normal form $\dot{y} = cy^3 + O(y^4)$ on the center manifold.
\end{theorem}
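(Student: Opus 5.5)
The proof goes by contradiction, in the style of Theorem~3.2 of \cite{cusp_jp}. We assume either $c=0$ or $\Delta=0$, and in each case build a nonzero $\delta\mathbf{x}=(\delta\mathbf{s},\delta\blambda,\delta\mathbf{u},\delta\mathbf{v},\delta\mathbf{w},\delta\mathbf{h})\in X_{\mathcal{G},\omega}$ with $DF(\mathbf{x})\delta\mathbf{x}=0$, contradicting bounded invertibility.

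Throughout, write $L\bydef D_uf(\blambda,\mathbf{u})$. By Remark~\ref{rmk : L2 vs ell2}, at a zero of $F$ we have $s_1=s_2=0$, $\ker L=\mathrm{span}(\mathbf{v})$ and $\ker L^*=\mathrm{span}(\mathbf{w})$. The spectral hypothesis makes $L$ Fredholm of index zero, so
\[
\mathrm{Ran}\,L=\{\mathbf{y}:(\mathbf{y},\mathbf{w})_2=0\}.
\]
We also use that $(\cdot,\cdot)_2$ is symmetric bilinear on the real subspace, and that all $b,c,\sigma_i,\tau_i,\Delta$ may be computed with $(\cdot,\cdot)_2$ (Remark~\ref{rmk : L2 vs ell2}).

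First we write out $DF(\mathbf{x})\delta\mathbf{x}$ row by row. For instance, the linearization of row~6 is
\[
L\delta\mathbf{v}+\delta s_1\mathbf{v}+D_{uu}f(\delta\mathbf{u},\mathbf{v})+\textstyle\sum_i\delta\lambda_i\,\partial_{\lambda_i}D_uf\,\mathbf{v},
\]
and row~7 gives $L^*\delta\mathbf{w}+(D_{uu}f(\delta\mathbf{u},\cdot))^*\mathbf{w}+\dots$. Here we need $C^3$ in $\mathbf{u}$ and $C^2$ in $\blambda$, which makes $DF$ well defined. The adjoint derivative identities of Proposition~\ref{prop : adjoint mult} ensure that the derivative of $D_uf^*\mathbf{w}$ with respect to $\mathbf{u}$ is the operator $\mathbf{y}\mapsto D_{uu}f(\mathbf{y},\cdot)^*\mathbf{w}$. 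This is the adjoint appearing in \eqref{eq : z vector}.

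\emph{Case $c=0$.} Take $\delta\blambda=0$, $\delta\mathbf{u}=\mathbf{v}$ (which satisfies row~5), and $\delta s_1=0$. Row~6 becomes $L\delta\mathbf{v}=-D_{uu}f(\mathbf{v},\mathbf{v})$, whose right-hand side lies in $\mathrm{Ran}\,L$ because $b=0$. By uniqueness of $\mathbf{h}$ we can take $\delta\mathbf{v}=\mathbf{h}+\alpha\mathbf{v}$, and choose $\alpha$ so that row~1, namely $2(\mathbf{v},\delta\mathbf{v})_2=0$, holds. Row~7 reads $L^*\delta\mathbf{w}=-D_{uu}f(\mathbf{v},\cdot)^*\mathbf{w}$. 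It is solvable because its pairing with $\mathbf{v}$ equals $-2b=0$, so $\delta\mathbf{w}=-\mathbf{z}+\gamma\mathbf{w}$, and $\gamma$ is fixed by row~2.

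Row~8 then asks us to solve
\[
L\delta\mathbf{h}=-\bigl[D_{uuu}f(\mathbf{v},\mathbf{v},\mathbf{h})\text{-type terms}+\delta s_2\mathbf{v}+2D_{uu}f(\mathbf{v},\delta\mathbf{v})+D_{uu}f(\mathbf{v},\mathbf{h})\bigr].
\]
Pairing with $\mathbf{w}$ fixes $\delta s_2$ as an explicit combination which we expect to be a multiple of $c$ plus $\alpha b$-type terms that vanish. We then pick $\delta\mathbf{h}$ in the complement and adjust it by a multiple of $\mathbf{v}$ to satisfy row~3. The remaining condition is row~4, i.e.\ $d/d\varepsilon$ of $(D_{uu}f(\mathbf{v},\mathbf{v}),\mathbf{w})_2$ along $\delta\mathbf{x}$. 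Using the definitions of $\mathbf{h}$ and $\mathbf{z}$, the $\delta\mathbf{v}$ and $\delta\mathbf{w}$ contributions cancel or combine, and this derivative reduces to $6c$. So $c=0$ gives $DF\,\delta\mathbf{x}=0$ with $\delta\mathbf{u}=\mathbf{v}\neq0$, a contradiction.

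\emph{Case $\Delta=0$.} Choose $\delta\blambda\neq0$ with $\sigma\cdot\delta\blambda=0$ and $\tau\cdot\delta\blambda=0$. Row~5, $L\delta\mathbf{u}=-\sum_i\delta\lambda_i\partial_{\lambda_i}f$, is then solvable by the first condition. The second condition makes row~4 consistent once the same chain of solves as in the first case is carried out, now with $\delta\mathbf{u}$ determined up to a multiple of $\mathbf{v}$ that we set to zero.

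\emph{Main obstacle.} The hard step is the bookkeeping that makes row~4's derivative collapse exactly to $6c$, respectively to $\tau\cdot\delta\blambda$ plus a multiple of $\sigma\cdot\delta\blambda$. This requires repeatedly moving $L$ across the inner product via $L^*\mathbf{z}=D_{uu}f(\mathbf{v},\cdot)^*\mathbf{w}$ and $L^*\mathbf{w}=0$. Symmetry of the bilinear form is essential here, and it holds because the groups used contain $x\mapsto -x$, so that Fourier coefficients are real. I would carry out this computation carefully in the finite-dimensional template of \cite{cusp_jp}, checking each transposition against Proposition~\ref{prop : adjoint mult}.

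Finally, with $c\neq0$ and $\Delta\neq0$ established, the remaining hypotheses of Lemma~\ref{lemma : Cusp bifurcation} hold, and the lemma gives the cusp together with the normal form $\dot y=cy^3+O(y^4)$.
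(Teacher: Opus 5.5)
Your case $c=0$ is correct. With $\delta\blambda=0$, $\delta\mathbf u=\mathbf v$, $\delta\mathbf v=\mathbf h-\gamma\mathbf v$ and $\delta\mathbf w=-\mathbf z+(\gamma+\beta)\mathbf w$, where $\gamma=(\mathbf v,\mathbf h)_2$ and $\beta=(\mathbf v,\mathbf z)_2$, row~8 gives $\delta s_2=-6c$ and row~4 evaluates to exactly $6c$. This is the classical argument recalled in Remark~\ref{rmk : old kernel element}.

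The case $\Delta=0$, however, fails as written, because the condition $\tau\cdot\delta\blambda=0$ is not what makes row~4 consistent. It is the solvability condition of row~7, $L^*\delta\mathbf w=-D_{uu}f(\delta\mathbf u,\cdot)^*\mathbf w-\sum_i\delta\lambda_i(\partial_{\lambda_i}D_uf)^*\mathbf w$. Unlike rows~6 and~8, row~7 has no bordering scalar to absorb its component against $\mathbf v$. Pairing it with $\mathbf v$ and using $L^*\mathbf z=D_{uu}f(\mathbf v,\cdot)^*\mathbf w$ together with row~5 (and $\sigma\cdot\delta\blambda=0$) yields exactly $\tau\cdot\delta\blambda=0$.

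Once rows~1--3 and 5--8 are satisfied, row~4 is one more scalar equation. With the $\mathbf v$-component of $\delta\mathbf u$ set to zero it reads $\theta\cdot\delta\blambda=0$. Here $\theta\cdot\delta\blambda$ is the linearized row~4 evaluated on those solves: the terms $D_{uuu}f(\delta\mathbf u,\mathbf v,\mathbf v)$, $\sum_i\delta\lambda_i\partial_{\lambda_i}D_{uu}f(\mathbf v,\mathbf v)$ and $2D_{uu}f(\delta\mathbf v,\mathbf v)$ paired with $\mathbf w$, plus $(D_{uu}f(\mathbf v,\mathbf v),\delta\mathbf w)_2$. Nothing forces this to vanish. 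Take the scalar map $f(\blambda,u)=\lambda_1(1+u)+a\lambda_2u^2+cu^3$ at the origin, with $a\neq0$. Then $\sigma=\tau=(1,0)$, so $\Delta=0$ and $\delta\blambda=(0,1)$. With $\delta u=0$ every row except row~4 can be satisfied (taking $\delta s_2=-2a$), but row~4 reads $6c\,\delta u+2a\,\delta\lambda_2=2a\neq0$.

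The missing idea is that the multiple $t$ of $\mathbf v$ in $\delta\mathbf u$ is exactly the unknown that row~4 must be solved for.
\begin{itemize}
\item Adding $t\mathbf v$ to $\delta\mathbf u$ changes the row-7 solvability condition and $\delta s_1$ only by $-2tb=0$.
\item By your own case-$c=0$ computation, it shifts row~4 by exactly $6ct$.
\end{itemize}
So in the case $\Delta=0$ you may assume $c\neq0$, since otherwise your first case already applies. Then take $t=-\theta\cdot\delta\blambda/(6c)$; the resulting element is nonzero because $\delta\blambda\neq0$.

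With this repair your contradiction argument is the ``$c\Delta=0\Rightarrow\ker DF(\mathbf x)\neq\{0\}$'' half of the paper's proof. The paper instead shows directly that $\hat{\mathbf X}\mapsto(\hat\lambda_1,\hat\lambda_2,(\mathbf w,\hat{\mathbf u})_2)$ maps $\ker DF(\mathbf x)$ isomorphically onto $\ker\mathbb T$. The rows of $\mathbb T$ are $\sigma$ (from row~5), $\tau+\beta\sigma$ (from row~7) and $(\theta,6c)$ (from row~4), so $\det\mathbb T=6c\Delta$ handles both cases at once. That formulation also makes visible that row~4 is the only place where $c$, and the free parameter $t$, enter.
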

\begin{proof}
The proof can be found in Appendix~\ref{sec:proof_thm_cusp_map_nondegen}. 
\end{proof}

\begin{remark}\label{rmk : factorization}
The two non-degeneracy conditions of Theorem~\ref{thm : cusp map nondegen} are not independent hypotheses but the two factors of a single determinant. Indeed, the proof in Appendix~\ref{sec:proof_thm_cusp_map_nondegen} establishes the linear isomorphism
\begin{equation}\label{eq : kernel isomorphism}
    \ker DF(\mathbf{x}) \;\cong\; \ker \mathbb{T}, \qquad
    \mathbb{T} \bydef \begin{bmatrix}
        \sigma_1 & \sigma_2 & 0 \\
        \tilde{\tau}_1 & \tilde{\tau}_2 & 0 \\
        \theta_1 & \theta_2 & 6c
    \end{bmatrix} \in M_{3\times3}(\mathbb{R}), \qquad
    \det \mathbb{T} = 6\,c\,\Delta,
\end{equation}
where $\tilde{\tau}_i \bydef \tau_i + \beta\sigma_i$ with $\beta \bydef (\mathbf{v},\mathbf{z})_2$ (so that $\sigma_1\tilde{\tau}_2 - \sigma_2\tilde{\tau}_1 = \Delta$, cf.\ the discussion following \eqref{eq : Delta}), where $\theta \in \mathbb{R}^2$ depends only on $\mathbf{x}$ and is given explicitly in \eqref{eq : theta def}, and the isomorphism is given by $\widehat{\mathbf{X}} \mapsto (\hat{\lambda}_1,\hat{\lambda}_2,(\mathbf{w},\widehat{\mathbf{u}})_2)$. In particular $\ker DF(\mathbf{x}) = \{0\}$ if and only if $c\,\Delta\neq0$; this is the only implication used in the proof of Theorem~\ref{thm : cusp map nondegen}, and it requires no Fredholm property. If, in addition, $DF(\mathbf{x})$ is Fredholm of index zero, then $DF(\mathbf{x})$ is boundedly invertible if and only if $c\,\Delta\neq0$. We stress that this converse is \emph{not} used anywhere in this paper, and that it must be read on the pair of spaces on which $DF(\mathbf{x})$ is a bounded operator, namely
\[
  \mathcal{D} \bydef \mathbb{R}^4\times\bigl(H^{2r}_{\mathcal{G}}(\mathscr{D})^{\mathscr{p}}\bigr)^{4}
  \;\longrightarrow\;
  \mathcal{R} \bydef \mathbb{R}^4\times\bigl(L^2_{\mathcal{G}}(\mathscr{D})^{\mathscr{p}}\bigr)^{4},
\]
and not on $X_{\mathcal{G},\omega}$, on which $D_uf(\blambda,\mathbf{u})$ is unbounded. On that pair, $DF(\mathbf{x})$ is the block-diagonal operator $\mathrm{diag}\bigl(D_uf,D_uf,D_uf^*,D_uf\bigr)$, perturbed by off-diagonal terms which are multiplication operators, hence relatively compact by the compactness of $H^{2r}_{\mathcal{G}}(\mathscr{D}) \hookrightarrow L^2_{\mathcal{G}}(\mathscr{D})$, and bordered by four scalar unknowns and four scalar rows; neither operation changes the index, so $DF(\mathbf{x}):\mathcal{D}\to\mathcal{R}$ is Fredholm of index zero. Since $\mathcal{D}\subset X_{\mathcal{G},\omega}$ (Remark~\ref{rmk : L2 vs ell2}), injectivity on $X_{\mathcal{G},\omega}$ implies injectivity on $\mathcal{D}$, and $DF(\mathbf{x}):\mathcal{D}\to\mathcal{R}$ is boundedly invertible if and only if $c\,\Delta\neq0$.
\end{remark}

\begin{remark}
Theorem~\ref{thm : cusp map nondegen} shows that verifying the hypotheses of Lemma~\ref{lemma : Cusp bifurcation} reduces to two computationally accessible tasks: (i) proving that the cusp map $F$ has a non-degenerate zero (accomplished via Theorem~\ref{th : radii polynomial theorem periodic}), and (ii) verifying that $D_u f(\blambda,\mathbf{u})$ has exactly one eigenvalue on the imaginary axis (accomplished via the Gershgorin arguments of Sections~\ref{sec : gershgorin swift hohenberg} and \ref{sec : gershgorin gray scott}). In particular, neither the non-vanishing of the cubic normal form coefficient $c$ nor the transversality condition $\Delta \neq 0$ requires a separate computation: both are consequences of the non-degeneracy of the zero.
\end{remark}

With the cusp map and its key property established, it remains to state the contraction theorem used to locate its non-degenerate zeros.
 
\subsection{The Newton--Kantorovich Theorem}\label{sec : radii poly}
Our main computational tool is the following Newton--Kantorovich contraction theorem. Its hypotheses require three computable bounds, $Y_0$, $Z_1$, and $Z_2$, which are constructed explicitly for each application in Sections~\ref{sec : swift hohenberg} and \ref{sec : gray scott}. 
\begin{theorem}\label{th : radii polynomial theorem periodic}
Let $\overline{\mathbf{x}} \in X_{\mathcal{G},\omega}$. Let $A$ be an injective, bounded linear operator such that $AF : X_{\mathcal{G},\omega} \to X_{\mathcal{G},\omega}$. Moreover, let $Y_0, Z_1$ be non-negative constants and $Z_2(r) : (0,\infty) \to [0,\infty)$ a non-negative function such that
\begin{align}
    \|AF(\overline{\mathbf{x}})\|_{X_{\mathcal{G},\omega}} &\leq Y_0 \\
    \|I_d - ADF(\overline{\mathbf{x}})\|_{\mathcal{B}(X_{\mathcal{G},\omega})} &\leq Z_1  \\
\|A(DF(\mathbf{x})- DF(\overline{\mathbf{x}}))\|_{\mathcal{B}(X_{\mathcal{G},\omega})} &\leq Z_2(r)r, ~ \text{for all} ~ \mathbf{x} \in B_r(\overline{\mathbf{x}}).
\end{align}
If there exists $r>0$ such that
\begin{equation}\label{condition radii polynomial}
    \frac{1}{2}Z_2(r)r^2 - (1 - Z_1)r + Y_0 <0 \quad \text{and} \quad  Z_1 + Z_2(r)r < 1,
 \end{equation}
then there exists a unique $\widetilde{\mathbf{x}} \in \overline{B_r(\overline{\mathbf{x}})} \subset X_{\mathcal{G},\omega}$ such that $F(\widetilde{\mathbf{x}})=0$, where $B_r(\overline{\mathbf{x}})$ is the open ball of $X_{\mathcal{G},\omega}$ centered at $\overline{\mathbf{x}}$ with radius $r$. Moreover, $\widetilde{\mathbf{x}}$ is a non-degenerate zero of $F$, that is, $DF(\widetilde{\mathbf{x}})$ is boundedly invertible --- see Remark~\ref{rmk : NK nondegeneracy} for the precise sense in which this is to be read, and for the fact that only the injectivity of $DF(\widetilde{\mathbf{x}})$ is used in this paper.
 \end{theorem}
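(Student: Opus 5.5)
The plan is the standard Newton--Kantorovich argument: a fixed-point problem for the Newton-like operator $T(\mathbf{x}) \bydef \mathbf{x} - AF(\mathbf{x})$ on the closed ball $\overline{B_r(\overline{\mathbf{x}})}$, closed by the Banach contraction mapping theorem. Since $A$ is injective, fixed points of $T$ are exactly the zeros of $F$, so existence and uniqueness of a zero in the ball reduce to showing that $T$ is a contraction from $\overline{B_r(\overline{\mathbf{x}})}$ into itself. The hypothesis that $AF$ maps $X_{\mathcal{G},\omega}$ into itself ensures that $T$ is well defined on $X_{\mathcal{G},\omega}$, and likewise $ADF(\mathbf{x})$ is defined, even though $F$ alone lands in a weaker space.

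The first step is a derivative bound. For $\mathbf{x}\in \overline{B_r(\overline{\mathbf{x}})}$,
\[
\|DT(\mathbf{x})\| \le \|I_d - ADF(\overline{\mathbf{x}})\| + \|A(DF(\mathbf{x})-DF(\overline{\mathbf{x}}))\| \le Z_1 + Z_2(r)r \bydef \kappa_r .
\]
By the second condition in \eqref{condition radii polynomial}, $\kappa_r<1$. The $Z_2$ bound is only assumed on the open ball, so on the closure one passes to the limit using continuity of $DF$.

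The second step is self-mapping. By the mean value inequality along the segment from $\overline{\mathbf{x}}$ to $\mathbf{x}$,
\[
\|T(\mathbf{x})-\overline{\mathbf{x}}\| \le \|T(\mathbf{x})-T(\overline{\mathbf{x}})\| + \|AF(\overline{\mathbf{x}})\|.
\]
Rather than use the crude estimate $\kappa_r r$, write
\[
T(\mathbf{x})-T(\overline{\mathbf{x}}) = \int_0^1 DT(\overline{\mathbf{x}}+t(\mathbf{x}-\overline{\mathbf{x}}))\,dt\,(\mathbf{x}-\overline{\mathbf{x}})
\]
and apply the $Z_2$ estimate at radius $t r$, assuming $Z_2$ is monotone (or replacing it by its running supremum). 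This gives
\[
\|T(\mathbf{x})-\overline{\mathbf{x}}\| \le Y_0 + Z_1 r + \tfrac12 Z_2(r) r^2 .
\]
The right-hand side is $<r$ exactly by the first inequality in \eqref{condition radii polynomial}. Contraction on the ball follows from the same mean value inequality with constant $\kappa_r$, so Banach's theorem yields a unique fixed point $\widetilde{\mathbf{x}}$, hence a unique zero of $F$ in the closed ball.

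The third step is non-degeneracy. At $\widetilde{\mathbf{x}}$ we have $\|I_d - ADF(\widetilde{\mathbf{x}})\|\le \kappa_r<1$, so $ADF(\widetilde{\mathbf{x}})$ is invertible by a Neumann series. Hence $DF(\widetilde{\mathbf{x}})$ is injective: if $DF(\widetilde{\mathbf{x}})\mathbf{y}=0$ then $ADF(\widetilde{\mathbf{x}})\mathbf{y}=0$, so $\mathbf{y}=0$. Only this injectivity is needed in Theorem~\ref{thm : cusp map nondegen}. For bounded invertibility one must also show that $A$ is surjective onto the relevant space. Since $A$ is injective and $ADF(\widetilde{\mathbf{x}})$ is surjective, $A$ is onto $X_{\mathcal{G},\omega}$ and thus bijective onto its range. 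Then $DF(\widetilde{\mathbf{x}})^{-1} = (ADF(\widetilde{\mathbf{x}}))^{-1}A$, read on the spaces where $DF$ acts (as in Remark~\ref{rmk : factorization}).

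The main obstacle I expect is the functional-analytic bookkeeping rather than the estimates. $DF$ is unbounded on $X_{\mathcal{G},\omega}$, so one must justify that $T$ is Fréchet differentiable there with $DT = I_d - ADF$. One must also state precisely the sense in which $DF(\widetilde{\mathbf{x}})$ is ``boundedly invertible''. I would handle this by working with $AF$ and $ADF$ as compositions defined on all of $X_{\mathcal{G},\omega}$, which the hypotheses guarantee, and by deferring the domain/range issue to the remark the statement cites.
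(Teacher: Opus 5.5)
Your route (contraction of $T=I_d-AF$ on $\overline{B_r(\overline{\mathbf{x}})}$, then a Neumann series at $\widetilde{\mathbf{x}}$) is the standard argument behind the cited reference and Remark~\ref{rmk : NK nondegeneracy}. However, Step~2 has a genuine gap.

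The bound $\|T(\mathbf{x})-\overline{\mathbf{x}}\|\le Y_0+Z_1r+\tfrac12Z_2(r)r^2$ needs $\|A(DF(\mathbf{y})-DF(\overline{\mathbf{x}}))\|\le Z_2(r)\|\mathbf{y}-\overline{\mathbf{x}}\|$ along the segment. That is, it needs $Z_2(\rho)\le Z_2(r)$ for $\rho\le r$, which is not a hypothesis of the theorem. Your fallback does not supply it. The running supremum $\sup_{\rho\le r}Z_2(\rho)$ is at least $Z_2(r)$, and possibly $+\infty$, while the first inequality in \eqref{condition radii polynomial} is only verified with $Z_2(r)$.

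In fact no argument can avoid some such assumption. Take $X=\mathbb{R}$, $A=1$, $\overline{\mathbf{x}}=0$, and $F(x)=\tfrac{7}{10}+\int_0^x\bigl(1-\tfrac12\phi(s)\bigr)\,ds$, with $\phi$ smooth, $0\le\phi\le1$, $\phi=0$ on $[0,\infty)$ and $\phi=1$ on $(-\infty,-\tfrac1{10}]$. Then $Y_0=\tfrac7{10}$ and $Z_1=0$. The function $Z_2(\rho)\bydef\tfrac1{2\rho}$ satisfies the third hypothesis for every $\rho>0$, since $|F'(x)-F'(0)|\le\tfrac12$. At $r=1$ both conditions hold: $\tfrac14-1+\tfrac7{10}<0$ and $\tfrac12<1$. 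Yet $F\ge F(-1)\ge\tfrac7{10}-\tfrac{11}{20}>0$ on $[-1,1]$, so there is no zero in the ball.

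Without monotonicity, your crude estimate $\kappa_r r$ only gives the radii condition without the factor $\tfrac12$. You should therefore state ``$Z_2$ non-decreasing'' (or the Lipschitz form of the $Z_2$ bound above) as an explicit hypothesis, rather than claim it can be arranged. With it, your Steps~1--2 are complete. It costs nothing in this paper, because every $Z_2$ in Lemmas~\ref{lem : Z2} and~\ref{lem : Z2 GS} is a polynomial in $r$ with non-negative coefficients.

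The rest is sound.
\begin{itemize}
\item The injectivity part of Step~3 is exactly Remark~\ref{rmk : NK nondegeneracy}.
\item For the closed ball, do not rely on continuity of $DF$, which is unbounded on $X_{\mathcal{G},\omega}$. Instead note that $T$ is $\kappa_r$-Lipschitz on the convex open ball and continuous, hence Lipschitz on its closure.
\item Your upgrade to bounded invertibility is a genuinely different route from the paper's. Remark~\ref{rmk : factorization} combines injectivity with a Fredholm index-zero argument for $DF(\widetilde{\mathbf{x}}):\mathcal{D}\to\mathcal{R}$. You instead observe that surjectivity of $ADF(\widetilde{\mathbf{x}})$ forces $A$ to be onto, so $DF(\widetilde{\mathbf{x}})^{-1}=(ADF(\widetilde{\mathbf{x}}))^{-1}A$. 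This avoids any compactness or index computation.
\item That route is only valid once $A$ is regarded as a bounded injective map from a Banach space $Y\supseteq\mathrm{ran}\,DF(\widetilde{\mathbf{x}})$ into $X_{\mathcal{G},\omega}$. Its diagonal tail extends naturally to such a $Y$, and something of this kind is implicit anyway for $AF$ to make sense. You should say so explicitly: the bound $\|A\|_{\mathcal{B}(X_{\mathcal{G},\omega})}$ computed in the paper is not the relevant one here, since $\mathrm{ran}\,DF(\widetilde{\mathbf{x}})\not\subset X_{\mathcal{G},\omega}$.
\end{itemize}
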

 \begin{proof}
The proof can be found in \cite{van2021spontaneous}.
 \end{proof}

\begin{remark}\label{rmk : NK nondegeneracy}
The last assertion of Theorem~\ref{th : radii polynomial theorem periodic} deserves a word. Since $\widetilde{\mathbf{x}} \in \overline{B_r(\overline{\mathbf{x}})}$,
\[
\|I_d - ADF(\widetilde{\mathbf{x}})\|_{\mathcal{B}(X_{\mathcal{G},\omega})}
\leq \|I_d - ADF(\overline{\mathbf{x}})\|_{\mathcal{B}(X_{\mathcal{G},\omega})} + \|A(DF(\overline{\mathbf{x}}) - DF(\widetilde{\mathbf{x}}))\|_{\mathcal{B}(X_{\mathcal{G},\omega})} \leq Z_1 + Z_2(r)r < 1 ,
\]
so $ADF(\widetilde{\mathbf{x}})$ is boundedly invertible by the Neumann series. Consequently $DF(\widetilde{\mathbf{x}})$ has the bounded left inverse $(ADF(\widetilde{\mathbf{x}}))^{-1}A$, and is in particular \emph{injective}. Injectivity is all that the proof of Theorem~\ref{thm : cusp map nondegen} uses. If moreover $DF(\widetilde{\mathbf{x}})$ is regarded as an operator $\mathcal{D}\to\mathcal{R}$ as in Remark~\ref{rmk : factorization}, where it is Fredholm of index zero, injectivity upgrades to bounded invertibility on that pair of spaces.
\end{remark}
 The construction of the numerical approximation $\overline{\mathbf{x}}$ and the approximate inverse $A$ is carried out separately for each application. We begin with the Swift-Hohenberg equation.

\subsection{Spectral enclosure by Gershgorin disks}\label{sec : gershgorin}

Both applications require the spectral hypothesis of Lemma~\ref{lemma : Cusp bifurcation} --- that $D_u f(\tilde{\blambda},\tilde{\mathbf{u}})$ has exactly one eigenvalue on the imaginary axis --- together with the number $k$ of eigenvalues with strictly positive real part needed in Lemma~\ref{lem : bistability}. We settle both here, once and for all, for a general system of $\mathscr{p}$ equations; Sections~\ref{sec : gershgorin swift hohenberg} and~\ref{sec : gershgorin gray scott} then only have to supply two equation-specific constants.

Assume Theorem~\ref{th : radii polynomial theorem periodic} has been applied with radius $r_0$, yielding $\tilde{\mathbf{x}} \in \overline{B_{r_0}(\overline{\mathbf{x}})}$. The idea is to first \emph{pseudo-diagonalize} $D_u f(\tilde{\blambda},\tilde{\mathbf{u}})$ and then localize its spectrum. Let
\[
P \bydef P^N + \Pi^{>N}, \qquad P^{-1} = (P^N)^{-1}+\Pi^{>N},
\]
where $P^N = \Pi^{\leq N}P^N\Pi^{\leq N}$ is constructed so that its columns approximate eigenvectors of the truncation $\Pi^{\leq N}D_uf(\overline{\blambda},\overline{\mathbf{u}})\Pi^{\leq N}$, and set
\begin{equation}\label{def : D gersh}
  \mathcal{D} \bydef P^{-1}D_u f(\tilde{\blambda},\tilde{\mathbf{u}})P,
  \qquad
  \overline{\mathcal{D}} \bydef P^{-1}D_u f(\overline{\blambda},\overline{\mathbf{u}})P .
\end{equation}
Being similar to $D_uf(\tilde{\blambda},\tilde{\mathbf{u}})$, the operator $\mathcal{D}$ has the same spectrum; $\overline{\mathcal{D}}$, unlike $\mathcal{D}$, is numerically accessible. We write $\mathcal{D}_{i,j}$ for the $(i,j)$ component block, each indexed by $\mathcal{Z}_{\mathrm{red}}(\mathcal{G})$, and use the column-wise Gershgorin theorem of \cite{gershgorin_bredencadiotzurek,gershgorin_statement} rather than the classical row-wise version of \cite{gershgorin_original}.

\begin{lemma}[{\bf Gershgorin enclosure}]\label{lem : gershgorin}
For $n \in \mathcal{Z}_{\mathrm{red}}(\mathcal{G})$ and $j \in \{1,\dots,\mathscr{p}\}$ set
\[
  \mathscr{r}_{n,j} \bydef \frac{1}{|\mathrm{orb}_{\mathcal{G}}(n)|}
  \Bigl(\sum_{k \neq n} |(\mathcal{D}_{j,j})_{k,n}|\,|\mathrm{orb}_{\mathcal{G}}(k)|
  + \sum_{i \neq j}\sum_{k} |(\mathcal{D}_{i,j})_{k,n}|\,|\mathrm{orb}_{\mathcal{G}}(k)|\Bigr),
\]
the sums running over $\mathcal{Z}_{\mathrm{red}}(\mathcal{G})$, and $B_{n,j} \bydef \{z \in \mathbb{C} : |z-(\mathcal{D}_{j,j})_{n,n}| \leq \mathscr{r}_{n,j}\}$. Then $\sigma\bigl(D_uf(\tilde{\blambda},\tilde{\mathbf{u}})\bigr) \subset \bigcup_{n,j}B_{n,j}$, and any sub-collection of $k$ disks whose union is disjoint from all remaining disks contains exactly $k$ eigenvalues, counted with algebraic multiplicity.
\end{lemma}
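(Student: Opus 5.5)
The plan is to reduce the statement to the column-wise Gershgorin theorem for infinite matrices of \cite{gershgorin_bredencadiotzurek,gershgorin_statement}, applied to an operator similar to $\mathcal{D}$ that acts on an unweighted $\ell^1$-type space. The orbit weights $|\mathrm{orb}_{\mathcal{G}}(k)|/|\mathrm{orb}_{\mathcal{G}}(n)|$ in $\mathscr{r}_{n,j}$ indicate which similarity to use.

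\textbf{Step 1 (similarity).} Since $\mathcal{D} = P^{-1}D_uf(\tilde{\blambda},\tilde{\mathbf{u}})P$ with $P$ boundedly invertible (finite block plus identity), the two operators have the same spectrum with the same algebraic multiplicities. Next, consider the diagonal operator $\mathcal{O} \bydef \mathrm{diag}(\mathcal{O}_{\mathcal{G}},\dots,\mathcal{O}_{\mathcal{G}})$ ($\mathscr{p}$ copies) and set $\widehat{\mathcal{D}} \bydef \mathcal{O}\mathcal{D}\mathcal{O}^{-1}$. Its entries are
\[
(\widehat{\mathcal{D}}_{i,j})_{k,n} = \frac{|\mathrm{orb}_{\mathcal{G}}(k)|}{|\mathrm{orb}_{\mathcal{G}}(n)|}(\mathcal{D}_{i,j})_{k,n}.
\]
The diagonal entries are unchanged, and the off-diagonal column sums of $|\widehat{\mathcal{D}}|$ in column $(n,j)$ are exactly $\mathscr{r}_{n,j}$. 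Because $\mathcal{O}_{\mathcal{G}}$ is bounded with bounded inverse (Lemma~\ref{lem : O operator norm maximum}), $\widehat{\mathcal{D}}$ is again similar to $D_uf(\tilde{\blambda},\tilde{\mathbf{u}})$. One can also see this directly: by Remark~\ref{rmk : L2 vs ell2}, $\mathcal{O}_{\mathcal{G}}$ is exactly the change between the reduced representation and the orbit-unfolded representation, so the column sums are those of the operator acting on full Fourier coefficients, grouped by orbits.

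\textbf{Step 2 (apply the column Gershgorin theorem).} The column version applies to operators $\widehat{\mathcal{D}} = \Lambda + R$, where $\Lambda$ is the (unbounded) diagonal part and $R$ is relatively bounded, acting on an $\ell^1$ space. The column sums of $|R|$ are then the finite quantities $\mathscr{r}_{n,j}$, and the resolvent $(z-\Lambda)^{-1}$ satisfies $\|R(z-\Lambda)^{-1}\|_{\ell^1\to\ell^1} = \sup_{n,j}\mathscr{r}_{n,j}/|z-\Lambda_{n,j}| < 1$ whenever $z\notin\bigcup B_{n,j}$. A Neumann series then gives invertibility of $z-\widehat{\mathcal{D}}$, so the spectrum lies in $\bigcup_{n,j}B_{n,j}$. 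For the counting statement, I would use the homotopy $\Lambda + tR$ for $t\in[0,1]$. The disks shrink with $t$, so a sub-collection of $k$ disks that is disjoint from the rest remains disjoint for all $t$. The Riesz projection over a contour separating the two groups then depends continuously on $t$, and its rank is constant in $t$; at $t=0$ it equals $k$.

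\textbf{Main obstacle.} The main obstacle is transferring spectrum from the space where $D_uf$ naturally acts ($\ell^2_{\mathcal{G},\omega}$, or $H^{2r}\to L^2$) to the $\ell^1$ setting where column sums control the norm. Since $D_uf$ has compact resolvent (its spectrum is discrete and consists of eigenvalues with finite algebraic multiplicity), I would argue that eigenvectors are smooth, with rapidly decaying coefficients, so they lie in every such space. Hence the eigenvalues and their generalized eigenspaces coincide across the spaces, and it suffices to work in $\ell^1$. I would also need that the tail column sums $\mathscr{r}_{n,j}$ grow more slowly than $|\Lambda_{n,j}|$ so the Neumann argument closes uniformly. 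I expect this to follow from the diagonal dominance of $l_{\blambda}$ in the tail, which is also what the cited theorem assumes.
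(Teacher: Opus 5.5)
Your proposal is correct and follows essentially the paper's route. The paper obtains the lemma by invoking the column-wise Gershgorin theorem of \cite{gershgorin_bredencadiotzurek,gershgorin_statement} on $\ell^1_{\mathcal{G}}$, whose operator norm is the supremum of the orbit-normalised column sums (cf.\ \eqref{eq : column norm}); your conjugation by $\mathcal{O}$ is precisely the isometry of $\ell^1_{\mathcal{G}}$ onto the unweighted $\ell^1$ space on $\mathcal{Z}_{\mathrm{red}}(\mathcal{G})$ that turns these into ordinary column sums, and the caveats you flag (local finiteness of the disk family, which holds here because the off-diagonal part of $\mathcal{D}$ is bounded on $\ell^1_{\mathcal{G}}$ while the diagonal entries inherited from $l_{\blambda}$ diverge, and the transfer of eigenvalues and algebraic multiplicities between the $\ell^1$ and $\ell^2_{\mathcal{G},\omega}$ settings via regularity of generalised eigenvectors) are exactly what the paper leaves to the cited theorem.
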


Since $\tilde{\mathbf{u}}$ is known only through its enclosure, we replace $\mathcal{D}$ by $\overline{\mathcal{D}}$ and inflate the radii. Denote by $\overline{\mathscr{r}}_{n,j}$ and $\overline{B}_{n,j}$ the quantities obtained from $\overline{\mathcal{D}}$, write $Dg \bydef D_{u}g(\overline{\blambda},\overline{\mathbf{u}})$ and assume, as holds for the polynomial nonlinearities considered here, that each block $(Dg)_{i,j} = \mathbb{M}_{\overline{q}_{i,j}}$ is a multiplication operator whose symbol is supported in $I^{2N}$ (true whenever $\overline{\mathbf{u}} = \obpi^{\leq N}\overline{\mathbf{u}}$ and $g$ is at most cubic, as in both applications); set
\begin{equation}\label{def : Qj}
  Q_j \bydef \sum_{i=1}^{\mathscr{p}} \|\overline{q}_{i,j}\|_1 .
\end{equation}
The single equation-specific ingredient is a constant $\varepsilon_{r_0}$ with
\begin{equation}\label{def : eps r0}
  \bigl\|D_uf(\tilde{\blambda},\tilde{\mathbf{u}}) - D_uf(\overline{\blambda},\overline{\mathbf{u}})\bigr\|_{\mathcal{B}(\ell^1_{\mathcal{G}})} \leq \varepsilon_{r_0}
  \qquad\text{for all } \tilde{\mathbf{x}} \in \overline{B_{r_0}(\overline{\mathbf{x}})} .
\end{equation}

\begin{lemma}[{\bf Computable Gershgorin radii}]\label{lem : computable radii}
Let $\mathscr{r}^{\infty} \bydef \max(1,\|P^N\|_{\mathcal{B}(\ell^1_{\mathcal{G}})})\max(1,\|(P^N)^{-1}\|_{\mathcal{B}(\ell^1_{\mathcal{G}})})\,\varepsilon_{r_0}$ and define, for $j\in\{1,\dots,\mathscr{p}\}$,
{\small\begin{align*}
  \overline{\mathscr{r}}^{\leq N}_{n,j} &\bydef \frac{1}{|\mathrm{orb}_{\mathcal{G}}(n)|}\Bigl(\sum_{k \in I^N,\, (k,i)\neq(n,j)} |(\overline{\mathcal{D}}_{i,j})_{k,n}|\,|\mathrm{orb}_{\mathcal{G}}(k)| + \sum_{k \in I^{3N}\setminus I^N}\sum_{i=1}^{\mathscr{p}} |((Dg\,P^N)_{i,j})_{k,n}|\,|\mathrm{orb}_{\mathcal{G}}(k)|\Bigr), & n &\in I^N,\\
  \overline{\mathscr{r}}^{3N\setminus N}_{n,j} &\bydef \frac{1}{|\mathrm{orb}_{\mathcal{G}}(n)|}\sum_{k \in I^N}\sum_{i=1}^{\mathscr{p}} |(((P^N)^{-1}Dg)_{i,j})_{k,n}|\,|\mathrm{orb}_{\mathcal{G}}(k)| \;+\; Q_j, & n &\in I^{3N}\setminus I^N,
\end{align*}}
where in the first line the index $i$ also runs over $\{1,\dots,\mathscr{p}\}$. Then $\mathscr{r}_{n,j} \leq \rho_{n,j}$, where
\[
  \rho_{n,j} \bydef
  \begin{cases}
    \overline{\mathscr{r}}^{\leq N}_{n,j} + \mathscr{r}^{\infty}, & n \in I^N,\\
    \overline{\mathscr{r}}^{3N\setminus N}_{n,j} + \mathscr{r}^{\infty}, & n \in I^{3N}\setminus I^N,\\
    Q_j + \mathscr{r}^{\infty}, & n \notin I^{3N},
  \end{cases}
\]
and consequently $B_{n,j} \subset \{z : |z - (\overline{\mathcal{D}}_{j,j})_{n,n}| \leq \rho_{n,j}\}$ for all $n$ and $j$.
\end{lemma}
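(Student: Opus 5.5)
The plan is to split $\mathcal{D}=\overline{\mathcal{D}}+(\mathcal{D}-\overline{\mathcal{D}})$ and bound the two contributions to $\mathscr{r}_{n,j}$ separately. The radius $\mathscr{r}_{n,j}$ is a weighted column sum: it is the $\ell^1_{\mathcal{G}}$ norm of the off-diagonal part of column $(n,j)$ of $\mathcal{D}$, divided by $|\mathrm{orb}_{\mathcal{G}}(n)|$. This holds because $\|e_n\|_{\ell^1_{\mathcal{G}}}=|\mathrm{orb}_{\mathcal{G}}(n)|$ and the $\ell^1_{\mathcal{G}}$ norm of a sequence $y$ is $\sum_k|y_k||\mathrm{orb}_{\mathcal{G}}(k)|$.

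By the triangle inequality,
\[
\mathscr{r}_{n,j}\le \overline{\mathscr{r}}_{n,j}+\frac{\|(\mathcal{D}-\overline{\mathcal{D}})e_{n,j}\|_{\ell^1_{\mathcal{G}}}}{\|e_{n,j}\|_{\ell^1_{\mathcal{G}}}}.
\]
Including the diagonal entry in the second term only enlarges it. The second term is at most $\|\mathcal{D}-\overline{\mathcal{D}}\|_{\mathcal{B}(\ell^1_{\mathcal{G}})}$, since the $\ell^1$ operator norm is exactly the supremum of normalized column norms. Now $\mathcal{D}-\overline{\mathcal{D}}=P^{-1}\bigl(D_uf(\tilde{\blambda},\tilde{\mathbf{u}})-D_uf(\overline{\blambda},\overline{\mathbf{u}})\bigr)P$. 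The block structure gives $\|P\|_{\mathcal{B}(\ell^1_{\mathcal{G}})}=\max(1,\|P^N\|)$, and likewise for $P^{-1}$. Combining this with \eqref{def : eps r0} yields the bound $\mathscr{r}^\infty$.

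It then remains to show $\overline{\mathscr{r}}_{n,j}\le$ the computable quantities, by computing the columns of $\overline{\mathcal{D}}$ explicitly. Write $D_uf(\overline{\blambda},\overline{\mathbf{u}})=l_{\blambda}+Dg$ with $l_{\blambda}$ diagonal in Fourier space, and split according to the position of $n$.

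\emph{Case $n\in I^N$.} Column $n$ of $\overline{\mathcal{D}}$ equals $P^{-1}(l+Dg)P^Ne_n$. Since $P^Ne_n$ is supported in $I^N$ and the symbols are supported in $I^{2N}$, this column is supported in $I^{3N}$. On $I^N$ the entries are the finite matrix entries $(\overline{\mathcal{D}}_{i,j})_{k,n}$. On $I^{3N}\setminus I^N$, $P^{-1}$ acts as the identity and $l$ contributes nothing, because $l P^N e_n$ stays in $I^N$. Those entries are therefore exactly $(DgP^N)_{k,n}$, giving $\overline{\mathscr{r}}^{\leq N}_{n,j}$.

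\emph{Case $n\notin I^N$.} Here $Pe_n=e_n$, and the diagonal entry $l_n+(Dg)_{nn}$ is excluded. The column consists of the $I^N$ part $(P^N)^{-1}Dg\,e_n$ together with the tail part $\Pi^{>N}Dg\,e_n$ minus its diagonal. For the tail part, the $\ell^1_{\mathcal{G}}$ norm of column $n$ of a multiplication operator $\mathbb{M}_q$, divided by the orbit size, is at most $\|q\|_1$. This follows from $\|q\star e\|_{\ell^1}\le\|q\|_1\|e\|_{\ell^1}$, where $\iota(e_n)$ has $\ell^1$ norm $|\mathrm{orb}_{\mathcal{G}}(n)|$, applied blockwise and summed over $i$, which produces $Q_j$. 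For $n\notin I^{3N}$, the $I^N$ part vanishes, because $Dg\,e_n$ is supported at distance greater than $N$ from the origin. This leaves $Q_j$ alone.

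The final inclusion follows because the centers $(\mathcal{D}_{j,j})_{n,n}$ differ from $(\overline{\mathcal{D}}_{j,j})_{n,n}$ by at most the diagonal entry of the perturbation. I expect this step to be the main obstacle: the diagonal perturbation must be absorbed into $\mathscr{r}^\infty$ together with the off-diagonal perturbation, so the column-norm estimate has to be taken over the full column, diagonal entry included.
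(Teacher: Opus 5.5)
Your proposal is correct and follows the paper's proof essentially step for step. The paper also bounds the radius and the centre shift together, $\mathscr{r}_{n,j}+|(\mathcal{D}_{j,j})_{n,n}-(\overline{\mathcal{D}}_{j,j})_{n,n}|\le\overline{\mathscr{r}}_{n,j}+\|\mathcal{D}-\overline{\mathcal{D}}\|_{\mathcal{B}(\ell^1_{\mathcal{G}})}\le\overline{\mathscr{r}}_{n,j}+\mathscr{r}^{\infty}$; the diagonal entry has normalised weight $1$, which is why the full-column estimate absorbs the centre shift. It then uses the same support analysis on $I^N$, $I^{3N}\setminus I^N$ and the complement of $I^{3N}$, bounding the normalised column sums of $\mathbb{M}_{\overline{q}_{i,j}}$ by $Q_j$. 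The only difference is cosmetic: you merge the last two regions into the single case $n\notin I^N$.
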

\begin{proof}
See Appendix~\ref{apen : gershgorin}.
\end{proof}

All the quantities $\rho_{n,j}$ for $n \in I^{3N}$ are finite sums of numerically accessible terms, hence computable with interval arithmetic; the tail is covered by the single constant $Q_j+\mathscr{r}^\infty$. This yields the following criterion, which is what the applications verify.

\begin{prop}[{\bf Spectral enclosure}]\label{prop : gershgorin enclosure}
Suppose that
\begin{enumerate}[label=(\roman*),nosep,leftmargin=2.2em]
\item $|\mathrm{Re}\,(\overline{\mathcal{D}}_{j,j})_{n,n}| > \rho_{n,j}$ for every $n \in I^{3N}$ and every $j$, with exactly one exception $(n_0,j_0)$;
\item $\displaystyle\sup_{n \in \mathcal{Z}_{\mathrm{red}}(\mathcal{G})\setminus I^{3N}} \mathrm{Re}\,(\overline{\mathcal{D}}_{j,j})_{n,n} + Q_j + \mathscr{r}^{\infty} < 0$ for every $j$;
\item the disk $\{z : |z-(\overline{\mathcal{D}}_{j_0,j_0})_{n_0,n_0}| \leq \rho_{n_0,j_0}\}$ is disjoint from all the others.
\end{enumerate}
Then $D_uf(\tilde{\blambda},\tilde{\mathbf{u}})$ has exactly one eigenvalue with zero real part, namely the algebraically simple eigenvalue $0$, and every other eigenvalue lies in $\{z : \mathrm{Re}(z) \neq 0\}$. If, in addition,
\begin{enumerate}[label=(\roman*),nosep,leftmargin=2.2em,start=4]
\item $\mathrm{Re}\,(\overline{\mathcal{D}}_{j,j})_{n,n} + \rho_{n,j} < 0$ for every $n \in I^{3N}$ and every $j$ with $(n,j)\neq(n_0,j_0)$,
\end{enumerate}
then $k=0$. (Conditions (i)--(iii) alone leave open the possibility of disks lying in $\{\mathrm{Re}(z)>0\}$; in general $k$ is the number of eigenvalues contained in those disks, counted as in Lemma~\ref{lem : gershgorin}.)
\end{prop}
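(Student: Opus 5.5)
The plan is to combine Lemma~\ref{lem : gershgorin} with Lemma~\ref{lem : computable radii}. Since $\mathcal{D}$ is similar to $D_uf(\tilde{\blambda},\tilde{\mathbf{u}})$, its spectrum lies in $\bigcup_{n,j}B_{n,j}$. By Lemma~\ref{lem : computable radii}, each $B_{n,j}$ is contained in the inflated disk $\widehat{B}_{n,j}\bydef\{z : |z-(\overline{\mathcal{D}}_{j,j})_{n,n}|\le\rho_{n,j}\}$.

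One subtlety is the centres. Lemma~\ref{lem : gershgorin} centres $B_{n,j}$ at $(\mathcal{D}_{j,j})_{n,n}$, not at $(\overline{\mathcal{D}}_{j,j})_{n,n}$. I will therefore check that the perturbation term $\mathscr{r}^\infty$ also absorbs the shift of the diagonal, $|(\mathcal{D}-\overline{\mathcal{D}})_{n,n}|$. This should hold because $\mathscr{r}^\infty$ bounds the full $\ell^1_{\mathcal{G}}$ column norm of $P^{-1}(D_uf(\tilde\cdot)-D_uf(\overline\cdot))P$, diagonal entry included, with the orbit weights. The final sentence of Lemma~\ref{lem : computable radii} asserts exactly this containment, so I take it as given.

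The steps are as follows.

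\textbf{Step 1.} By (i), every inflated disk with $n\in I^{3N}$ and $(n,j)\ne(n_0,j_0)$ misses the imaginary axis. By (ii), for $n\notin I^{3N}$ the radius is $Q_j+\mathscr{r}^\infty$ and the disks lie in $\{\mathrm{Re}\,z<0\}$. Hence the union of all disks other than $\widehat{B}_{n_0,j_0}$ does not meet $i\mathbb{R}$.

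\textbf{Step 2.} By (iii), $\widehat{B}_{n_0,j_0}$ is disjoint from the other disks. The counting part of Lemma~\ref{lem : gershgorin} with $k=1$ then shows that it contains exactly one eigenvalue, counted with algebraic multiplicity. The counting statement is for the true disks $B_{n,j}$, so I will apply it through a homotopy argument. I will deform the radii between $\mathscr{r}_{n,j}$ and $\rho_{n,j}$ and the centres accordingly, and use the continuity of the spectral projection for the isolated part. Alternatively, I will simply read Lemma~\ref{lem : gershgorin} as applying to any enclosing family of disks, which is how it is proved via Riesz projections.

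\textbf{Step 3.} The zero of $F$ gives $D_uf\,\mathbf{v}=0$ with $\mathbf{v}\neq0$, since $(\mathbf{v},\mathbf{v})_2=1$. So $0$ is an eigenvalue. It cannot lie in any other disk by Step 1, so it lies in $\widehat{B}_{n_0,j_0}$ and is the unique eigenvalue there, and it is algebraically simple. Every other eigenvalue lies in the other disks, which by Step 1 lie in $\{\mathrm{Re}\,z\ne0\}$.

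For $k=0$, condition (iv) together with (ii) places every other disk in $\{\mathrm{Re}\,z<0\}$. The only eigenvalue outside the open left half-plane is then $0$, which has zero real part, so no eigenvalue has positive real part.

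The main obstacle I expect is the counting statement in the infinite-dimensional, orbit-weighted setting. The column-wise Gershgorin theorem on $\ell^1_{\mathcal{G}}$ with orbit weights needs the sum of disks to be controlled uniformly. Condition (ii) yields a uniform gap for the tail disks, so the isolated disk is separated from a closed set containing the rest of the spectrum. That gap is what makes the Riesz projection and continuity argument rigorous.
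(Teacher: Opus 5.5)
Your proposal is correct and follows the paper's proof essentially step for step: you inflate to the disks centred at $(\overline{\mathcal{D}}_{j,j})_{n,n}$ via Lemma~\ref{lem : computable radii}, use (i)--(ii) to show that only the exceptional disk meets $i\mathbb{R}$, apply (iii) with the counting part of Lemma~\ref{lem : gershgorin} to get exactly one eigenvalue in it, identify that eigenvalue as $0$ via the null vector $\mathbf{v}$, and use (iv) to obtain $k=0$. The homotopy/Riesz-projection detour in your Step~2 is unnecessary: since $B_{n,j}\subset\widehat{B}_{n,j}$, disjointness of the inflated disks immediately gives disjointness of the true ones, so Lemma~\ref{lem : gershgorin} applies verbatim to $B_{n_0,j_0}$, and any eigenvalue in $\widehat{B}_{n_0,j_0}$ must already lie in $B_{n_0,j_0}$.
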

\begin{proof}
See Appendix~\ref{apen : gershgorin}.
\end{proof}

Condition (ii) is checked in practice by observing that $(\overline{\mathcal{D}}_{j,j})_{n,n}$ is decreasing in $|n|$ beyond $I^{3N}$ --- the symbol of $l_{\overline{\blambda}}$ dominating there --- so that the supremum is attained at the smallest such $|n|$; this holds as soon as $N$ is large enough, which is the case in all our applications.

\section{Application to the Swift-Hohenberg PDE}\label{sec : swift hohenberg}

We now apply the framework of Section~\ref{sec : set-up} to the Swift-Hohenberg (SH) equation
\begin{equation}\label{swift hohenberg}
\partial_t u = f(\boldsymbol{\lambda},u) \bydef l_{\boldsymbol{\lambda}} u + g(\boldsymbol{\lambda},u), \qquad
l_{\boldsymbol{\lambda}} \bydef -(I_d + \Delta)^2 - \lambda_1 I_d, \qquad
g(\boldsymbol{\lambda},u) \bydef \lambda_2 u^2 - u^3.
\end{equation}

The Swift-Hohenberg equation is among the most thoroughly studied models for pattern formation. It supports periodic patterns with diverse spatial symmetry groups---including square, hexagonal, and general dihedral symmetries~\cite{hexagon2021lloyd,hexagon2008,squareSakaguchi_1997,jason_review_paper,jason_spot_paper,jason_ring_paper}---and displays a rich multistability structure that makes it a natural benchmark for our cusp validation framework. Rigorous computer-assisted existence proofs for stationary periodic solutions have been developed in one, two, and three dimensions~\cite{lessard_2,lessard_1,dominic_sh_periodic,gabriel_snaking,sh_cadiot,symmetry_blanco_cadiot,olivier_radial,symmetry_blanco_cadiot}.

Specializing the general framework of Section~\ref{sec : set-up} to~\eqref{swift hohenberg}, note that this is a scalar equation ($\mathscr{p}=1$), so throughout this section we simplify notation and write $\mathbf{u} = u$, $\mathbf{v} = v$, $\mathbf{w} = w$, $\mathbf{h} = h$. One computes
\[
D_{uu}f(\boldsymbol{\lambda},u)(v,v) = (2\lambda_2 - 6u)v^2
\]
and, using Proposition~\ref{prop : adjoint mult},
\[
D_u f(\boldsymbol{\lambda},u)^* w = l_{\boldsymbol{\lambda}} w + \mathcal{O}_{\mathcal{G}}\bigl((2\lambda_2 u - 3u^2)\star (\mathcal{O}_{\mathcal{G}}^{-1}w)\bigr).
\]
Substituting these into the general cusp map~\eqref{def : cusp map} of Section~\ref{sec : cusp map} and representing the map in Fourier space yields
\begin{equation}\label{sh cusp map}
F(\mathbf{x}) \bydef \begin{bmatrix}
(v,v)_2 - 1 \\
(v,w)_2 - 1 \\
(h,w)_2 \\
\bigl(2\lambda_2 v^2 - 6u v^2,\, w\bigr)_2 \\
l_{\boldsymbol{\lambda}} u + \lambda_2 u^2 - u^3 \\
l_{\boldsymbol{\lambda}} v + (2\lambda_2 u - 3u^2)v + s_1 v \\
l_{\boldsymbol{\lambda}} w + \mathcal{O}_{\mathcal{G}}\bigl((2\lambda_2 u - 3u^2)\star(\mathcal{O}_{\mathcal{G}}^{-1}w)\bigr) \\
l_{\boldsymbol{\lambda}} h + (2\lambda_2 u - 3u^2)h + s_2 v + (2\lambda_2 - 6u)v^2
\end{bmatrix}.
\end{equation}
We seek zeros of~\eqref{sh cusp map} in $X_{\mathcal{G},\omega} = \mathbb{R}^4 \times (\ell^2_{\mathcal{G},\omega})^4$ by applying Theorem~\ref{th : radii polynomial theorem periodic}. The construction of $\overline{\mathbf{x}}$ and $A$ is described in Section~\ref{sec : numerical aspects}; the bounds $Y_0$, $Z_1$, $Z_2$ required by that theorem are computed in Section~\ref{sec : bounds sh}; the spectral hypothesis of Lemma~\ref{lemma : Cusp bifurcation} is verified in Section~\ref{sec : gershgorin swift hohenberg}; and the rigorous cusp proofs are assembled in Section~\ref{sec : caps sh}.

\subsection{Numerical approximation and approximate inverse}\label{sec : numerical aspects}

\noindent\textbf{Constructing $\overline{\mathbf{x}}$.}
The numerical approximation $\overline{\mathbf{x}} = (\bar{\mathbf{s}}, \bar{\boldsymbol{\lambda}}, \bar{u}, \bar{v}, \bar{w}, \bar{h})$ is obtained via a three-stage procedure. First, one-parameter pseudo-arclength continuation in $\lambda_1$ locates an approximate saddle-node bifurcation, which is refined using Newton's method on the saddle-node map~\cite{jp_saddle_node1,jp_saddle_node} to obtain accurate values of $(\bar{\lambda}_1, \bar{u}, \bar{v})$. Second, fixing these values, continuation in $\lambda_2$ traces the fold curve until a saddle-node of the saddle-node map is encountered, signalling a candidate cusp; see~\cite{cusp_jp} for the analogous finite-dimensional procedure. Third, Newton's method is applied to the full cusp map~\eqref{sh cusp map}, initialized from the data of the previous steps (with $\bar{w}$ computed from the approximate adjoint equation and $\bar{h}$ corrected by Newton iterations). The resulting $\overline{\mathbf{x}}$ satisfies $\bar{s}_1 = \bar{s}_2 = 0$.

Fix $N \in \mathbb{N}$ as the truncation index and recall the projections $\Pi^{\leq N}$, $\Pi^{>N}$ from~\eqref{def : piN and pisubN}. For $\mathbf{x} = (\mathbf{s},\boldsymbol{\lambda},u,v,w,h) \in X_{\mathcal{G},\omega}$, we define the tuple-level projections
{\small\begin{align}
    &\bpi^{\leq N}(u,v,w,h) \bydef (\Pi^{\leq N} u, \Pi^{\leq N} v, \Pi^{\leq N} w, \Pi^{\leq N} h), \quad
     \bpi^{>N}(u,v,w,h) \bydef (\Pi^{> N} u, \Pi^{> N} v, \Pi^{> N} w, \Pi^{> N} h), \\
    &\obpi^{\leq N}\mathbf{x} \bydef (\mathbf{s},\boldsymbol{\lambda},\Pi^{\leq N} u, \Pi^{\leq N} v, \Pi^{\leq N} w, \Pi^{\leq N} h), \quad
     \obpi^{>N}\mathbf{x} \bydef (0,0,0,0,\Pi^{> N} u, \Pi^{> N} v, \Pi^{> N} w, \Pi^{> N} h).
\end{align}}
We also introduce the shorthand $\mathcal{O}_{\mathcal{G},N} \bydef \Pi^{\leq N}\mathcal{O}_{\mathcal{G}}\Pi^{\leq N}$ for the truncation of the orbit-size operator~\eqref{def : D operator} to the first $N$ modes. We take $\overline{\mathbf{x}} \bydef \obpi^{\leq N}\overline{\mathbf{x}}$, so that $\overline{\mathbf{x}}$ has finitely many nonzero Fourier modes. In Fourier space, $l_{\boldsymbol{\lambda}}$ is diagonal with entries
\[
(l_{\bar{\boldsymbol{\lambda}}})_{n,n} = -(1 - |\mathcal{L}\tilde{n}|_2^2)^2 - \bar{\lambda}_1, \quad n \in \mathcal{Z}_{\mathrm{red}}(\mathcal{G}).
\]
For $|n|$ sufficiently large, $(l_{\bar{\boldsymbol{\lambda}}})_{n,n}$ is bounded away from zero, ensuring invertibility in the tail.

\noindent\textbf{Constructing \boldmath{$A$}.}
The approximate inverse is decomposed as $A = \obpi^{\leq N}A + \obpi^{>N}A$, where the finite part $A^N \approx (\obpi^{\leq N}DF(\overline{\mathbf{x}})\obpi^{\leq N})^{-1}$ is computed and stored as a matrix. The tail part inverts the dominant diagonal operator $l_{\bar{\boldsymbol{\lambda}}}$ on each sequence component independently: since $(l_{\bar{\boldsymbol{\lambda}}})_{n,n} = -(1-|\mathcal{L}\tilde{n}|_2^2)^2 - \bar{\lambda}_1$ is nonzero for all $n \in \mathcal{Z}_{\mathrm{red}}(\mathcal{G}) \setminus I^N$, we define, for each $z \in \{u,v,w,h\}$,
\[
(\Pi_A^{>N}z)_n \bydef \begin{cases}
0, & n \in I^N \\
\frac{z_n}{(l_{\bar{\boldsymbol{\lambda}}})_{n,n}}, &n \in \mathcal{Z}_{\mathrm{red}}(\mathcal{G}) \setminus I^N.
\end{cases}
\]
In full, for $\mathbf{x} = (\mathbf{s},\boldsymbol{\lambda},u,v,w,h) \in X_{\mathcal{G},\omega}$,
\[
\obpi^{>N}A\,\mathbf{x} \bydef \bigl(0,0,\;\Pi_A^{>N}u,\;\Pi_A^{>N}v,\;\Pi_A^{>N}w,\;\Pi_A^{>N}h\bigr).
\]
We define
\begin{equation}\label{def : Linfty}
\mathcal{L}_{\infty} \bydef \max_{n \in \mathcal{Z}_{\mathrm{red}}(\mathcal{G}) \setminus I^N} \frac{1}{|(1 - |\mathcal{L}\tilde{n}|_2^2)^2 + \bar{\lambda}_1|}.
\end{equation}

\begin{lemma}\label{lem : tailA}
$\|\obpi^{>N}A\|_{\mathcal{B}(X_{\mathcal{G},\omega})} \leq \mathcal{L}_{\infty}$.
\end{lemma}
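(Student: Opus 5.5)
The plan is to exploit the fact that $\obpi^{>N}A$ acts componentwise by a diagonal operator, and to use the orbit-weighted form \eqref{eq : orbit weights} of the norm on $\ell^2_{\mathcal{G},\omega}$. Fix $\mathbf{x} = (\mathbf{s},\blambda,u,v,w,h) \in X_{\mathcal{G},\omega}$. By definition, $\obpi^{>N}A\,\mathbf{x} = (0,0,\Pi_A^{>N}u,\Pi_A^{>N}v,\Pi_A^{>N}w,\Pi_A^{>N}h)$. Hence, by the definition of $\|\cdot\|_{X_{\mathcal{G},\omega}}$,
\[
\|\obpi^{>N}A\,\mathbf{x}\|_{X_{\mathcal{G},\omega}} = \sum_{z\in\{u,v,w,h\}} \|\Pi_A^{>N}z\|_{2,\omega}.
\]
It therefore suffices to show that $\|\Pi_A^{>N}z\|_{2,\omega} \leq \mathcal{L}_\infty \|z\|_{2,\omega}$ for each sequence component. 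Once this holds, the right-hand side above is at most $\mathcal{L}_\infty\sum_z\|z\|_{2,\omega} \leq \mathcal{L}_\infty \|\mathbf{x}\|_{X_{\mathcal{G},\omega}}$, since the scalar terms $|s_i|+|\lambda_i|$ are nonnegative and are simply dropped.

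For the componentwise bound, I would argue exactly as in the proof of Lemma~\ref{lem : O operator norm maximum}. Write
\[
\|z\|_{2,\omega}^2 = \sum_{n\in\mathcal{Z}_{\mathrm{red}}(\mathcal{G})} W_n |z_n|^2,
\]
where the $W_n>0$ are the orbit weights. The operator $\Pi_A^{>N}$ is diagonal in the reduced index $n$. Its entry is $0$ for $n\in I^N$ and $1/(l_{\bar{\blambda}})_{n,n}$ otherwise. The weights therefore cancel in the Rayleigh-type quotient, giving
\[
\|\Pi_A^{>N}z\|_{2,\omega}^2 = \sum_{n\notin I^N} \frac{W_n |z_n|^2}{|(l_{\bar\blambda})_{n,n}|^2} \leq \Bigl(\sup_{n\notin I^N}\frac{1}{|(l_{\bar\blambda})_{n,n}|}\Bigr)^2 \sum_{n\notin I^N} W_n|z_n|^2 \leq \mathcal{L}_\infty^2\|z\|_{2,\omega}^2.
\]
Here I use $|(l_{\bar\blambda})_{n,n}| = |(1-|\mathcal{L}\tilde n|_2^2)^2+\bar\lambda_1|$, which matches the definition \eqref{def : Linfty}. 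This argument requires no invariance of $\omega$ on orbits, so it also covers the $D_6$ case.

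The only genuinely delicate point, and so the expected main obstacle, is making sure $\mathcal{L}_\infty$ is a finite maximum rather than an unattained or infinite supremum. I would settle it as follows. The construction of $A$ assumes $(l_{\bar\blambda})_{n,n}\neq 0$ for every $n\notin I^N$. Moreover, $|(l_{\bar\blambda})_{n,n}|\to\infty$ as $|n|\to\infty$, because $\mathcal{L}$ is invertible and hence $|\mathcal{L}\tilde n|_2\to\infty$. Consequently only finitely many $n$ can come close to realizing the supremum, which is therefore attained and finite. Since the componentwise bound and the finiteness of $\mathcal{L}_\infty$ are the only ingredients, combining them gives $\|\obpi^{>N}A\|_{\mathcal{B}(X_{\mathcal{G},\omega})}\leq\mathcal{L}_\infty$.
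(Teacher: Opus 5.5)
Your proposal is correct and follows the paper's argument: the paper's proof is exactly the componentwise estimate $\|\Pi_A^{>N}z\|_{2,\omega}\leq\mathcal{L}_\infty\|\Pi^{>N}z\|_{2,\omega}$, summed over $z\in\{u,v,w,h\}$ with the scalar terms dropped. You additionally justify the diagonal bound via the orbit weights $W_n$, which the paper leaves implicit; your finiteness discussion of $\mathcal{L}_\infty$ is harmless but not needed for the inequality itself.
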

\begin{proof}
For $\|\mathbf{x}\|_{X_{\mathcal{G},\omega}} = 1$, the definition of the tail action gives
\[
 \|\obpi^{> N} A \mathbf{x}\|_{X_{\mathcal{G},\omega}} 
    \leq \mathcal{L}_\infty \sum_{z \in \{u,v,w,h\}} \left\|\Pi^{> N}z\right\|_{2,\omega}
    \leq \mathcal{L}_\infty \|\mathbf{x}\|_{X_{\mathcal{G},\omega}} = \mathcal{L}_\infty. \qquad \qedhere
\]
\end{proof}

\subsection{Newton--Kantorovich bounds for Swift-Hohenberg}\label{sec : bounds sh}

With $\overline{\mathbf{x}}$ and $A$ in hand, we construct the three bounds required by Theorem~\ref{th : radii polynomial theorem periodic}. Throughout this section we write $a^{\mathcal{N}} \bydef \Pi^{\leq \mathcal{N}}a$ for any sequence $a \in \ell^2_{\mathcal{G},\omega}$ and some $\mathcal{N} \in \mathbb{N}$.

Because the nonlinearity in~\eqref{swift hohenberg} is cubic, $F(\overline{\mathbf{x}})$ has Fourier support contained in $I^{3N}$. This leads to the following result, which provides a computable bound for $Y_0$.

\begin{lemma}[\bf The \boldmath{$Y_0$} bound]\label{lem : Y0 SH}
Let $Y_0 \bydef \|\obpi^{\leq 3N} AF(\overline{\mathbf{x}})\|_{X_{\mathcal{G},\omega}}$. Then $\|AF(\overline{\mathbf{x}})\|_{X_{\mathcal{G},\omega}} \leq Y_0$.
\end{lemma}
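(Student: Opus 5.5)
The plan is to show that $AF(\overline{\mathbf{x}}) = \obpi^{\leq 3N}AF(\overline{\mathbf{x}})$ exactly, so that the bound holds with equality. Since $\overline{\mathbf{x}} = \obpi^{\leq N}\overline{\mathbf{x}}$, every sequence component $\bar u,\bar v,\bar w,\bar h$ is supported in $I^N$. First I examine the Fourier support of each row of \eqref{sh cusp map}. The operator $l_{\bar\blambda}$ is diagonal and preserves support. Products of two or three sequences supported in $I^N$ are supported in $I^{2N}$ or $I^{3N}$ respectively, by the convolution structure: $|n-k|_\infty\le N$ and $|k|_\infty\le N$ imply $|n|_\infty\le 2N$, and iterating gives the cubic case. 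This support statement holds in the symmetric setting because $\iota(u\star v)=\iota(u)\star\iota(v)$ (Corollary~\ref{cor : conv estimates symmetric}), and because $|\cdot|_\infty$ is invariant along orbits for the groups used. The $D_6$ lattice action $(n_1,n_2)\mapsto(n_2,n_2-n_1)$ is the one case where this invariance can fail, so I would handle it by working on the full lattice and taking $I^{\mathcal{N}}$ with respect to the unfolded indices.

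For the adjoint row, I use $\mathcal{O}_{\mathcal{G}}\bigl((2\bar\lambda_2\bar u-3\bar u^2)\star(\mathcal{O}_{\mathcal{G}}^{-1}\bar w)\bigr)$. Here $\mathcal{O}_{\mathcal{G}}^{\pm1}$ are diagonal and so preserve supports, which places this row in $I^{3N}$ as well. Hence every sequence row of $F(\overline{\mathbf{x}})$ is supported in $I^{3N}$, and the scalar rows are unaffected.

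Next I apply $A=\obpi^{\leq N}A+\obpi^{>N}A$. By construction the finite block $A^N$ maps into $\obpi^{\leq N}$, so its output is supported in $I^N\subset I^{3N}$. The tail part acts diagonally, dividing by $(l_{\bar\blambda})_{n,n}$, and therefore maps data supported in $I^{3N}\setminus I^N$ into the same set. Consequently $\obpi^{>3N}AF(\overline{\mathbf{x}})=0$, which gives
\[
AF(\overline{\mathbf{x}})=\obpi^{\leq 3N}AF(\overline{\mathbf{x}}),
\]
and taking $X_{\mathcal{G},\omega}$-norms yields the claim. The resulting quantity is a finite sum, computable with interval arithmetic.

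The main obstacle is the support bookkeeping in the symmetry-reduced setting. Both the convolution and the $\mathcal{O}_{\mathcal{G}}$-conjugated adjoint convolution must be shown to stay inside $I^{3N}$ when $I^{\mathcal{N}}$ is defined on reduced indices. I would settle this by passing through $\iota$ to the full lattice and checking that the chosen representatives satisfy $|n|_\infty\le$ the maximum over their orbit, or equivalently by defining the truncation orbit-wise.
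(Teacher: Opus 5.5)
Your argument is correct and is the one the paper relies on (it justifies the lemma only by noting that the cubic nonlinearity puts the support of $F(\overline{\mathbf{x}})$ in $I^{3N}$): with $\overline{\mathbf{x}} = \obpi^{\leq N}\overline{\mathbf{x}}$, the block $A^N$ lands in the first $N$ modes and the tail of $A$ is diagonal, so $AF(\overline{\mathbf{x}}) = \obpi^{\leq 3N}AF(\overline{\mathbf{x}})$ and the bound holds with equality. Your $D_6$ caveat is a point the paper does not address; the condition you actually need is that each chosen representative \emph{attains} the orbit maximum of $|\cdot|_\infty$ (the inequality ``$\leq$'' you wrote holds trivially), i.e.\ that the truncation is by the $\mathcal{G}$-invariant, subadditive quantity $\max_{\mathscr{g}\in\mathcal{G}}|\beta_{\mathscr{g}}(n)|_\infty$, and this holds for instance when the $D_6$ representatives are taken in the sector $\{0\leq n_2\leq n_1\}$, where $|n_1-n_2|\leq n_1$.
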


The following lemma bounds the variation of $DF$ in a ball of radius $r$ around $\overline{\mathbf{x}}$. It relies on the convolution constants $\kappa$, $\kappa_0$ from Section~\ref{sec:conv_estimates} and the orbit-size bound $\mathscr{O}_{\mathrm{max}}$ from~\eqref{def : omax}.

\begin{lemma}[\bf The \boldmath{$Z_2$} bound]\label{lem : Z2}
Define
\begin{align}\label{def : Z2 SH}
Z_2(r) \bydef \bigl(\|A^N\|_{\mathcal{B}(X_{\mathcal{G},\omega})} + \mathcal{L}_{\infty}\bigr)\sum_{k=1}^{5} Z_{2,k}(r),
\end{align}
where
{\footnotesize\begin{align*}
Z_{2,1}(r) &\bydef \bigl(2\kappa r^2 + 4\|\overline{v}\|_{1}r + 2\|\overline{v}^2\|_{2} + (2\kappa r + 4\|\overline{v}\|_{1}) \|\overline{w}\|_{2}\bigr)r, \\
Z_{2,2}(r) &\bydef 6 + \kappa_0 (28\|\overline{v}\|_{1} + 22\kappa r + 4\|\overline{\lambda}_2 - 3\overline{u}\|_{1})\|\overline{w}\|_{2} + 4\kappa_0 \|\overline{\lambda}_2 \overline{v} - 3 \overline{u}\overline{v}\|_{2} + 4\kappa_0 \|\overline{\lambda}_2 - 3\overline{u}\|_{1}r + 6\|\overline{u}\overline{v}\|_{1} \\
    &+ 2(3\kappa \|\overline{u}\|_{1}r + (2|\overline{\lambda}_2| + 8\kappa r + 14\kappa_0 r)\|\overline{v}\|_{1} + 8\|\overline{v}^2\|_{1}+ 6\kappa_0\|\overline{v}^2\|_{2} + |\overline{\lambda}_2|\kappa r + (4\kappa^2 + \kappa_0(10 + 12\kappa)) r^2), \\
Z_{2,3}(r) &\bydef (7+2\mathscr{O}_{\mathrm{max}})\kappa r + 6 +  (6+2\mathscr{O}_{\mathrm{max}})\|\overline{u}\|_{1,\sqrt{\omega}}
 + 6 \|\overline{v}\|_{1,\sqrt{\omega}} + 2 \mathscr{O}_{\mathrm{max}}\|\mathcal{O}_{\mathcal{G},N}^{-1} \overline{w}\|_{1,\sqrt{\omega}} + 2 \|\overline{h}\|_{1,\sqrt{\omega}}, \\
Z_{2,4}(r) &\bydef (14 + 4\kappa \mathscr{O}_{\mathrm{max}})\kappa r + 6 +  (6+2\kappa \mathscr{O}_{\mathrm{max}})\|\overline{u}\|_{1,\sqrt{\omega}}
 + 6 \|\overline{v}\|_{1,\sqrt{\omega}} + 2\kappa \mathscr{O}_{\mathrm{max}}\|\mathcal{O}_{\mathcal{G},N}^{-1}\overline{w}\|_{1,\sqrt{\omega}} + 2 \|\overline{h}\|_{1,\sqrt{\omega}}, \\
Z_{2,5}(r) &\bydef (14\kappa + 4\mathscr{O}_{\mathrm{max}}) \|\overline{\lambda}_2 - 3\overline{u}\|_{1,\sqrt{\omega}}+ 30\kappa \|\overline{v}\|_{1,\sqrt{\omega}} + 6\kappa \|\overline{h}\|_{1,\sqrt{\omega}} + 6\kappa \mathscr{O}_{\mathrm{max}}\|\mathcal{O}_{\mathcal{G},N}^{-1} \overline{w}\|_{1,\sqrt{\omega}} + (39+9\mathscr{O}_{\mathrm{max}})\kappa^2 r.
\end{align*}}
Then $\|A(DF(\mathbf{x})- DF(\overline{\mathbf{x}}))\|_{\mathcal{B}(X_{\mathcal{G},\omega})} \leq Z_2(r)r$ for all $\mathbf{x} \in B_r(\overline{\mathbf{x}})$.
\end{lemma}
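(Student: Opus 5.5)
The plan is to factor the approximate inverse out and bound the variation of $DF$ row by row. Write $A = \obpi^{\leq N}A + \obpi^{>N}A$. The finite block acts through $A^N$, and the tail is controlled by Lemma~\ref{lem : tailA}. Since both pieces act on the same output of $DF(\mathbf{x})-DF(\overline{\mathbf{x}})$, we get
\[
\|A(DF(\mathbf{x})-DF(\overline{\mathbf{x}}))\|_{\mathcal{B}(X_{\mathcal{G},\omega})}\leq \bigl(\|A^N\|_{\mathcal{B}(X_{\mathcal{G},\omega})}+\mathcal{L}_\infty\bigr)\,\|DF(\mathbf{x})-DF(\overline{\mathbf{x}})\|,
\]
where the last norm is taken in the sense in which $DF$ maps $X_{\mathcal{G},\omega}$ into itself. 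This makes sense because the unbounded part $l_{\boldsymbol{\lambda}}$ cancels in the difference, except for its $\lambda_1$-dependence, which is affine and therefore enters only through bounded terms. It then suffices to show $\|DF(\mathbf{x})-DF(\overline{\mathbf{x}})\|\leq r\sum_{k=1}^5 Z_{2,k}(r)$ for $\mathbf{x}=\overline{\mathbf{x}}+\boldsymbol{\delta}$ with $\|\boldsymbol{\delta}\|_{X_{\mathcal{G},\omega}}<r$.

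The five terms will be matched to the rows of \eqref{sh cusp map} as follows:
\begin{itemize}
\item $Z_{2,1}$ covers the scalar rows 1--3;
\item $Z_{2,2}$ covers row 4, the $b$-row;
\item $Z_{2,3}$ covers rows 5 and 6;
\item $Z_{2,4}$ covers the adjoint row 7;
\item $Z_{2,5}$ covers row 8.
\end{itemize}
For each row, compute the derivative at $\mathbf{x}$ applied to a unit direction $\hat{\mathbf{x}}=(\hat{\mathbf{s}},\hat{\boldsymbol{\lambda}},\hat u,\hat v,\hat w,\hat h)$. Expand every product $u=\overline{u}+\delta_u$ and so on, and subtract the same expression at $\overline{\mathbf{x}}$. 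Each surviving term contains at least one $\delta$ factor, which gives the overall factor $r$, and possibly further $\delta$ factors, which give the $\kappa r$ and $\kappa^2 r$ contributions.

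The estimates are applied according to the type of product.
\begin{itemize}
\item For sequence-valued rows 5--8, products such as $\overline{u}\,\delta_u\hat u$ are bounded in $\ell^2_{\mathcal{G},\omega}$. We use Lemma~\ref{lem : gen young} (via Corollary~\ref{cor : conv estimates symmetric}) when one factor is a known finite sequence, which produces the $\|\cdot\|_{1,\sqrt{\omega}}$ norms of $\overline u,\overline v,\overline h$. We use the Banach algebra estimate with $\kappa$ when all factors are unknown.
\item The scalar rows are inner products $(\cdot,\cdot)_2$, bounded by Cauchy--Schwarz. Unweighted $\ell^2$ norms are bounded by weighted ones since $\omega_n\ge1$. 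Where an $\ell^1$ norm is convenient, Lemma~\ref{lem : 1 bounded by 2 nu} gives $\|\cdot\|_1\le\kappa_0\|\cdot\|_{2,\omega}$. This explains the mixture of $\|\overline v\|_1$, $\|\overline v^2\|_2$ and $\kappa_0$ in $Z_{2,1}$ and $Z_{2,2}$.
\item The $\hat\lambda_2$-derivatives produce terms like $\hat\lambda_2\delta_u$ and $\delta_{\lambda_2}\hat u$. These are responsible for the constants $6$ and the $\|\overline{\lambda}_2-3\overline u\|$ terms.
\end{itemize}

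Row 7 is where the orbit operator enters. By Proposition~\ref{prop : adjoint mult}, its derivative is $l_{\boldsymbol{\lambda}}\hat w+\mathbb{M}^*_{2\lambda_2u-3u^2}\hat w+\mathbb{Q}_w(2\hat\lambda_2u+2\lambda_2\hat u-6u\hat u)$, up to the $\hat\lambda_1$ contribution. Both $\mathbb{M}^*_q\hat w=\mathcal{O}_{\mathcal{G}}(q\star\mathcal{O}_{\mathcal{G}}^{-1}\hat w)$ and $\mathbb{Q}_w$ are bounded using Lemma~\ref{lem : O operator norm maximum}, namely $\|\mathcal{O}_{\mathcal{G}}\|=\mathscr{O}_{\max}$ and $\|\mathcal{O}_{\mathcal{G}}^{-1}\|=1$. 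In the $\mathbb{Q}$ terms the known factor is $\mathcal{O}_{\mathcal{G},N}^{-1}\overline w$, which must be kept inside the $\|\cdot\|_{1,\sqrt\omega}$ norm; this is the origin of $\|\mathcal{O}_{\mathcal{G},N}^{-1}\overline w\|_{1,\sqrt\omega}$. I expect this row, together with the $b$-row and its many cross terms involving $w$, $v^2$ and $u$, to be the main obstacle. The difficulty is mainly bookkeeping: every cross term has to be enumerated with the correct multiplicity coefficients, and the $\mathcal{O}_{\mathcal{G}}$ factors must be placed so that they are not lost when $\omega$ is not orbit-invariant (the $D_6$ case). For that reason I would bound $\mathcal{O}_{\mathcal{G}}$ and $\mathcal{O}_{\mathcal{G}}^{-1}$ only through Lemma~\ref{lem : O operator norm maximum}, and never through $\Omega$.

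Finally, sum the row bounds using the $\ell^1$-type structure of $\|\cdot\|_{X_{\mathcal{G},\omega}}$. The operator norm is then at most the sum of the row contributions, each of which is at most $r$ times its $Z_{2,k}(r)$. This yields the stated inequality for all $\mathbf{x}\in B_r(\overline{\mathbf{x}})$.
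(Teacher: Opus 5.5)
Your outer reduction matches the paper's: you pull out $\|A^N\|_{\mathcal{B}(X_{\mathcal{G},\omega})}+\mathcal{L}_\infty$, then bound $\|DF(\mathbf{x})-DF(\overline{\mathbf{x}})\|$ piece by piece with Cauchy--Schwarz, H\"older plus $\kappa_0$, Lemma~\ref{lem : gen young}, the $\kappa$-estimate and Lemma~\ref{lem : O operator norm maximum}. The gap is in the correspondence you announce between the $Z_{2,k}$ and the rows of \eqref{sh cusp map}. It is not the correspondence these constants encode, so the step ``each row group is at most $r\,Z_{2,k}(r)$'' fails. The paper splits $DF(\mathbf{x})-DF(\overline{\mathbf{x}})$ by blocks, not by rows:
\begin{itemize}
\item $Z_{2,1}$ is the single scalar--scalar entry, the $\lambda_2$-derivative of row~4, i.e.\ $|(2v^2,w)_2-(2\overline{v}^2,\overline{w})_2|$.
\item $Z_{2,2}$ is $D\varphi(\mathbf{x})-D\varphi(\overline{\mathbf{x}})$, the sequence-direction derivatives of \emph{all four} scalar rows. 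Its constant $6$ is the $2r+2r+2r$ from rows 1--3; the rest comes from the functionals $\mathscr{Z}_1,\mathscr{Z}_2,\mathscr{Z}_3$ of row~4.
\item $Z_{2,3}$ is the $(\mathbf{s},\blambda)$-column of all four sequence rows 5--8.
\item The sequence--sequence block is split through the intermediate point where $(\mathbf{s},\blambda)$ is reset to $(\mathbf{0},\overline{\blambda})$ but $(u,v,w,h)$ is kept. This gives $Z_{2,4}$ (parameter change, rows 5--8) and $Z_{2,5}$ (state change at $\overline{\lambda}_2$, rows 5--8).
\end{itemize}
This is why $\mathscr{O}_{\mathrm{max}}$, which only row~7 produces, appears in $Z_{2,3}$, $Z_{2,4}$ and $Z_{2,5}$ at once. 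It is also why $\|\overline{h}\|_{1,\sqrt{\omega}}$ appears in $Z_{2,3}$ although rows 5 and 6 contain no $h$.

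Concretely, your matching breaks in two places. First, rows 1--3 contain $2(\hat v,\delta_v)_2$. For $\hat v=e_{\mathbf{0}}$ (a unit vector, since $\omega_{\mathbf{0}}=1$) and $\delta_v=\rho\,e_{\mathbf{0}}$ this equals $2\rho$, with $\rho<r$ arbitrary and independent of $\overline{v},\overline{w}$. By contrast, $r\,Z_{2,1}(r)$ is built only from $\|\overline{v}\|_1$, $\|\overline{v}^2\|_2$, $\|\overline{w}\|_2$, and as written it is even $O(r^2)$, so it cannot control rows 1--3. Second, $Z_{2,3}$ and $Z_{2,4}$ have no $\overline{\lambda}_2$-dependence at all. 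Yet rows 5--6 and row~7 contain terms such as $2\overline{\lambda}_2\,\delta_u\hat u$ and $2\overline{\lambda}_2\,\mathcal{O}_{\mathcal{G}}(\delta_u\star\mathcal{O}_{\mathcal{G}}^{-1}\hat w)$, which grow with $|\overline{\lambda}_2|$. In the paper these are absorbed by the $\|\overline{\lambda}_2-3\overline{u}\|_{1,\sqrt{\omega}}$ term of $Z_{2,5}$, whose coefficient $14\kappa+4\mathscr{O}_{\mathrm{max}}$ collects $2\kappa+4\kappa+8\kappa$ from rows 5, 6, 8 and the $\mathscr{O}_{\mathrm{max}}$ part from row~7. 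Relatedly, those $\|\overline{\lambda}_2-3\overline{u}\|$ terms come from $\delta_u$ multiplying $2\overline{\lambda}_2-6\overline{u}$, not from $\hat\lambda_2$-derivatives as you suggest. Your toolkit is the right one, but to reach the stated constants you have to run it along the block split above. A row-wise computation can only be matched against $r\sum_k Z_{2,k}(r)$ as a grand total, term by term, never group by group.
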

\begin{proof}
See Appendix~\ref{apen : Z2}.
\end{proof}

To control $\|I_d - ADF(\overline{\mathbf{x}})\|$, we introduce the auxiliary sequences
\begin{equation}\label{def : q notation sh}
\bar{q}_0 \bydef (2\bar{\lambda}_2 - 6\overline{u})\star(\mathcal{O}_{\mathcal{G},N}^{-1}\overline{w}), 
\quad
\bar{q}_1 \bydef 2\bar{\lambda}_2\bar{u} - 3\bar{u}^2,
\quad
\bar{q}_2 \bydef 2\bar{\lambda}_2\overline{v} - 6\overline{u} \star \overline{v}, \quad
\bar{q}_3 \bydef 2\bar{\lambda}_2\overline{h} -6\overline{u} \star \overline{h} - 6\overline{v}^2.
\end{equation}
The bound $Z_1$ decomposes into a finite-dimensional part $Z_{1,1}$ (the norm of a finite matrix, computable directly), an interaction term $Z_{1,2}$ between $A^N$ and the tail of $DF(\overline{\mathbf{x}})$, and a truncation error term $Z_\infty$ arising from the finite approximation $\overline{\mathbf{x}}$.

\begin{lemma}[\bf The \boldmath{$Z_1$} bound]\label{lem : Z1 full}
Let $\overline{L}_{\mathbf{s},\blambda}$ and $M^N(\overline{\mathbf{x}})$ be defined as in Appendix \ref{apen : Z1}. Let $Z_{1,1},Z_{1,2},Z_{\infty} > 0$ be defined as 
\begin{align}
    &Z_{1,1} \bydef \left\|\obpi^{\leq N} - A^N (\overline{L}_{\mathbf{0},\overline{\blambda}} + M^N(\overline{\mathbf{x}}))\obpi^{\leq 2N}\right\|_{\mathcal{B}(X_{\mathcal{G},\omega})},\label{def : Z11}\\
    &Z_{1,2} \bydef 2 \mathcal{L}_{\infty} \left(\left(3+\mathscr{O}_{\mathrm{max}}\right)\|\bar{q}_{1}^N\|_{1,\sqrt{\omega}} + 3\|\bar{q}_{2}^N\|_{1,\sqrt{\omega}} + \|\bar{q}_3^N\|_{1,\sqrt{\omega}} + \mathscr{O}_{\mathrm{max}}\|\bar{q}_0^N\|_{1,\sqrt{\omega}}\right),\label{def : Z12}
\end{align}
and
{\small\begin{align}
Z_{\infty} &\bydef  \left(3+\mathscr{O}_{\mathrm{max}}\right)\|\bar{q}_1 - \bar{q}_1^N\|_{1,\sqrt{\omega}}
+3 \|\bar{q}_2 - \bar{q}_2^N\|_{1,\sqrt{\omega}}
+ \|\bar{q}_3 - \bar{q}_3^N\|_{1,\sqrt{\omega}}
+ \mathscr{O}_{\mathrm{max}}\|\bar{q}_0 - \bar{q}_0^N\|_{1,\sqrt{\omega}} \nonumber\\
&\quad + \|(\Pi^{3N} - \Pi^{2N})(\bar{q}_2\star \overline{v})\|_{2}
+\|\overline{u}^2 - \Pi^{\leq N} \overline{u}^2\|_{2,\omega}
+ 2\|\overline{u}\star \overline{v} - \Pi^{\leq N}(\overline{u}\star \overline{v})\|_{2,\omega} \nonumber\\
&\quad + 2\|\mathcal{O}_{\mathcal{G},2N}\bigl(\overline{u}\star(\mathcal{O}_{\mathcal{G},N}^{-1}\overline{w}) - \Pi^{\leq N}(\overline{u}\star(\mathcal{O}_{\mathcal{G},N}^{-1}\overline{w}))\bigr)\|_{2,\omega}
+2\|\overline{u}\star \overline{h}+\overline{v}^2 - \Pi^{\leq N}(\overline{u}\star \overline{h}+\overline{v}^2)\|_{2,\omega} \nonumber\\
&\quad + 6\|(\Pi^{\leq 3N} - \Pi^{\leq 2N}) (\overline{v}^2 \star \overline{w})\|_{2} + 2\|(\Pi^{\leq 3N} - \Pi^{\leq 2N})  (\overline{q}_2 \star \overline{w})\|_{2}.\label{def : Zinf sh}
\end{align}}
Then, defining $Z_1 \bydef Z_{1,1} + Z_{1,2} + (\|A^N\|_{\mathcal{B}(X_{\mathcal{G},\omega})} + \mathcal{L}_{\infty})Z_{\infty}$,
\[
\|I_d - ADF(\overline{\mathbf{x}})\|_{\mathcal{B}(X_{\mathcal{G},\omega})} \leq Z_1.
\]
\end{lemma}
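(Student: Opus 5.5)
We split $I_d - ADF(\overline{\mathbf{x}})$ according to the finite/tail decomposition $A = \obpi^{\leq N}A + \obpi^{>N}A$ and the decomposition $DF(\overline{\mathbf{x}}) = \overline{L}_{\mathbf{0},\overline{\blambda}} + M(\overline{\mathbf{x}})$. Here $\overline{L}_{\mathbf{0},\overline{\blambda}}$ collects the diagonal operator $l_{\overline{\blambda}}$ acting on each sequence component. The part $M(\overline{\mathbf{x}})$ collects the multiplication operators built from $\bar q_0,\dots,\bar q_3$, the bordering scalar rows and columns, and the $\lambda$- and $s$-derivatives. Writing $M(\overline{\mathbf{x}}) = M^N(\overline{\mathbf{x}}) + (M(\overline{\mathbf{x}}) - M^N(\overline{\mathbf{x}}))$, with $M^N$ using the truncated symbols $\bar q_i^N$ (and truncated quadratic terms), we obtain
\[
I_d - ADF(\overline{\mathbf{x}}) = \bigl(\obpi^{\leq N} - A^N(\overline{L}_{\mathbf{0},\overline{\blambda}}+M^N)\bigr) + \bigl(\obpi^{>N} - \obpi^{>N}A(\overline{L}_{\mathbf{0},\overline{\blambda}}+M^N)\bigr) - A\bigl(M-M^N\bigr).
\]
We then bound the three pieces separately and add the bounds by the triangle inequality.

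For the first piece, the symbols of $M^N$ have support in $I^{2N}$ (products of two modes in $I^N$). Hence $A^N(\overline{L}+M^N)$ only sees $\obpi^{\leq 2N}$ of the input, apart from the tail contributions of $M^N$ applied to $\obpi^{>2N}$. These contributions land outside $I^N$, so $A^N = \obpi^{\leq N}A^N$ annihilates them. Restricting to inputs in $\obpi^{\leq 2N}$ reduces the piece to a finite matrix, whose norm in $\mathcal{B}(X_{\mathcal{G},\omega})$ is $Z_{1,1}$ and is evaluated with interval arithmetic. It is a weighted $\ell^2$ operator norm, computable by rescaling with $W_n^{1/2}$ and taking a spectral norm on each block.

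For the second piece, $\obpi^{>N}A\,\overline{L}_{\mathbf{0},\overline{\blambda}} = \obpi^{>N}$ by construction of $\Pi_A^{>N}$, so only $-\obpi^{>N}A M^N$ remains. Each block of $M^N$ is, by Proposition~\ref{prop : adjoint mult}, either $\mathbb{M}_{\bar q_i^N}$ or $\mathbb{Q}$-type, i.e.\ $\mathcal{O}_{\mathcal{G}}\mathbb{M}_{\cdot}\mathcal{O}_{\mathcal{G}}^{-1}$. We bound the former by Lemma~\ref{lem : gen young}, giving $\|\bar q_i^N\|_{1,\sqrt{\omega}}$, and the latter by Lemma~\ref{lem : O operator norm maximum}, giving an extra factor $\mathscr{O}_{\mathrm{max}}$. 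Together with Lemma~\ref{lem : tailA}, this gives $\mathcal{L}_\infty$ times the weighted sum in $Z_{1,2}$. The coefficients $3+\mathscr{O}_{\mathrm{max}}$, $3$, $1$, $\mathscr{O}_{\mathrm{max}}$ count how many component rows each symbol appears in: $\bar q_1$ appears in the $u$, $v$ and $h$ rows and, in conjugated form, in the $w$ row. The factor $2$ comes from the scaling conventions of $\bar q_i$ relative to $D_uf$. The scalar rows and columns of $M^N$ contribute nothing here, since their action on tails is absorbed in $Z_{1,1}$ or $Z_\infty$.

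For the third piece, we bound $\|A\|\le\|A^N\|+\mathcal{L}_\infty$ and $\|M-M^N\|\le Z_\infty$. The multiplication-type differences are handled again by Lemma~\ref{lem : gen young}, giving $\|\bar q_i-\bar q_i^N\|_{1,\sqrt{\omega}}$. The remaining terms of $Z_\infty$ come from the scalar rows and the $\lambda$-columns: for example, row $4$ tested on the tail involves $(\bar q_2\star\bar v,\cdot)_2$ and $(\bar v^2\star\bar w,\cdot)_2$ restricted to modes in $I^{3N}\setminus I^{2N}$. These are bounded by Cauchy--Schwarz using $\|\cdot\|_2$ of the truncated remainders, since $\omega\ge1$. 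The $\lambda_2$-columns give $u^2$, $uv$, $uh+v^2$ and the conjugated $\mathcal{O}_{\mathcal{G},2N}(\bar u\star\mathcal{O}^{-1}_{\mathcal{G},N}\bar w)$ terms, whose parts outside $I^N$ are not captured in $M^N$.

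The main obstacle is bookkeeping. We must check that every term of $DF(\overline{\mathbf{x}})$ is assigned to exactly one of the three pieces, with correct supports, in particular that $A^N M^N\obpi^{>2N}=0$. We must also treat the $w$-row correctly via $\mathbb{Q}$, where non-invariance of $\omega$ under $D_6$ forces us to use only the diagonal bound of Lemma~\ref{lem : O operator norm maximum} rather than any isometry argument.
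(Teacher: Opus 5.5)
Your proposal is correct and is essentially the paper's proof. The paper likewise peels off $A(M(\overline{\mathbf{x}})-M^N(\overline{\mathbf{x}}))$ using $\|A\|\le\|A^N\|+\mathcal{L}_\infty$ and $Z_\infty$, then splits $I_d-A(\overline{L}_{\mathbf{0},\overline{\blambda}}+M^N(\overline{\mathbf{x}}))$ with $\obpi^{\le N},\obpi^{>N}$ on the left and $\obpi^{\le 2N},\obpi^{>2N}$ on the right, shows $A^NM^N(\overline{\mathbf{x}})\obpi^{>2N}=0$, and bounds the two tail blocks $\obpi^{>N}AM^N(\overline{\mathbf{x}})\obpi^{\le 2N}$ and $\obpi^{>N}AM^N(\overline{\mathbf{x}})\obpi^{>2N}$ \emph{separately} by $\mathcal{L}_\infty\|DG^N(\overline{\mathbf{x}})\|$. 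That separate treatment is where the factor $2$ in $Z_{1,2}$ comes from, not any scaling of the $\bar q_i$, so your merged tail piece actually yields $Z_{1,2}/2$.

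One correction to your bookkeeping: $A^NM^N(\overline{\mathbf{x}})\obpi^{>2N}=0$ holds because the truncated symbols $\bar q_i^N=\Pi^{\le N}\bar q_i$ are supported in $I^N$, and the scalar-row functionals in $I^{2N}$. Hence inputs supported outside $I^{2N}$ are mapped outside $I^N$, and these are then discarded by $A^N=A^N\obpi^{\le N}$ (the projection on the right, not the left). Support in $I^{2N}$, as you wrote, would not suffice.
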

\begin{proof}
See Appendix~\ref{apen : Z1}.
\end{proof}

\begin{remark}
All three bounds can be enclosed rigorously using interval arithmetic. The quantities $Z_{1,1}$ and $\|A^N\|_{\mathcal{B}(X_{\mathcal{G},\omega})}$ are computed as norms of finite matrices; $Z_{1,2}$ and $Z_\infty$ involve norms of truncated Fourier sequences, controlled via the convolution estimates of Section~\ref{sec:conv_estimates}.
\end{remark}

\subsection{Gershgorin spectral enclosure for Swift-Hohenberg}\label{sec : gershgorin swift hohenberg}

We apply the framework of Section~\ref{sec : gershgorin} with $\mathscr{p}=1$, so that a single family of disks $B_n \bydef B_{n,1}$ is involved and $\mathcal{D} = P^{-1}D_uf(\tilde{\blambda},\tilde u)P$ is a scalar operator on $\ell^2_{\mathcal{G},\omega}$. Here $Dg(\overline{\lambda}_2,\overline{u}) = \mathbb{M}_{\overline{q}_1}$ with $\overline{q}_1 \bydef 2\overline{\lambda}_2\overline{u}-3\overline{u}^2$, so that \eqref{def : Qj} reads $Q_1 = \|\overline{q}_1\|_1$. It remains to produce the constant $\varepsilon_{r_0}$ of \eqref{def : eps r0}.

\begin{lemma}\label{lem : eps r0 sh}
For Swift--Hohenberg one may take
\[
  \varepsilon_{r_0} \bydef (2\kappa_0+3\kappa_0^2)r_0^2 + \bigl(1+2(1+3\kappa_0)\|\overline{u}\|_1 + 2|\overline{\lambda}_2|\kappa_0\bigr)r_0 .
\]
\end{lemma}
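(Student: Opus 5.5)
The plan is to write the difference of linearizations explicitly and bound each piece in $\mathcal{B}(\ell^1_{\mathcal{G}})$, using that multiplication operators on $\ell^1$ are controlled by the $\ell^1$ norm of their symbol. For Swift--Hohenberg,
\[
D_uf(\tilde{\blambda},\tilde u) - D_uf(\overline{\blambda},\overline u) = -(\tilde\lambda_1-\overline\lambda_1)I_d + \mathbb{M}_{q},
\]
where
\[
q \bydef 2\tilde\lambda_2\tilde u - 3\tilde u^2 - 2\overline\lambda_2\overline u + 3\overline u^2 .
\]
The $\lambda_1$ term contributes at most $|\tilde\lambda_1-\overline\lambda_1|\le r_0$, since $l_{\blambda}$ depends on $\lambda_1$ only through the shift. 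By Young's inequality on the full lattice (after unfolding with $\iota$, using $\alpha_{\mathscr{g}}\equiv1$ so that $\iota$ commutes with convolution), $\|\mathbb{M}_q\|_{\mathcal{B}(\ell^1_{\mathcal{G}})}\le\|q\|_1$. Here I interpret $\|\cdot\|_{\mathcal{B}(\ell^1_{\mathcal{G}})}$ through the unfolded norm $\|\iota(\cdot)\|_1$, consistent with Corollary~\ref{cor : conv estimates symmetric}. This is the step to check carefully: the reduced coefficients must be weighted by orbit sizes for the inequality to hold as stated.

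Next, set $\delta_u\bydef\tilde u-\overline u$ and $\delta_\lambda\bydef\tilde\lambda_2-\overline\lambda_2$, so that $|\delta_\lambda|\le r_0$ and $\|\delta_u\|_{2,\omega}\le r_0$. Expanding gives
\[
q = 2\delta_\lambda\overline u + 2\overline\lambda_2\delta_u + 2\delta_\lambda\delta_u - 6\overline u\star\delta_u - 3\delta_u\star\delta_u .
\]
Each term is bounded in $\ell^1$ with the Banach algebra property of $\ell^1$, $\|a\star b\|_1\le\|a\|_1\|b\|_1$, together with Lemma~\ref{lem : 1 bounded by 2 nu} and Corollary~\ref{cor : conv estimates symmetric}, which give $\|\delta_u\|_1\le\kappa_0 r_0$. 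Collecting terms:
\[
\|q\|_1\le 2r_0\|\overline u\|_1 + 2|\overline\lambda_2|\kappa_0r_0 + 2\kappa_0r_0^2 + 6\kappa_0r_0\|\overline u\|_1 + 3\kappa_0^2r_0^2 .
\]
Adding the $\lambda_1$ contribution $r_0$, this is exactly $(2\kappa_0+3\kappa_0^2)r_0^2+\bigl(1+2(1+3\kappa_0)\|\overline u\|_1+2|\overline\lambda_2|\kappa_0\bigr)r_0$, matching the stated $\varepsilon_{r_0}$.

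One subtlety concerns the components of $\tilde{\mathbf{x}}-\overline{\mathbf{x}}$. Since the $X_{\mathcal{G},\omega}$ norm is a sum of component norms, each component difference is individually bounded by $r_0$, even though they share a common budget. The crude bound uses $r_0$ for every component, which is what the stated formula assumes.

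The main obstacle is the justification of the $\ell^1_{\mathcal{G}}$ operator-norm estimate for $\mathbb{M}_q$ in the symmetry-reduced setting. I would handle it by working entirely with unfolded sequences: $\|\mathbb{M}_q v\|_1=\|\iota(q)\star\iota(v)\|_1\le\|q\|_1\|v\|_1$. Everything else is routine term-by-term bookkeeping.
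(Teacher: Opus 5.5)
Your proposal is correct and is essentially the paper's argument: you write the difference as $-\eta_1 I_d$ plus a multiplication operator, bound its $\mathcal{B}(\ell^1_{\mathcal{G}})$-norm by the $\ell^1$-norm of the symbol, and estimate the five terms of the expanded symbol using $|\eta_j|\leq r_0$ and $\|\delta_u\|_1\leq\kappa_0 r_0$. Your unfolding argument via $\iota$ (which needs $\alpha_{\mathscr{g}}\equiv1$) makes explicit the step $\|\mathbb{M}_q\|_{\mathcal{B}(\ell^1_{\mathcal{G}})}\leq\|q\|_1$ that the paper only asserts.
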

\begin{proof}
Write $\tilde u = \overline u + y$ and $\tilde{\blambda} = \overline{\blambda}+\boldsymbol{\eta}$ with $\|y\|_{2,\omega}\leq r_0$ and $|\boldsymbol{\eta}|_1\leq r_0$, so that $\|y\|_1 \leq \kappa_0 r_0$ by Corollary~\ref{cor : conv estimates symmetric}. Since $D_uf(\blambda,u) = l_{\blambda} + 2\lambda_2u - 3u^2$ and $l_{\blambda}$ depends on $\blambda$ only through the additive constant $-\lambda_1$,
\[
D_uf(\tilde{\blambda},\tilde u) - D_uf(\overline{\blambda},\overline u) = -\eta_1 + 2\bigl(\eta_2 y + \eta_2\overline u + \overline{\lambda}_2 y\bigr) - 3\bigl(y^2+2y\overline u\bigr),
\]
a multiplication operator, whose $\mathcal{B}(\ell^1_{\mathcal{G}})$-norm is bounded by the $\ell^1$-norm of its symbol; estimating each term with $|\eta_j|\leq r_0$ and $\|y\|_1\leq\kappa_0r_0$ gives the stated $\varepsilon_{r_0}$.
\end{proof}

The tail condition (ii) of Proposition~\ref{prop : gershgorin enclosure} is verified through the symbol of $l_{\overline{\blambda}}$: since $\overline{\mathcal{D}}_{n,n} = -(1-\tfrac{\pi^2}{d^2}|\mathcal{L}n|^2)^2-\overline{\lambda}_1+\sum_{j\in\mathrm{orb}_{\mathcal{G}}(n)}(\iota(\overline{q}_1))_{n-j}$ for $n \notin I^{3N}$ (the diagonal entry of $\mathbb{M}_{\overline{q}_1}$, cf.\ the formula for $(\mathcal{M}_u)_{n,k}$ in the proof of Proposition~\ref{prop : adjoint mult}), the assumption
\begin{equation}\label{tail assumption sh}
  \min_{n \in \mathcal{Z}_{\mathrm{red}}(\mathcal{G})\setminus I^{3N}}\Bigl(1-\frac{\pi^2}{d^2}|\mathcal{L}n|^2\Bigr)^2 > \|\overline{q}_1\|_1 - \overline{\lambda}_1
\end{equation}
gives $\overline{\mathcal{D}}_{n,n} < 0$ there and makes $n \mapsto \overline{\mathcal{D}}_{n,n}$ decreasing, so that the supremum in (ii) is attained at the smallest $|n|$ outside $I^{3N}$ and (ii) becomes a single verifiable inequality. Condition~\eqref{tail assumption sh} holds whenever $N$ is large enough, which is always the case in our applications. Conditions (i)--(iv) of Proposition~\ref{prop : gershgorin enclosure} are then checked by interval arithmetic, certifying that all eigenvalues of $D_uf(\tilde{\blambda},\tilde u)$ other than the simple eigenvalue $0$ lie in the open left half-plane, i.e.\ $k=0$.

\subsection{Computer-assisted proofs for Swift-Hohenberg}\label{sec : caps sh}

Combining Theorem~\ref{th : radii polynomial theorem periodic} (a non-degenerate zero $\tilde{\mathbf{x}}$ of~\eqref{sh cusp map}), Theorem~\ref{thm : cusp map nondegen} (which shows $c \neq 0$ from non-degeneracy), and the Gershgorin spectral enclosure of Section~\ref{sec : gershgorin swift hohenberg} (exactly one zero eigenvalue), all hypotheses of Lemma~\ref{lemma : Cusp bifurcation} are simultaneously verified. Specializing~\eqref{eq:normal_form_coeff_c} to~\eqref{swift hohenberg} (using $D_{uuu}f = -6$ and $D_{uu}f(v,h) = (2\lambda_2 - 6u)vh$) yields the normal form coefficient
\begin{equation}\label{c : SH}
c = \bigl(w,v^3 + 3uvh\bigr)_2,
\end{equation}
whose sign is certified by interval arithmetic applied to the rigorous enclosure of $\tilde{\mathbf{x}}$. With $k$ and $\mathrm{sign}(c)$ both certified, Lemma~\ref{lem : bistability} determines the stability structure of the three nearby solutions.

We prove cusp bifurcations in three distinct settings: the one-dimensional problem with $\mathbb{Z}_2$-symmetry, and the two-dimensional problem with square ($D_4$) and hexagonal ($D_6$) symmetry.

\begin{theorem}[$\mathbb{Z}_2$ cusp bifurcation in 1D Swift-Hohenberg]\label{th : SH 1D cusp}
Set $r_0 \bydef 9 \times 10^{-11}$ and $\mathcal{L} = 1$. Let $(\omega_n)_{n \in \mathbb{Z}} \bydef (1 + |n|)^{1.01})_{n \in \mathbb{Z}}$. When $\boldsymbol{\lambda} \approx (0.05293,\, 1.22174)$, there exists a unique $\tilde{\mathbf{x}} \in \overline{B_{r_0}(\overline{\mathbf{x}})} \subset X_{\mathbb{Z}_2,\omega}$ satisfying $F(\tilde{\mathbf{x}}) = 0$ in~\eqref{sh cusp map}. Moreover, $(\tilde{\boldsymbol{\lambda}}, \tilde{u})$ is a cusp bifurcation with $k = 0$ and $c < -3.114 $; in particular, the three coexisting solutions exhibit bistability.
\end{theorem}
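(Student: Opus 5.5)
The proof is a computer-assisted verification assembled from the results of Section~\ref{sec : set-up} and Section~\ref{sec : swift hohenberg}. It has four steps.

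\textbf{Step 1: setting.} We work with $m=1$, $\mathcal{G}=\mathbb{Z}_2$ acting by $x\mapsto -x$, $\mathcal{L}=1$, and weights $\omega_n=(1+|n|)^{1.01}$. These weights are admissible in the sense of Definition~\ref{def:admissible_weights} because $s=1.01>1$. We compute rigorous upper bounds for $\kappa$ and $\kappa_0$ as in Appendix~\ref{apen : algebra estimates}. The orbit sizes are $|\mathrm{orb}(0)|=1$ and $|\mathrm{orb}(n)|=2$ for $n\neq0$, so $\mathscr{O}_{\max}=2$.

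\textbf{Step 2: approximate zero and inverse.} We compute a numerical zero $\overline{\mathbf{x}}=\obpi^{\leq N}\overline{\mathbf{x}}$ of the cusp map \eqref{sh cusp map} by the three-stage continuation and Newton procedure of Section~\ref{sec : numerical aspects}, giving $\overline{\blambda}\approx(0.05293,1.22174)$. We then build $A$ from a numerical inverse $A^N$ of the finite block together with the diagonal tail $\Pi_A^{>N}$.

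\textbf{Step 3: Newton--Kantorovich bounds.} Using interval arithmetic we evaluate:
\begin{itemize}
\item $Y_0$ from Lemma~\ref{lem : Y0 SH}, which is a finite computation since $F(\overline{\mathbf{x}})$ is supported in $I^{3N}$;
\item $Z_1=Z_{1,1}+Z_{1,2}+(\|A^N\|+\mathcal{L}_\infty)Z_\infty$ from Lemma~\ref{lem : Z1 full}, where $\mathcal{L}_\infty$ comes from Lemma~\ref{lem : tailA};
\item $Z_2(r)$ from Lemma~\ref{lem : Z2}.
\end{itemize}
We then check that $r_0=9\times10^{-11}$ satisfies both inequalities in \eqref{condition radii polynomial}. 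Theorem~\ref{th : radii polynomial theorem periodic} then gives a unique zero $\tilde{\mathbf{x}}\in\overline{B_{r_0}(\overline{\mathbf{x}})}$. By Remark~\ref{rmk : NK nondegeneracy}, $DF(\tilde{\mathbf{x}})$ is injective, which is what Theorem~\ref{thm : cusp map nondegen} requires.

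\textbf{Step 4: spectrum, cusp, and stability.} We apply Proposition~\ref{prop : gershgorin enclosure} with $\mathscr{p}=1$, using $\varepsilon_{r_0}$ from Lemma~\ref{lem : eps r0 sh} and $Q_1=\|\overline{q}_1\|_1$.
\begin{itemize}
\item We build $P^N$ from the numerical eigenvectors of the truncated Jacobian.
\item We check the tail assumption \eqref{tail assumption sh} at the first index outside $I^{3N}$.
\item We verify (i)--(iv) disk by disk on $I^{3N}$: exactly one disk, the one centred near $0$, is isolated, and every other disk lies strictly in the left half-plane.
\end{itemize}
This yields exactly one eigenvalue on the imaginary axis, with a spectral gap, and $k=0$. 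Theorem~\ref{thm : cusp map nondegen} then gives $c\neq0$ and $\Delta\neq0$, and Lemma~\ref{lemma : Cusp bifurcation} applies. For the sign, we enclose $c=(w,v^3+3uvh)_2$ from \eqref{c : SH} over the ball. Writing $v=\bar v+\delta v$ and so on, the perturbation terms are bounded by $\kappa$, $\kappa_0$ and $r_0$ via Corollary~\ref{cor : conv estimates symmetric} and Cauchy--Schwarz. This certifies $c<-3.114$. Lemma~\ref{lem : bistability} with $k=0$ and $c<0$ then gives bistability.

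\textbf{Main obstacle.} I expect the hardest step to be getting $Z_1<1$ with a margin small enough for $r_0\sim10^{-10}$. Two terms drive this. $Z_{1,2}$ scales like $\mathcal{L}_\infty$ times $\ell^1_{\sqrt{\omega}}$-norms, and $Z_\infty$ involves truncation errors. Since $s=1.01$ gives only slow decay, $\kappa_0$ is large. My first move is to increase $N$: this shrinks $\mathcal{L}_\infty$ like $N^{-4}$ and reduces the truncation norms. A related difficulty is the Gershgorin step near the small eigenvalues: the disk around $0$ must be separated from its neighbours. I would handle this by choosing $P^N$ accurately, so that the off-diagonal entries of $\overline{\mathcal{D}}$ on $I^N$ are close to machine precision.
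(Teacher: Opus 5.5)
Your plan coincides with the paper's proof. With $N=170$ and $d=50$, the paper evaluates $Y_0$, $Z_1$ and $Z_2(r_0)$ from Lemmas~\ref{lem : Y0 SH}--\ref{lem : Z1 full} by interval arithmetic, obtaining $Y_0\le1.215\times10^{-11}$, $Z_1\le0.2873$ and $Z_2(r_0)\le1.993\times10^{9}$; these satisfy \eqref{condition radii polynomial} at $r_0=9\times10^{-11}$. It then certifies $k=0$ with the Gershgorin enclosure of Section~\ref{sec : gershgorin swift hohenberg}, encloses $c$, and concludes with Lemma~\ref{lem : bistability}.

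One step fails as you wrote it. The quantity you propose to enclose, $(w,v^3+3uvh)_2$ taken from \eqref{c : SH}, is not the coefficient $c$ of \eqref{eq : c nonzero}. Since $D_{uuu}f(v,v,v)=-6v^3$ and $D_{uu}f(v,h)=(2\lambda_2-6u)vh$,
\[
c=\tfrac16\bigl(w,\,-6v^3+3(2\lambda_2-6u)vh\bigr)_2=\bigl(w,\,-v^3+(\lambda_2-3u)vh\bigr)_2 .
\]
So the displayed formula has the wrong sign on the cubic term and drops the $\lambda_2 vh$ term. That term is not negligible here, since $\lambda_2\approx1.22$. Enclosing $(w,v^3+3uvh)_2$ therefore certifies the sign of a different number.

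The sign of $c$ is exactly what decides bistability versus monostability in Lemma~\ref{lem : bistability} once $k=0$. So the bound $c<-3.114$ must come from applying your perturbation estimates to the correct expression $(w,-v^3+(\lambda_2-3u)vh)_2$. That enclosure must also account for the variation of $\lambda_2$ over the ball, i.e.\ a term $\eta_2\,vh$ with $|\eta_2|\le r_0$. The analogous Gray--Scott formula \eqref{c : GS} is consistent with the definition. Apart from this correction, nothing in your plan needs to change.
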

\begin{proof}
Take $N = 170$ and $d = 50$. The numerical approximation $\overline{\mathbf{x}}$ and the approximate inverse $A^N$ are constructed as described in Section~\ref{sec : numerical aspects}. Using the Julia package~\cite{CuspProofs.jl} with interval arithmetic, Lemmas~\ref{lem : Y0 SH}--\ref{lem : Z1 full} yield
\[
\|A^N\|_{\mathcal{B}(X_{\mathbb{Z}_2,\omega})} \leq 56548.1, \quad
Y_0 \leq 1.215 \times 10^{-11}, \quad
Z_2(r_0) \leq 1.993 \times 10^9, \quad
Z_1 \leq 0.2873.
\]
These values satisfy the conditions of Theorem~\ref{th : radii polynomial theorem periodic}, establishing the unique zero $\tilde{\mathbf{x}}$. The Gershgorin argument of Section~\ref{sec : gershgorin swift hohenberg} confirms that all eigenvalues of $D_u f(\tilde{\boldsymbol{\lambda}}, \tilde{u})$ other than zero have negative real part, so $k = 0$. Interval arithmetic applied to~\eqref{c : SH} certifies $c < -3.114 < 0$, and Lemma~\ref{lem : bistability} yields bistability.
\end{proof}
\begin{figure}[H]
\centering
 \begin{minipage}{.24\linewidth}
  \centering\epsfig{figure=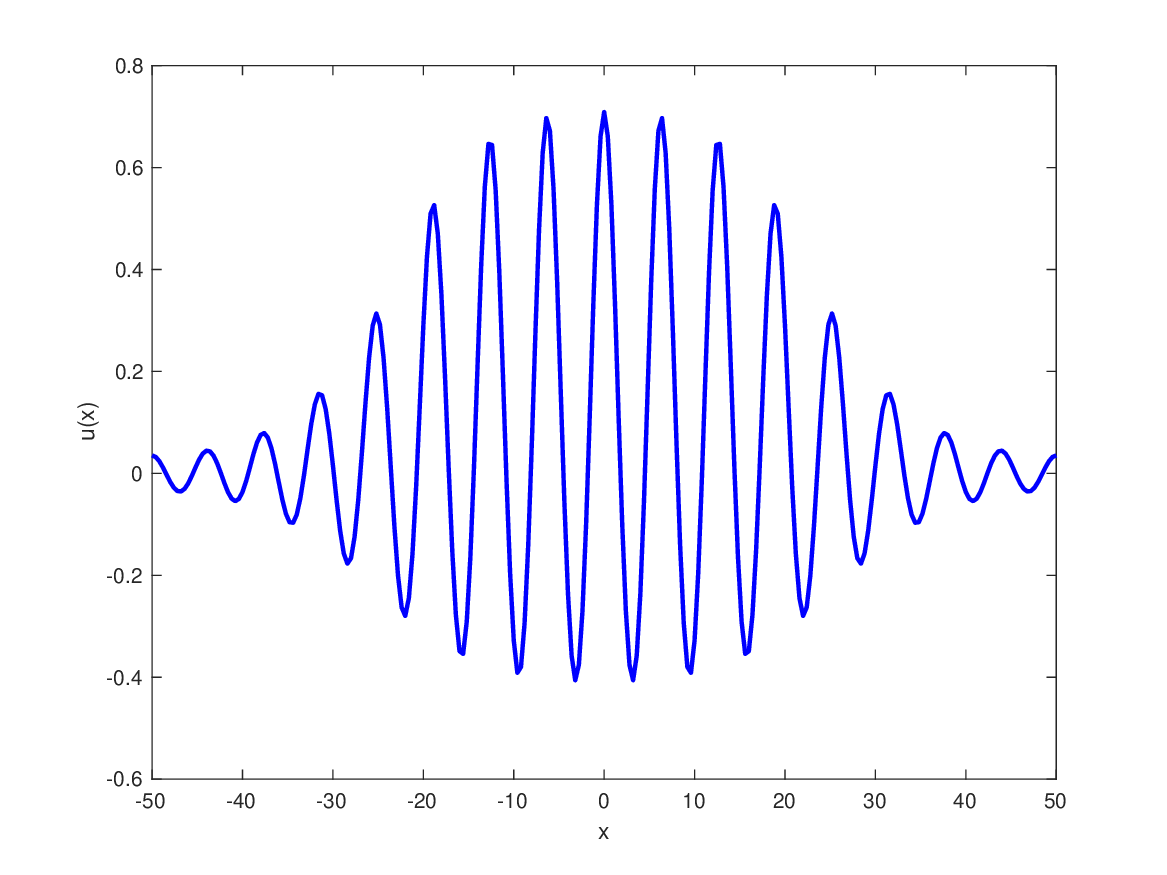,width=\linewidth}
  \end{minipage}%
 \begin{minipage}{.24\linewidth}
  \centering\epsfig{figure=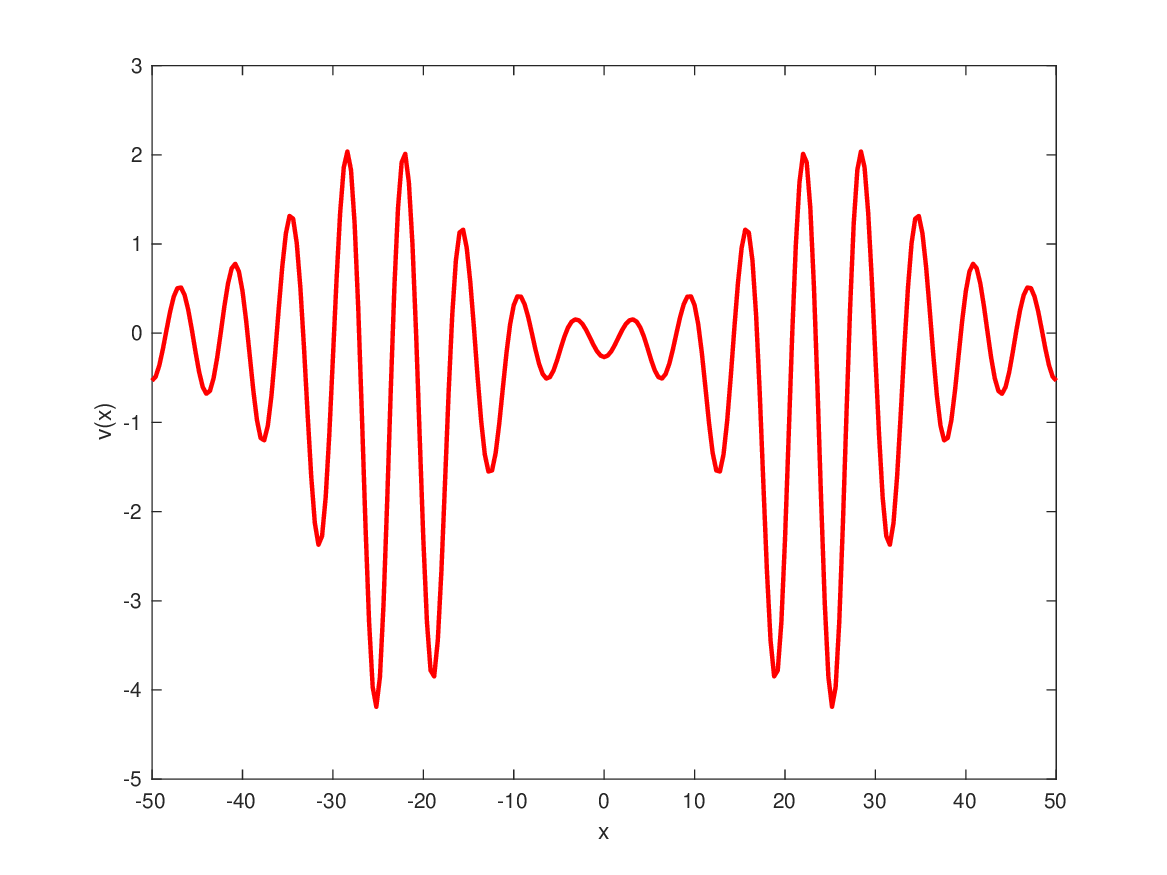,width=\linewidth}
 \end{minipage}%
  \begin{minipage}{.24\linewidth}
  \centering\epsfig{figure=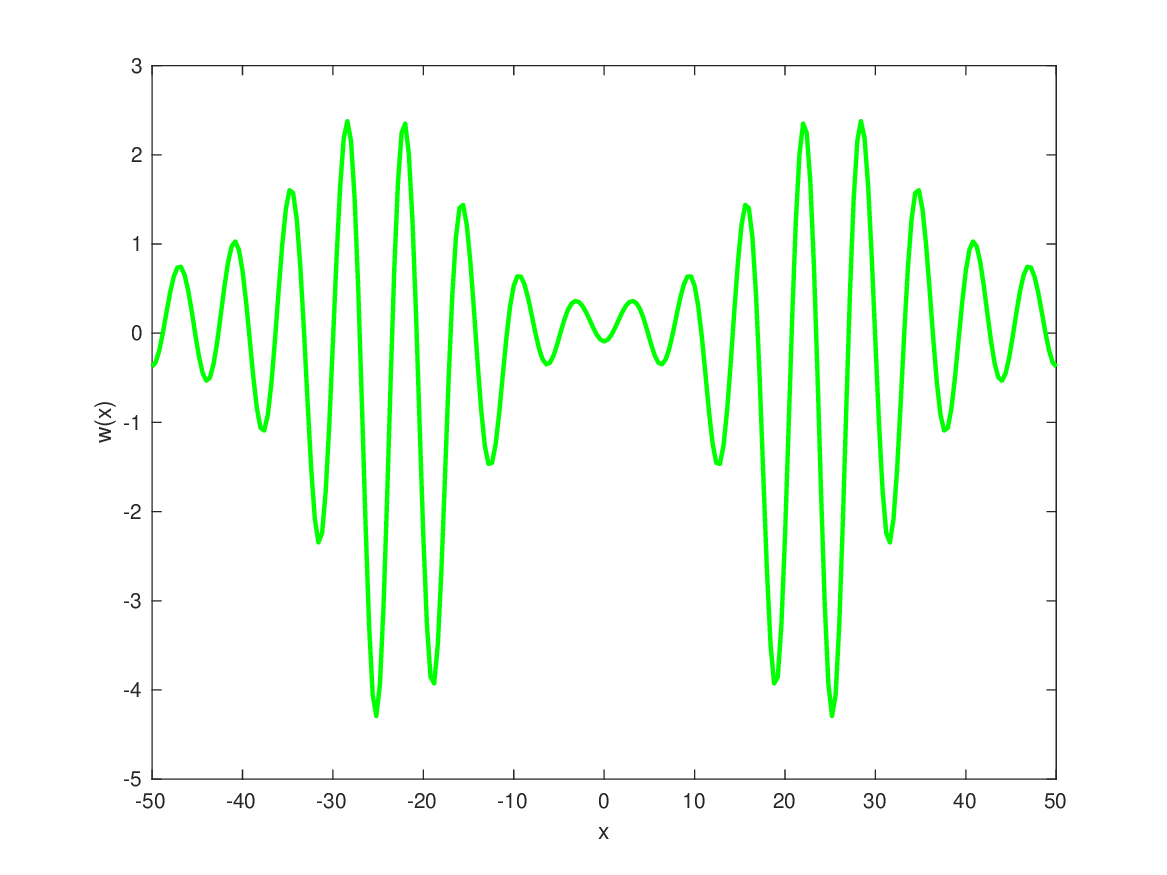,width=\linewidth}
 \end{minipage}%
  \begin{minipage}{.24\linewidth}
  \centering\epsfig{figure=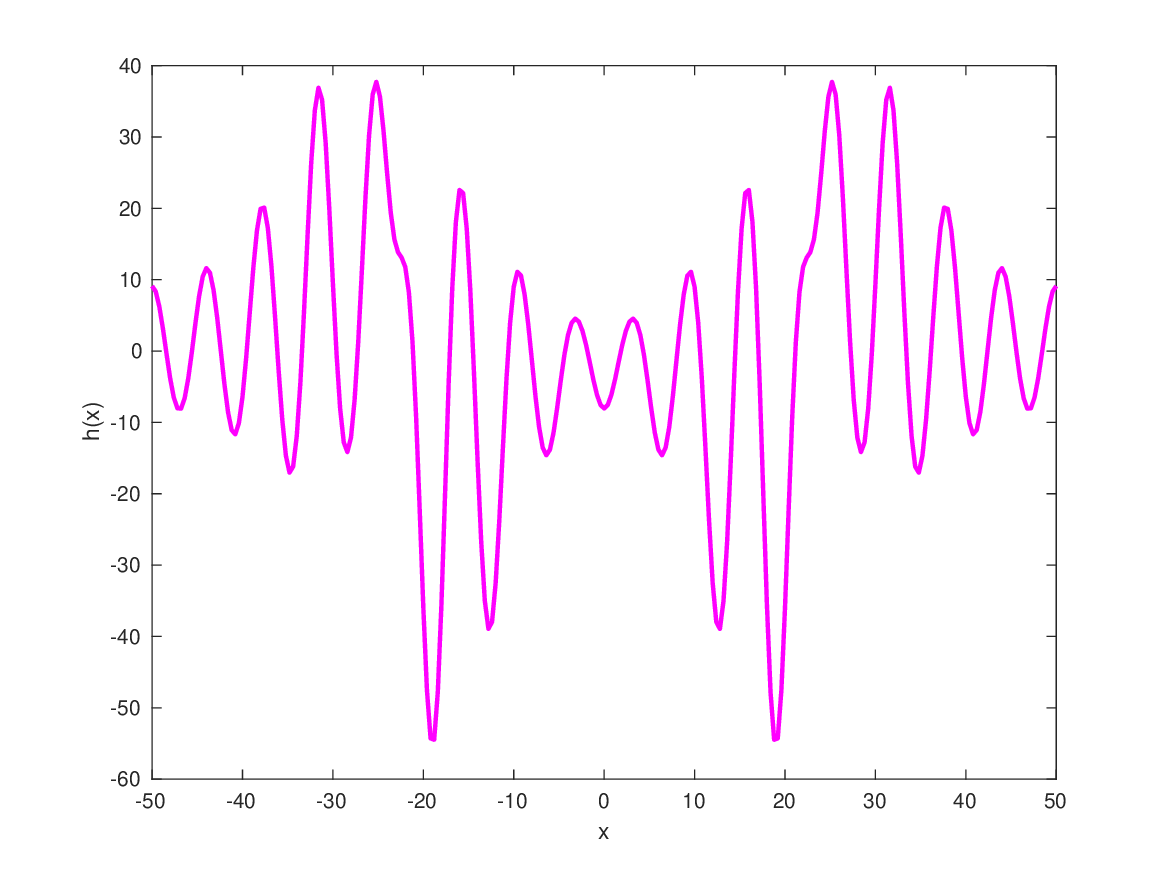,width=\linewidth}
 \end{minipage} 
 \caption{Plot of $ \overline{u}$ (L), $\overline{v}$ (ML), $\overline{w}$ (MR), and $\overline{h}$ (R) on $(-50,50)$ used in the proof of Theorem \ref{th : SH 1D cusp}.}\label{fig : Th311}
 \end{figure}%
\begin{theorem}[$D_4$ cusp bifurcation in 2D Swift-Hohenberg]\label{th : SH 2D cusp D4}
Set $r_0 = 4 \times 10^{-11}$ and $\mathcal{L} = I_2$. Let $(\omega_n)_{n = (n_1,n_2) \in \mathbb{Z}^2} = ((1 + |n_1|)^{1.01}(1 + |n_2|)^{1.01})_{n = (n_1,n_2) \in \mathbb{Z}^2}$. When $\boldsymbol{\lambda} \approx (-0.95266,\, 0)$, there exists a unique $\tilde{\mathbf{x}} \in \overline{B_{r_0}(\overline{\mathbf{x}})} \subset X_{D_4,\omega}$ satisfying $F(\tilde{\mathbf{x}}) = 0$ in~\eqref{sh cusp map}. Moreover, $(\tilde{\boldsymbol{\lambda}}, \tilde{u})$ is a cusp bifurcation with square $D_4$-symmetry. Additionally, $(\tilde{\blambda},\tilde{u})$ is a cusp bifurcation with $k = 0$ and $c > 3.77881 $; in particular, the three coexisting solutions exhibit monostability.
\end{theorem}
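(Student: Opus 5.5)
The proof will follow the template of Theorem~\ref{th : SH 1D cusp}, now in the two-dimensional setting with $\mathcal{G} = D_4$, $\mathcal{L} = I_2$, and the product weight $\omega_n = (1+|n_1|)^{1.01}(1+|n_2|)^{1.01}$.

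First, I check that this weight is admissible in the sense of Definition~\ref{def:admissible_weights}. Because $\omega$ is constant on $D_4$-orbits when $\mathcal{L}=I_2$, we have $W_n = |\mathrm{orb}_{D_4}(n)|\,\omega_n$. The constants $\kappa$ and $\kappa_0$ are then obtained as products of their one-dimensional counterparts (Appendix~\ref{apen : algebra estimates}), and $\mathscr{O}_{\mathrm{max}} = 8$.

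Next, I fix a truncation index $N$ and a half-period $d$, and build $\overline{\mathbf{x}} = \obpi^{\leq N}\overline{\mathbf{x}}$ by the three-stage continuation and Newton procedure of Section~\ref{sec : numerical aspects}. I would work on the reduced index set $\mathcal{Z}_{\mathrm{red}}(D_4) = \{0 \le n_2 \le n_1\}$ and enforce $\bar s_1 = \bar s_2 = 0$. From this I form $A^N$ and the tail operator $\obpi^{>N}A$. Before proceeding, I verify that $(l_{\overline{\blambda}})_{n,n} \neq 0$ outside $I^N$, so that $\mathcal{L}_\infty$ in \eqref{def : Linfty} is finite; since $\overline{\lambda}_1 \approx -0.95$, this amounts to checking that no lattice point $\tilde n$ with $|n|_\infty>N$ has $(1-|\tilde n|^2)^2$ close to $0.95$.

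With interval arithmetic, I then evaluate $Y_0$ (Lemma~\ref{lem : Y0 SH}), $Z_2(r_0)$ (Lemma~\ref{lem : Z2}), and $Z_1$ (Lemma~\ref{lem : Z1 full}), and check both conditions of \eqref{condition radii polynomial} at $r_0 = 4\times10^{-11}$. Theorem~\ref{th : radii polynomial theorem periodic} then gives a unique zero $\tilde{\mathbf{x}}$ in the ball. Remark~\ref{rmk : NK nondegeneracy} shows that $DF(\tilde{\mathbf{x}})$ is injective, so $c\,\Delta \neq 0$ by Theorem~\ref{thm : cusp map nondegen}. This yields everything except the spectral hypothesis and the sign of $c$.

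The step I expect to be the main obstacle is the spectral one. I would apply Proposition~\ref{prop : gershgorin enclosure} with $\varepsilon_{r_0}$ taken from Lemma~\ref{lem : eps r0 sh}. The matrix $P^N$ would be obtained by numerically diagonalizing the truncated self-adjoint (in $L^2$) operator. The two-dimensional difficulty is that $D_4$-orbit weighting makes $\overline{\mathcal{D}}$ non-symmetric in $\ell^2$, and near-resonant modes, those with $|\tilde n|\approx 1$, produce many diagonal entries close to $-\overline{\lambda}_1 - 0 \approx 0.95$. Since $\overline{\lambda}_1<0$, these entries are positive unless the nonlinear term shifts them, and their disks must be certified to lie in $\{\mathrm{Re}\,z<0\}$.

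I would first verify the tail assumption \eqref{tail assumption sh} on $I^{3N}$ and conditions (i)--(iii) of Proposition~\ref{prop : gershgorin enclosure}, isolating the single disk around $0$. I would then check condition (iv) to obtain $k=0$. If the disks overlap or cross the imaginary axis, I would increase $N$ or refine $P^N$ with higher-precision eigenvectors.

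Finally, I enclose $c = (w, v^3 + 3uvh)_2$ from \eqref{c : SH} on the ball of radius $r_0$. Each product is bounded by Corollary~\ref{cor : conv estimates symmetric} and the error is propagated linearly in $r_0$, which should give $c > 3.77881$. Lemma~\ref{lem : bistability} with $k=0$ and $c>0$ then gives monostability.
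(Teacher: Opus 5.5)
Your proposal follows the paper's proof essentially step for step: the paper takes $N=15$ and $d=5$, certifies $Y_0$, $Z_1$ and $Z_2(r_0)$ by interval arithmetic to invoke Theorem~\ref{th : radii polynomial theorem periodic}, and then obtains $k=0$ and the sign of $c$ exactly as in the 1D case, via the Gershgorin enclosure of Proposition~\ref{prop : gershgorin enclosure}, an interval enclosure of $c$, and Lemma~\ref{lem : bistability}. One caution on that last step, which concerns the displayed formula you quote rather than your strategy: expanding \eqref{eq : c nonzero} with $D_{uuu}f(v,v,v)=-6v^3$ and $D_{uu}f(v,h)=(2\lambda_2-6u)vh$ gives $c=-(w,v^3+3uvh)_2+\lambda_2(w,vh)_2$, not the expression in \eqref{c : SH}, so you should enclose this quantity instead, because at $\lambda_2\approx0$ the two are essentially negatives of each other and that sign is exactly what decides monostability versus bistability.
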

\begin{proof}
Take $N = 15$ and $d = 5$. The $D_4$-symmetric numerical approximation is constructed using~\cite{dominic_dihedral_julia}. Applying~\cite{CuspProofs.jl} with interval arithmetic yields
\[
\|A^N\|_{\mathcal{B}(X_{D_4,\omega})} \leq 167.987, \quad
Y_0 \leq 2.562 \times 10^{-11}, \quad
Z_2(r_0) \leq 3.5655 \times 10^7, \quad
Z_1 \leq 0.245.
\]
The spectral and stability conclusions follow as in Theorem~\ref{th : SH 1D cusp}.
\end{proof}
\begin{figure}[H]
\centering
 \begin{minipage}{.24\linewidth}
  \centering\epsfig{figure=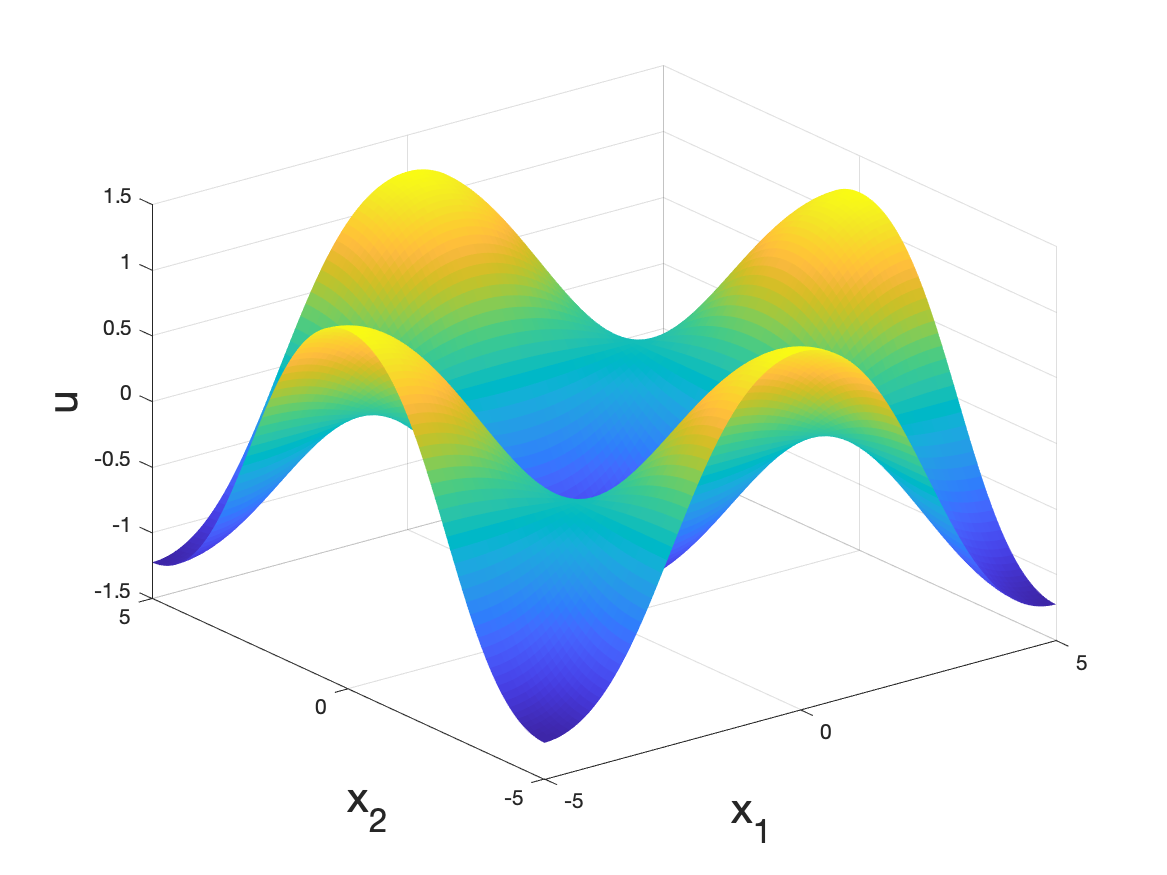,width=\linewidth}
  \end{minipage}%
 \begin{minipage}{.24\linewidth}
  \centering\epsfig{figure=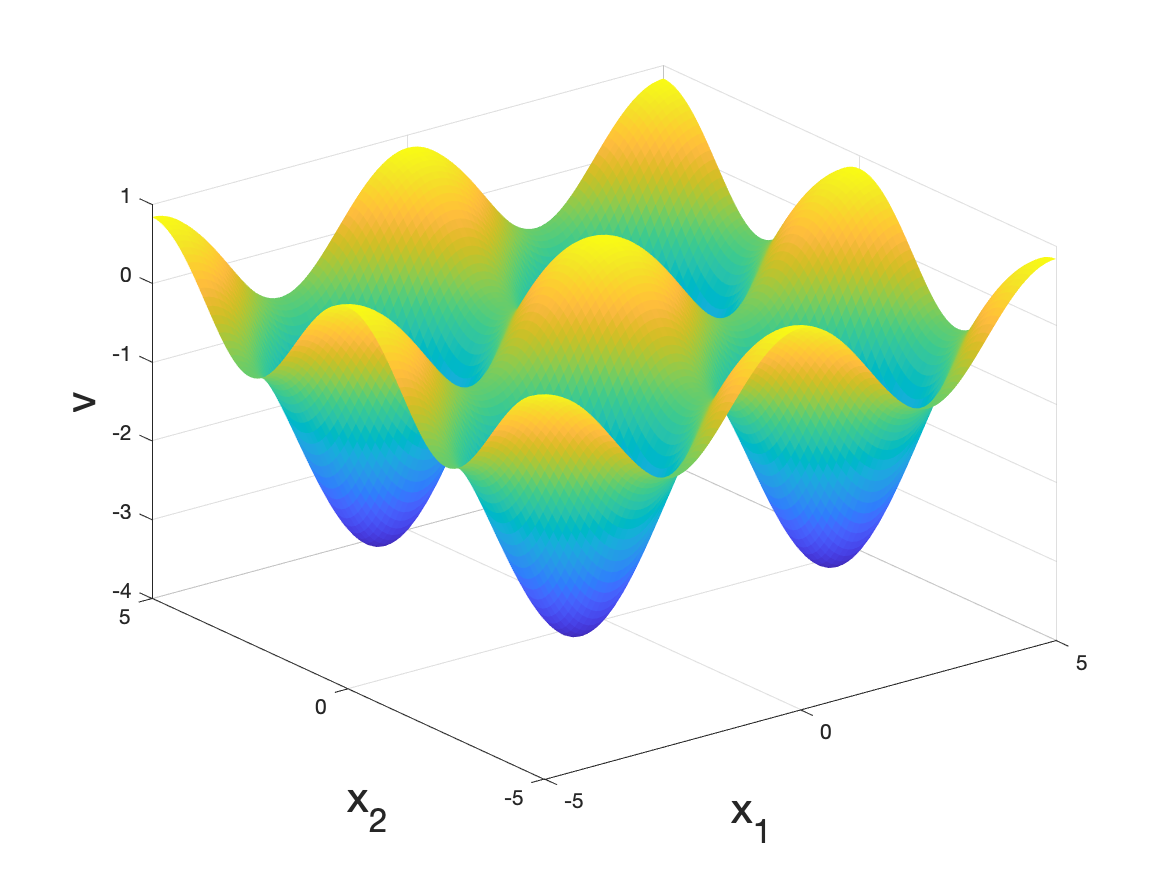,width=\linewidth}
 \end{minipage}%
  \begin{minipage}{.24\linewidth}
  \centering\epsfig{figure=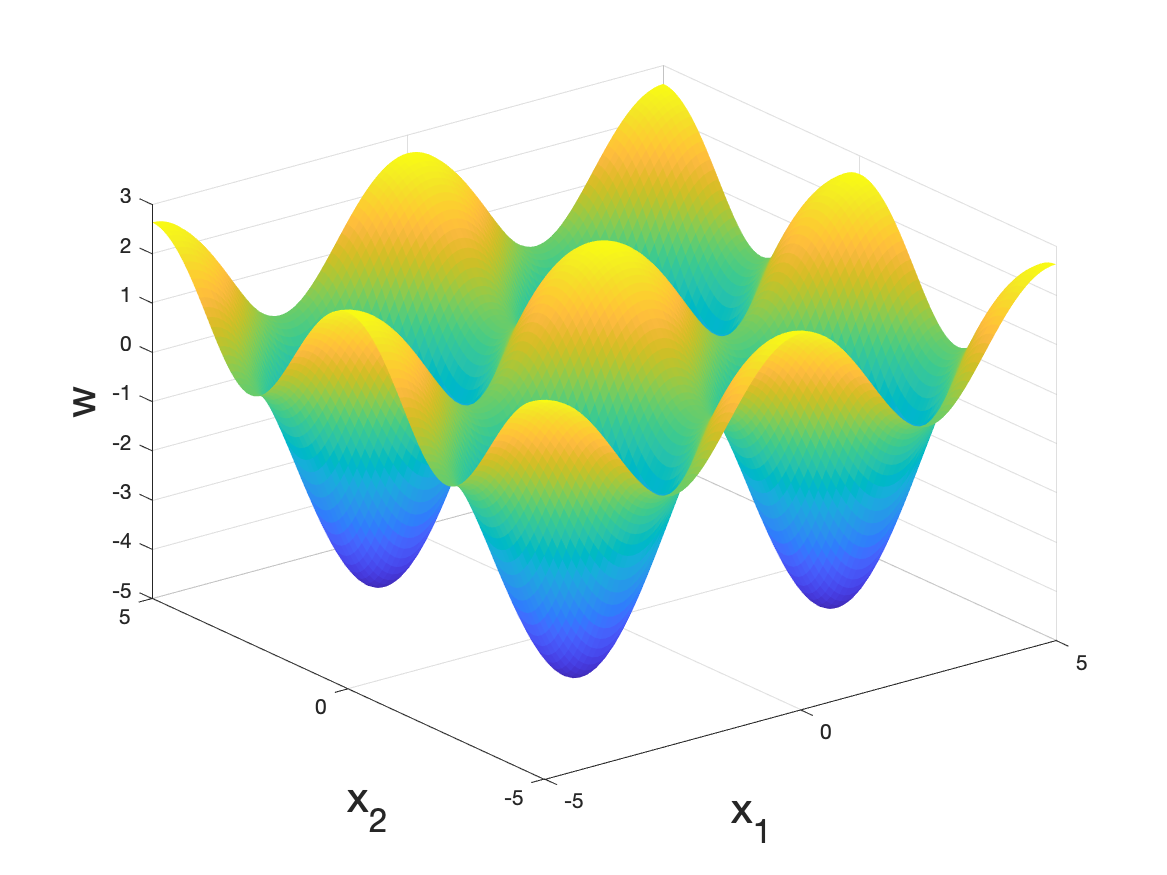,width=\linewidth}
 \end{minipage}%
  \begin{minipage}{.24\linewidth}
  \centering\epsfig{figure=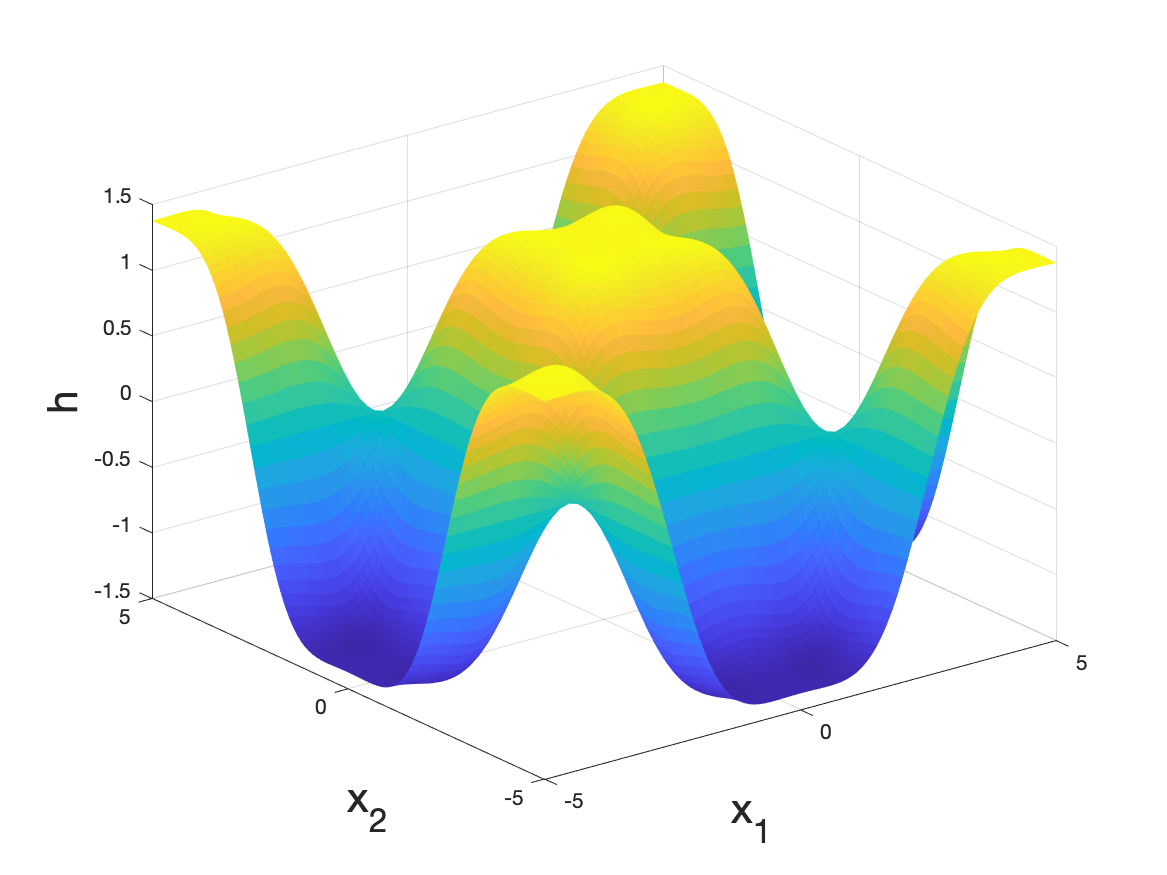,width=\linewidth}
 \end{minipage} 
 \caption{Plot of $ \overline{u}$ (L), $\overline{v}$ (ML), $\overline{w}$ (MR), and $\overline{h}$ (R) on $(-5,5)^2$ used in the proof of Theorem \ref{th : SH 2D cusp D4}.}\label{fig : Th312}
 \end{figure}%
\begin{theorem}[$D_6$ cusp bifurcation in 2D Swift-Hohenberg]\label{th : SH 2D cusp D6}
Set $r_0 \bydef 3 \times 10^{-10}$ and $\mathcal{L} = \begin{bmatrix} 1 & -\frac{1}{2} \\ 0 &\frac{\sqrt{3}}{2}  \end{bmatrix}$. Let $(\omega_n)_{n = (n_1,n_2) \in \mathbb{Z}^2} = ((1 + |n_1|)^{1.01}(1 + |n_2|)^{1.01})_{n = (n_1,n_2) \in \mathbb{Z}^2}$. When we have $\boldsymbol{\lambda} \approx (-0.32732,\, 1.04163)$, there exists a unique $\tilde{\mathbf{x}} \in \overline{B_{r_0}(\overline{\mathbf{x}})} \subset X_{D_6,\omega}$ satisfying $F(\tilde{\mathbf{x}}) = 0$ in~\eqref{sh cusp map}. Moreover, $(\tilde{\boldsymbol{\lambda}}, \tilde{u})$ is a cusp bifurcation with hexagonal $D_6$-symmetry; by Theorem~2.3 of~\cite{dominic_sh_periodic}, the solution $\tilde{u}$ is periodic on a hexagon. Additionally, $(\tilde{\blambda},\tilde{u})$ is a cusp bifurcation with $k = 0$ and $c > 0.3695 $; in particular, the three coexisting solutions exhibit monostability.
\end{theorem}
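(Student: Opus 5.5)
The proof of Theorem~\ref{th : SH 2D cusp D6} will follow the same three-stage template as Theorems~\ref{th : SH 1D cusp} and~\ref{th : SH 2D cusp D4}. The one genuinely new ingredient is the hexagonal lattice $\mathcal{L}$, for which the weight $\omega$ is not constant on $\mathcal{G}$-orbits.

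\textbf{Step 1: numerical approximation.} Fix a truncation $N$ and a half-period $d$ compatible with the hexagonal lattice. Build $\overline{\mathbf{x}}=\obpi^{\leq N}\overline{\mathbf{x}}$ in $X_{D_6,\omega}$ by the three-stage continuation of Section~\ref{sec : numerical aspects}:
\begin{itemize}
\item continue a fold in $\lambda_1$;
\item continue the fold curve in $\lambda_2$ until the saddle-node of the saddle-node map appears;
\item apply Newton on \eqref{sh cusp map}.
\end{itemize}
The index set $\mathcal{Z}_{\mathrm{red}}(D_6)$ and the orbit sizes entering $\mathcal{O}_{\mathcal{G}}$ are generated with the $D_6$ action $n\mapsto(n_2,n_2-n_1)$ together with the reflections, using the symmetry tools of \cite{JB_symmetries_1,JB_symmetries_2}. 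Then assemble $A$, with $A^N$ a numerical inverse of the truncated Jacobian and the tail given by $l_{\overline{\blambda}}^{-1}$. Compute $\mathcal{L}_\infty$ from \eqref{def : Linfty} using $|\mathcal{L}\tilde n|_2$. Note that for the hexagonal $\mathcal{L}$ the minimum of $|\mathcal{L}\tilde n|$ outside $I^N$ must be bounded from below with care: $|\mathcal{L}n|\geq \tfrac{\sqrt3}{2}|n|_\infty$, which suffices.

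\textbf{Step 2: Newton--Kantorovich verification.} Evaluate, in interval arithmetic:
\begin{itemize}
\item $Y_0$ from Lemma~\ref{lem : Y0 SH}, which only involves finitely many modes in $I^{3N}$;
\item $Z_1$ from Lemma~\ref{lem : Z1 full};
\item $Z_2(r_0)$ from Lemma~\ref{lem : Z2}.
\end{itemize}
The constants $\kappa$ and $\kappa_0$ come from Appendix~\ref{apen : algebra estimates} with $s=1.01$, and $\mathscr{O}_{\mathrm{max}}=12$. All norms are taken with the orbit weights $W_n$ of \eqref{eq : orbit weights}, not with $|\mathrm{orb}(n)|\omega_n$, since the lemmas were stated in that generality. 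With $r_0=3\times10^{-10}$, check both inequalities in \eqref{condition radii polynomial}. Theorem~\ref{th : radii polynomial theorem periodic} then gives a unique zero $\tilde{\mathbf{x}}$ in $\overline{B_{r_0}(\overline{\mathbf{x}})}$ at which $DF(\tilde{\mathbf{x}})$ is injective (Remark~\ref{rmk : NK nondegeneracy}).

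\textbf{Step 3: spectral enclosure and conclusion.} For the spectral hypothesis, compute $P^N$ from the eigenvectors of the truncated $D_uf(\overline{\blambda},\overline u)$ and take $\varepsilon_{r_0}$ from Lemma~\ref{lem : eps r0 sh}. Verify \eqref{tail assumption sh} and conditions (i)--(iv) of Proposition~\ref{prop : gershgorin enclosure} with the radii $\rho_{n,1}$ of Lemma~\ref{lem : computable radii}. This shows that $0$ is the only eigenvalue on the imaginary axis and that $k=0$. Theorem~\ref{thm : cusp map nondegen} then gives $c\neq0$ and $\Delta\neq0$, hence the cusp via Lemma~\ref{lemma : Cusp bifurcation}. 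Enclose $c=(w,v^3+3uvh)_2$ from \eqref{c : SH} over the ball to obtain $c>0.3695$, and conclude monostability by Lemma~\ref{lem : bistability}. The statement that $\tilde u$ is periodic on a hexagon is quoted from Theorem~2.3 of \cite{dominic_sh_periodic}.

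\textbf{Main obstacle.} I expect the main difficulty to be obtaining $Z_1<1$ and a sufficiently small $Z_2$ in the $D_6$ setting. Two factors work against this. First, the large orbit factor $\mathscr{O}_{\mathrm{max}}=12$ enters $Z_{1,2}$, $Z_\infty$ and the $Z_{2,k}$. Second, the non-orbit-invariant weight means the adjoint terms $\mathcal{O}_{\mathcal{G}}\mathbb{M}\,\mathcal{O}_{\mathcal{G}}^{-1}$ are controlled less sharply. My first remedy would be to enlarge $N$ so that $\mathcal{L}_\infty$ and the tails $\|\bar q_i-\bar q_i^N\|_{1,\sqrt\omega}$ shrink. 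I would also keep $s$ close to $1$ so that $\kappa$ and $\kappa_0$ stay moderate.
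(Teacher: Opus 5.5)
Your plan is the paper's proof. The paper fixes $N=30$, $d=5$ and reports the interval-arithmetic bounds $\|A^N\|\leq 1435.53$, $Y_0\leq 1.668\times 10^{-10}$, $Z_2(r_0)\leq 1.05\times 10^{9}$, $Z_1\leq 0.0772$, which satisfy both inequalities of \eqref{condition radii polynomial} at $r_0=3\times10^{-10}$; it then takes the Gershgorin, sign-of-$c$ and monostability conclusions exactly as in Theorem~\ref{th : SH 1D cusp}.

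One side remark in your obstacle paragraph is backwards: for $m=2$ one has $\kappa_0=2\zeta(s)-1\approx 200$ and $\kappa\leq 2^{s}(2\zeta(s)-1)\approx 403$ at $s=1.01$, and both blow up as $s\downarrow 1$, so staying near $s=1$ makes these constants large rather than moderate. Likewise, the non-invariance of $\omega$ costs nothing in the adjoint terms, since $\|\mathcal{O}_{\mathcal{G}}\|=\mathscr{O}_{\mathrm{max}}$ and $\|\mathcal{O}_{\mathcal{G}}^{-1}\|=1$ hold for any weight.
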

\begin{proof}
Take $N = 30$ and $d = 5$. The $D_6$-symmetric numerical approximation is constructed using~\cite{dominic_dihedral_julia}. Applying~\cite{CuspProofs.jl} with interval arithmetic yields
\[
\|A^N\|_{\mathcal{B}(X_{D_6,\omega})} \leq 1435.53, \quad
Y_0 \leq 1.668 \times 10^{-10}, \quad
Z_2(r_0) \leq 1.05 \times 10^9, \quad
Z_1 \leq 0.0772.
\]
The spectral and stability conclusions follow as in Theorem~\ref{th : SH 1D cusp}.
\end{proof}
\begin{figure}[H]
\centering
 \begin{minipage}{.24\linewidth}
  \centering\epsfig{figure=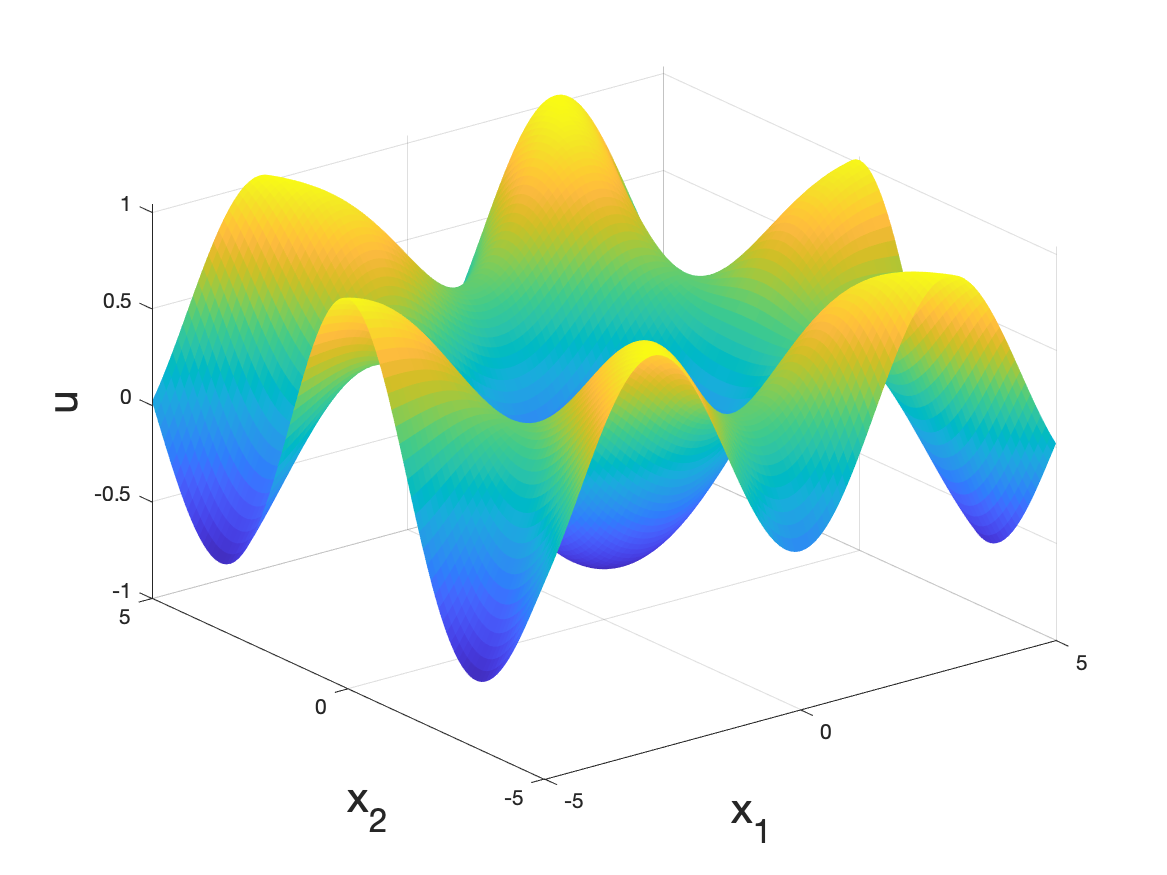,width=\linewidth}
  \end{minipage}%
 \begin{minipage}{.24\linewidth}
  \centering\epsfig{figure=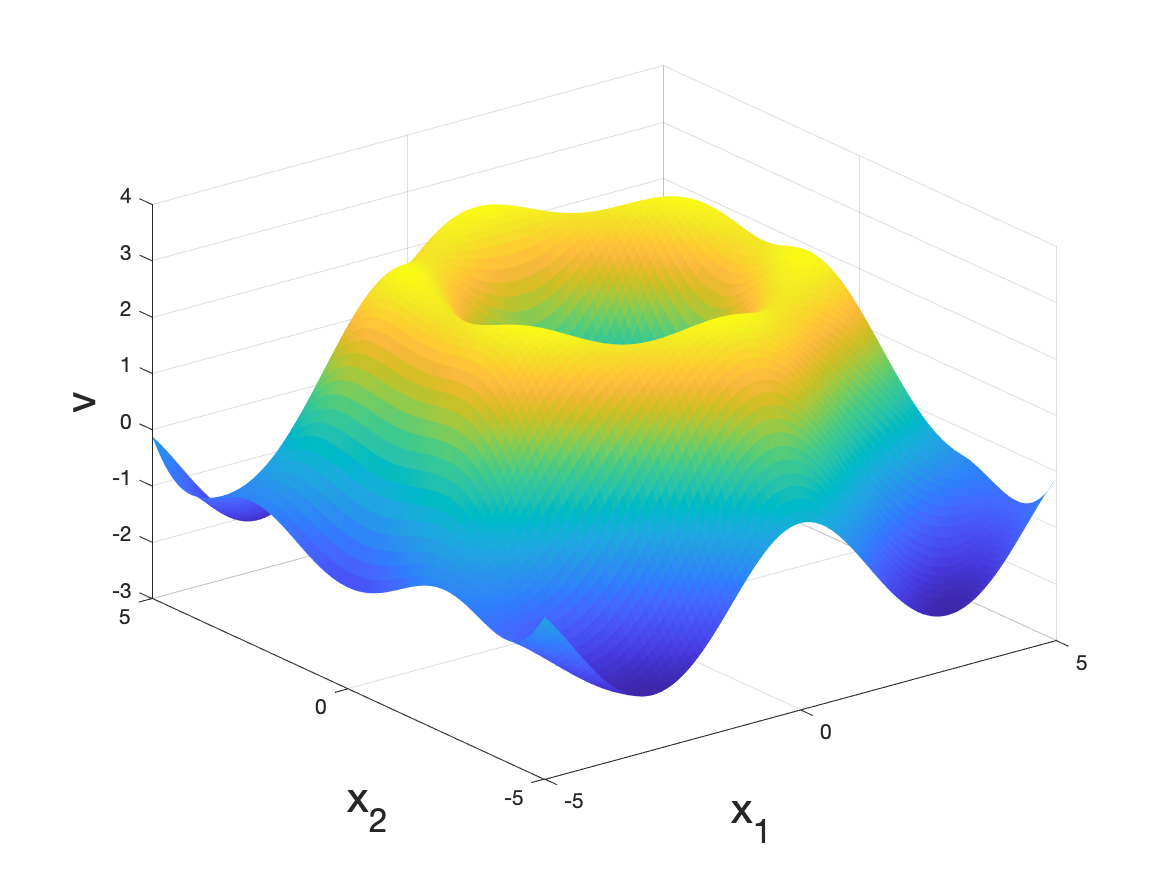,width=\linewidth}
 \end{minipage}%
  \begin{minipage}{.24\linewidth}
  \centering\epsfig{figure=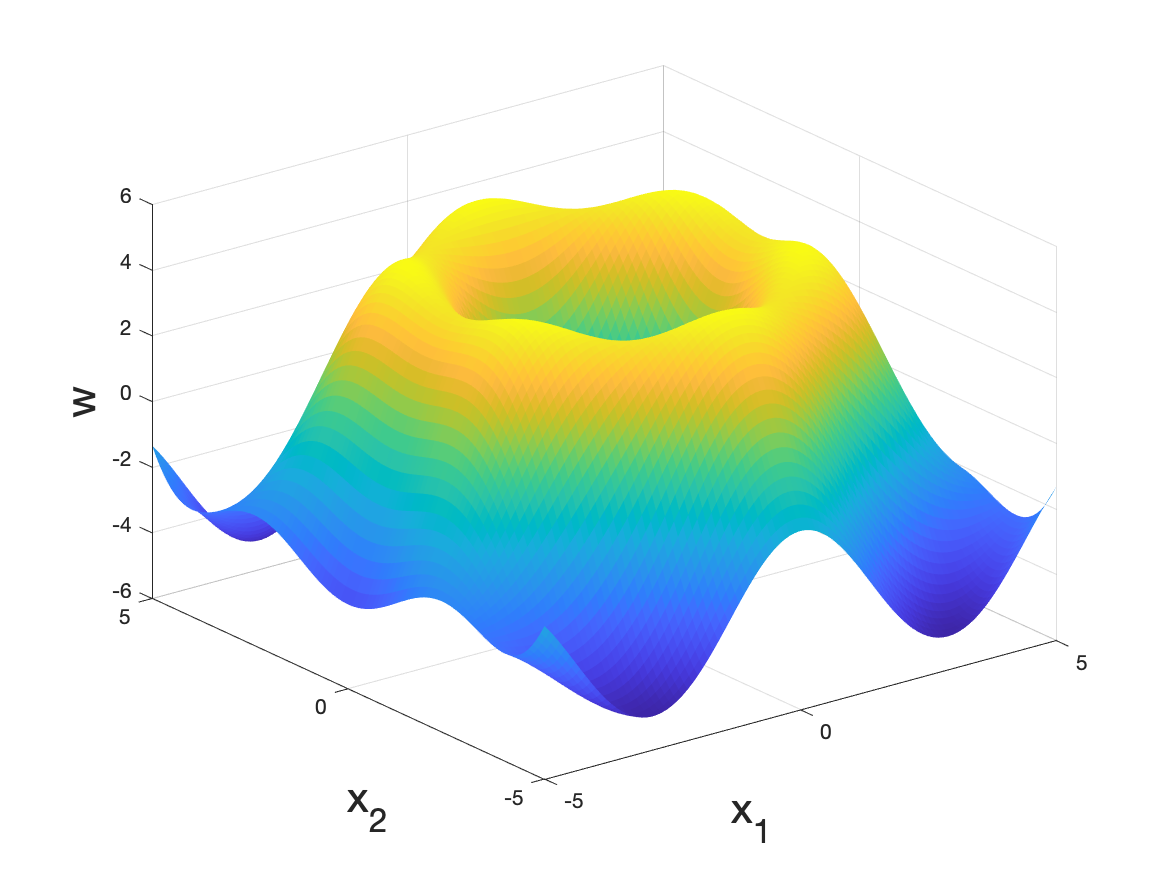,width=\linewidth}
 \end{minipage}%
  \begin{minipage}{.24\linewidth}
  \centering\epsfig{figure=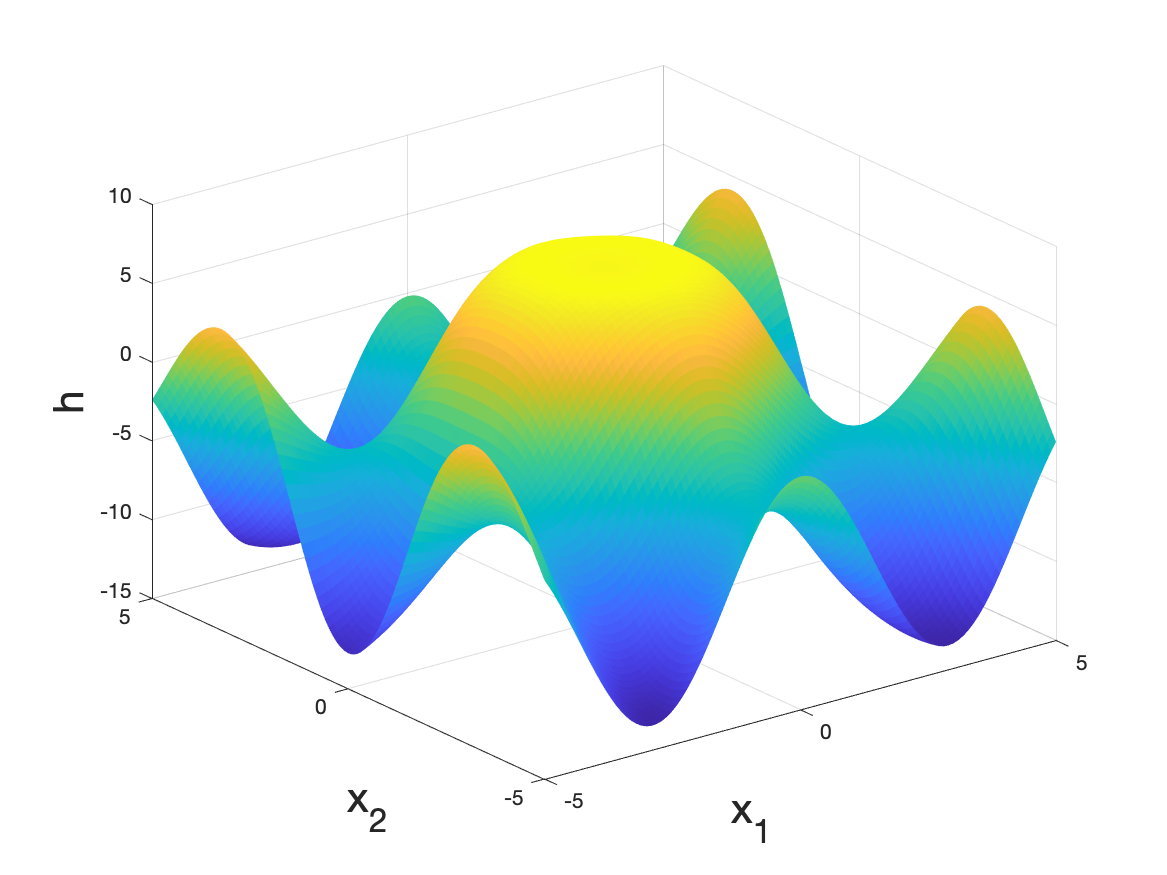,width=\linewidth}
 \end{minipage} 
 \caption{Plot of $ \overline{u}$ (L), $\overline{v}$ (ML), $\overline{w}$ (MR), and $\overline{h}$ (R) on $(-5,5)^2$ used in the proof of Theorem \ref{th : SH 2D cusp D6}.}\label{fig : Th313}
 \end{figure}%

\section{Application to the Gray-Scott system of PDEs}\label{sec : gray scott}
In this section, we consider the problem of looking for equilibria in the Gray-Scott model, namely
\begin{equation}\label{eq : gray_scott}
\begin{aligned}
    \frac{\partial u_1}{\partial t} =  \delta_{1} \Delta u_1 - \lambda_1 u_1 + (u_2 + 1) u_1^2 &= 0 \\
    \frac{\partial u_2}{\partial t} =  \delta_{2} \Delta u_2 - \lambda_2 u_2 - (u_2 + 1) u_1^2 &= 0 .
\end{aligned}
\end{equation}
As we now have a system, $\mathscr{p} = 2$ and we let $\mathbf{u} \bydef (u_1,u_2)$. We define
\begin{align}
    f(\blambda,\mathbf{u}) \bydef l_{\blambda}\mathbf{u} + g(\mathbf{u}), \qquad~
    l_{\blambda} \bydef \begin{bmatrix}
        \delta_1 \Delta - \lambda_1 I_d & 0 \\
        0 & \delta_2 \Delta - \lambda_2 I_d
    \end{bmatrix},\qquad~ g(\mathbf{u}) \bydef \begin{bmatrix}
        (u_2 + 1) u_1^2 \\
        -(u_2 + 1) u_1^2
    \end{bmatrix}.
\end{align}
First introduced by Peter Gray and S.K. Scott in \cite{gs_original} as a chemical equation, the Gray-Scott model \eqref{eq : gray_scott} is a prototypical reaction-diffusion equation of the activator-inhibitor type which models a pair of reactions featuring a cubic autocatalysis. The equations presented in \eqref{eq : gray_scott} are the resulting reaction-diffusion equations corresponding to the chemical reaction proposed in \cite{gs_original}. The pioneered work of  John Pearson in \cite{pearson_gs} allowed to discover and classify a wide variety of patterns, including Turing patterns. Many works have followed his classification on both Turing and localized patterns (moving and stationary). These include \cite{mcgough_riley_gs}, \cite{munafo_gs}, \cite{autosolitons_2D}, \cite{pearson_spots}, \cite{turing_3d}, and \cite{ueyama_numerics} all of which found and classified additional numerical solutions to \eqref{eq : gray_scott}. A more diverse classification which is related to a general class of activator inhibitor models was presented by the authors of \cite{gs_alsaadi}. Here, the authors introduced a homotopy parameter to transform the Glycolysis model studied in \cite{gly_1d} into Gray-Scott. Some rigorous results include the work of \cite{doleman_pulse}, where the authors considered a specific parameter regime allowing them to prove the existence of single pulse solutions. In a different, but still specific parameter regime, exact solutions were studied by the authors of \cite{exact_sol1D_Hale}. These results were then extended beyond this regime using a computer-assisted approach by the authors of \cite{jay_jp_gs}. In 2D, a variety of work using asymptotic expansions in specific parameter regimes exists. We refer the interested reader to the works \cite{wei_spike_2d_unbounded,wei_ring_2d_small_epsilon_unbounded,wei_multi_spike_2d,wei_asym_spike_2d,wei_breakup,thesis_localized_but_moving}. As far as approaches using computer-assisted proofs in 2D, there is the work by the authors of \cite{gs_cadiot_blanco} which is for localized patterns and \cite{castelli_gs} for stationary periodic patterns. 
\par As before, we let $\mathbf{v} \bydef (v_1,v_2)$ be a left null vector of $D_{u}f(\blambda,\mathbf{u})$, $\mathbf{w} \bydef (w_1,w_2)$ be a right null vector of $D_{u}f(\blambda,\mathbf{u})^*$, and $\mathbf{h} \bydef (h_1,h_2)$ solve the bordering system for Gray-Scott. Since $\mathscr{p} = 2$, we have the space $X_{\mathcal{G},\omega} \bydef \mathbb{R}^4 \times (\ell^2_{\mathcal{G},\omega})^8$. We then redefine $\mathbf{x} \bydef (\mathbf{s},\blambda,\mathbf{u},\mathbf{v},\mathbf{w},\mathbf{h}) \in X_{\mathcal{G},\omega}$. We then compute
\begin{align}
    &D_{u} f(\blambda,\mathbf{u})\mathbf{v} = \begin{bmatrix}
        \delta_1 \Delta v_1 - \lambda_1 v_1 + 2(u_2 + 1)u_1 v_1 + u_1^2 v_2  \\
         \delta_2 \Delta v_2 - \lambda_2 v_2 - 2(u_2 + 1) u_1 v_1 - u_1^2 v_2
    \end{bmatrix},\\
    &D_{uu}f (\blambda,\mathbf{u})(\mathbf{v},\mathbf{v}) = \begin{bmatrix}
        2(u_2 + 1) v_1^2 + 4u_1 v_1 v_2 \\
        -2(u_2 + 1) v_1^2 - 4u_1 v_1 v_2
    \end{bmatrix},
\end{align}
and, using Proposition \ref{prop : adjoint mult}, 
\begin{align}
    D_u f(\blambda,\mathbf{u})^* \mathbf{w} =\begin{bmatrix}
        \delta_1 \Delta w_1 - \lambda_1 w_1 + \mathcal{O}_{\mathcal{G}}((2(u_2 + 1)u_1)\star  (\mathcal{O}_{\mathcal{G}}^{-1} (w_1-w_2)))\\
         \delta_2 \Delta w_2 - \lambda_2 w_2 +\mathcal{O}_{\mathcal{G}}(u_1^2\star ( \mathcal{O}_{\mathcal{G}}^{-1} (w_1-w_2)))
    \end{bmatrix}.
\end{align}
Substituting these into the general cusp map \eqref{def : cusp map} of Section \ref{sec : cusp map} and representing the map in Fourier space yields
\begin{align}\label{gs cusp map}
    F(\mathbf{x}) \bydef \begin{bmatrix}
        (\mathbf{v},\mathbf{v})_{2} - 1 \\
        (\mathbf{v},\mathbf{w})_{2} - 1 \\
        (\mathbf{w},\mathbf{h})_{2} \\
        (2(u_2 + 1)v_1^2 + 4 u_1 v_1 v_2,w_1-w_2)_{2} \\
        \delta_1 \Delta u_1 - \lambda_1 u_1 + (u_2 + 1)u_1^2 \\
        \delta_2 \Delta u_2 - \lambda_2 u_2 - (u_2 + 1)u_1^2 \\
        \delta_1 \Delta v_1 - \lambda_1 v_1 + 2(u_2 + 1)u_1 v_1 + u_1^2 v_2 + s_1 v_1 \\
        \delta_2 \Delta v_2 - \lambda_2 v_2 - 2(u_2 + 1) u_1 v_1 - u_1^2 v_2 + s_1 v_2 \\
        \delta_1 \Delta w_1 - \lambda_1 w_1 + \mathcal{O}_{\mathcal{G}}((2(u_2 + 1)u_1)\star  (\mathcal{O}_{\mathcal{G}}^{-1} (w_1-w_2)))\\
        \delta_2 \Delta w_2 - \lambda_2 w_2 +\mathcal{O}_{\mathcal{G}}(u_1^2\star ( \mathcal{O}_{\mathcal{G}}^{-1} (w_1-w_2))) \\
        \delta_1 \Delta h_1 - \lambda_1 h_1 + 2(u_2 + 1)u_1 h_1 + u_1^2 h_2 + s_2 v_1 + 2(u_2 + 1)v_1^2 + 4u_1v_1v_2 \\
        \delta_2 \Delta h_2 - \lambda_2 h_2 - 2(u_2 + 1)u_1 h_1 - u_1^2 h_2 + s_2 v_1 - 2(u_2 + 1)v_1^2 - 4u_1v_1v_2
    \end{bmatrix}.
\end{align}

We now redefine
{\small\begin{align}
    &\bpi^{\leq N}(\mathbf{u},\mathbf{v},\mathbf{w},\mathbf{h}) \bydef (\Pi^{\leq N} u_1,\Pi^{\leq N} u_2, \Pi^{\leq N} v_1,\Pi^{\leq N} v_2, \Pi^{\leq N} w_1,\Pi^{\leq N} w_2, \Pi^{\leq N} h_1,\Pi^{\leq N} h_2), \\
    &\bpi^{>N}(\mathbf{u},\mathbf{v},\mathbf{w},\mathbf{h}) \bydef (\Pi^{> N} u_1,\Pi^{> N} u_2, \Pi^{> N} v_1,\Pi^{> N} v_2, \Pi^{> N} w_1,\Pi^{> N} w_2, \Pi^{> N} h_1, \Pi^{> N} h_2)\\
    &\obpi^{\leq N}\mathbf{x} \bydef (\mathbf{s},\boldsymbol{\lambda},\Pi^{\leq N} u_1,\Pi^{\leq N} u_2, \Pi^{\leq N} v_1,\Pi^{\leq N} v_2, \Pi^{\leq N} w_1,\Pi^{\leq N} w_2, \Pi^{\leq N} h_1,\Pi^{\leq N} h_2) \\
    &\obpi^{>N}\mathbf{x} \bydef (0,0,0,0,\Pi^{> N} u_1,\Pi^{> N} u_2, \Pi^{> N} v_1,\Pi^{> N} v_2, \Pi^{> N} w_1,\Pi^{> N} w_2, \Pi^{> N} h_1, \Pi^{> N} h_2)
\end{align}}As in Section \ref{sec : numerical aspects}, we construct $\overline{\mathbf{x}}$ using our strategy of locating a solution, a saddle-node, and then a saddle-node of the saddle-node map. We construct $A$ using a similar strategy as well. More specifically, we let $\mathbf{z} = (z_1,z_2)$ where $\mathbf{z} \in \{\mathbf{u},\mathbf{v},\mathbf{w},\mathbf{h}\}$. Then, we write
\begin{align}
    (\Pi^{> N} A \mathbf{z})_n \bydef \begin{cases}
        0 & n \in I^N \\
        \begin{bmatrix}
            \frac{(z_1)_n}{-\delta_1 |\mathcal{L}\tilde{n}|_2^2 - \overline{\lambda}_1} \\
            \frac{(z_2)_n}{- \delta_2|\mathcal{L}\tilde{n}|_2^2 - \overline{\lambda}_2}
        \end{bmatrix} & n \in \mathcal{Z}_{\mathrm{red}}(\mathcal{G}) \setminus I^N
    \end{cases}
\end{align}
while the scalar parameters $\mathbf{s}$ and $\blambda$ are left unchanged as before. We then define \begin{align}
    \mathcal{L}_{\infty} \bydef \max_{n \in \mathcal{Z}_{\mathrm{red}}(\mathcal{G}) \setminus I^N}\left\{\frac{1}{|\delta_1 |\mathcal{L}\tilde{n}|_2^2 + \overline{\lambda}_1|},\frac{1}{| \delta_2|\mathcal{L}\tilde{n}|_2^2 + \overline{\lambda}_2|}\right\},
\end{align}
and perform a similar proof to that of Lemma \ref{lem : tailA} to obtain that $\|\obpi^{> N} A\|_{\mathcal{B}(X_{\mathcal{G},\omega})} \leq \mathcal{L}_{\infty}$.

\subsection{Newton--Kantorovich bounds for Gray--Scott}\label{sec : bounds gs}
In this section, we compute the bounds required to apply Theorem \ref{th : radii polynomial theorem periodic}. That is, we must compute the constants $Y_0, Z_1$ and the function $Z_2(r)$. As the highest power in Gray-Scott is a cubic, the result for $Y_0$ is the same as Lemma \ref{lem : Y0 SH}. We now move to the $Z_2(r)$ bound.
\begin{lemma}\label{lem : Z2 GS}
Let $Z_2(r) : (0,\infty) \to [0,\infty)$ be defined as 
\begin{align}
    Z_2(r) \bydef (\|A^N\|_{\mathcal{B}(X_{\mathcal{G},\omega})} + \mathcal{L}_{\infty}) (24 + Z_{2,1}(r) + Z_{2,2}(r))
\end{align}
where
\begin{align}
    &Z_{2,1}(r) \bydef 12 + \kappa_0 (18\kappa r + 16\|\overline{v}_1\|_{1} + 8\|\overline{v}_2\|_{1} + 8\|\overline{u}_1\|_{1} + 4\|\overline{u}_2 + 1\|_{1})\|\overline{w}_1 - \overline{w}_2\|_{2}+ (36\kappa_0\kappa + 30\kappa^2) r^2\\
    &\hspace{+0.2cm} + 8\kappa_0\|\overline{v}_1 \overline{v}_2\|_{2} + 4\|\overline{v}_1\overline{v}_2\|_{1} + (16\kappa + 32\kappa_0)\|\overline{v}_1\|_{1}r + (8\kappa + 16\kappa_0) \|\overline{v}_2\|_{1}r + 4\kappa_0\|\overline{v}_1^2\|_{2} + 4\|\overline{v}_1^2\|_{1}\\
    &\hspace{+0.7cm} + 8\kappa_0\|(\overline{u}_2 + 1)\overline{v}_1\|_{2} + 8\|(\overline{u}_2 + 1)\overline{v}_1\|_{1} + (4\kappa + 8\kappa_0)\|\overline{u}_2 + 1\|_{1}r + (8\kappa_0 + 8)\|\overline{u}_1 \overline{v}_2\|_{1}\\
    &\hspace{+1.0cm} + (16\kappa_0 + 12\kappa)\|\overline{u}_1\|_{1}r + (8\kappa_0+8)\|\overline{u}_1 \overline{v}_1\|_{1}, \\
    &Z_{2,2}(r) \bydef \kappa \biggl((78+24\mathscr{O}_{\mathrm{max}}) \kappa r + (56 + 24\mathscr{O}_{\mathrm{max}}) \|\overline{u}_1\|_{1,\sqrt{\omega}} + (28 + 12\mathscr{O}_{\mathrm{max}}) \|\overline{u}_2+1\|_{1,\sqrt{\omega}} \\
    &\hspace{+1cm}+ 40 \|\overline{v}_1\|_{1,\sqrt{\omega}}+ 20 \|\overline{v}_2\|_{1,\sqrt{\omega}}+ 8 \|\overline{h}_1\|_{1,\sqrt{\omega}} + 4 \|\overline{h}_2\|_{1,\sqrt{\omega}} +6 \mathscr{O}_{\mathrm{max}} \|\mathcal{O}_{\mathcal{G},N}^{-1}(\overline{w}_1 - \overline{w}_2)\|_{1,\sqrt{\omega}}\biggr).
\end{align}
Then, it follows that $\|A(DF(\mathbf{x}) - DF(\overline{\mathbf{x}}))\|_{\mathcal{B}(X_{\mathcal{G},\omega})} \leq Z_2(r)r.$
\end{lemma}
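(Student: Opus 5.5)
The plan follows the template of Lemma~\ref{lem : Z2}. Fix $\mathbf{x}\in B_r(\overline{\mathbf{x}})$, write $\mathbf{x} = \overline{\mathbf{x}} + \mathbf{y}$ with $\|\mathbf{y}\|_{X_{\mathcal{G},\omega}}<r$, and take an arbitrary test vector $\mathbf{q}$ with $\|\mathbf{q}\|_{X_{\mathcal{G},\omega}}\le 1$.

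First, we split $A = \obpi^{\leq N}A + \obpi^{>N}A$. Then
\[
\|A(DF(\mathbf{x})-DF(\overline{\mathbf{x}}))\mathbf{q}\|_{X_{\mathcal{G},\omega}} \leq \bigl(\|A^N\|_{\mathcal{B}(X_{\mathcal{G},\omega})}+\mathcal{L}_\infty\bigr)\,\|(DF(\mathbf{x})-DF(\overline{\mathbf{x}}))\mathbf{q}\|_{X_{\mathcal{G},\omega}},
\]
using the tail bound $\|\obpi^{>N}A\|\le\mathcal{L}_\infty$ established just before this lemma. This is legitimate because the linear part $l_{\blambda}$ depends on $\blambda$ only through the constants $-\lambda_1,-\lambda_2$. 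Hence $DF(\mathbf{x})-DF(\overline{\mathbf{x}})$ contains no differential operators and is a genuinely bounded operator on $X_{\mathcal{G},\omega}$. It therefore suffices to show
\[
\|(DF(\mathbf{x})-DF(\overline{\mathbf{x}}))\mathbf{q}\|\le (24+Z_{2,1}(r)+Z_{2,2}(r))\,r .
\]

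Second, we differentiate \eqref{gs cusp map} row by row and organize the difference into three groups.

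\begin{itemize}
\item \textbf{Scalar rows 1--4.} Rows 1--3 are bilinear, so their differences are controlled by $\|\mathbf{y}\|\,\|\mathbf{q}\|$ via Cauchy--Schwarz. We use $\|\cdot\|_2\le\|\cdot\|_{2,\omega}$ (true since $\omega_n\ge1$) to pass to weighted norms. These contributions give a fixed constant of the type collected in the ``$24$''/``$12$'' terms.

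Row 4, $(2(u_2+1)v_1^2+4u_1v_1v_2,\,w_1-w_2)_2$, is quartic. We expand its derivative at $\mathbf{x}$ minus at $\overline{\mathbf{x}}$ as a polynomial in $\mathbf{y}$ with coefficients built from $\overline{\mathbf{x}}$. Each term $(a\star b\star c, d)_2$ is bounded by $\|a\star b\star c\|_2\|d\|_2$. When $a\star b\star c$ contains a factor from $\overline{\mathbf{x}}$, we put that factor in $\ell^1$ and use Young's inequality \eqref{eq:youngs inequality}. We apply Lemma~\ref{lem : 1 bounded by 2 nu} (constant $\kappa_0$) to move perturbation factors from $\ell^1$ to $\ell^2_{\omega}$, and Lemma~\ref{lem : conv_estimates} (constant $\kappa$) for products of two perturbations. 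Collecting by degree in $r$ produces exactly the structure of $Z_{2,1}(r)$: the terms in $\|\overline w_1-\overline w_2\|_2$, the terms $\|\overline v_1\overline v_2\|_{1,2}$ and $\|\overline v_1^2\|_{1,2}$, the terms $\|(\overline u_2+1)\overline v_1\|$, and the $r^2$ terms carrying $\kappa_0\kappa$ and $\kappa^2$.
\item \textbf{Rows 5--8 and 11--12.} These are polynomial in $(\mathbf{u},\mathbf{v},\mathbf{h},s)$, and the factors $\pm$ duplicate across the two components. Each term of the derivative difference has the form $\mathbb{M}_{p}$ applied to a component of $\mathbf{q}$, where $p$ is a product involving at least one perturbation. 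We bound it by Lemma~\ref{lem : gen young} with the $\overline{\mathbf{x}}$-factor in $\ell^1_{\sqrt\omega}$, and by Lemma~\ref{lem : conv_estimates} for the purely perturbative parts. This yields the $\|\overline u_1\|_{1,\sqrt\omega}$, $\|\overline u_2+1\|_{1,\sqrt\omega}$, $\|\overline v_j\|_{1,\sqrt\omega}$ and $\|\overline h_j\|_{1,\sqrt\omega}$ terms of $Z_{2,2}$, each with its prefactor $\kappa$. The numerical coefficients come from counting monomials: for example, $2(u_2+1)u_1h_1$ varies in three slots, and it appears in both rows 11 and 12.
\item \textbf{Rows 9--10 (the adjoint rows).} We use \eqref{adjoint mult action 2}--\eqref{bbQ action}. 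The difference is $\mathcal{O}_{\mathcal{G}}(\Delta p\star\mathcal{O}_{\mathcal{G}}^{-1}(\cdot))$ plus $\mathcal{O}_{\mathcal{G}}(\cdot\star\mathcal{O}_{\mathcal{G}}^{-1}(\overline w_1-\overline w_2+y_{w_1}-y_{w_2}))$. Lemma~\ref{lem : O operator norm maximum} gives $\|\mathcal{O}_{\mathcal{G}}\|=\mathscr{O}_{\max}$ and $\|\mathcal{O}_{\mathcal{G}}^{-1}\|=1$. This produces all the $\mathscr{O}_{\max}$-terms, including $\mathscr{O}_{\max}\|\mathcal{O}_{\mathcal{G},N}^{-1}(\overline w_1-\overline w_2)\|_{1,\sqrt\omega}$. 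Here we use that $\overline{\mathbf{w}}$ is supported in $I^N$, so $\mathcal{O}_{\mathcal{G}}^{-1}\overline{\mathbf{w}}=\mathcal{O}_{\mathcal{G},N}^{-1}\overline{\mathbf{w}}$.
\end{itemize}

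The main obstacle is the adjoint rows 9--10 together with row 4. They are where the symmetry-reduced structure interferes with the standard estimates: convolution estimates hold on unfolded sequences (Corollary~\ref{cor : conv estimates symmetric}), but $\mathcal{O}_{\mathcal{G}}$ does not commute with convolution. The intended fix is to apply $\mathcal{O}_{\mathcal{G}}$ only as an outer operator-norm factor and $\mathcal{O}_{\mathcal{G}}^{-1}$ only on the argument before convolving. That keeps every convolution inside $\ell^2_{\mathcal{G},\omega}$, where Corollary~\ref{cor : conv estimates symmetric} applies.

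The remaining work is careful bookkeeping of constants, which we would delegate to a term-by-term table as in Appendix~\ref{apen : Z2}. Since every resulting bound is dominated by the corresponding coefficient in $Z_{2,1}$ or $Z_{2,2}$, summing the three groups gives the claim.
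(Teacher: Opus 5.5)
Your plan has the same architecture as the paper's proof. It uses the reduction to $(\|A^N\|_{\mathcal{B}(X_{\mathcal{G},\omega})}+\mathcal{L}_{\infty})\|DF(\mathbf{x})-DF(\overline{\mathbf{x}})\|$ and the convolution Lemmas~\ref{lem : conv_estimates}, \ref{lem : gen young} and \ref{lem : 1 bounded by 2 nu}. For the adjoint rows it uses $\|\mathcal{O}_{\mathcal{G}}\|=\mathscr{O}_{\mathrm{max}}$, $\|\mathcal{O}_{\mathcal{G}}^{-1}\|=1$ and $\mathcal{O}_{\mathcal{G}}^{-1}\overline{\mathbf{w}}=\mathcal{O}_{\mathcal{G},N}^{-1}\overline{\mathbf{w}}$. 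Your handling of the $\mathbf{u},\mathbf{v},\mathbf{w},\mathbf{h}$-columns of rows 5--12 is how the paper obtains $Z_{2,2}$. Two points differ.

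(i) The paper splits by blocks rather than by rows. Rows 1--3 give only the constant $12$ \emph{inside} $Z_{2,1}$. The separate $24$ collects the $(\mathbf{s},\blambda)$-columns of rows 5--12 ($12r$) and the diagonal shifts $L_{\mathbf{s},\blambda}-L_{\mathbf{0},\overline{\blambda}}$ ($12r$). Your row grouping must account for these explicitly.

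(ii) In row 4 the paper introduces representer sequences $\mathscr{Z}_1,\dots,\mathscr{Z}_4$ for the $\mathbf{u}$- and $\mathbf{v}$-columns. It then uses $|(p_j,\mathscr{Z}_j)_2|\le\|\mathscr{Z}_j\|_\infty\|p_j\|_1\le\kappa_0\|\mathscr{Z}_j\|_\infty\|p_j\|_{2,\omega}$, so $\kappa_0$ is paid on the test vector (the paper's $\mathbf{p}$, your $\mathbf{q}$). This is what produces the terms $8\kappa_0\|\overline{v}_1\overline{v}_2\|_2$, $4\kappa_0\|\overline{v}_1^2\|_2$ and $8\kappa_0\|(\overline{u}_2+1)\overline{v}_1\|_2$.

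Your direct Cauchy--Schwarz/Young estimate of the whole pairing is equally valid and somewhat cleaner. It never needs coordinatewise bounds on $\mathrm{conv}(e_k,\cdot)$; in the reduced space $\iota(e_k)$ is the indicator of the whole orbit of $k$, so such bounds pick up orbit-size factors.

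However, with the placement you state ($\overline{\mathbf{x}}$-factor in $\ell^1$, $\kappa_0$ only on perturbations), the pieces where a pure $\overline{\mathbf{x}}$-product is paired against the $\mathbf{w}$-perturbation come out as $\|\overline{v}_1\overline{v}_2\|_1$, $\|\overline{v}_1^2\|_1$, $\|(\overline{u}_2+1)\overline{v}_1\|_1$ without $\kappa_0$. These are not covered term by term by the stated $Z_{2,1}$. To get the lemma with its stated constants, bound those pieces as $\|q_j\star\overline{a}\,\overline{b}\|_2\le\kappa_0\|q_j\|_{2,\omega}\|\overline{a}\,\overline{b}\|_2$. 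After that change, your closing claim that every term is dominated by the corresponding coefficient holds.
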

\begin{proof}
The proof can be found in Appendix \ref{apen : Z2 GS}.
\end{proof}
Let us now discuss the $Z_1$ bound. First, we introduce the necessary notation.
\begin{align}
    &\overline{q}_{-1} \bydef (2(\overline{u}_2 + 1)\star (\mathcal{O}_{\mathcal{G},N}^{-1} (\overline{w}_1 - \overline{w}_2))),~\overline{q}_{0} \bydef (2\overline{u}_1,\star (\mathcal{O}_{\mathcal{G},N}^{-1} (\overline{w}_1 - \overline{w}_2))), ~\overline{q}_1 \bydef 2\overline{u}_1(\overline{u}_2 + 1), \\
    &\overline{q}_2 \bydef \overline{u}_1^2,~\overline{q}_3 \bydef 2(\overline{u}_2 + 1)\overline{v}_1 + 2\overline{u}_1 \overline{v}_2, ~\overline{q}_4 \bydef 2\overline{u}_1 \overline{v}_1, ~\overline{q}_5 \bydef 2(\overline{u}_2 + 1)\overline{h}_1 + 2\overline{u}_1 \overline{h}_2 + 4\overline{v}_1 \overline{v}_2 \\
    &\overline{q}_6 \bydef 2\overline{u}_1 \overline{h}_1 + 2\overline{v}_1^2,~\overline{q}_7 \bydef 2(u_2 + 1)v_1^2 + 4u_1v_1v_2, ~\text{and}~ \overline{q}_k^N \bydef \Pi^{\leq N} \overline{q}_k ~ \text{for} ~ k \in \{-1,0,1,2,3,4,5,6,7\}.
\end{align}
We now state the lemma for the $Z_1$ bound.
\begin{lemma}\label{lem : Z1 full GS}
Let $Z_{1,1},Z_{1,2},Z_{\infty} > 0$ be defined as 
\begin{align}
    &Z_{1,1} \bydef \left\|\obpi^{\leq N} - A^NM^N(\overline{\mathbf{x}})\obpi^{\leq 2N}\right\|_{\mathcal{B}(X_{\mathcal{G},\omega})}, \\
    &Z_{1,2} \bydef 2 \mathcal{L}_{\infty}\biggl( (6+2\mathscr{O}_{\mathrm{max}})\| \overline{q}_1^N\|_{1,\sqrt{\omega}} + (6+2\mathscr{O}_{\mathrm{max}})\|\overline{q}_2^N\|_{1,\sqrt{\omega}} + 6\| \overline{q}_3^N\|_{1,\sqrt{\omega}} + 6\| \overline{q}_4^N\|_{1,\sqrt{\omega}} \\
    &\hspace{+1.5cm}+ 2\| \overline{q}_5^N\|_{1,\sqrt{\omega}} + 2\| \overline{q}_6^N\|_{1,\sqrt{\omega}}+ \mathscr{O}_{\mathrm{max}}\|\overline{q}_{-1}^N\|_{1,\sqrt{\omega}} +2\mathscr{O}_{\mathrm{max}}\|\overline{q}_0^N\|_{1,\sqrt{\omega}}\biggr), 
\end{align}
and 
\begin{align}
    Z_{\infty} &\bydef 2\|(\Pi^{3N} - \Pi^{2N})\overline{q}_7 - \overline{q}_7^N\|_{2}+ (6 + 2\mathscr{O}_{\mathrm{max}}) \|\overline{q}_1 - \overline{q}_1^N\|_{1,\sqrt{\omega}}+  (6 + 2\mathscr{O}_{\mathrm{max}})\|\overline{q}_2 - \overline{q}_2^N\|_{1,\sqrt{\omega}}\\
    &+ 6 \|\overline{q}_3 - \overline{q}_3^N\|_{1,\sqrt{\omega}}+ 6 \|\overline{q}_4 - \overline{q}_4^N\|_{1,\sqrt{\omega}}+ 2 \|\overline{q}_5 - \overline{q}_5^N\|_{1,\sqrt{\omega}}+ 2 \|\overline{q}_6 - \overline{q}_6^N\|_{1,\sqrt{\omega}} \\
    &+  \mathscr{O}_{\mathrm{max}} \|\overline{q}_{-1} - \overline{q}_{-1}^N\|_{1,\sqrt{\omega}} + 2 \mathscr{O}_{\mathrm{max}}\|\overline{q}_0 - \overline{q}_0^N\|_{1,\sqrt{\omega}}  \\
    &+ 4\|(\Pi^{3N} - \Pi^{2N}) (\overline{v}_1 \overline{v}_2 \star (\overline{w}_1 - \overline{w}_2))\|_{2}+ 2\|(\Pi^{3N} - \Pi^{2N}) (\overline{v}_1^2 \star (\overline{w}_1 - \overline{w}_2))\|_{2}\\
    &+ 4\|(\Pi^{3N} - \Pi^{2N}) ([(\overline{u}_2 + 1)\overline{v}_1 + \overline{u}_1 \overline{v}_2] \star (\overline{w}_1 - \overline{w}_2))\|_{2} + 4\|(\Pi^{3N} - \Pi^{2N}) (\overline{u}_1 \overline{v}_1 \star (\overline{w}_1 - \overline{w}_2))\|_{2}.
\end{align}
Then, defining
\begin{align}
    Z_1 \bydef Z_{1,1} + Z_{1,2} + (\|A^N\|_{\mathcal{B}(X_{\mathcal{G},\omega})} + \mathcal{L}_{\infty}) Z_{\infty},
\end{align}
it follows that $\|I_d - ADF(\overline{\mathbf{x}})\|_{\mathcal{B}(X_{\mathcal{G},\omega})} \leq Z_1$.
\end{lemma}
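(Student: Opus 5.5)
The plan is to follow the structure of the Swift--Hohenberg $Z_1$ bound (Lemma~\ref{lem : Z1 full}) and split $I_d - ADF(\overline{\mathbf{x}})$ according to the projections $\obpi^{\leq N}$ and $\obpi^{>N}$. We write $A = \obpi^{\leq N}A + \obpi^{>N}A$ with $\obpi^{\leq N}A = A^N$. We also decompose $DF(\overline{\mathbf{x}}) = \overline{L}_{\mathbf{0},\overline{\blambda}} + \mathcal{M}(\overline{\mathbf{x}})$, where $\overline{L}$ collects the diagonal operators $\delta_j\Delta - \overline{\lambda}_j$ acting on each sequence component. The remainder $\mathcal{M}(\overline{\mathbf{x}})$ contains the scalar rows, the $\blambda$-derivatives, and all multiplication operators. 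These are $\mathbb{M}_{\overline{q}_1},\mathbb{M}_{\overline{q}_2}$ in the rows for $\mathbf{u},\mathbf{v},\mathbf{h}$; $\mathbb{M}_{\overline{q}_3},\mathbb{M}_{\overline{q}_4},\dots$ in the $\mathbf{u}$-derivative of rows 7--8 and 11--12; and, via \eqref{adjoint mult action 2}--\eqref{bbQ action}, the operators $\mathcal{O}_{\mathcal{G}}\mathbb{M}_{\overline{q}_1}\mathcal{O}_{\mathcal{G}}^{-1}$, $\mathcal{O}_{\mathcal{G}}\mathbb{M}_{\overline{q}_2}\mathcal{O}_{\mathcal{G}}^{-1}$ and $\mathbb{Q}$-type operators built from $\overline{q}_{-1},\overline{q}_0$ in rows 9--10. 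In $M^N(\overline{\mathbf{x}})$ each symbol $\overline{q}_k$ is replaced by $\overline{q}_k^N$. We then have
\[
I_d - ADF(\overline{\mathbf{x}}) = \bigl(\obpi^{\leq N} - A^N DF(\overline{\mathbf{x}})\bigr) + \obpi^{>N}\bigl(I_d - A\,DF(\overline{\mathbf{x}})\bigr).
\]
Replacing $\overline{q}_k$ by $\overline{q}_k^N$ isolates a truncation remainder $A(\mathcal{M}(\overline{\mathbf{x}}) - M^N(\overline{\mathbf{x}}))$, whose norm is at most $(\|A^N\| + \mathcal{L}_\infty)Z_\infty$.

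\textbf{Step 1 (finite part).} Since $\overline{q}_k^N$ is supported in $I^N$ and acts on vectors supported in $I^N$, $M^N(\overline{\mathbf{x}})\obpi^{\leq N}$ has support in $I^{2N}$. The map $A^N(\overline{L} + M^N)$ restricted to the low modes is therefore exactly the finite matrix in $Z_{1,1}$. Here $\overline{L}$ is diagonal, so $\obpi^{\leq N}\overline{L}\obpi^{>N} = 0$, and the $\overline{L}$ term is absorbed into $Z_{1,1}$ on the finite block; the lemma writes only $M^N$, and I would check that $\overline{L}$ is included there.

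\textbf{Step 2 (tail diagonal).} On the tail, $\obpi^{>N}A\,\overline{L}\,\obpi^{>N} = \obpi^{>N}$ exactly, by the construction of $\Pi^{>N}A$. What remains is the interaction $\obpi^{>N}A\,M^N\obpi^{\leq N}$ together with $A^N M^N\obpi^{>N}$, which is nonzero only on modes $N < |n| \le 2N$.

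\textbf{Step 3 (bounding $Z_{1,2}$).} Each multiplication block is bounded by Lemma~\ref{lem : gen young}: $\|\mathbb{M}_q\|_{\mathcal{B}(\ell^2_{\mathcal{G},\omega})} \le \|q\|_{1,\sqrt{\omega}}$, using Corollary~\ref{cor : conv estimates symmetric} to pass to the symmetric space. Conjugation by $\mathcal{O}_{\mathcal{G}}$ costs a factor $\mathscr{O}_{\mathrm{max}}$, by Lemma~\ref{lem : O operator norm maximum}. Tail components carry a factor $\mathcal{L}_\infty$ from the $\Pi^{>N}A$ side. The numerical coefficients in $Z_{1,2}$ come from counting how many rows a given symbol appears in, and whether it enters directly or through its adjoint. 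For example, $\overline{q}_1$ appears in the rows for $\mathbf{u},\mathbf{v}$ and $\mathbf{h}$ in both components, plus once conjugated in the $\mathbf{w}$ row, which gives $6 + 2\mathscr{O}_{\mathrm{max}}$. The overall factor $2$ accounts for the two interaction blocks.

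\textbf{Step 4 (the truncation error $Z_\infty$).} The $(1,\sqrt{\omega})$ terms are again handled by Lemma~\ref{lem : gen young}. The scalar row 4 involves $(\cdot,\overline{w}_1 - \overline{w}_2)_2$ applied to products of degree up to three. Its derivative yields functionals whose full symbols reach $I^{3N}$, and the parts beyond $2N$ are bounded in $\ell^2$ by Cauchy--Schwarz, giving the $\|(\Pi^{3N} - \Pi^{2N})(\cdot)\|_2$ terms.

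The main obstacle is the bookkeeping in rows 9--10. Those rows contain $\mathcal{O}_{\mathcal{G}}(\cdot\star\mathcal{O}_{\mathcal{G}}^{-1}(w_1-w_2))$, so their $\mathbf{u}$-derivatives are $\mathbb{Q}$-operators with symbols $\overline{q}_{-1},\overline{q}_0$. The $\mathbf{w}$-derivatives are conjugated multiplications, and the coupling $w_1 - w_2$ doubles the count. Getting these factors, and the placement of $\mathcal{O}_{\mathcal{G},N}^{-1}$ inside the symbols, consistent with Proposition~\ref{prop : adjoint mult} is where I expect the errors to lie. I would verify this term by term against the SH case.
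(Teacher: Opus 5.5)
Your overall plan matches the paper's proof, which simply reruns Lemma~\ref{lem : Z1 full}. You write $DF(\overline{\mathbf{x}})=\overline{L}_{\mathbf{0},\overline{\blambda}}+M(\overline{\mathbf{x}})$, pay $(\|A^N\|_{\mathcal{B}(X_{\mathcal{G},\omega})}+\mathcal{L}_\infty)Z_\infty$ to replace $M$ by $M^N$, and split $I_d-A\mathcal{M}$ into blocks, where $\mathcal{M}\bydef\overline{L}_{\mathbf{0},\overline{\blambda}}+M^N(\overline{\mathbf{x}})$. You are also right that $Z_{1,1}$ must contain $\overline{L}_{\mathbf{0},\overline{\blambda}}$; the proof uses $\mathcal{M}$ exactly as in the Swift--Hohenberg case.

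The problem is not in rows 9--10, as you feared. It is in the block decomposition of your Steps~2--3, which as written does not give the stated $Z_1$, for two reasons.

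\textbf{The tail--tail block is not trivial.} Since $\obpi^{>N}A\,\overline{L}_{\mathbf{0},\overline{\blambda}}=\obpi^{>N}$, one gets $\obpi^{>N}(I_d-A\mathcal{M})\obpi^{>N}=-\obpi^{>N}AM^N(\overline{\mathbf{x}})\obpi^{>N}$. This is nonzero: $\mathbb{M}_{\overline{q}_k^N}$, its $\mathcal{O}_{\mathcal{G}}$-conjugates and the $\mathbb{Q}$-operators all send modes with $|n|>N$ to modes with $|n|>N$. Your list of what remains omits this term.

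\textbf{One interaction block carries the wrong factor.} The block $A^NM^N(\overline{\mathbf{x}})\obpi^{>N}$ carries $A^N$, not $\obpi^{>N}A$. So it cannot be charged to $Z_{1,2}$ with a factor $\mathcal{L}_\infty$. Estimating it separately would give a term of size $\|A^N\|_{\mathcal{B}(X_{\mathcal{G},\omega})}\|DG^N(\overline{\mathbf{x}})\|$, which does not appear in $Z_1$.

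\textbf{The fix is to split the columns at $2N$, as the paper does.} The multiplication symbols in $M^N(\overline{\mathbf{x}})$ are supported in $I^N$, and the functionals in its scalar rows in $I^{2N}$. Hence $\obpi^{\leq N}M^N(\overline{\mathbf{x}})\obpi^{>2N}=0$, and $\obpi^{\leq N}(I_d-A\mathcal{M})=\obpi^{\leq N}-A^N\mathcal{M}\obpi^{\leq 2N}$. So the whole top block is the finite matrix $Z_{1,1}$, including the columns $N<|n|\leq 2N$ that you assigned to an interaction term. This is why $Z_{1,1}$ has $\obpi^{\leq 2N}$ on the right. Note that the relevant support fact is this input-side one, not that $M^N(\overline{\mathbf{x}})\obpi^{\leq N}$ lands in $I^{2N}$.

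The bottom block is $-\obpi^{>N}AM^N(\overline{\mathbf{x}})$ on all columns. The paper splits it into its $\obpi^{\leq 2N}$ and $\obpi^{>2N}$ columns and bounds each piece by $\mathcal{L}_\infty\|DG^N(\overline{\mathbf{x}})\|_{\mathcal{B}((\ell^2_{\mathcal{G},\omega})^8)}$. The scalar rows and parameter columns of $M^N$ vanish under $\obpi^{>N}$, because $\overline{\mathbf{x}}=\obpi^{\leq N}\overline{\mathbf{x}}$. These two pieces, not the ones you named, produce the factor $2$ in $Z_{1,2}$. Bounding $\obpi^{>N}AM^N(\overline{\mathbf{x}})$ in one go would give the same estimate without that factor.

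What remains is the row-by-row estimate of $\|DG^N(\overline{\mathbf{x}})\|$ via Lemma~\ref{lem : gen young} and the factor $\mathscr{O}_{\mathrm{max}}$ for the conjugated and $\mathbb{Q}$-terms, essentially as you describe in Step~3.
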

\begin{proof}
The proof can be found in Appendix \ref{apen : Z1 GS}.
\end{proof}
\subsection{Enclosing the spectrum with Gershgorin disks for Gray Scott}\label{sec : gershgorin gray scott}

We apply the framework of Section~\ref{sec : gershgorin} with $\mathscr{p}=2$; the disks now come in two families $B_{n,1}, B_{n,2}$, one per component. The blocks of $Dg(\overline{\mathbf{u}})$ are the multiplication operators
\[
  (Dg)_{1,1} = \mathbb{M}_{\overline{q}_1}, \quad (Dg)_{2,1} = -\mathbb{M}_{\overline{q}_1},
  \qquad
  (Dg)_{1,2} = \mathbb{M}_{\overline{q}_2}, \quad (Dg)_{2,2} = -\mathbb{M}_{\overline{q}_2},
\]
with $\overline{q}_1 \bydef 2(\overline{u}_2+1)\overline{u}_1$ and $\overline{q}_2 \bydef \overline{u}_1^2$, so that \eqref{def : Qj} gives $Q_j = 2\|\overline{q}_j\|_1$ for $j=1,2$. The constant $\varepsilon_{r_0}$ of \eqref{def : eps r0} is given by the following lemma, whose proof is the computation of Lemma~\ref{lem : eps r0 sh}, carried out on $g(\mathbf{u}) = ((u_2+1)u_1^2,-(u_2+1)u_1^2)$ and using $\|y_i\|_1\leq\kappa_0r_0$ for each component.

\begin{lemma}\label{lem : eps r0 gs}
For Gray--Scott one may take
\[
  \varepsilon_{r_0} \bydef r_0 + 2\kappa_0 r_0\bigl(4\|\overline{u}_1\|_1 + 2\|\overline{u}_2+1\|_1 + 3\kappa_0 r_0\bigr).
\]
\end{lemma}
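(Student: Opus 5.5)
The plan is to mirror the proof of Lemma~\ref{lem : eps r0 sh}, now for the $2\times 2$ block structure of $D_uf$. Write $\tilde{\mathbf{u}} = \overline{\mathbf{u}} + \mathbf{y}$ with $\mathbf{y} = (y_1,y_2)$ and $\tilde{\blambda} = \overline{\blambda} + \boldsymbol{\eta}$. Here $\|y_i\|_{2,\omega}\leq r_0$ and $|\boldsymbol{\eta}|_1\leq r_0$, since $\tilde{\mathbf{x}}\in\overline{B_{r_0}(\overline{\mathbf{x}})}$ and the $X_{\mathcal{G},\omega}$-norm is a sum of component norms. Corollary~\ref{cor : conv estimates symmetric} then gives $\|y_i\|_1\leq\kappa_0 r_0$ for $i=1,2$.

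\textbf{Linear part.} The operator $l_{\blambda}$ depends on $\blambda$ only through the diagonal constants $-\lambda_1$ and $-\lambda_2$ on the two components. Its contribution to the difference is therefore $\mathrm{diag}(-\eta_1,-\eta_2)$. Its norm is at most $\max(|\eta_1|,|\eta_2|)\leq r_0$, which produces the leading term $r_0$.

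\textbf{Nonlinear part.} The difference $Dg(\tilde{\mathbf{u}})-Dg(\overline{\mathbf{u}})$ is the block operator
\[
\begin{bmatrix} \mathbb{M}_{\delta_1} & \mathbb{M}_{\delta_2}\\ -\mathbb{M}_{\delta_1} & -\mathbb{M}_{\delta_2}\end{bmatrix},
\qquad
\delta_1 \bydef 2\bigl((\overline{u}_2+1)y_1 + \overline{u}_1 y_2 + y_1y_2\bigr),
\qquad
\delta_2 \bydef 2\overline{u}_1y_1 + y_1^2 .
\]
These symbols are obtained by expanding $2(u_2+1)u_1$ and $u_1^2$ at $\overline{\mathbf{u}}+\mathbf{y}$. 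Each block is a multiplication operator. Its $\mathcal{B}(\ell^1_{\mathcal{G}})$-norm is bounded by the $\ell^1$-norm of its symbol, by Young's inequality on the unfolded sequences via $\iota$, using $\alpha_{\mathscr{g}}\equiv1$. Applying the triangle inequality, Young's inequality in $\ell^1$, and $\|y_i\|_1\leq\kappa_0r_0$ gives
\[
\|\delta_1\|_1\leq 2\kappa_0r_0\bigl(\|\overline{u}_2+1\|_1+\|\overline{u}_1\|_1+\kappa_0r_0\bigr),
\qquad
\|\delta_2\|_1\leq 2\kappa_0r_0\|\overline{u}_1\|_1+\kappa_0^2r_0^2 .
\]

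\textbf{Assembling.} The norm of the block operator on $(\ell^1_{\mathcal{G}})^2$, with the sum norm, is controlled by its column sums. Column $j$ contributes at most $2\|\delta_j\|_1$. Bounding crudely by the sum over both columns, which is harmless and also covers any other convention for the product norm, gives
\[
2\|\delta_1\|_1+2\|\delta_2\|_1\leq 4\kappa_0r_0\bigl(2\|\overline{u}_1\|_1+\|\overline{u}_2+1\|_1\bigr)+6\kappa_0^2r_0^2 = 2\kappa_0 r_0\bigl(4\|\overline{u}_1\|_1 + 2\|\overline{u}_2+1\|_1 + 3\kappa_0 r_0\bigr).
\]
Adding the $r_0$ from the linear part yields exactly the stated $\varepsilon_{r_0}$.

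The only step needing care is the justification that a multiplication operator on the symmetry-reduced space is bounded in $\mathcal{B}(\ell^1_{\mathcal{G}})$ by the $\ell^1$-norm of its symbol. I would handle it by passing through $\iota$: one has $\iota(a\star b)=\iota(a)\star\iota(b)$ and $\|\cdot\|_1=\|\iota(\cdot)\|_1$, and then apply the classical $\ell^1$ Young inequality on $\mathbb{Z}^m$. Everything else is bookkeeping.
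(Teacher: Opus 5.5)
Your proof is correct and follows the paper's own route. The paper's proof simply repeats the computation of Lemma~\ref{lem : eps r0 sh} for $g(\mathbf{u}) = ((u_2+1)u_1^2,-(u_2+1)u_1^2)$, using $\|y_i\|_1\leq\kappa_0 r_0$ for each component; your symbols $\delta_1,\delta_2$, the $\ell^1$ Young bounds through $\iota$, and the summation over both block columns carry this out and reproduce the stated $\varepsilon_{r_0}$ exactly.
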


For the tail condition (ii) of Proposition~\ref{prop : gershgorin enclosure}, the diagonal blocks of $l_{\overline{\blambda}}$ have symbols $-\delta_j\frac{\pi^2}{d^2}|\mathcal{L}n|^2 - \overline{\lambda}_j$, so requiring $\delta_j\frac{\pi^2}{d^2}|\mathcal{L}n|^2 > Q_j - \overline{\lambda}_j$ for both $j$ amounts to
\begin{equation}\label{tail assumption}
  \min_{n \in \mathcal{Z}_{\mathrm{red}}(\mathcal{G})\setminus I^{3N}} |\mathcal{L}n|^2
  \;>\; \max_{j\in\{1,2\}} \frac{d^2\bigl(2\|\overline{q}_j\|_1-\overline{\lambda}_j\bigr)}{\pi^2\delta_j} .
\end{equation}
This makes $\mathrm{Re}\,(\overline{\mathcal{D}}_{j,j})_{n,n}$ negative and decreasing beyond $I^{3N}$ for both $j$, reducing (ii) to two verifiable inequalities; as before it holds once $N$ is large enough. Conditions (i)--(iv) are then checked by interval arithmetic, giving exactly one eigenvalue on the imaginary axis and $k=0$.

\subsection{Computer-Assisted Proofs in Gray-Scott}
In this section, we present computer-assisted proofs of cusp bifurcations in the Gray Scott system of PDEs. Note that in order to apply Lemma \ref{lem : bistability}, we must compute the value of $c$. In the case of Gray-Scott, observe that the normal form coefficient is
\begin{align}
    c = (w_1 - w_2,v_1^2 v_2 + (u_2 + 1)v_1 h_1 +  u_1 v_2 h_1 + u_1 v_1 h_2)_{2}.\label{c : GS}
\end{align}
We prove cusp bifurcations in two settings: the one-dimensional problem with $\mathbb{Z}_2$-symmetry, and the two dimensional problem with rectangle ($D_2$) symmetry.
\begin{theorem}[Even ($\mathbb{Z}_2$) Cusp Bifurcation in 1D Gray-Scott]\label{th : GS 1D cusp}
Set $r_0 \bydef  5 \times 10^{-11}$ and $\mathcal{L} = 1$. Let $ (\omega_n)_{n \in \mathbb{Z}} \bydef ((1 + |n|)^{1.01})_{n \in \mathbb{Z}}$. When $\blambda \approx (1,4.476425525999964)$, there exists a unique  $\tilde{\mathbf{x}} \in \overline{B_{r_0}(\overline{\mathbf{x}})} \subset X_{\mathbb{Z}_2,\omega}$ satisfying $F(\overline{\mathbf{x}}) = 0$ in \eqref{gs cusp map}. Moreover,  $(\tilde{\blambda},\tilde{\mathbf{u}})$ is a cusp bifurcation with $k = 0$ and $c > 1.99 \times 10^{-3}$; in particular, the three coexisting solutions exhibit monostability.
\end{theorem}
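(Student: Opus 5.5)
The proof follows the same pattern as Theorems~\ref{th : SH 1D cusp}--\ref{th : SH 2D cusp D6}, now with the Gray--Scott bounds of Section~\ref{sec : bounds gs}.

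\textbf{Numerical data and approximate inverse.} First we construct the numerical approximation $\overline{\mathbf{x}} = (\bar{\mathbf{s}},\bar{\blambda},\bar{\mathbf{u}},\bar{\mathbf{v}},\bar{\mathbf{w}},\bar{\mathbf{h}})$ with $\bar{\mathbf{s}}=0$ by the three-stage procedure described in Section~\ref{sec : numerical aspects}.
\begin{itemize}
\item Fix $\lambda_1 = 1$, $\delta_1,\delta_2,d$ and $N$.
\item Continue even ($\mathbb{Z}_2$, $\mathcal{L}=1$) periodic solutions of \eqref{eq : gray_scott} until a fold is located.
\item Continue the fold curve in the second parameter until the saddle-node map itself degenerates.
\item Polish the resulting data with Newton's method on \eqref{gs cusp map}.
\end{itemize}
We then truncate so that $\overline{\mathbf{x}} = \obpi^{\leq N}\overline{\mathbf{x}}$. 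The finite block $A^N$ is computed as a numerical inverse of $\obpi^{\leq N}DF(\overline{\mathbf{x}})\obpi^{\leq N}$. The tail is the diagonal inverse of $l_{\bar{\blambda}}$, whose norm is bounded by $\mathcal{L}_\infty$. Since $\bar{\lambda}_1,\bar{\lambda}_2>0$, we have $\mathcal{L}_\infty \leq \max_j 1/(\delta_j\pi^2(N+1)^2/d^2+\bar\lambda_j)$, which is explicitly small.

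\textbf{Newton--Kantorovich verification.} Next we evaluate, with interval arithmetic, the following bounds:
\begin{itemize}
\item $Y_0 = \|\obpi^{\leq 3N}AF(\overline{\mathbf{x}})\|_{X_{\mathbb{Z}_2,\omega}}$, which is valid by the argument of Lemma~\ref{lem : Y0 SH} since $g$ is cubic;
\item $Z_1$ from Lemma~\ref{lem : Z1 full GS};
\item $Z_2(r_0)$ from Lemma~\ref{lem : Z2 GS}.
\end{itemize}
The constants $\kappa,\kappa_0$ for $\omega_n=(1+|n|)^{1.01}$ come from Appendix~\ref{apen : algebra estimates}, and $\mathscr{O}_{\max}=2$ for $\mathbb{Z}_2$. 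We then check the two inequalities \eqref{condition radii polynomial} at $r=r_0=5\times10^{-11}$. Theorem~\ref{th : radii polynomial theorem periodic} then gives a unique zero $\tilde{\mathbf{x}}\in\overline{B_{r_0}(\overline{\mathbf{x}})}$, and Remark~\ref{rmk : NK nondegeneracy} shows that $DF(\tilde{\mathbf{x}})$ is injective. Note that the statement's ``$F(\overline{\mathbf{x}})=0$'' should read $F(\tilde{\mathbf{x}})=0$.

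\textbf{Spectral enclosure and the sign of $c$.} We apply the Gershgorin framework of Section~\ref{sec : gershgorin gray scott}.
\begin{enumerate}
\item Build $P^N$ from approximate eigenvectors of the $2\times 2$-block truncation of $D_uf(\bar\blambda,\bar{\mathbf{u}})$.
\item Compute $\varepsilon_{r_0}$ from Lemma~\ref{lem : eps r0 gs} and $Q_j=2\|\bar q_j\|_1$.
\item Verify the tail assumption \eqref{tail assumption}.
\item Check conditions (i)--(iv) of Proposition~\ref{prop : gershgorin enclosure} with the radii $\rho_{n,j}$ of Lemma~\ref{lem : computable radii}.
\end{enumerate}
This gives a unique, isolated disk around $0$ and all other disks in the open left half-plane, so the spectral hypothesis of Lemma~\ref{lemma : Cusp bifurcation} holds with $k=0$. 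Theorem~\ref{thm : cusp map nondegen} then gives $c\neq0$ and $\Delta\neq0$, hence a cusp. To certify $c>1.99\times10^{-3}$, we evaluate \eqref{c : GS} at $\overline{\mathbf{x}}$ and add a perturbation error. This error is obtained by expanding the trilinear and quadrilinear terms around $\overline{\mathbf{x}}$. Each product is bounded with Cauchy--Schwarz in $(\cdot,\cdot)_2$ together with $\|\cdot\|_2\leq\|\cdot\|_{2,\omega}$, Corollary~\ref{cor : conv estimates symmetric}, and $\|\tilde z-\bar z\|_{2,\omega}\leq r_0$. Lemma~\ref{lem : bistability} with $k=0$ and $c>0$ then yields monostability.

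\textbf{Main obstacle.} The main obstacle is the Gershgorin step. Because the linearization is non-self-adjoint, its spectrum can be complex. In addition, $\delta_1\approx0.086$ is small, so the diffusive symbol in the first component grows slowly. This forces either large $3N$ before \eqref{tail assumption} holds, or a good pseudo-diagonalization $P^N$ so that the $O(\bar\lambda_1)$ disks near the imaginary axis do not overlap the zero disk. If the disks fail to separate, we would first try enlarging $N$ and recomputing $P^N$ in higher precision. The Newton--Kantorovich step is comparatively routine, provided $Z_1<1$, which again requires $N$ large given the small $\delta_1$.
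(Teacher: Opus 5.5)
Your proposal is correct and follows essentially the same route as the paper. The paper's proof simply takes $N=100$, $d=5$ and reports the interval-arithmetic bounds $\|A^N\|\leq 1.62\times10^4$, $Y_0\leq 9.24\times10^{-12}$, $Z_2(r_0)\leq 2.031\times10^8$, $Z_1\leq 0.756$, which satisfy the radii-polynomial conditions at $r_0$; it then defers the Gershgorin, sign-of-$c$ and stability conclusions to the argument of Theorem~\ref{th : SH 1D cusp}, exactly as you outline (one small caveat: the value $\delta_1\approx 0.086$ in your ``main obstacle'' discussion is the one fixed for the 2D theorem, and the 1D statement does not specify $\delta_1,\delta_2$).
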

\begin{proof}
Take $N = 100$ and $d = 5$. Applying \cite{CuspProofs.jl} with interval arithmetic yields
\begin{align}
    \|A^N\|_{\mathcal{B}(X_{\mathbb{Z}_2,\omega})} \leq 1.62 \times 10^4 \text{,}~Y_0 \bydef 9.24 \times 10^{-12}  \text{,}~Z_{2}(r_0) \bydef 2.031 \times 10^8    \text{,}~Z_1 \bydef 0.756.
    \end{align}
The stability and spectral conclusions follow as in Theorem \ref{th : SH 1D cusp}.
\end{proof}
 \begin{figure}[H]
    \centering
    \begin{subfigure}[b]{0.24\textwidth}
        \centering
        \epsfig{figure=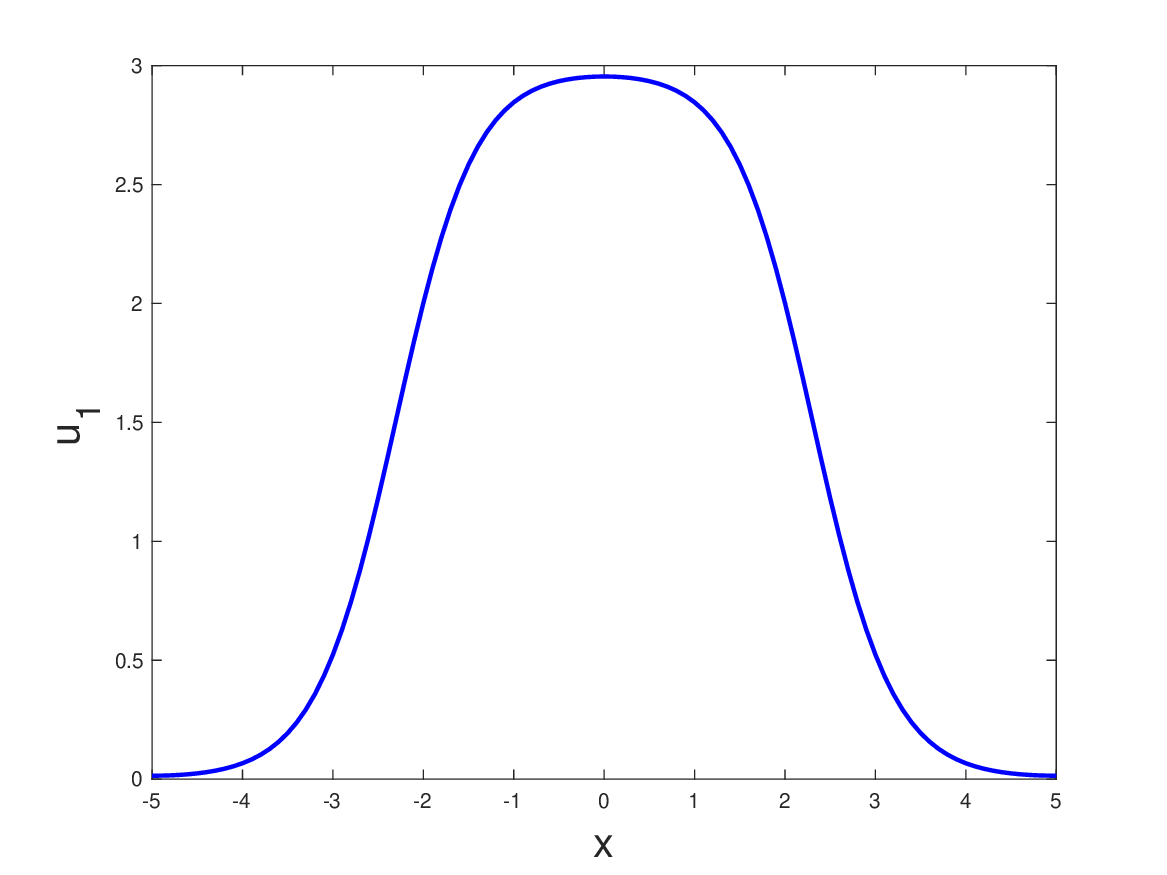, width=\textwidth}
        \caption{$\overline{u}_1$ approximate solution in the Gray-Scott system equation}\label{fig : Z2 u1}
    \end{subfigure}
    \hfill
    \begin{subfigure}[b]{0.24\textwidth}
    \centering
        \epsfig{figure=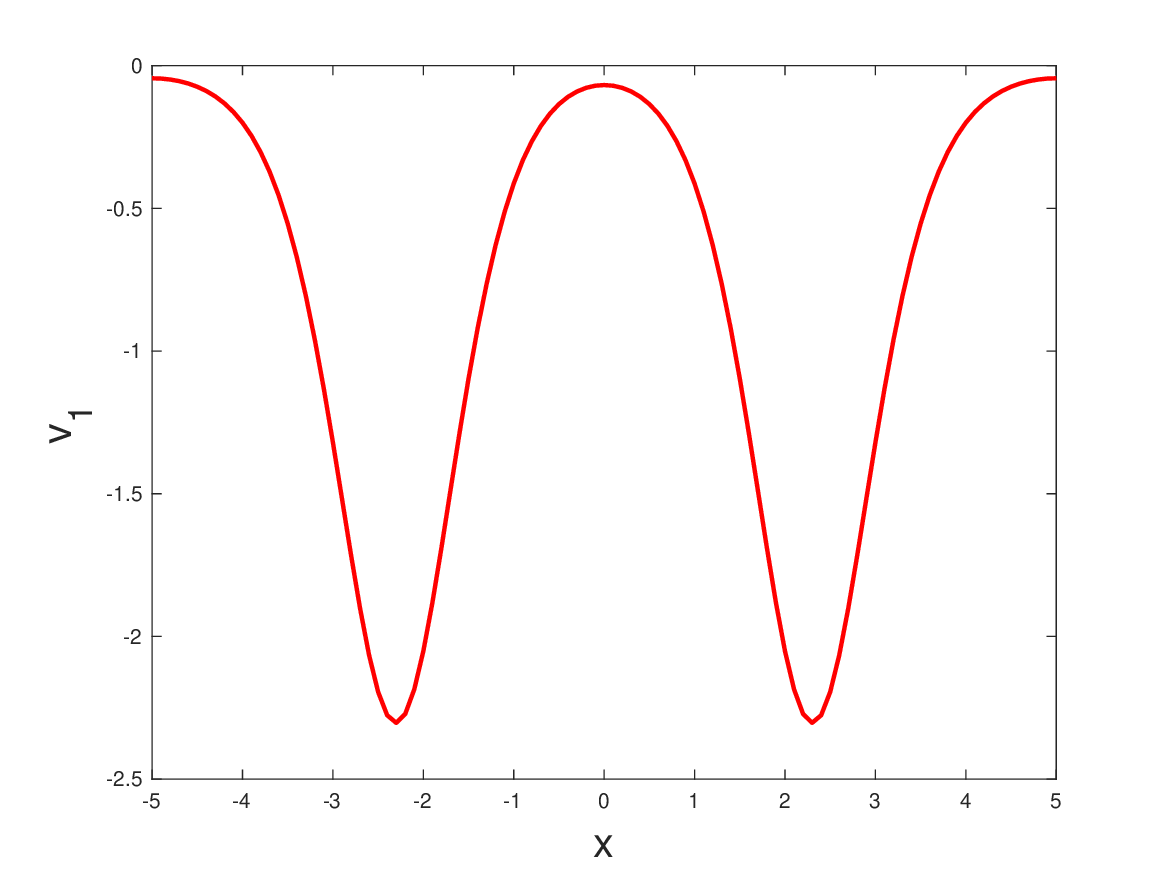, width=\textwidth}
        \caption{$\overline{v}_1$ approximate solution in the Gray-Scott system equation}\label{fig : Z2 v1}
    \end{subfigure}
    \hfill
    \begin{subfigure}[b]{0.24\textwidth}
    \centering
        \epsfig{figure=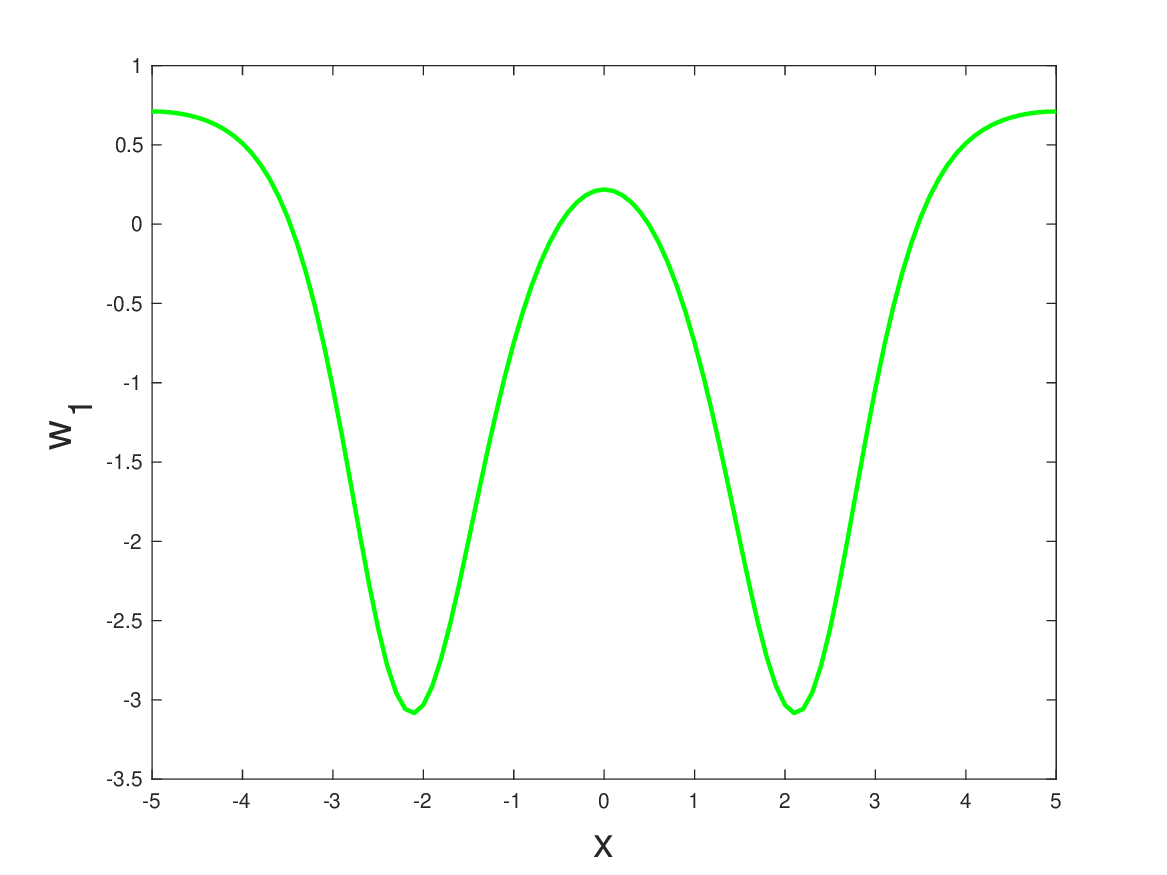, width=\textwidth}
        \caption{$\overline{w}_1$ approximate solution in the Gray-Scott system equation}\label{fig : Z2 w1}
    \end{subfigure}
    \hfill
    \begin{subfigure}[b]{0.24\textwidth}
    \centering
        \epsfig{figure=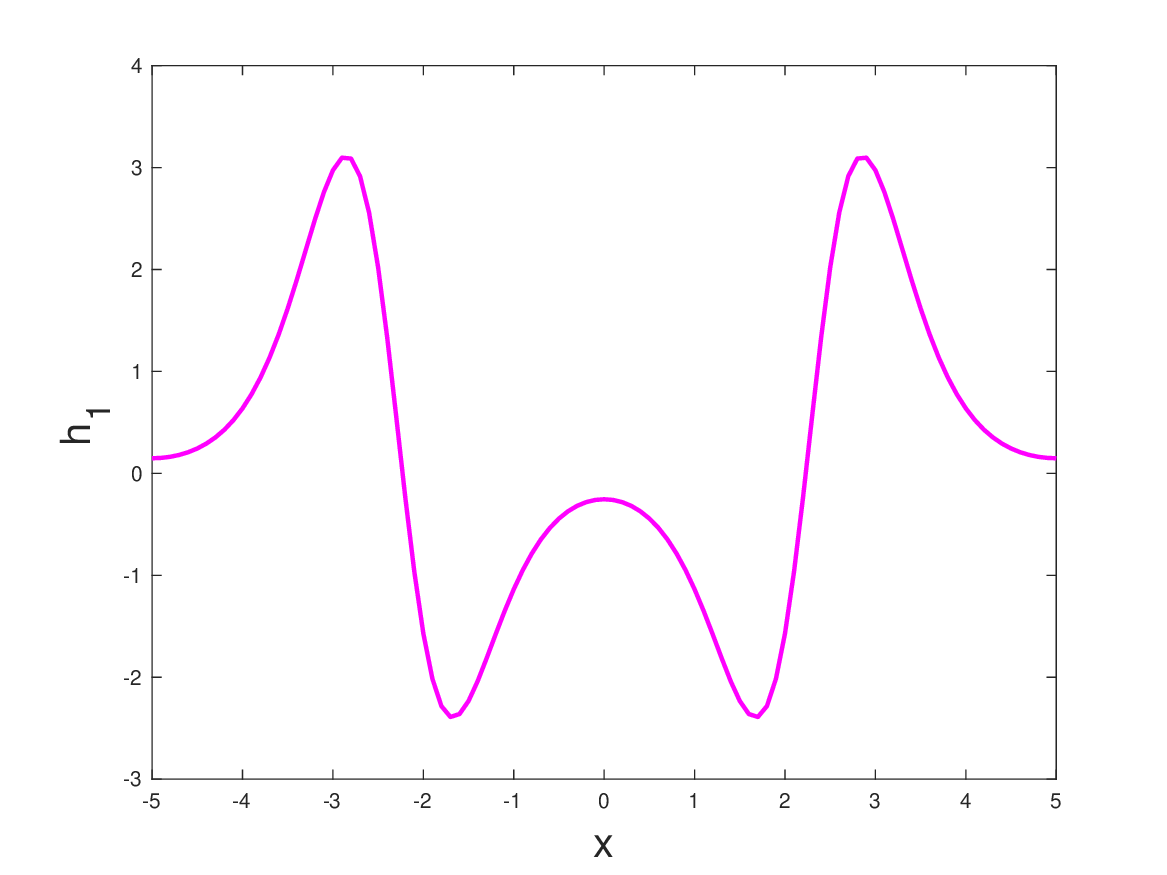, width=\textwidth}
        \caption{$\overline{h}_1$ approximate solution in the Gray-Scott system equation}\label{fig : Z2 h1}
    \end{subfigure}
    \\
    \begin{subfigure}[b]{0.24\textwidth}
        \centering
        \epsfig{figure=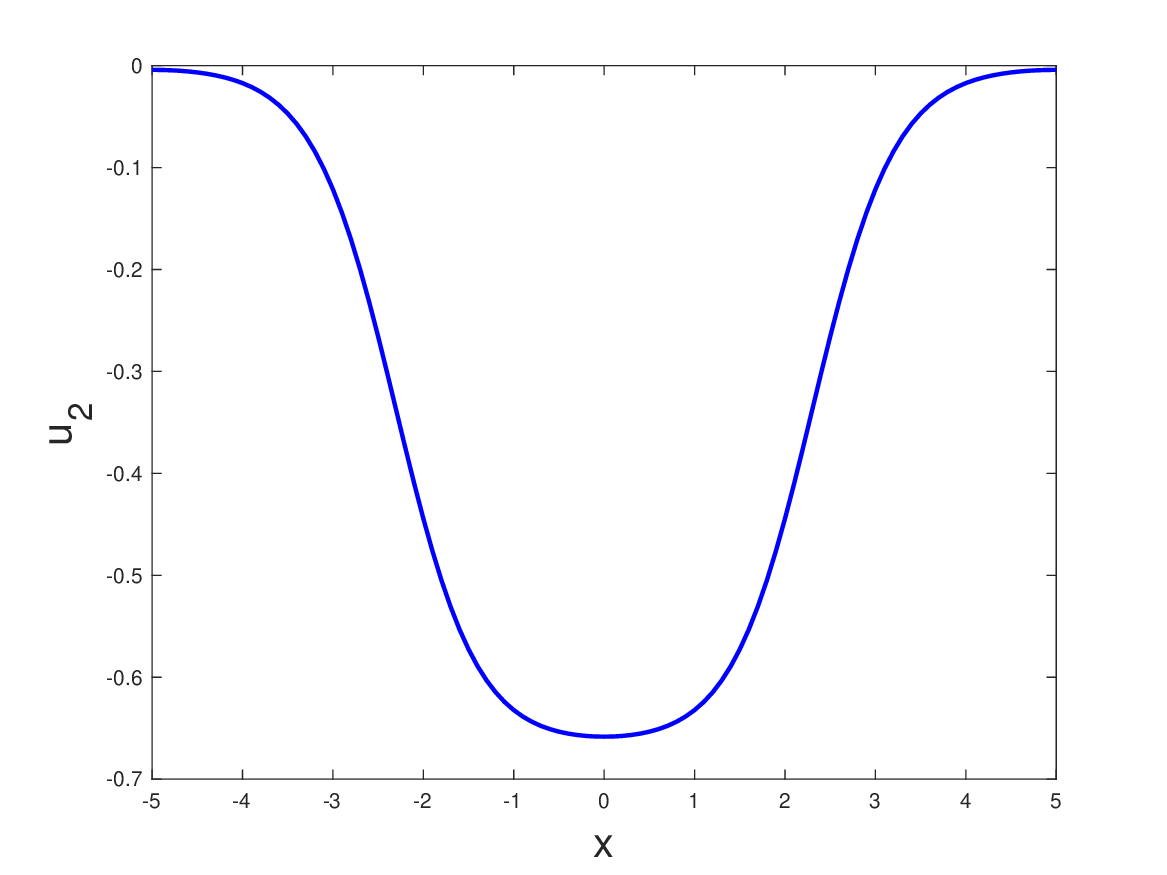, width=\textwidth}
        \caption{$\overline{u}_2$ approximate solution in the Gray-Scott system equation}\label{fig : Z2 u2}
    \end{subfigure}
    \hfill
    \begin{subfigure}[b]{0.24\textwidth}
    \centering
        \epsfig{figure=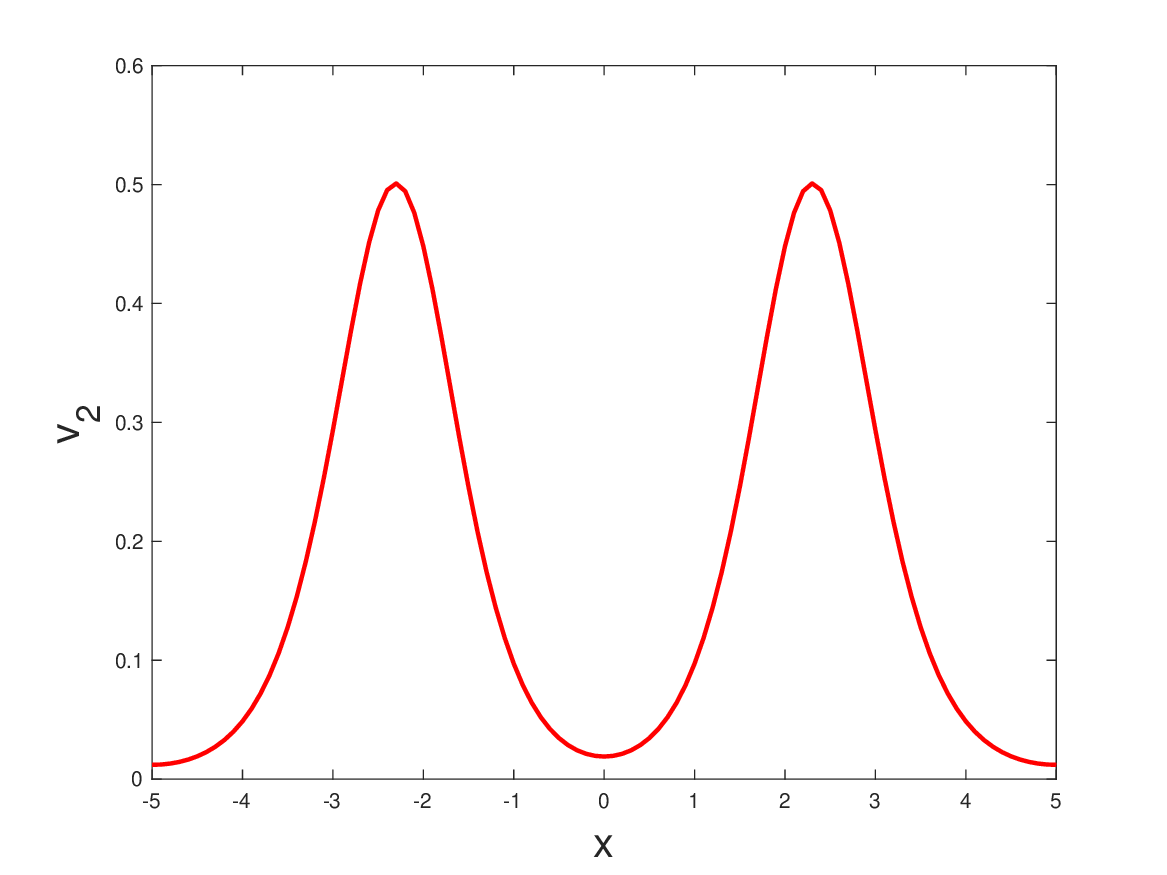, width=\textwidth}
        \caption{$\overline{v}_2$ approximate solution in the Gray-Scott system equation}\label{fig : Z2 v2}
    \end{subfigure}
    \hfill
    \begin{subfigure}[b]{0.24\textwidth}
    \centering
        \epsfig{figure=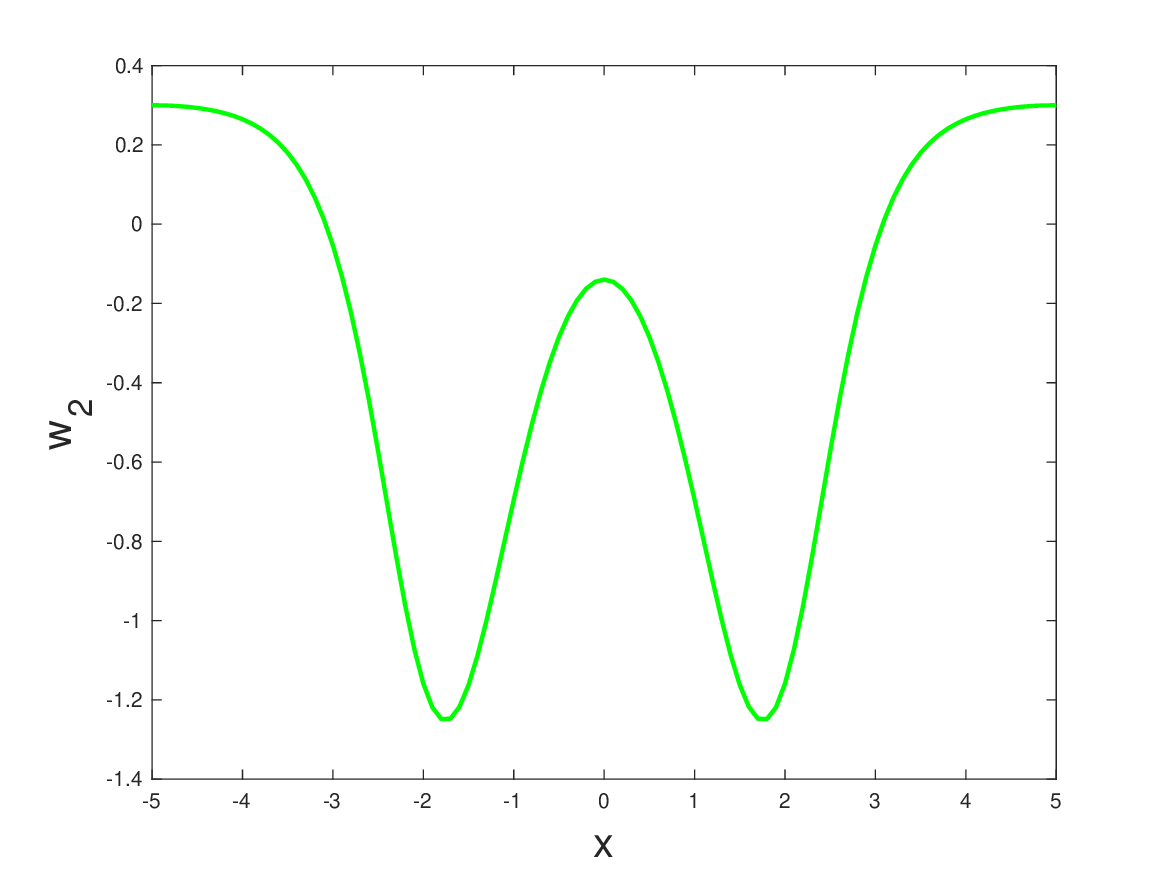, width=\textwidth}
        \caption{$\overline{w}_2$ approximate solution in the Gray-Scott system equation}\label{fig : Z2 w2}
    \end{subfigure}
    \hfill
    \begin{subfigure}[b]{0.24\textwidth}
    \centering
        \epsfig{figure=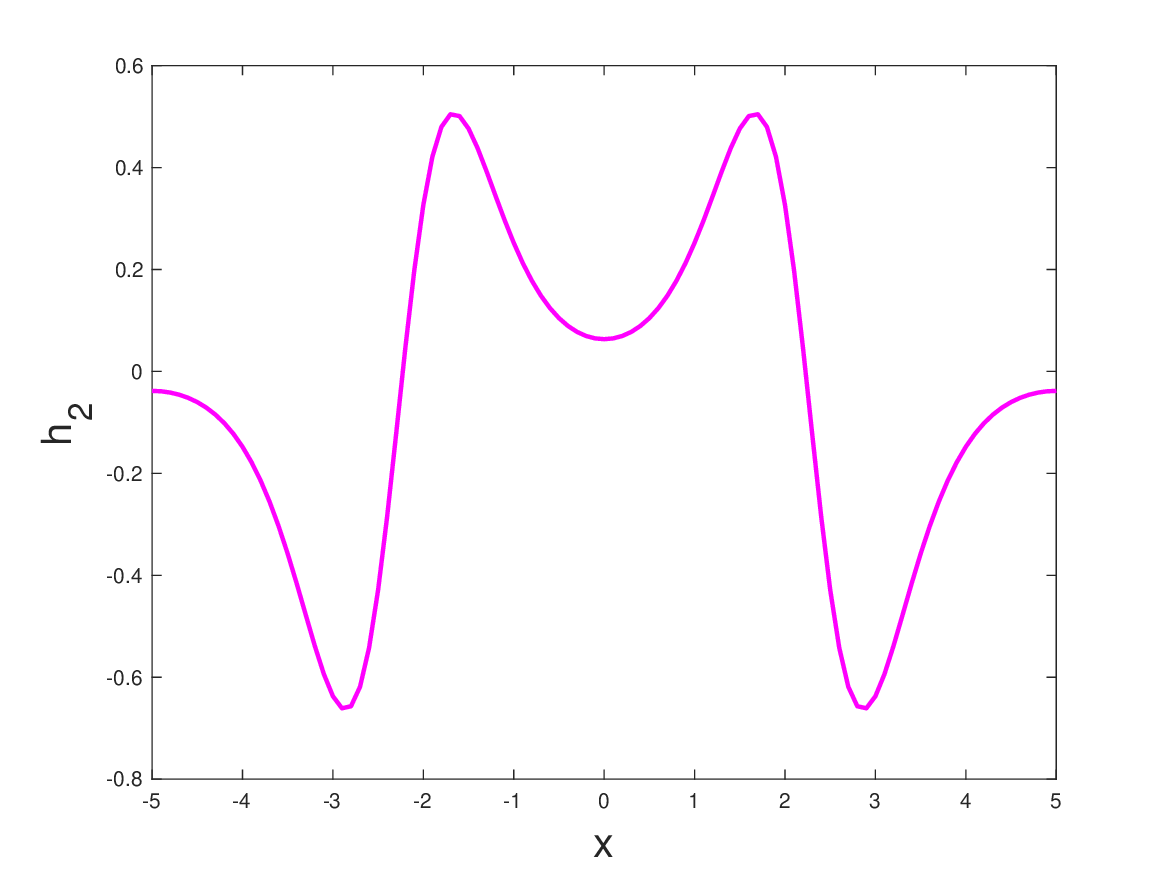, width=\textwidth}
        \caption{$\overline{h}_2$ approximate solution in the Gray-Scott system equation}\label{fig : Z2 h2}
    \end{subfigure}
    \caption{Approximation of each component of $\overline{\mathbf{x}}$ plotted on $(-5,5)$ used in Theorem \ref{th : GS 1D cusp}.}
\end{figure}
\begin{theorem}[$D_2$ Cusp Bifurcation in 2D Gray-Scott]\label{th : GS 2D cusp}
Set $r_0 \bydef 3 \times 10^{-12}$ and choose $\mathcal{L} = I_d$. Let $(\omega_n)_{n = (n_1,n_2) \in \mathbb{Z}^2} \bydef ((1 + |n_1|)^{1.01} (1 + |n_2|)^{1.01})_{n = (n_1,n_2) \in \mathbb{Z}^2}, \delta_1 = 0.08583846158364704,$ and $\delta_2 = 1$. When $\blambda \approx (1,4.183827930189122)$, there exists a unique  $\tilde{\mathbf{x}}\in \overline{B_{r_0}(\overline{\mathbf{x}})} \subset X_{D_2,\omega}$ satisfying $F(\overline{\mathbf{x}}) = 0$ as in \eqref{gs cusp map}. Moreover, $(\tilde{\blambda},\tilde{\mathbf{u}})$ is a cusp bifurcation with $k = 0$ and $c > 0.1796$; in particular, the three coexisting solutions exhibit monostability.
\end{theorem}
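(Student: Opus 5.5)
The proof is computer-assisted and follows the template of Theorem~\ref{th : GS 1D cusp}, with the one-dimensional $\mathbb{Z}_2$ reduction replaced by the rectangular group $D_2$ acting on $\mathbb{Z}^2$ through $(n_1,n_2)\mapsto(\pm n_1,\pm n_2)$. With $\mathcal{L}=I_d$ and $\omega_n=(1+|n_1|)^{1.01}(1+|n_2|)^{1.01}$, the weight is constant on orbits, so $W_n=|\mathrm{orb}_{D_2}(n)|\,\omega_n$ and $\mathscr{O}_{\mathrm{max}}=4$.

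\textbf{Constructing the approximation.} I fix a truncation $N$ (of the order of a few tens in each direction) and $d=1.5$. I then build $\overline{\mathbf{x}}=\obpi^{\leq N}\overline{\mathbf{x}}$ by the three-stage procedure of Section~\ref{sec : numerical aspects}:
\begin{enumerate}
\item continue a $D_2$-symmetric stationary pattern in $\lambda_2$ with $\lambda_1=1$;
\item refine a fold with the saddle-node map and continue the fold until the saddle-node map itself degenerates;
\item polish with Newton's method on \eqref{gs cusp map}, obtaining $\overline{\blambda}\approx(1,4.183827930189122)$ and $\overline{s}_1=\overline{s}_2=0$.
\end{enumerate}
Next I form $A^N\approx(\obpi^{\leq N}DF(\overline{\mathbf{x}})\obpi^{\leq N})^{-1}$ and the diagonal tail of Section~\ref{sec : gray scott}. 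The symbols $-\delta_j\pi^2|n|^2/d^2-\overline{\lambda}_j$ are bounded away from zero because $\overline{\lambda}_j>0$, so $\mathcal{L}_\infty$ is finite.

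\textbf{Newton--Kantorovich step.} I evaluate, in interval arithmetic, $Y_0$ from Lemma~\ref{lem : Y0 SH} (support in $I^{3N}$), $Z_2(r_0)$ from Lemma~\ref{lem : Z2 GS}, and $Z_1$ from Lemma~\ref{lem : Z1 full GS}. These require the constants $\kappa,\kappa_0$ of Appendix~\ref{apen : algebra estimates} for $s=1.01$ in two dimensions. I then check \eqref{condition radii polynomial} at $r_0=3\times10^{-12}$. Theorem~\ref{th : radii polynomial theorem periodic} gives a unique zero $\tilde{\mathbf{x}}\in\overline{B_{r_0}(\overline{\mathbf{x}})}$, and Remark~\ref{rmk : NK nondegeneracy} shows that $DF(\tilde{\mathbf{x}})$ is injective.

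\textbf{Spectral step.} I apply Section~\ref{sec : gershgorin gray scott}. I choose $P^N$ from numerical eigenvectors of the truncated linearization and compute $\varepsilon_{r_0}$ by Lemma~\ref{lem : eps r0 gs}, $Q_j=2\|\overline{q}_j\|_1$, and the radii $\rho_{n,j}$ of Lemma~\ref{lem : computable radii}. I verify the tail inequality \eqref{tail assumption} and conditions (i)--(iv) of Proposition~\ref{prop : gershgorin enclosure}. This yields the spectral hypothesis with exactly one isolated disk around $0$ and $k=0$.

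\textbf{Conclusion.} Theorem~\ref{thm : cusp map nondegen} then gives $c\neq0$ and $\Delta\neq0$, and Remark~\ref{rmk : L2 vs ell2} transfers the conditions to $L^2$, so Lemma~\ref{lemma : Cusp bifurcation} holds. To get the sign of $c$, I enclose \eqref{c : GS} over the ball: I evaluate at $\overline{\mathbf{x}}$ and bound the perturbation using Corollary~\ref{cor : conv estimates symmetric}. The perturbation is of size $O(r_0)$, so $c>0.1796$ follows. Lemma~\ref{lem : bistability} with $k=0$ and $c>0$ then gives monostability.

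\textbf{Main obstacle.} I expect the hardest part to be making $Z_1<1$ and the Gershgorin separation hold at the same time in 2D. The diffusion $\delta_1\approx0.086$ is small, so the tail symbols of the first component grow slowly. This inflates $\mathcal{L}_\infty$, and with it $Z_{1,2}$ and the tail radii $Q_1+\mathscr{r}^\infty$. Meanwhile, the $2$D truncation size makes $A^N$ expensive. My first remedy would be to increase $N$ and exploit the $D_2$ reduction to keep the matrices manageable. If $Z_1$ is still too large, I would sharpen $Z_{1,2}$ by computing the interaction of $A^N$ with the tail blocks exactly on $I^{3N}$ rather than bounding it with the cruder $\|\cdot\|_{1,\sqrt\omega}$ estimates.
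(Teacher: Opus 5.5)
Your proposal is correct and follows the paper's route. The paper takes $N=50$ and $d=1.5$, and certifies by interval arithmetic that $\|A^N\|_{\mathcal{B}(X_{D_2,\omega})}\leq 382.464$, $Y_0\leq1.389\times10^{-13}$, $Z_2(r_0)\leq1.13\times10^{8}$ and $Z_1\leq0.9143$, so both inequalities of Theorem~\ref{th : radii polynomial theorem periodic} hold at $r_0=3\times10^{-12}$; it then obtains $k=0$ via Proposition~\ref{prop : gershgorin enclosure}, certifies $c>0.1796$ by interval arithmetic on \eqref{c : GS}, and concludes monostability from Lemma~\ref{lem : bistability}, exactly as in Theorem~\ref{th : SH 1D cusp}.
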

\begin{proof}
Take $N = 50$ and  $d = 1.5$. Appyling \cite{CuspProofs.jl} with interval arithemtic yields
\begin{align}
    \|A^N\|_{\mathcal{B}(X_{D_2,\omega})} \leq  382.464\text{,}~Y_0 \bydef 1.389 \times 10^{-13}  \text{,}~Z_{2}(r_0) \bydef  1.13 \times 10^8   \text{,}~Z_1 \bydef 0.9143.
    \end{align}
The spectral and stability conclusions follow as in Theorem \ref{th : SH 1D cusp}.
\end{proof}
 \begin{figure}[H]
    \centering
    \begin{subfigure}[b]{0.24\textwidth}
        \centering
        \epsfig{figure=u1GSD2cusp.eps, width=\textwidth}
        \caption{$\overline{u}_1$ approximate solution in the Gray-Scott system equation}\label{fig : D2 u1}
    \end{subfigure}
    \hfill
    \begin{subfigure}[b]{0.24\textwidth}
    \centering
        \epsfig{figure=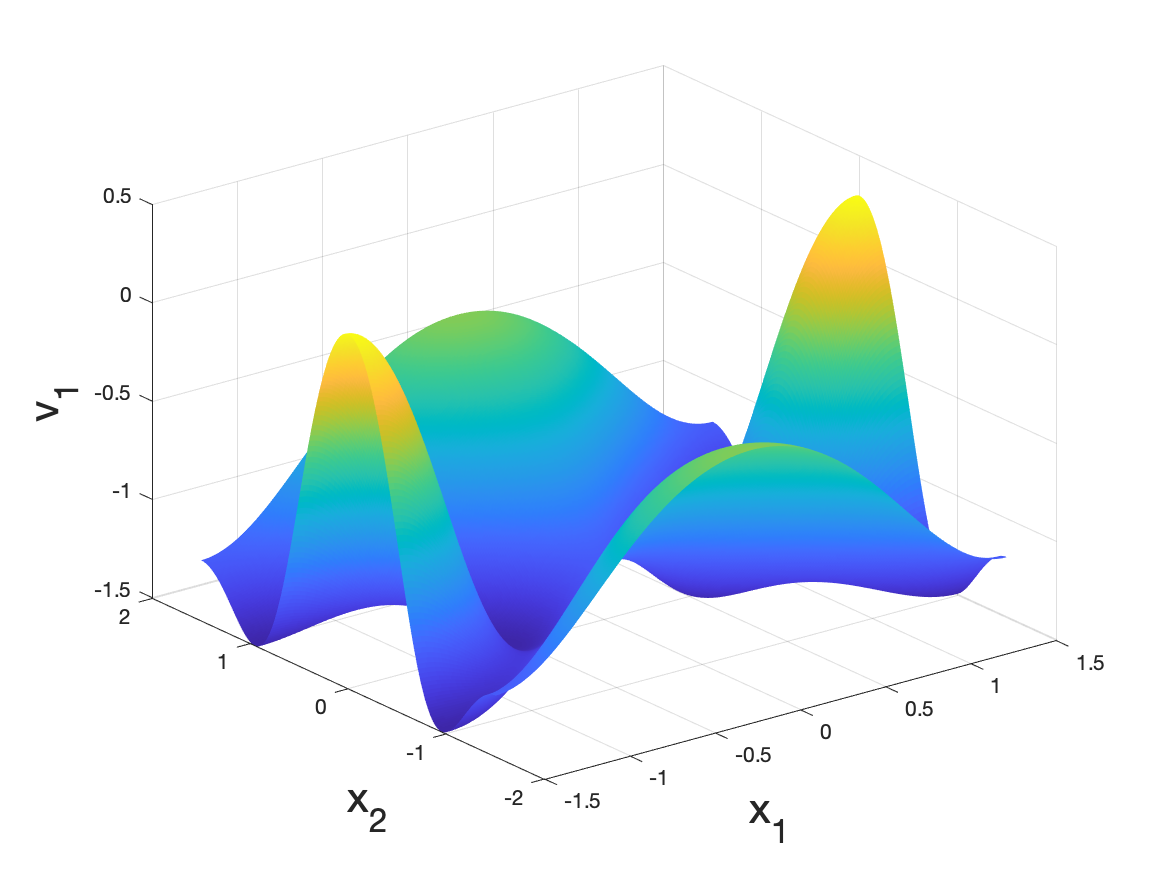, width=\textwidth}
        \caption{$\overline{v}_1$ approximate solution in the Gray-Scott system equation}\label{fig : D2 v1}
    \end{subfigure}
    \hfill
    \begin{subfigure}[b]{0.24\textwidth}
    \centering
        \epsfig{figure=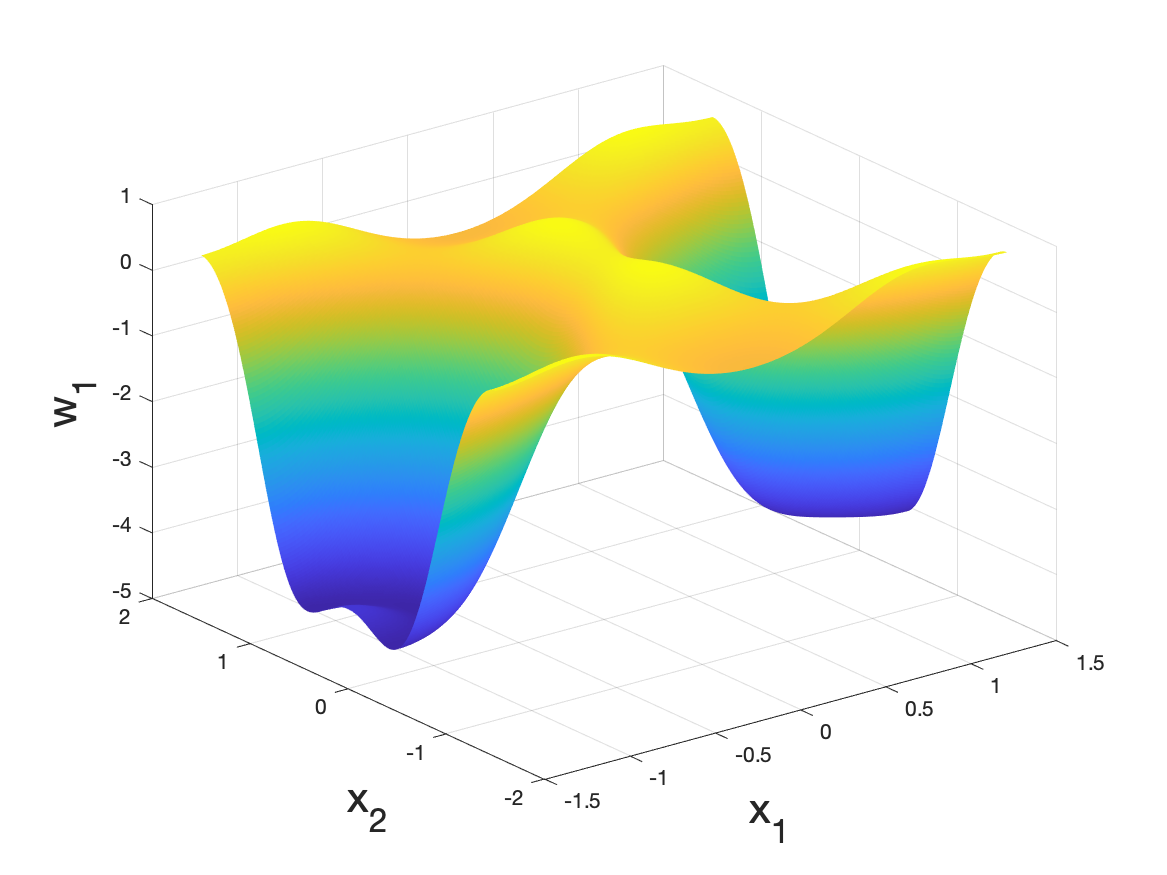, width=\textwidth}
        \caption{$\overline{w}_1$ approximate solution in the Gray-Scott system equation}\label{fig : D2 w1}
    \end{subfigure}
    \hfill
    \begin{subfigure}[b]{0.24\textwidth}
    \centering
        \epsfig{figure=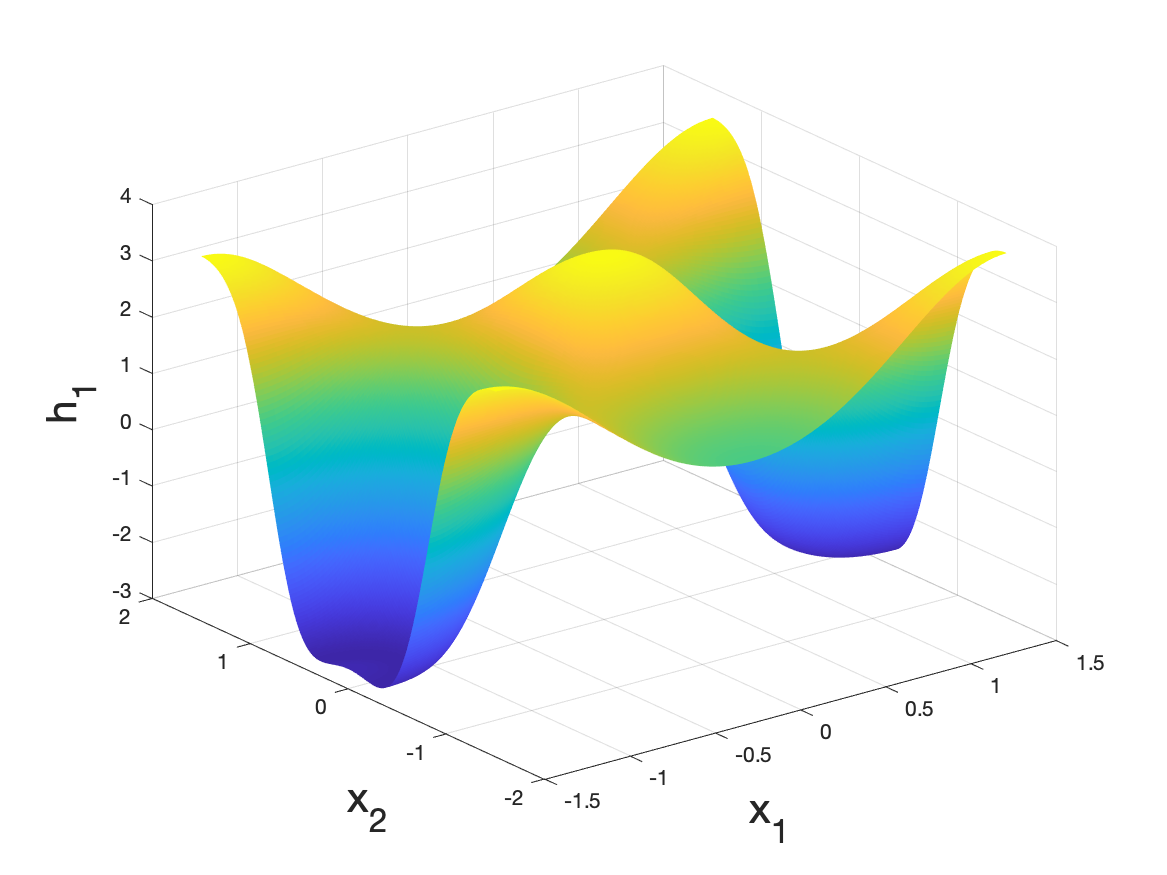, width=\textwidth}
        \caption{$\overline{h}_1$ approximate solution in the Gray-Scott system equation}\label{fig : D2 h1}
    \end{subfigure}
    \\
    \begin{subfigure}[b]{0.24\textwidth}
        \centering
        \epsfig{figure=u2GSD2cusp.eps, width=\textwidth}
        \caption{$\overline{u}_2$ approximate solution in the Gray-Scott system equation}\label{fig : D2 u2}
    \end{subfigure}
    \hfill
    \begin{subfigure}[b]{0.24\textwidth}
    \centering
        \epsfig{figure=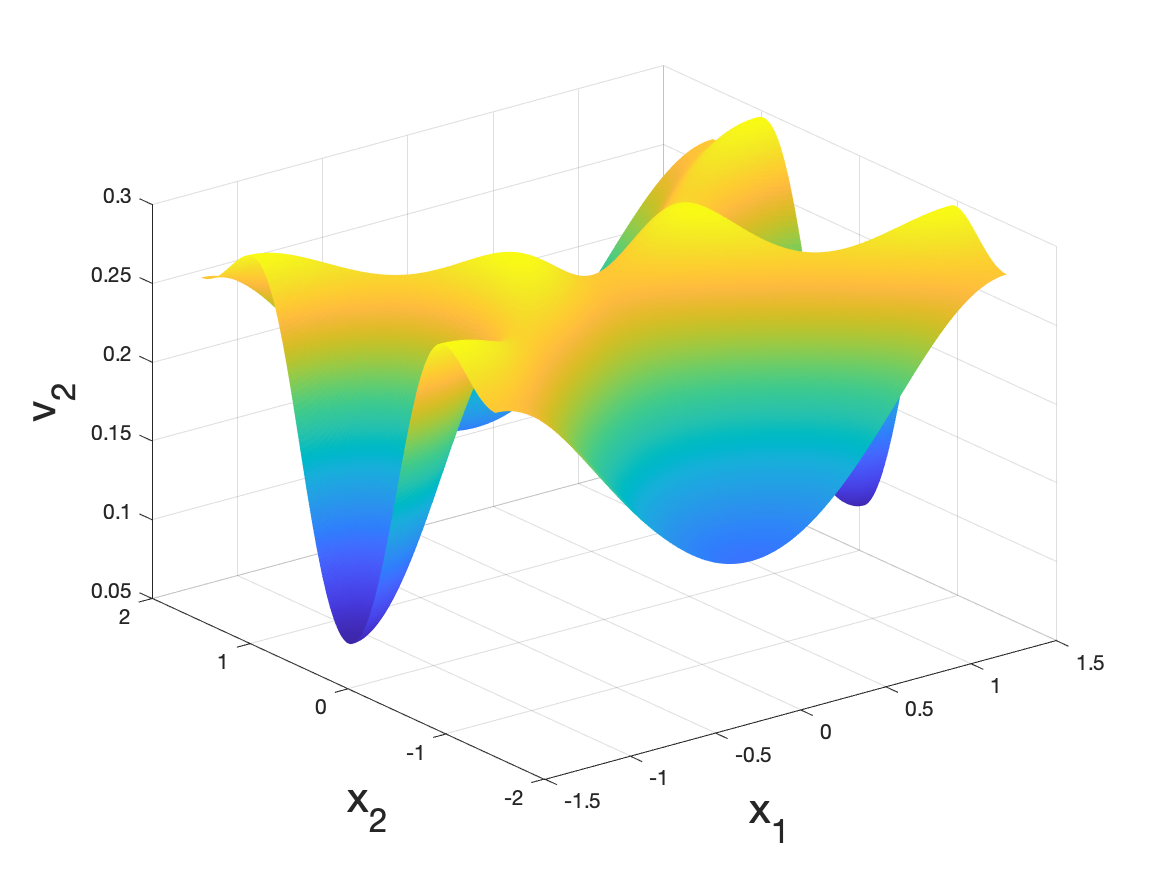, width=\textwidth}
        \caption{$\overline{v}_2$ approximate solution in the Gray-Scott system equation}\label{fig : D2 v2}
    \end{subfigure}
    \hfill
    \begin{subfigure}[b]{0.24\textwidth}
    \centering
        \epsfig{figure=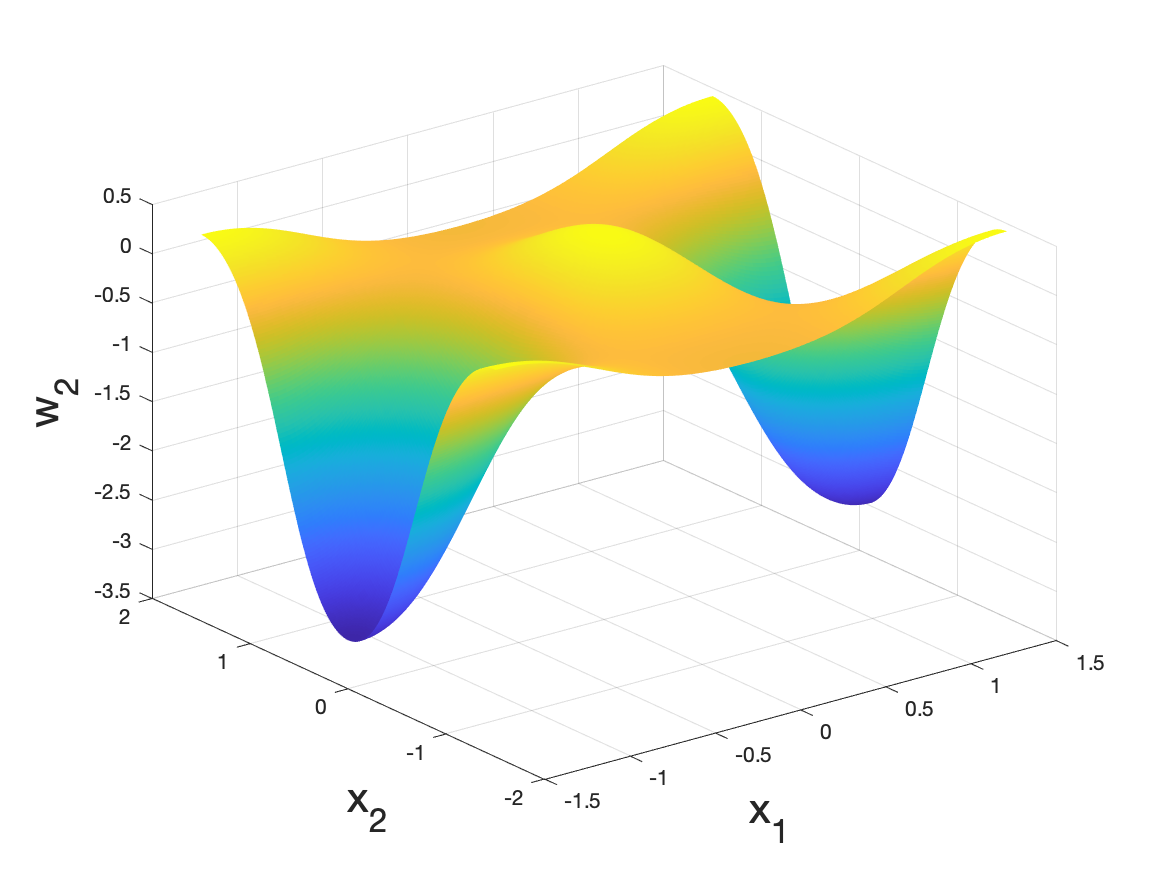, width=\textwidth}
        \caption{$\overline{w}_2$ approximate solution in the Gray-Scott system equation}\label{fig : D2 w2}
    \end{subfigure}
    \hfill
    \begin{subfigure}[b]{0.24\textwidth}
    \centering
        \epsfig{figure=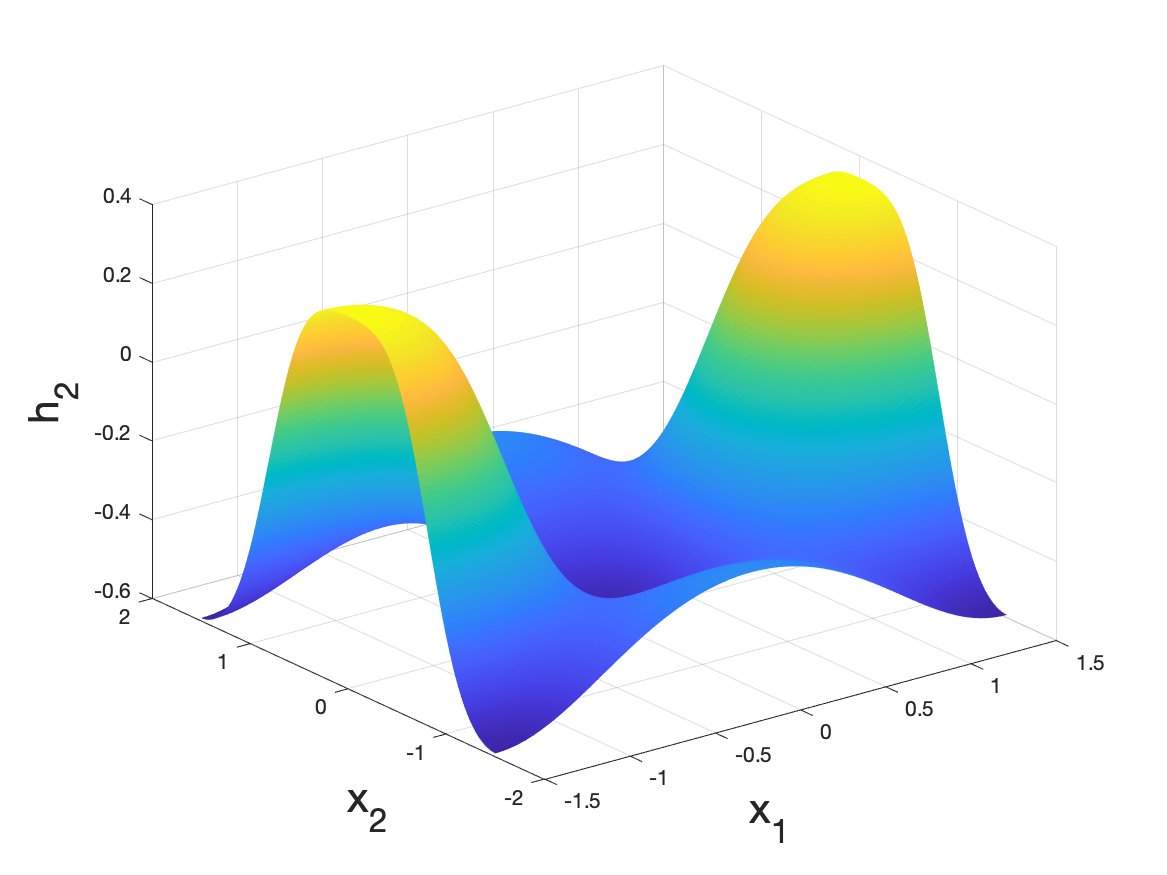, width=\textwidth}
        \caption{$\overline{h}_2$ approximate solution in the Gray-Scott system equation}\label{fig : D2 h2}
    \end{subfigure}
    \caption{Approximation of each component of $\overline{\mathbf{x}}$ plotted on $(-1.5,1.5)^2$ used in Theorem \ref{th : GS 2D cusp}.}
\end{figure}
\section{Conclusion}\label{sec : conclusion}
In this manuscript, we have presented the necessary tools to rigorously compute cusp bifurcations of PDEs. We applied the theory developed in \cite{cusp_jp} to PDEs, which naturally fits our CAP setup. We then demonstrate our approach on the Swift-Hohenberg PDE and the Gray-Scott PDE both in 1D and 2D. We provide the needed bounds and the computational details on how to estimate them. Using these bounds, we proved five cusp bifurcations of various symmetries. Additionally, in the case of Gray-Scott, we have provided the details for rigorously enclosing the spectrum of $D_{u}f(\tilde{\mathbf{u}})$. We do so by using Gershgorin circles, and show how we can use such an argument after completing our rigorous proofs. 

One future goal of ours would be to extend this approach to prove solutions on infinite domains. More specifically, we would wish to apply the method developed by the authors of \cite{unbounded_domain_cadiot}, which allows for one to prove localized patterns of PDEs. In fact, when we began this project, we initially searched for a cusp bifurcation that had the potential to be a localized pattern. We were unsuccessful in locating such a solution in Swift-Hohenberg. In Gray-Scott, we found promising candidates; however, upon extending the size of the domain $d$, the cusp bifurcation would no longer be present. As a result, the first challenge in such a project would be to locate a cusp bifurcation on an unbounded domain. We remark that such solutions appear to be conjectured in 2D Swift-Hohenberg by the authors of \cite{hexagon2008}. Additionally, the authors of \cite{thomas_1d} identify a cusp point in the 1D Thomas model. As their study primarily concerns localized patterns, it is possible that a candidate could exist in this model. The challenge behind studying the Thomas model is the presence of the nonpolynomial nonlinearity. A possible solution could be a similar approach to what was used by the author of \cite{dominic_thomas} who studied the Thomas model. The approach was based on the work of \cite{maxime_paper_continuation,olivier_kevin_paper}. Along with identifying a candidate solution, one will have to handle the adjoint multiplication operator. In this paper, we focused on stationary periodic patterns where we were able to perform our analysis on (Fourier) sequences. When applying the method of \cite{unbounded_domain_cadiot}, one must perform estimates on the function. This would lead to further complications with the adjoint multiplication operator. The study of cusp bifurcations on unbounded domains requires additional attention, and we consider this to be a future project.

Our approach increases the applicability of CAPs when it comes to performing bifurcation analysis. More specifically, our use of an augmented system to locate bifurcations is not new in this realm. As mentioned in the introduction, there are a variety of methods to study bifurcations using CAPs (saddle-nodes, symmetry-breaking, etc). Our approach, based on that of \cite{cusp_jp}, for cusp bifurcations follows the aforementioned approaches in spirit as it relies on Lemma \ref{lemma : Cusp bifurcation}. A more general question is how much further this idea can be stretched, and if there is a way to set-up such augmented zero-finding problems in general. To explain this, a saddle-node bifurcation is co-dimension one and a cusp bifurcation is co-dimension two. Is there a way to generalize the derivation and set-up augmented zero finding problems for bifurcations of co-dimension $k$ for every $k$? Answering such a question would further improve the applicability of CAPs to perform bifurcation analysis, and we consider its investigation a possible future project. 
\appendix
\renewcommand{\theequation}{A.\arabic{equation}}
\setcounter{equation}{0}

\section{Proofs of Lemmas~\ref{lemma : Cusp bifurcation} and \ref{lem : bistability}, and of Theorem~\ref{thm : cusp map nondegen}}

\subsection{Proofs of Lemma~\ref{lemma : Cusp bifurcation}} \label{sec:proof_lemma_cusp}

\begin{proof}
The operator $D_u f(\blambda,\mathbf{u})$, acting from its Sobolev domain $H^{2r}_{\mathcal{G}}(\mathscr{D})^\mathscr{p}$ (where $r$ denotes half the order of the leading differential operator in $l_{\blambda}$) into $L^2_{\mathcal{G}}(\mathscr{D})^\mathscr{p}$, is Fredholm of index zero: the principal symbol of $l_{\blambda}$ is elliptic on the periodic domain $\mathscr{D}$, and the index vanishes by the formal self-adjointness of the leading-order part together with compact Sobolev embeddings. Its kernel is spanned by $\mathbf{v}$, and that of its $L^2_{\mathcal{G}}(\mathscr{D})$-adjoint $D_u f(\blambda,\mathbf{u})^\dagger$ by $\mathbf{w}$, so the Lyapunov--Schmidt reduction applies \cite{chow_hale}. (Throughout Appendices~\ref{sec:proof_lemma_cusp} and~\ref{sec:proof_lemma_stability} we write $\dagger$ for the $L^2_{\mathcal{G}}(\mathscr{D})$-adjoint, reserving $\cdot^*$ for the $(\cdot,\cdot)_2$-adjoint of Section~\ref{sec : adjoint mult}; see Remark~\ref{rmk : L2 vs ell2}.)

Let $P\boldsymbol{\psi} \bydef (\mathbf{w},\boldsymbol{\psi})_{L^2}\mathbf{v}$, a projection since $(\mathbf{w},\mathbf{v})_{L^2}=1$, and $Q \bydef I-P$; every nearby state is uniquely $\mathbf{u}+\xi\mathbf{v}+\boldsymbol{\psi}$ with $\xi\in\R$ and $\boldsymbol{\psi}\in\ker P$. Since $\mathrm{ran}\,D_uf(\blambda,\mathbf{u}) = (\ker D_uf(\blambda,\mathbf{u})^\dagger)^\perp = \ker P$, on which $Q$ is the identity, and $\ker D_uf(\blambda,\mathbf{u})\cap\ker P = \mathrm{span}(\mathbf{v})\cap\ker P = \{0\}$, the operator $QD_uf(\blambda,\mathbf{u})|_{\ker P}$ is invertible. The implicit function theorem applied to the range equation $Qf(\tilde{\blambda},\mathbf{u}+\xi\mathbf{v}+\boldsymbol{\psi})=0$ therefore gives a unique smooth $\Psi(\xi,\tilde{\blambda})\in\ker P$ near $(0,\blambda)$ with $\Psi(0,\blambda)=0$, and the bifurcation equation becomes the scalar \emph{bifurcation function}
\[
  \phi(\xi,\tilde{\blambda}) \bydef \left(\mathbf{w}, f\!\bigl(\tilde{\blambda}, \mathbf{u} + \xi\mathbf{v} + \Psi(\xi,\tilde{\blambda})\bigr) \right)_{L^2},
\]
whose zeros correspond bijectively to the solutions $\mathbf{u}+\xi\mathbf{v}+\Psi(\xi,\tilde{\blambda})$ of $f(\tilde{\blambda},\cdot)=0$ near $(\blambda,\mathbf{u})$ --- these are the states \eqref{eq : three solutions}. In particular $\phi(0,\blambda) = (\mathbf{w},f(\blambda,\mathbf{u}))_{L^2}=0$ is the equilibrium condition; carrying the base point $\mathbf{u}$ along is what makes this so.

Differentiating the range equation once at $(0,\blambda)$ gives $QD_uf(\blambda,\mathbf{u})(\mathbf{v}+\partial_\xi\Psi(0,\blambda))=0$, hence $\partial_\xi\Psi(0,\blambda)=0$ by the invertibility just noted. Differentiating twice then gives $QD_uf(\blambda,\mathbf{u})\partial_\xi^2\Psi(0,\blambda) = -QD_{uu}f(\blambda,\mathbf{u})(\mathbf{v},\mathbf{v}) = -D_{uu}f(\blambda,\mathbf{u})(\mathbf{v},\mathbf{v})$, the last equality because $PD_{uu}f(\blambda,\mathbf{u})(\mathbf{v},\mathbf{v}) = 2b\,\mathbf{v}=0$; hence $\partial_\xi^2\Psi(0,\blambda)=\mathbf{h}$, normalised by $(\mathbf{w},\mathbf{h})_{L^2}=0$ (the value of $c$ does not depend on this choice, see Section~\ref{sec:introduction}). By the chain rule,
\[
  \partial_\xi\phi(0,\blambda)=0, \qquad \partial_\xi^2\phi(0,\blambda)=2b=0, \qquad \partial_\xi^3\phi(0,\blambda)=6c,
\]
so that $\phi(\xi,\blambda) = c\xi^3+O(\xi^4)$ with $c\neq0$.

It remains to check that the two parameters unfold this degeneracy transversally, which is where \eqref{eq : Delta} enters. Differentiating $\phi$ with respect to $\lambda_i$ at $(0,\blambda)$ and using $D_uf(\blambda,\mathbf{u})^\dagger\mathbf{w}=0$ to kill the term carrying $\partial_{\lambda_i}\Psi$ gives $\partial_{\lambda_i}\phi(0,\blambda) = \sigma_i$. Differentiating once more in $\xi$ and using $\partial_\xi\Psi(0,\blambda)=0$,
\[
  \partial_{\lambda_i}\partial_\xi\phi(0,\blambda)
  = \bigl(\mathbf{w},\partial_{\lambda_i}[D_uf]\mathbf{v}\bigr)_{L^2} + \bigl(\mathbf{w},D_{uu}f(\mathbf{v},\Theta_i)\bigr)_{L^2},
  \qquad \Theta_i \bydef \partial_{\lambda_i}\Psi(0,\blambda) \in \ker P ,
\]
and since $\Theta_i$ solves $D_uf\,\Theta_i = -\partial_{\lambda_i}f + \sigma_i\mathbf{v}$ while $D_{uu}f^\dagger(\mathbf{w},\mathbf{v}) = D_uf^\dagger\mathbf{z}$ by \eqref{eq : z vector},
\[
  \bigl(\mathbf{w},D_{uu}f(\mathbf{v},\Theta_i)\bigr)_{L^2} = \bigl(\mathbf{z},D_uf\,\Theta_i\bigr)_{L^2} = -\bigl(\mathbf{z},\partial_{\lambda_i}f\bigr)_{L^2} + \beta\,\sigma_i,
  \qquad \beta\bydef(\mathbf{z},\mathbf{v})_{L^2}.
\]
Thus $\partial_{\lambda_i}\partial_\xi\phi(0,\blambda) = \tau_i+\beta\sigma_i$ --- the shift already announced after \eqref{eq : Delta} --- and
\begin{equation}\label{eq : versality determinant}
  \det \frac{\partial(\phi,\partial_\xi\phi)}{\partial(\lambda_1,\lambda_2)}(0,\blambda)
  = \sigma_1(\tau_2+\beta\sigma_2)-\sigma_2(\tau_1+\beta\sigma_1) = \Delta \neq 0 .
\end{equation}
So $\phi$ has at $(0,\blambda)$ the singularity $\phi=\partial_\xi\phi=\partial_\xi^2\phi=0$, $\partial_\xi^3\phi = 6c\neq0$, unfolded transversally by $(\lambda_1,\lambda_2)$. By the versal-unfolding (contact-equivalence) theorem for the cusp \cite{kuznetsov_book,chow_hale,kuznetsov_2}, there are a smooth change of coordinate $\xi=\Xi(y,\tilde{\blambda})$ with $\partial_y\Xi>0$, a reparametrisation $\tilde{\blambda}\mapsto(\beta_1,\beta_2)$ which is a local diffeomorphism precisely by \eqref{eq : versality determinant}, and a smooth $\varrho>0$, such that
\[
  \phi\bigl(\Xi(y,\tilde{\blambda}),\tilde{\blambda}\bigr) = \varrho(y,\tilde{\blambda})\bigl[\beta_1+\beta_2 y + \mathrm{sign}(c)\,y^3\bigr] .
\]
This is the assertion of the lemma.
\end{proof}

\subsection{Proofs of Lemma~\ref{lem : bistability}} \label{sec:proof_lemma_stability}

\begin{proof}
Let $\tilde{\blambda}$ be close to $\blambda$ and inside the cusp region, so that $\phi(\cdot,\tilde{\blambda}) = 0$ has exactly three roots $\xi_1 < \xi_2 < \xi_3$ near $\xi = 0$, and the three solutions $\tilde{\mathbf{u}}_j = \mathbf{u} + \xi_j\mathbf{v} + \Psi(\xi_j,\tilde{\blambda})$ ($j=1,2,3$) vary continuously with $\tilde{\blambda}$.

\textbf{Step 1 (Persistence of non-zero eigenvalues).} Set $A_0\bydef D_uf(\blambda,\mathbf{u})$ and $A_j\bydef D_uf(\tilde{\blambda},\tilde{\mathbf{u}}_j)$, closed on $L^2_{\mathcal{G}}(\mathscr{D})^{\mathscr{p}}$ with the common domain $H^{2r}_{\mathcal{G}}(\mathscr{D})^{\mathscr{p}}$; then $A_0$ is sectorial with compact resolvent and $\|A_j-A_0\|_{\mathcal{B}(H^{2r},L^2)}\to0$ as $\tilde{\blambda}\to\blambda$. Consequently $\sigma(A_0)$ is discrete and lies in a left sector, so the spectral gap hypothesis of Lemma~\ref{lemma : Cusp bifurcation} makes $\sigma(A_0)\cap\{\mathrm{Re}\,\mu>-\varepsilon\}$ finite: the simple eigenvalue $0$ and the $k<\infty$ eigenvalues with positive real part. Analytic perturbation theory \cite[Thm.~IV.3.16]{kato2013perturbation} then gives, for $\tilde{\blambda}$ close to $\blambda$, that $A_j$ has exactly $k$ eigenvalues with positive real part, exactly one eigenvalue $\mu_j$ near $0$ --- real and algebraically simple, with left eigenvector $\mathbf{w}_j\to\mathbf{w}$ normalised by $(\mathbf{w}_j,\mathbf{v})_{L^2}=1$ --- and all others in $\{\mathrm{Re}\,\mu<-\varepsilon/2\}$. Thus the only eigenvalue that can change half-plane is the one whose real part is zero at $(\blambda,\mathbf{u})$.

\textbf{Step 2 (Exchange of stability).} By construction $Qf(\tilde{\blambda},\mathbf{u}+\xi\mathbf{v}+\Psi(\xi,\tilde{\blambda}))=0$ and $Pf(\tilde{\blambda},\mathbf{u}+\xi\mathbf{v}+\Psi(\xi,\tilde{\blambda})) = \phi(\xi,\tilde{\blambda})\mathbf{v}$, so that $f(\tilde{\blambda},\mathbf{u}+\xi\mathbf{v}+\Psi(\xi,\tilde{\blambda})) = \phi(\xi,\tilde{\blambda})\,\mathbf{v}$ identically in $\xi$. Differentiating at $\xi=\xi_j$ gives
\[
  A_j\mathbf{y}_j = \phi'(\xi_j,\tilde{\blambda})\,\mathbf{v},
  \qquad \mathbf{y}_j \bydef \mathbf{v}+\partial_\xi\Psi(\xi_j,\tilde{\blambda}) \;\longrightarrow\; \mathbf{v},
\]
where $\phi' = \partial_\xi\phi$. Pairing with $\mathbf{w}_j$ and using $A_j^\dagger\mathbf{w}_j = \mu_j\mathbf{w}_j$ with $\mu_j$ real yields the exchange-of-stability relation
\begin{align}\label{eq : exchange stability}
  \mu_j\,\mathcal{Q}(\xi_j,\tilde{\blambda}) = \phi'(\xi_j,\tilde{\blambda}),
  \qquad
  \mathcal{Q}(\xi_j,\tilde{\blambda}) \bydef \bigl(\mathbf{w}_j,\mathbf{y}_j\bigr)_{L^2} \;\longrightarrow\; (\mathbf{w},\mathbf{v})_{L^2} = 1 .
\end{align}
In particular $\mathcal{Q}>0$, hence $\mathrm{sign}(\mu_j)=\mathrm{sign}(\phi'(\xi_j,\tilde{\blambda}))$, for $\tilde{\blambda}$ close enough to $\blambda$. (In the scalar model $\dot u = f(\blambda,u)$ one has $\phi=f$ and $\mathcal{Q}\equiv1$, consistent with $\mu = D_uf = \phi'$.)

\textbf{Step 3 (Sign of $\phi'$ at each root).} We argue on $\phi$ itself, avoiding any truncation (cf.\ Remark~\ref{rmk : discriminant}). Since $\phi(\xi,\blambda) = c\xi^3(1+O(\xi))$ with $c\neq0$, we may fix $\xi_*>0$ with $\phi(\cdot,\blambda)\neq0$ on $0<|\xi|\leq\xi_*$ and $\mathrm{sign}\,\phi(\xi_*,\blambda)=\mathrm{sign}(c)$, and read ``near $\xi=0$'' as ``in $(-\delta,\delta)$'' for some $\delta\in(0,\xi_*)$. As $\phi$ is uniformly continuous and $|\phi(\cdot,\blambda)|$ is bounded below on $\delta\leq|\xi|\leq\xi_*$, for $\tilde{\blambda}$ close enough $\phi(\cdot,\tilde{\blambda})$ has no zero there and still satisfies $\mathrm{sign}\,\phi(\xi_*,\tilde{\blambda})=\mathrm{sign}(c)$; hence $\xi_1,\xi_2,\xi_3$ are its only zeros in $[-\xi_*,\xi_*]$, and they are simple because the cusp region avoids the fold set $\mathcal{F}$. Now $\phi(\cdot,\tilde{\blambda})$ has constant sign $\mathrm{sign}(c)$ on $(\xi_3,\xi_*]$, so $\mathrm{sign}\,\phi'(\xi_3,\tilde{\blambda})=\mathrm{sign}(c)$; and the signs of $\phi'$ at consecutive simple zeros alternate. Therefore
\[
  \mathrm{sign}\,\phi'(\xi_1,\tilde{\blambda}) = \mathrm{sign}\,\phi'(\xi_3,\tilde{\blambda}) = \mathrm{sign}(c),
  \qquad
  \mathrm{sign}\,\phi'(\xi_2,\tilde{\blambda}) = -\mathrm{sign}(c) .
\]

\textbf{Step 4 (Counting unstable eigenvalues).} By Steps 1--3, the total number of eigenvalues of $D_u f(\tilde{\blambda},\tilde{\mathbf{u}}_j)$ with positive real part equals $k$ plus the contribution of the near-zero eigenvalue:
\begin{itemize}
  \item If $c < 0$: $\phi'(\xi_{1,3}) < 0$ so $\mu_{1},\mu_3 < 0$, contributing $0$ to the unstable count; $\phi'(\xi_2) > 0$ so $\mu_2 > 0$, contributing $1$. Totals: $k$ at $\tilde{\mathbf{u}}_{1,3}$, $k+1$ at $\tilde{\mathbf{u}}_2$.
  \item If $c > 0$: $\phi'(\xi_{1,3}) > 0$ so $\mu_{1},\mu_3 > 0$, contributing $1$; $\phi'(\xi_2) < 0$ so $\mu_2 < 0$, contributing $0$. Totals: $k+1$ at $\tilde{\mathbf{u}}_{1,3}$, $k$ at $\tilde{\mathbf{u}}_2$.
\end{itemize}
The bistability ($k=0$, $c<0$), monostability ($k=0$, $c>0$), and complete instability ($k\geq 1$) claims follow immediately, by the principle of linearized stability for the analytic semigroup generated by $D_uf(\tilde{\blambda},\tilde{\mathbf{u}}_j)$ \cite{henry_geometric,lunardi_analytic}, the required spectral gap being provided by Step~1.
\end{proof}

\begin{remark}[\bf The unfolded cubic]\label{rmk : discriminant}
Step~3 is often presented on the truncated unfolding $P(\xi) \bydef c\xi^3+\beta_2\xi+\beta_1$, obtained from $\phi$ after the near-identity shift removing the quadratic term \cite{kuznetsov_book} (we write $\beta_1,\beta_2$ rather than $\mu_1,\mu_2$ so as not to clash with the eigenvalues $\mu_j$). Two warnings. First, the discriminant of $P$ is $-4c\beta_2^3-27c^2\beta_1^2$, so three distinct real roots means
\begin{align}\label{eq : discriminant}
  4c\,\beta_2^3 + 27c^2\beta_1^2 < 0
\end{align}
--- \emph{not} $4\beta_2^3+27c^2\beta_1^2<0$, which is not homogeneous in $c$ and fails for $-\xi^3+\xi$ (roots $0,\pm1$, yet $4\beta_2^3+27c^2\beta_1^2 = 4>0$). From \eqref{eq : discriminant} one gets $c\beta_2^3<0$, hence $c\beta_2<0$, and with $\rho\bydef-\beta_2/(3c)>0$, $P'(\xi) = 3c(\xi^2-\rho)$ gives the signs of Step~3 at once; note that one may not multiply $\xi_{1,3}^2>\rho$ by $3c$ without distinguishing the sign of $c$. Second, these are statements about $P$ and not about $\phi$, which carries a remainder $O(\xi^4)$; this is why Step~3 argues on $\phi$ directly.
\end{remark}

\subsection{Proof of Theorem~\ref{thm : cusp map nondegen}} \label{sec:proof_thm_cusp_map_nondegen}

\begin{proof}
By Remark~\ref{rmk : L2 vs ell2}, $F(\mathbf{x})=0$ forces $s_1=s_2=0$ and delivers every hypothesis of Lemma~\ref{lemma : Cusp bifurcation} except $c\neq0$ and $\Delta\neq0$. We obtain both at once by establishing the kernel isomorphism \eqref{eq : kernel isomorphism} of Remark~\ref{rmk : factorization}: $DF(\mathbf{x})$ being invertible, hence injective, $\ker\mathbb{T}=\{0\}$ and $\det\mathbb{T}=6c\Delta\neq0$. (Only injectivity is used. The argument of \cite{cusp_jp}, recalled in Remark~\ref{rmk : old kernel element}, gives only ``$c=0\Rightarrow\ker DF(\mathbf{x})\neq\{0\}$''; transversality needs the parameter directions.)

\smallskip\noindent\textbf{Notation.} Suppressing $(\blambda,\mathbf{u})$, put $\mathcal{A}\bydef D_uf$, $\mathcal{B}\bydef D_{uu}f$, $\mathcal{C}\bydef D_{uuu}f$ and, for $\hat{\blambda}\in\R^2$, $g_{\hat{\blambda}}\bydef\sum_i\hat\lambda_i\partial_{\lambda_i}f$, $G_{\hat{\blambda}}\bydef\sum_i\hat\lambda_i\partial_{\lambda_i}[D_uf]$, $H_{\hat{\blambda}}\bydef\sum_i\hat\lambda_i\partial_{\lambda_i}[D_{uu}f]$; also $\gamma\bydef(\mathbf{v},\mathbf{h})_2$, $\beta\bydef(\mathbf{v},\mathbf{z})_2$ and $\tilde\tau_i\bydef\tau_i+\beta\sigma_i$. We write $\mathcal{B}^*(\hat{\mathbf{u}},\mathbf{w})$ for the variation of $\mathcal{A}^*\mathbf{w}$ in the direction $\hat{\mathbf{u}}$, so $(\mathcal{B}^*(\hat{\mathbf{u}},\mathbf{w}),\mathbf{y})_2 = (\mathbf{w},\mathcal{B}(\hat{\mathbf{u}},\mathbf{y}))_2$: the direction sits in the first slot, the reverse of \eqref{eq : z vector}. Thus $\mathcal{A}^*\mathbf{z} = \mathcal{B}^*(\mathbf{v},\mathbf{w})$ (a different vector from $\mathcal{B}^*(\mathbf{w},\mathbf{v})$, which is not used), and $\mathbf{z}$ exists by Step~0 since $(\mathbf{v},\mathcal{B}^*(\mathbf{v},\mathbf{w}))_2 = (\mathbf{w},\mathcal{B}(\mathbf{v},\mathbf{v}))_2 = 2b = 0$. Here $\sigma_i,\tau_i,\Delta,\mathbf{z}$ are read in the $(\cdot,\cdot)_2$ convention, which by Remark~\ref{rmk : L2 vs ell2} gives the same numbers as \eqref{eq : z vector}--\eqref{eq : Delta}. With $\hat{\mathbf{X}} = (\hat s_1,\hat s_2,\hat{\blambda},\hat{\mathbf{u}},\hat{\mathbf{v}},\hat{\mathbf{w}},\hat{\mathbf{h}})$, the equation $DF(\mathbf{x})\hat{\mathbf{X}}=0$ reads
\begin{equation*}
\begin{aligned}
&\text{(1)} \ 2(\hat{\mathbf{v}},\mathbf{v})_2 = 0, \quad
\text{(2)} \ (\hat{\mathbf{v}},\mathbf{w})_2 + (\mathbf{v},\hat{\mathbf{w}})_2 = 0, \quad
\text{(3)} \ (\hat{\mathbf{w}},\mathbf{h})_2 + (\mathbf{w},\hat{\mathbf{h}})_2 = 0, \quad
\text{(5)} \ \mathcal{A}\hat{\mathbf{u}} + g_{\hat{\blambda}} = 0,\\
&\text{(4)} \ \bigl(\mathcal{C}(\hat{\mathbf{u}},\mathbf{v},\mathbf{v}) + H_{\hat{\blambda}}(\mathbf{v},\mathbf{v}) + 2\mathcal{B}(\hat{\mathbf{v}},\mathbf{v}),\mathbf{w}\bigr)_2 + \bigl(\mathcal{B}(\mathbf{v},\mathbf{v}),\hat{\mathbf{w}}\bigr)_2 = 0,\\
&\text{(6)} \ \mathcal{B}(\hat{\mathbf{u}},\mathbf{v}) + G_{\hat{\blambda}}\mathbf{v} + \mathcal{A}\hat{\mathbf{v}} + \hat{s}_1\mathbf{v} = 0, \qquad
\text{(7)} \ \mathcal{B}^*(\hat{\mathbf{u}},\mathbf{w}) + G_{\hat{\blambda}}^*\mathbf{w} + \mathcal{A}^*\hat{\mathbf{w}} = 0,\\
&\text{(8)} \ \mathcal{B}(\hat{\mathbf{u}},\mathbf{h}) + G_{\hat{\blambda}}\mathbf{h} + \mathcal{A}\hat{\mathbf{h}} + \hat{s}_2\mathbf{v} + \mathcal{C}(\hat{\mathbf{u}},\mathbf{v},\mathbf{v}) + H_{\hat{\blambda}}(\mathbf{v},\mathbf{v}) + 2\mathcal{B}(\hat{\mathbf{v}},\mathbf{v}) = 0 .
\end{aligned}
\end{equation*}

\smallskip\noindent\textbf{Step 0 (Solvability).} By Remark~\ref{rmk : L2 vs ell2}, $\ker\mathcal{A}=\mathrm{span}(\mathbf{v})$ and $\ker\mathcal{A}^*=\mathrm{span}(\mathbf{w})$; since $\mathcal{A}$ is Fredholm of index zero, the orthogonality relations $\mathrm{ran}\,\mathcal{A} = (\ker\mathcal{A}^\dagger)^{\perp_{L^2}}$ and $\mathrm{ran}\,\mathcal{A}^* = \mathcal{O}_{\mathcal{G}}(\ker\mathcal{A})^{\perp_{L^2}}$ become, by \eqref{eq : L2 vs ell2},
\[
\mathrm{ran}\,\mathcal{A} = \{\mathbf{r} : (\mathbf{w},\mathbf{r})_2 = 0\},
\qquad
\mathrm{ran}\,\mathcal{A}^* = \{\mathbf{r} : (\mathbf{v},\mathbf{r})_2 = 0\}
\]
(readable in $X_{\mathcal{G},\omega}$, again by that remark). Since $(\mathbf{w},\mathbf{v})_2=(\mathbf{v},\mathbf{v})_2=1$, it follows that each of
\[
(\mathbf{y},\alpha)\mapsto\bigl(\mathcal{A}\mathbf{y}+\alpha\mathbf{v},(\mathbf{w},\mathbf{y})_2\bigr),\quad
(\mathbf{y},\alpha)\mapsto\bigl(\mathcal{A}\mathbf{y}+\alpha\mathbf{v},(\mathbf{y},\mathbf{v})_2\bigr),\quad
(\mathbf{y},\alpha)\mapsto\bigl(\mathcal{A}^*\mathbf{y}+\alpha\mathbf{w},(\mathbf{v},\mathbf{y})_2\bigr)
\]
is a bijection: pairing the first component with $\mathbf{w}$, $\mathbf{w}$, $\mathbf{v}$ respectively forces $\alpha=0$, after which the border forces $\mathbf{y}=0$; and for surjectivity one takes $\alpha=(\mathbf{w},\mathbf{r})_2$, $(\mathbf{w},\mathbf{r})_2$, $(\mathbf{v},\mathbf{r})_2$ respectively, then fixes the free multiple of $\mathbf{v}$, resp.\ $\mathbf{w}$, by the border. These are the only solvability facts used.

\smallskip\noindent\textbf{Step 1 ($\Phi$ is injective on $\ker DF(\mathbf{x})$).} Set $\Phi(\hat{\mathbf{X}})\bydef(\hat\lambda_1,\hat\lambda_2,(\mathbf{w},\hat{\mathbf{u}})_2)$ and let $\hat{\mathbf{X}}\in\ker DF(\mathbf{x})$ with $\Phi(\hat{\mathbf{X}})=0$. Then $\hat{\blambda}=0$, so (5) gives $\hat{\mathbf{u}}\in\ker\mathcal{A}$ and $(\mathbf{w},\hat{\mathbf{u}})_2=0$ gives $\hat{\mathbf{u}}=0$. Now (6) reads $\mathcal{A}\hat{\mathbf{v}}+\hat s_1\mathbf{v}=0$, so pairing with $\mathbf{w}$ gives $\hat s_1=0$ and (1) gives $\hat{\mathbf{v}}=0$; (7) reads $\mathcal{A}^*\hat{\mathbf{w}}=0$, and (2) gives $\hat{\mathbf{w}}=0$; (8) reads $\mathcal{A}\hat{\mathbf{h}}+\hat s_2\mathbf{v}=0$, so pairing with $\mathbf{w}$ gives $\hat s_2=0$ and (3) gives $\hat{\mathbf{h}}=0$.

\smallskip\noindent\textbf{Step 2 (Reconstruction).} Fix $(\hat{\blambda},t)\in\R^3$. By Step~0 there are unique $(\hat{\mathbf{u}}_0,\alpha_1)$, $(\hat{\mathbf{v}},\hat s_1)$, $(\hat{\mathbf{w}},\alpha_2)$, $(\hat{\mathbf{h}},\hat s_2)$, depending linearly on $(\hat{\blambda},t)$, with
\[
\begin{array}{ll}
\mathcal{A}\hat{\mathbf{u}}_0+\alpha_1\mathbf{v} = -g_{\hat{\blambda}}, \ (\mathbf{w},\hat{\mathbf{u}}_0)_2=0,
&\quad
\mathcal{A}\hat{\mathbf{v}}+\hat s_1\mathbf{v} = -\mathcal{B}(\hat{\mathbf{u}},\mathbf{v})-G_{\hat{\blambda}}\mathbf{v}, \ (\hat{\mathbf{v}},\mathbf{v})_2=0,
\\[2pt]
\mathcal{A}^*\hat{\mathbf{w}}+\alpha_2\mathbf{w} = -\mathcal{B}^*(\hat{\mathbf{u}},\mathbf{w})-G^*_{\hat{\blambda}}\mathbf{w}, \ (\mathbf{v},\hat{\mathbf{w}})_2 = -(\hat{\mathbf{v}},\mathbf{w})_2,
&\quad
\text{(8) with } (\mathbf{w},\hat{\mathbf{h}})_2 = -(\hat{\mathbf{w}},\mathbf{h})_2,
\end{array}
\]
where $\hat{\mathbf{u}}\bydef\hat{\mathbf{u}}_0+t\mathbf{v}$, so $(\mathbf{w},\hat{\mathbf{u}})_2=t$. Rows (6) and (8) carry the free scalars $\hat s_1,\hat s_2$ and hold by construction; row (7) has none, and is bordered only so that the construction --- hence $\theta$ below --- is defined for every $\hat{\blambda}$. Pairing the first system with $\mathbf{w}$ gives $\alpha_1=-\sigma\cdot\hat{\blambda}$, so (5) holds iff $\sigma\cdot\hat{\blambda}=0$. Pairing the second with $\mathbf{w}$ and using $\mathcal{A}^*\mathbf{z}=\mathcal{B}^*(\mathbf{v},\mathbf{w})$ in the form $(\mathbf{w},\mathcal{B}(\hat{\mathbf{u}},\mathbf{v}))_2 = (\mathbf{z},\mathcal{A}\hat{\mathbf{u}})_2 = -(\mathbf{z},g_{\hat{\blambda}})_2-\alpha_1(\mathbf{z},\mathbf{v})_2$ gives $\hat s_1 = -\tilde\tau\cdot\hat{\blambda}$; pairing the third with $\mathbf{v}$ gives $\alpha_2 = -(\mathbf{w},\mathcal{B}(\hat{\mathbf{u}},\mathbf{v})+G_{\hat{\blambda}}\mathbf{v})_2 = \hat s_1$, so (7) holds iff $\tilde\tau\cdot\hat{\blambda}=0$. Conversely every $\hat{\mathbf{X}}\in\ker DF(\mathbf{x})$ arises this way with $t=(\mathbf{w},\hat{\mathbf{u}})_2$, by uniqueness in Step~0.

\smallskip\noindent\textbf{Step 3 (Row (4) and the coefficient $6c$).} Writing $\hat{\mathbf{u}}_0,\hat{\mathbf{v}}_0,\hat{\mathbf{w}}_0$ for the objects of Step~2 with $t=0$, set
\begin{equation}\label{eq : theta def}
  \theta\cdot\hat{\blambda} \bydef \bigl(\mathcal{C}(\hat{\mathbf{u}}_0,\mathbf{v},\mathbf{v}) + H_{\hat{\blambda}}(\mathbf{v},\mathbf{v}) + 2\mathcal{B}(\hat{\mathbf{v}}_0,\mathbf{v}),\mathbf{w}\bigr)_2 + \bigl(\mathcal{B}(\mathbf{v},\mathbf{v}),\hat{\mathbf{w}}_0\bigr)_2 ,
\end{equation}
a linear functional on $\R^2$ depending only on $\mathbf{x}$; we claim row (4) reads $\theta\cdot\hat{\blambda}+6ct=0$. Adding $t\mathbf{v}$ to $\hat{\mathbf{u}}$ leaves $\hat s_1$ and $\alpha_2$ unchanged (their variation is $-t(\mathbf{w},\mathcal{B}(\mathbf{v},\mathbf{v}))_2=-2tb=0$) and changes $\hat{\mathbf{v}}$ by $t\boldsymbol{\delta}$ and $\hat{\mathbf{w}}$ by $t\boldsymbol{\zeta}$, where $\mathcal{A}\boldsymbol{\delta}=-\mathcal{B}(\mathbf{v},\mathbf{v})=\mathcal{A}\mathbf{h}$ with $(\boldsymbol{\delta},\mathbf{v})_2=0$, i.e.\ $\boldsymbol{\delta}=\mathbf{h}-\gamma\mathbf{v}$, and $\mathcal{A}^*\boldsymbol{\zeta}=-\mathcal{A}^*\mathbf{z}$, i.e.\ $\boldsymbol{\zeta}=-\mathbf{z}+\varsigma\mathbf{w}$ with $\varsigma=\gamma+\beta$ forced by (2), since $(\boldsymbol{\delta},\mathbf{w})_2=-\gamma$. Row (4) therefore varies by
\[
t\Bigl[\bigl(\mathcal{C}(\mathbf{v},\mathbf{v},\mathbf{v}),\mathbf{w}\bigr)_2 + 2\bigl(\mathcal{B}(\boldsymbol{\delta},\mathbf{v}),\mathbf{w}\bigr)_2 + \bigl(\mathcal{B}(\mathbf{v},\mathbf{v}),\boldsymbol{\zeta}\bigr)_2\Bigr] = 6ct ,
\]
since $b=0$ gives $2(\mathcal{B}(\boldsymbol{\delta},\mathbf{v}),\mathbf{w})_2 = 2(\mathcal{B}(\mathbf{v},\mathbf{h}),\mathbf{w})_2$ and $(\mathcal{B}(\mathbf{v},\mathbf{v}),\boldsymbol{\zeta})_2 = -(\mathcal{B}(\mathbf{v},\mathbf{v}),\mathbf{z})_2 = (\mathcal{B}(\mathbf{v},\mathbf{h}),\mathbf{w})_2$, the last equality from $(\mathcal{B}(\mathbf{v},\mathbf{v}),\mathbf{z})_2 = -(\mathcal{A}\mathbf{h},\mathbf{z})_2 = -(\mathbf{h},\mathcal{A}^*\mathbf{z})_2$.

\smallskip\noindent\textbf{Step 4 (Conclusion).} By Steps 1--3, $\Phi$ maps $\ker DF(\mathbf{x})$ isomorphically onto $\{(\hat\lambda_1,\hat\lambda_2,t) : \sigma\cdot\hat{\blambda}=0,\ \tilde\tau\cdot\hat{\blambda}=0,\ \theta\cdot\hat{\blambda}+6ct=0\}=\ker\mathbb{T}$. Expanding $\det\mathbb{T}$ along the third column and using $\tilde\tau=\tau+\beta\sigma$ gives $\det\mathbb{T}=6c\Delta$, which is therefore non-zero: $c\neq0$ and $\Delta\neq0$, and Lemma~\ref{lemma : Cusp bifurcation} applies.
\end{proof}

\begin{remark}\label{rmk : old kernel element}
The classical argument of \cite{cusp_jp} is the case $\Phi^{-1}(0,0,-1)$: if $c=0$, Step~0 provides $\mathbf{u}_2$ with $D_uf^*\mathbf{u}_2 = D_{uu}f^*(\mathbf{w},\mathbf{v})$, $(\mathbf{v},\mathbf{u}_2)_2=-\gamma$, and $\mathbf{u}_3$ with $D_uf\,\mathbf{u}_3 = D_{uuu}f(\mathbf{v},\mathbf{v},\mathbf{v})+3D_{uu}f(\mathbf{v},\mathbf{h})-2\gamma D_{uu}f(\mathbf{v},\mathbf{v})$, $(\mathbf{w},\mathbf{u}_3)_2=-(\mathbf{h},\mathbf{u}_2)_2$ (solvable by $b=0$, resp.\ $c=0$); then $\hat{\mathbf{X}} = (\mathbf{0},\mathbf{0},-\mathbf{v},-\mathbf{h}+\gamma\mathbf{v},\mathbf{u}_2,\mathbf{u}_3)$ is a non-zero element of $\ker DF(\mathbf{x})$, whence $c\neq0$. This is Step~2 with $\hat{\blambda}=0$, $t=-1$, for which $\boldsymbol{\delta}=\mathbf{h}-\gamma\mathbf{v}$ and $\boldsymbol{\zeta}=-\mathbf{u}_2$, and Step~3 reduces row (4) to $-6c=0$. Producing only kernel elements with $\hat{\blambda}=0$, it says nothing about $\Delta$.
\end{remark}

\renewcommand{\theequation}{B.\arabic{equation}}
\setcounter{equation}{0}
\section{Computational details for Swift-Hohenberg}
For the remaining appendices, we use two convenient notations for the discrete convolution. These are
\begin{align}
    uv = \mathrm{conv}(u,v) = u \star v,
\end{align}
for $u,v \in \ell^p_{\mathcal{G},\omega}$. In addition, we introduce the sequence $e_k$
\begin{align}
    e_k \bydef ((e_k)_n)_{n \in \mathcal{Z}_{\mathrm{red}}(\mathcal{G})} = \begin{cases}
    1 & n = k \\
    0 & \mathrm{else}
\end{cases}.\label{def : ek}
\end{align}
In this appendix, we provide the computational details for the bounds required for the Swift-Hohenberg PDE. That is, we prove Lemmas \ref{lem : Z2} and \ref{lem : Z1 full}. We begin with Lemma \ref{lem : Z2}.
\subsection{Proof of Lemma \ref{lem : Z2}}\label{apen : Z2}
We now define
{\footnotesize\begin{align}
    L_{\mathbf{s},\boldsymbol{\lambda}} \bydef \begin{bmatrix}
        l_{\boldsymbol{\lambda}} & 0 & 0 & 0 \\
        0 & l_{\boldsymbol{\lambda}} + s_1 I_d & 0 & 0 \\
        0 & 0 & l_{\boldsymbol{\lambda}} & 0 \\
        0 & s_2 I_d & 0 & l_{\boldsymbol{\lambda}}
    \end{bmatrix}, ~ G(\mathbf{x}) \bydef \begin{bmatrix}
        \lambda_2 u^2 - u^3 \\
        (2\lambda_2 u - 3u^2)v \\
        \mathcal{O}_{\mathcal{G}} ((2\lambda_2 u - 3u^2)\star (\mathcal{O}_{\mathcal{G}}^{-1}w)) \\
        (2\lambda_2 u - 3u^2)h + (2\lambda_2 - 6u)v^2
    \end{bmatrix}, ~ \varphi(\mathbf{x}) \bydef \begin{bmatrix}
        (v,v)_{2} - 1 \\
        (v,w)_{2} - 1 \\
        (h,w)_{2} \\
        (2\lambda_2v^2 - 6uv^2,w)_{2}
    \end{bmatrix}.\label{def L G and Phi}
\end{align}}
We also write $F$ in the form
\begin{align}
    F(\mathbf{x}) = \begin{bmatrix}
        \varphi(\mathbf{x}) \\
        L_{\mathbf{s},\boldsymbol{\lambda}} (u,v,w,h) + G(\mathbf{x})
    \end{bmatrix}.
\end{align}
We will now prove Lemma \ref{lem : Z2}.
\begin{proof}
First, we simplify the computation by 
\begin{align}
    \|A(DF(\mathbf{x}) - DF(\overline{\mathbf{x}}))\|_{\mathcal{B}(X_{\mathcal{G},\omega})} &\leq \|A\|_{\mathcal{B}(X_{\mathcal{G},\omega})} \|DF(\mathbf{x}) - DF(\overline{\mathbf{x}})\|_{\mathcal{B}(X_{\mathcal{G},\omega})} \\
    &\leq (\|A^N\|_{\mathcal{B}(X_{\mathcal{G},\omega})} + \mathcal{L}_{\infty})\|DF(\mathbf{x}) - DF(\overline{\mathbf{x}})\|_{\mathcal{B}(X_{\mathcal{G},\omega})}.
\end{align}
First, for simplicity, let $\mathcal{F}(\mathbf{x}) \bydef L_{\mathbf{s},\blambda}(u,v,w,h) + G(\mathbf{x})$
Now, observe that
{\small\begin{align}
    &\|DF(\mathbf{x}) - DF(\overline{\mathbf{x}})\|_{\mathcal{B}(X_{\mathcal{G},\omega})} = \left\| \begin{bmatrix}
        \partial_{(\mathbf{s},\blambda)} \varphi(\mathbf{x}) - \partial_{(\mathbf{s},\blambda)}\varphi(\overline{\mathbf{x}}) & D\varphi(\mathbf{x}) - D\varphi(\overline{\mathbf{x}}) \\
        \partial_{(\mathbf{s},\blambda)}\mathcal{F}(\mathbf{x}) - \partial_{(\mathbf{s},\blambda)}\mathcal{F}(\overline{\mathbf{x}}) & L_{\mathbf{s},\blambda} + DG(\mathbf{x}) - L_{\mathbf{0},\overline{\blambda}} - DG(\overline{\mathbf{x}})
    \end{bmatrix}\right\|_{\mathcal{B}(X_{\mathcal{G},\omega})} \\
    &\leq \|\partial_{(\mathbf{s},\blambda)}\varphi(\mathbf{x}) - \partial_{(\mathbf{s},\blambda)}\varphi(\overline{\mathbf{x}})\|_{\mathcal{B}(\mathbb{R}^4)} + \|D\varphi(\mathbf{x}) - D\varphi(\overline{\mathbf{x}})\|_{\mathcal{B}((\ell^2_{\mathcal{G},\omega})^4,\mathbb{R}^4)} \\
    &\hspace{+0.1cm}+ \|\partial_{(\mathbf{s},\blambda)}\mathcal{F}(\mathbf{x}) - \partial_{(\mathbf{s},\blambda)}\mathcal{F}(\overline{\mathbf{x}})\|_{\mathcal{B}(\mathbb{R}^4,(\ell^2_{\mathcal{G},\omega})^4)} + \|L_{\mathbf{s},\blambda} + DG(\mathbf{x}) - L_{\mathbf{0},\overline{\blambda}} - DG(\overline{\mathbf{x}})\|_{\mathcal{B}((\ell^2_{\mathcal{G},\omega})^4)} \\
    &\leq|(2v^2,w)_{2} - (2\overline{v}^2,\overline{w})_{2}| + \|D\varphi(\mathbf{x}) - D\varphi(\overline{\mathbf{x}})\|_{\mathcal{B}((\ell^2_{\mathcal{G},\omega})^4,\mathbb{R}^4)} \\
    &\hspace{+0.1cm}+ \|\partial_{(\mathbf{s},\blambda)}\mathcal{F}(\mathbf{x}) - \partial_{(\mathbf{s},\blambda)}\mathcal{F}(\overline{\mathbf{x}})\|_{\mathcal{B}(\mathbb{R}^4,(\ell^2_{\mathcal{G},\omega})^4)} + \|L_{\mathbf{s},\blambda} - L_{\mathbf{0},\overline{\blambda}}+DG(\mathbf{x}) - DG(0,\overline{\lambda_2},u,v,w,h)\|_{\mathcal{B}((\ell^2_{\mathcal{G},\omega})^4)}\\
    &\hspace{+0.3cm}+ \|DG(0,\overline{\lambda_2},u,v,w,h)- DG(\overline{\mathbf{x}})\|_{\mathcal{B}((\ell^2_{\mathcal{G},\omega})^4)}.\label{Z2 split up}
\end{align}}
Let us now estimate each quantity of \eqref{Z2 split up}. Firstly, we write $\mathbf{x} = \mathbf{t} + \overline{\mathbf{x}}$ for some $\mathbf{t} = (t_1,t_2,\dots,t_8) \in B_r(0)$. Therefore, 
\begin{align}
    |(2v^2,w)_{2} - (2\overline{v}^2,\overline{w})_{2}| &= |(2t_6^2 + 4t_6 \overline{v} + 2\overline{v}^2,t_7 + \overline{w})_{2} - (2\overline{v}^2,\overline{w})_{2}| \\
    &= |(2t_6^2 + 4t_6 \overline{v} + 2\overline{v}^2,t_7)_{2} + (2t_6^2 + 4t_6 \overline{v} + 2\overline{v}^2,\overline{w})_{2}- (2\overline{v}^2,\overline{w})_{2}| \\
    &= |(2t_6^2 + 4t_6 \overline{v} + 2\overline{v}^2,t_7)_{2} + (2t_6^2 + 4t_6 \overline{v},\overline{w})_{2}|
\end{align}
where we used bilinearity of the inner product. Now, we use the following inequalities
\begin{align}\label{ineq list}
    &\|a+b\|_{2,\omega} \leq \|a\|_{2,\omega} + \|b\|_{2,\omega}~ \text{Triangle inequality} \\
    &|(a,b)_{2}| \leq \|a\|_{2} \|b\|_{2} ~ \text{Cauchy-Schwarz} \\
    &\|ab\|_{2} \leq \|a\|_{2} \|b\|_{1} ~ \text{Young's Inequality for Convolution} \\
    &\|a\|_{2} \leq \|a\|_{2,\omega} ~ \text{follows from} ~ \ell^2_{\omega} \subset \ell^2\\
    &\|ab\|_{2} \leq \|a\|_{2,\omega} \|b\|_{1} ~\text{Combining Young's Inequality with} ~\ell^2_{\omega} \subset \ell^2\\
    &\|ab\|_{2,\omega} \leq \kappa \|a\|_{2,\omega} \|b\|_{2,\omega} ~ \text{from Lemma \ref{lem : conv_estimates}}.
\end{align}
These six inequalities provide us with
\begin{align}
    &|(2t_6^2 + 4t_6 \overline{v} + 2\overline{v}^2,t_7)_{2} + (2t_6^2 + 4t_6 \overline{v},\overline{w})_{2}| \leq \|2t_6^2 + 4t_6 \overline{v} + 2\overline{v}^2\|_{2} \|t_7\|_{2} + \|2t_6^2 + 4t_6 \overline{v}\|_{2} \|\overline{w}\|_{2} \\
    &\hspace{+0.2cm}\leq (2\kappa\|t_6\|_{2,\omega}^2 + 4 \|t_6\|_{2,\omega} \|\overline{v}\|_{1} + 2\|\overline{v}^2\|_{2}) \|t_7\|_{2,\omega} + 2\kappa\|t_6\|_{2,\omega}^2\|\overline{w}\|_{2} + 4\|t_6\|_{2,\omega} \|\overline{v}\|_{1} \|\overline{w}\|_{2} \\
    &\hspace{+0.2cm}\leq (2\kappa r^2 + 4\|\overline{v}\|_{1}r + 2\|\overline{v}^2\|_{2}) r + 2\kappa r^2\|\overline{w}\|_{2} + 4\|\overline{v}\|_{1} \|\overline{w}\|_{2} r \\
    &\hspace{+0.2cm}= (2\kappa r^2 + 4\|\overline{v}\|_{1}r + 2\|\overline{v}^2\|_{2} + (2\kappa r + 4\|\overline{v}\|_{1}) \|\overline{w}\|_{2})r\\
    &\hspace{+2cm}\bydef Z_{2,1}(r) r.\label{def : Z21}
\end{align}
We now move to the second term of \eqref{Z2 split up}. Observe that
\begin{align}
    D\varphi(\mathbf{t} + \overline{\mathbf{x}}) - D\varphi(\overline{\mathbf{x}}) &= \begin{bmatrix}
        0 & 2t_6^* & 0 & 0 \\
        0 & t_7^* & t_6^* & 0 \\
        0 & 0 & t_8^* & t_7^* \\
        \mathscr{Z}_1^* & \mathscr{Z}_2^* & \mathscr{Z}_3^*  & 0
    \end{bmatrix},
\end{align}
where
\begin{align}
    &(\mathscr{Z}_1)_k \bydef -6(2\mathrm{conv}(e_k, \overline{v}t_6) + \mathrm{conv}(e_k,t_6^2),\overline{w})_{2} - 6(\mathrm{conv}(e_k,(\overline{v} + t_6)^2),t_7)_{2} \end{align}
    \vspace{-0.3cm}{\tiny\begin{align}
    &(\mathscr{Z}_2)_k \bydef 4(\mathrm{conv}(e_k,(\overline{\lambda}_2 t_6 - 3\overline{u}t_6 + (t_4 - 3t_5)\overline{v} + t_4t_6 - 3t_5 t_6)),\overline{w})_{2} + 4(\mathrm{conv}(e_k,((\overline{\lambda}_2+t_4)(\overline{v}+t_6) - 3(\overline{u}+t_5)(\overline{v}+t_6))),t_7)_{2} \end{align}}
    \vspace{-0.5cm}\begin{align}&\mathscr{Z}_3 \bydef 4\overline{\lambda}_2 \overline{v} t_6 - 12\overline{u}\overline{v} t_6 + (2t_4 - 6t_5) \overline{v}^2 + 2\overline{\lambda}_2 t_6^2 - 6\overline{u} t_6^2 + (4t_4 t_6 - 12t_5 t_6)\overline{v} + 2t_4t_6^2 - 6t_5 t_6^2.
\end{align}
Let $\mathbf{p} = (p_1,p_2,p_3,p_4) \in (\ell^2_{\mathcal{G},\omega})^4, \|\mathbf{p}\|_{2,\omega} = 1$. We now rely on the inequalities from \eqref{ineq list} and two additional inequalities 
\begin{align}\label{ineq list 2}
    &|(a,b)_{2}| \leq \|a\|_{\infty} \|b\|_{1} ~ \text{Holder's Inequality} \\
    &\|a\|_{1} \leq \kappa_0 \|a\|_{2,\omega} , ~ \text{from Lemma \ref{lem : 1 bounded by 2 nu}} 
\end{align}
Then,

\begin{align}
    &\|D\varphi(\mathbf{x}) - D\varphi(\overline{\mathbf{x}})\|_{\mathcal{B}((\ell^2_{\mathcal{G},\omega})^4,\mathbb{R}^4)} \\
    &= |2(p_2,t_6)_{2}| +
        |(p_2,t_7)_{2} + (p_3,t_6)_{2}| +
        |(p_3,t_8)_{2} + (p_4,t_7)_{2}| +
        |(p_1,\mathscr{Z}_1)_{2} + (p_2,\mathscr{Z}_2)_{2} + (p_3,\mathscr{Z}_3)_{2}| \\
        &\leq 6r + \|\mathscr{Z}_1\|_{\infty}\|p_1\|_{1} + \|\mathscr{Z}_2\|_{\infty}\|p_2\|_{1} + \|\mathscr{Z}_3\|_{2}\|p_3\|_{2}
        \\
        &\leq 6r + \kappa_0 \|\mathscr{Z}_1\|_{\infty}\|p_1\|_{2,\omega} + \kappa_0 \|\mathscr{Z}_2\|_{\infty}\|p_2\|_{2,\omega} + \|\mathscr{Z}_3\|_{2}\|p_3\|_{2,\omega} \\
        &\leq 6r + \kappa_0 \|\mathscr{Z}_1\|_{\infty} + \kappa_0 \|\mathscr{Z}_2\|_{\infty} + \|\mathscr{Z}_3\|_{2}.  
\end{align}
 For $\mathscr{Z}_1$ and $\mathscr{Z}_2$, we bound each coefficient as
\begin{align}
    |(\mathscr{Z}_1)_k| &\leq 6\|2\mathrm{conv}(e_k,\overline{v}t_6) + \mathrm{conv}(e_k,t_6^2)\|_{2} \|\overline{w}\|_{2} + 6\|\mathrm{conv}(e_k,\overline{v}^2 +2\overline{v}t_6+ t_6^2)\|_{2} \|t_7\|_{2} \\
    &\leq 6(2\|\mathrm{conv}(e_k,\overline{v})\|_{1} \|t_6\|_{2,\omega} + \|e_k\|_{1} \kappa \|t_6\|_{2,\omega}^2)\|\overline{w}\|_{2} + 6(\|\mathrm{conv}(e_k,\overline{v}^2)\|_{2} \\
    &\hspace{+1cm}+2\|\mathrm{conv}(e_k,\overline{v})\|_{1} \|t_6\|_{2,\omega} + \|e_k\|_{1}\kappa \|t_6\|_{2,\omega}^2)\|t_7\|_{2,\omega} \\
    &\leq 6(2\|\mathrm{conv}(e_k, \overline{v})\|_{1} r + \kappa r^2)\|\overline{w}\|_{2} + 6(\|\mathrm{conv}(e_k,\overline{v}^2)\|_{2} + 2\|\mathrm{conv}(e_k,\overline{v})\|_{1} r + \kappa r^2)r \\
    &=[6(2\|\mathrm{conv}(e_k, \overline{v})\|_{1} + \kappa r)\|\overline{w}\|_{2} + 6(\|\mathrm{conv}(e_k,\overline{v}^2)\|_{2} + 2\|\mathrm{conv}(e_k,\overline{v})\|_{1} r + \kappa r^2)]r \\
    &=[6(2\|\overline{v}\|_{1} + \kappa r)\|\overline{w}\|_{2} + 6( \|\overline{v}^2\|_{2} + 2\|\overline{v}\|_{1} r + \kappa r^2)]r.
\end{align}
where we used that 
\begin{align}
    &\|\mathrm{conv}(e_k,a)\|_{1} \leq \|a\|_{1} \\
    &\|\mathrm{conv}(e_k,a)\|_{2} \leq \|e_k\|_{1} \|a\|_{2} \leq \|a\|_{2}
\end{align}
for some sequence $a$. 
Since we bounded each $(\mathscr{Z}_1)_k$ with the same bound, it follows that
\begin{align}
    \|\mathscr{Z}_1\|_{\infty} \leq [6(2\|\overline{v}\|_{1} + \kappa r)\|\overline{w}\|_{2} + 6(\|\overline{v}^2\|_{2} + 2\|\overline{v}\|_{1} r + \kappa r^2)]r.
\end{align}
We use a similar process for $\mathscr{Z}_2$.
{\footnotesize\begin{align}
    |(\mathscr{Z}_2)_k| &\leq 4\|\mathrm{conv}(e_k,\overline{\lambda}_2 t_6 - 3\overline{u}t_6 + (t_4 - 3t_5)\overline{v} + t_4t_6 - 3t_5 t_6)\|_{2} \|\overline{w}\|_{2} \\
    &\hspace{+0.5cm}+ 4\|\mathrm{conv}(e_k,\overline{\lambda}_2\overline{v} -3\overline{u}\overline{v} + \overline{\lambda}_2 t_6 - 3\overline{u}t_6 + (t_4 - 3t_5)\overline{v} + t_4t_6 - 3t_5 t_6) \|_{2} \|t_7\|_{2} \\
    &\leq 4\biggl( (\|\mathrm{conv}(e_k,\overline{\lambda}_2 - 3\overline{u})\|_{1} \|t_6\|_{2,\omega} + \|\mathrm{conv}(e_k,\overline{v})\|_{1}\|t_4-3t_5\|_{2,\omega} + \|e_k\|_{1} \|t_4 t_6 - 3t_5 t_6\|_{2,\omega})\|\overline{w}\|_{2} \\
    &\hspace{+0.5cm}+ (\|\mathrm{conv}(e_k,\overline{\lambda}_2\overline{v}-3\overline{u}\overline{v})\|_{2} + \|\mathrm{conv}(e_k, \overline{\lambda}_2 - 3\overline{u})\|_{1} \|t_6\|_{2,\omega} \\
    &\hspace{+1cm}+ \|\mathrm{conv}(e_k,\overline{v})\|_{1}\|t_4-3t_5\|_{2,\omega} + \|e_k\|_{1} \|t_4 t_6 - 3t_5 t_6\|_{2,\omega})\|t_7\|_{2,\omega}\biggr) \\
    &\leq 4\biggl( (\|\mathrm{conv}(e_k, \overline{\lambda}_2 - 3\overline{u})\|_{1} r + \|\mathrm{conv}(e_k,\overline{v})\|_{1}(4r) + (r^2 + 3\kappa r^2)\|\overline{w}\|_{2} \\
    &\hspace{+0.5cm}+ (\|\mathrm{conv}(e_k,\overline{\lambda}_2\overline{v}-3\overline{u}\overline{v})\|_{2} + \|\mathrm{conv}(e_k, \overline{\lambda}_2 - 3\overline{u})\|_{1} r + \|\mathrm{conv}(e_k,\overline{v})\|_{1}(4r) + (1 + 3\kappa)r^2)r\biggr) \\
    &=4\biggl( (\|\mathrm{conv}(e_k ,\overline{\lambda}_2 - 3\overline{u})\|_{1}  + 4\|\mathrm{conv}(e_k,\overline{v})\|_{1} + 4\kappa r)\|\overline{w}\|_{2} \\
    &\hspace{+0.5cm}+ (\|\mathrm{conv}(e_k,\overline{\lambda}_2\overline{v}-3\overline{u}\overline{v})\|_{2} + \|\mathrm{conv}(e_k, \overline{\lambda}_2 - 3\overline{u})\|_{1} r + 4\|\mathrm{conv}(e_k,\overline{v})\|_{1}r + (1 + 3\kappa) r^2)\biggr)r \\&=4\biggl( (\| \overline{\lambda}_2 - 3\overline{u}\|_{1}  + 4\|\overline{v}\|_{1} + 4\kappa r)\|\overline{w}\|_{2} + (\|\overline{\lambda}_2\overline{v}-3\overline{u}\overline{v}\|_{2} + \|\overline{\lambda}_2 - 3\overline{u}\|_{1} r + 4\|\overline{v}\|_{1}r + (1 + 3\kappa) r^2)\biggr)r.
\end{align}}
Hence,
{\small\begin{align}
    \|\mathscr{Z}_2\|_{\infty} \leq 4\biggl( (\| \overline{\lambda}_2 - 3\overline{u}\|_{1}  + 4\|\overline{v}\|_{1} + 4\kappa r)\|\overline{w}\|_{2} + (\|\overline{\lambda}_2\overline{v}-3\overline{u}\overline{v}\|_{2} + \|\overline{\lambda}_2 - 3\overline{u}\|_{1} r + 4\|\overline{v}\|_{1}r + (1 + 3\kappa) r^2)\biggr)r.
\end{align}}
For $\mathscr{Z}_3$, we return to a more standard estimate.
{\small\begin{align}
    \|\mathscr{Z}_3\|_{2} &\leq 4|\overline{\lambda}_2| \|\overline{v}\|_{1} \|t_6\|_{2,\omega} + 12\|\overline{u}\overline{v}\|_{1} \|t_6\|_{2,\omega} + \|(2t_4 - 6t_5)\|_{2,\omega} \|\overline{v}^2\|_{1} + 2\kappa |\overline{\lambda}_2| \|t_6\|_{2,\omega}^2 + 6\kappa \|\overline{u}\|_{1} \|t_6\|_{2,\omega}^2 \\
    &\hspace{+0.5cm}+ \|(4t_4 t_6 - 12t_5 t_6)\|_{2,\omega}\|\overline{v}\|_{1} + \|2t_4t_6^2 - 6t_5 t_6^2\|_{2,\omega} \\
    &\leq 4|\overline{\lambda}_2| \|\overline{v}\|_{1} r + 12\|\overline{u}\overline{v}\|_{1} r + 8r \|\overline{v}^2\|_{1} + 2\kappa |\overline{\lambda}_2| r^2 + 6\kappa \|\overline{u}\|_{1} r^2+ 16\kappa r^2\|\overline{v}\|_{1} + 8\kappa^2 r^3 \\
    &\leq 2(3\kappa \|\overline{u}\|_{1}r + (2|\overline{\lambda}_2| + 8\kappa r)\|\overline{v}\|_{1} + 6\|\overline{u}\overline{v}\|_{1} + 8\|\overline{v}^2\|_{1} + \kappa |\overline{\lambda}_2|r + 4\kappa^2 r^2)r.
\end{align}}
Hence, we obtain
\begin{align}
    &\|D\varphi(\mathbf{x}) - D\varphi(\overline{\mathbf{x}})\|_{\mathcal{B}((\ell^2_{\mathcal{G},\omega})^4,\mathbb{R}^4)} \\
    &\leq (6 + \kappa_0 (28\|\overline{v}\|_{1} + 22\kappa r + 4\|\overline{\lambda}_2 - 3\overline{u}\|_{1})\|\overline{w}\|_{2} + 4\kappa_0 \|\overline{\lambda}_2 \overline{v} - 3 \overline{u}\overline{v}\|_{2} + 4\kappa_0 \|\overline{\lambda}_2 - 3\overline{u}\|_{1}r \\
    &\hspace{+1cm}+ 2(3\kappa \|\overline{u}\|_{1}r + (2|\overline{\lambda}_2| + 8\kappa r + 14\kappa_0 r)\|\overline{v}\|_{1} + 6\|\overline{u}\overline{v}\|_{1} + 8\|\overline{v}^2\|_{1} + 6\kappa_0\|\overline{v}^2\|_{2} + |\overline{\lambda}_2|\kappa r \\
    &\hspace{+1.1cm}+ (4\kappa^2 + \kappa_0(10 + 12\kappa)) r^2))r \\
    &\bydef Z_{2,2}(r)r.
\end{align}
We now move to the third term of \eqref{Z2 split up}. Observe that
{\small\begin{align}
   &\|\partial_{(\mathbf{s},\blambda)}\mathcal{F}(\mathbf{x}) - \partial_{(\mathbf{s},\blambda)}\mathcal{F}(\overline{\mathbf{x}})\|_{\mathcal{B}(\mathbb{R}^4,(\ell^2_{\mathcal{G},\omega})^4)} \\
   &= \left\| \begin{bmatrix}
        0 & 0 & -t_5 & t_5^2 + 2t_5 \overline{u} \\
        t_6 & 0 & -t_6 & 2t_5 t_6 + 2t_5 \overline{v} + 2t_6 \overline{u} \\
        0 & 0 & -t_7 & 2\mathcal{O}_{\mathcal{G}} \mathrm{conv}(t_5, \mathcal{O}_{\mathcal{G}}^{-1} t_7) + 2\mathcal{O}_{\mathcal{G}}\mathrm{conv}(\overline{u}, \mathcal{O}_{\mathcal{G}}^{-1}t_7) + 2\mathcal{O}_{\mathcal{G}} \mathrm{conv}(t_5,\mathcal{O}_{\mathcal{G},N}^{-1} \overline{w}) \\
        0 & t_6 & -t_8 & 2t_5 t_8 + 2t_5 \overline{h} + 2t_8 \overline{u} + 2t_6^2 + 4t_6 \overline{v}
    \end{bmatrix} \right\|_{\mathcal{B}(\mathbb{R}^4,(\ell^2_{\mathcal{G},\omega})^4)} \\
    &\leq 6r + \|t_5^2 + 2t_5 \overline{u}\|_{2,\omega} + \|2t_5 t_6 + 2t_5 \overline{v} + 2t_6 \overline{u}\|_{2,\omega}+ \|2t_5 t_8 + 2t_5 \overline{h} + 2t_8 \overline{u} + 2t_6^2 + 4t_6 \overline{v}\|_{2,\omega} \\
    &\hspace{+0.5cm}+ \|2\mathcal{O}_{\mathcal{G}} \mathrm{conv}(t_5,\mathcal{O}_{\mathcal{G}}^{-1} t_7) + 2\mathcal{O}_{\mathcal{G}}\mathrm{conv}(\overline{u},\mathcal{O}_{\mathcal{G}}^{-1}t_7) + 2\mathcal{O}_{\mathcal{G}} \mathrm{conv}(t_5,\mathcal{O}_{\mathcal{G},N}^{-1} \overline{w})\|_{2,\omega}.\label{before using banach algebra Z23}
\end{align}}
Now we use Lemmas \ref{lem : conv_estimates} and \ref{lem : gen young} to obtain
\begin{align}
    &\|t_5^2 + 2t_5 \overline{u}\|_{2,\omega} \leq \kappa r^2 + 2  \|\overline{u}\|_{1,\sqrt{\omega}}r \\
    &\|2t_5 t_6 + 2t_5 \overline{v} + 2t_6 \overline{u}\|_{2,\omega} \leq 2\kappa r^2 + 2(\|\overline{v}\|_{1,\sqrt{\omega}} + \|\overline{u}\|_{1,\sqrt{\omega}})r \\
    &\|2t_5 t_8 + 2t_5 \overline{h} + 2t_8 \overline{u} + 2t_6^2 + 4t_6 \overline{v}\|_{2,\omega} \leq 4\kappa r^2 + 2(\|\overline{h}\|_{1,\sqrt{\omega}} + \|\overline{u}\|_{1,\sqrt{\omega}} + 2\|\overline{v}\|_{1,\sqrt{\omega}})r,
\end{align}
\begin{align}
    &\hspace{-1cm}\|2\mathcal{O}_{\mathcal{G}} \mathrm{conv}(t_5, \mathcal{O}_{\mathcal{G}}^{-1} t_7) + 2\mathcal{O}_{\mathcal{G}}\mathrm{conv}(\overline{u},\mathcal{O}_{\mathcal{G}}^{-1}t_7) + 2\mathcal{O}_{\mathcal{G}} \mathrm{conv}(t_5,\mathcal{O}_{\mathcal{G},N}^{-1} \overline{w})\|_{2,\omega} \\
    &\leq 2\|\mathcal{O}_{\mathcal{G}}\|_{\mathcal{B}(\ell^2_{\mathcal{G},\omega})} (\|\mathrm{conv}(t_5,\mathcal{O}_{\mathcal{G}}^{-1} t_7)\|_{2,\omega} + \|\mathrm{conv}(\overline{u},\mathcal{O}_{\mathcal{G}}^{-1} t_7)\|_{2,\omega} + \|\mathrm{conv}(t_5, \mathcal{O}_{\mathcal{G},N}^{-1}\overline{w})\|_{2,\omega}) \\
    &\leq 2\mathscr{O}_{\mathrm{max}}(\kappa \|t_5\|_{2,\omega} \|\mathcal{O}_{\mathcal{G}}^{-1} t_7\|_{2,\omega} +  \|\overline{u}\|_{1,\sqrt{\omega}} \|\mathcal{O}_{\mathcal{G}}^{-1} t_7\|_{2,\omega} +  \|t_5\|_{2,\omega} \|\mathcal{O}_{\mathcal{G},N}^{-1} \overline{w}\|_{1,\sqrt{\omega}}) \\
    &\leq 2\mathscr{O}_{\mathrm{max}}(\kappa \|\mathcal{O}_{\mathcal{G}}^{-1}\|_{\mathcal{B}(\ell^2_{\mathcal{G},\omega})} \|t_5\|_{2,\omega} \|t_7\|_{2,\omega} +  \|\mathcal{O}_{\mathcal{G}}^{-1}\|_{\mathcal{B}(\ell^2_{\mathcal{G},\omega})}\|\overline{u}\|_{1,\sqrt{\omega}} \|t_7\|_{2,\omega} +  \|\mathcal{O}_{\mathcal{G},N}^{-1} \overline{w}\|_{1,\sqrt{\omega}}\|t_5\|_{2,\omega}) \\
    &\leq 2\mathscr{O}_{\mathrm{max}}\left(\kappa r^2 +  (\|\overline{u}\|_{1,\sqrt{\omega}}  +  \|\mathcal{O}_{\mathcal{G},N}^{-1} \overline{w}\|_{1,\sqrt{\omega}})r\right).
\end{align}
Returning to \eqref{before using banach algebra Z23}, we obtain
\begin{align}
    &\|\partial_{(\mathbf{s},\blambda)}\mathcal{F}(\mathbf{x}) - \partial_{(\mathbf{s},\blambda)}\mathcal{F}(\overline{\mathbf{x}})\|_{\mathcal{B}(\mathbb{R}^4,(\ell^2_{\mathcal{G},\omega})^4)} \\
    &\leq (7+2\mathscr{O}_{\mathrm{max}})\kappa r^2 + 6 +  (8+2\mathscr{O}_{\mathrm{max}})\|\overline{u}\|_{1,\sqrt{\omega}} + 6 \|\overline{v}\|_{1,\sqrt{\omega}} + 2 \mathscr{O}_{\mathrm{max}}\|\mathcal{O}_{\mathcal{G},N}^{-1} \overline{w}\|_{1,\sqrt{\omega}} + 2 \|\overline{h}\|_{1,\sqrt{\omega}})r \\
    &= ((7+2\mathscr{O}_{\mathrm{max}})\kappa r + 6 +  (6+2\mathscr{O}_{\mathrm{max}})\|\overline{u}\|_{1,\sqrt{\omega}} + 6\|\overline{v}\|_{1,\sqrt{\omega}} + 2 \mathscr{O}_{\mathrm{max}}\|\mathcal{O}_{\mathcal{G},N}^{-1} \overline{w}\|_{1,\sqrt{\omega}} + 2 \|\overline{h}\|_{1,\sqrt{\omega}})r
    \\
    &\bydef Z_{2,3}(r) r.\label{def : Z23}
\end{align}
We now move to the fourth term of \eqref{Z2 split up}. Let $\mathbf{p}$ be defined as earlier. Observe that
\begin{align}
    &\|L_{\mathbf{s},\blambda} - L_{\mathbf{0},\overline{\blambda}} + DG(\mathbf{x}) - DG(0,\overline{\lambda_2},u,v,w,h)\|_{\mathcal{B}((\ell^2_{\mathcal{G},\omega})^4)} \\
    &= \left\| \begin{bmatrix}
        -t_3 & 2t_4 \mathbb{M}_{u} & 0 & 0 \\
        2t_4 \mathbb{M}_v & -t_3 + 2t_4 \mathbb{M}_u + t_1 & 0 & 0 \\
        \mathbb{Q}_{\mathcal{O}_{\mathcal{G}}\mathrm{conv}(2t_4e_0,\mathcal{O}_{\mathcal{G}}^{-1}w)} & 0 & -t_3 + 2t_4\mathbb{M}_u^* & 0 \\
        2t_4 \mathbb{M}_h & t_2 + 4t_4 \mathbb{M}_v & 0 & -t_3 + 2t_4 \mathbb{M}_u
    \end{bmatrix}\mathbf{p}\right\|_{2,\omega} \\
    &= \left\| \begin{bmatrix}
        -t_3 p_1 + 2t_4 \mathrm{conv}(u,p_2) \\
        2t_4 \mathrm{conv}(v, p_1) + \mathrm{conv}(-t_3 + 2t_4 u + t_1,p_2) \\
        \mathcal{O}_{\mathcal{G}}\mathrm{conv}(\mathrm{conv}(2t_4e_0,\mathcal{O}_{\mathcal{G}}^{-1} w), p_1) + -t_3 p_3 + 2t_4 \mathcal{O}_{\mathcal{G}} \mathrm{conv}(u,\mathcal{O}_{\mathcal{G}}^{-1}p_3) \\
        2t_4 hp_1 + (t_2 + 4t_4 v)p_2 + (-t_3 + 2t_4 u)p_4
    \end{bmatrix}\right\|_{2,\omega}.
\end{align}
We now use Banach algebra to obtain
{\footnotesize\begin{align}
    &\|-t_3 p_1 + 2t_4up_2\|_{2,\omega} \leq r + 2 r \|u\|_{1,\sqrt{\omega}} \leq r + 2 r(\|\overline{u}\|_{1,\sqrt{\omega}} + r), \\
    &\|2t_4vp_1 + (-t_3 + 2t_4u + t_1)p_2\|_{2,\omega} \leq 2 r(\|\overline{v}\|_{1,\sqrt{\omega}} + r) + 2r + 2 r(\|\overline{u}\|_{1,\sqrt{\omega}} + r), \\
    &\|2t_4hp_1 + (t_2 + 4t_4v)p_2 + (-t_3 + 2t_4u)p_4\|_{2,\omega} \leq 2 r(\|\overline{h}\|_{1,\sqrt{\omega}} + r) + 2r + 4 r(\|\overline{v}\|_{1,\sqrt{\omega}} + r) + 2 r(\|\overline{u}\|_{1,\sqrt{\omega}} + r),
\end{align}}
\begin{align}
    &\|\mathcal{O}_{\mathcal{G}}\mathrm{conv}(\mathrm{conv}(2t_4e_0,\mathcal{O}_{\mathcal{G}}^{-1} w), p_1) + -t_3 p_3 + 2t_4 \mathcal{O}_{\mathcal{G}} \mathrm{conv}(u,\mathcal{O}_{\mathcal{G}}^{-1}p_3)\|_{2,\omega} \\&\leq \kappa \|\mathcal{O}_{\mathcal{G}}\|_{\mathcal{B}(\ell^2_{\mathcal{G},\omega})}\|\mathrm{conv}(2t_4e_0,\mathcal{O}_{\mathcal{G}}^{-1} w))\|_{2,\omega}\|p_1\|_{2,\omega} + r + 2 r\|\mathcal{O}_{\mathcal{G}}\|_{\mathcal{B}(\ell^2_{\mathcal{G},\omega})} \|\mathrm{conv}(t_5,\mathcal{O}_{\mathcal{G}}^{-1} p_3)\|_{2,\omega}  \\
    &\hspace{+1cm}+2 r\|\mathcal{O}_{\mathcal{G}}\|_{\mathcal{B}(\ell^2_{\mathcal{G},\omega})}\|\mathrm{conv}(\overline{u},\mathcal{O}_{\mathcal{G}}^{-1} p_3)\|_{2,\omega} \\
    &\leq 2\kappa  \mathscr{O}_{\mathrm{max}}r(\|\mathcal{O}_{\mathcal{G},N}^{-1} \overline{w}\|_{1,\sqrt{\omega}}+\kappa r) + r + 2\kappa  \mathscr{O}_{\mathrm{max}}r(\|\overline{u}\|_{1,\sqrt{\omega}} + \kappa r).
\end{align}
Hence, we obtain
{\small\begin{align}
    &\|L_{\mathbf{s},\blambda} - L_{\mathbf{0},\overline{\blambda}} +DG(\mathbf{x}) - DG(0,\overline{\lambda_2},u,v,w,h)\|_{\mathcal{B}((\ell^2_{\mathcal{G},\omega})^4)} \\
    &\leq (14 + 4\kappa \mathscr{O}_{\mathrm{max}})r^2 + [6 +  (6+2\kappa \mathscr{O}_{\mathrm{max}})\|\overline{u}\|_{1,\sqrt{\omega}} + 6 \|\overline{v}\|_{1,\sqrt{\omega}} + 2\kappa  \mathscr{O}_{\mathrm{max}}\|\mathcal{O}_{\mathcal{G},N}^{-1}\overline{w}\|_{1,\sqrt{\omega}} + 2 \|\overline{h}\|_{1,\sqrt{\omega}}]r \\
    &= ((14 + 4\kappa \mathscr{O}_{\mathrm{max}})r + 6 +  (6+2\kappa \mathscr{O}_{\mathrm{max}})\|\overline{u}\|_{1,\sqrt{\omega}} + 6 \|\overline{v}\|_{1,\sqrt{\omega}} + 2\kappa \mathscr{O}_{\mathrm{max}}\|\mathcal{O}_{\mathcal{G},N}\overline{w}\|_{1,\sqrt{\omega}} + 2 \|\overline{h}\|_{1,\sqrt{\omega}})r \\
    &\bydef Z_{2,4}(r)r.\label{def : Z24}
\end{align}}
Let us now examine the fifth term of \eqref{Z2 split up}. Using $\mathbf{p}$ as before, observe that
{\tiny\begin{align}
&\|DG(0,\overline{\lambda_2},u,v,w,h)- DG(\overline{\mathbf{x}})\|_{\mathcal{B}((\ell^2_{\mathcal{G},\omega})^4)} \\
&= \left\| \begin{bmatrix}
   (2\overline{\lambda_2} t_5 - 6t_5 \overline{u}- 3t_5^2 )p_1 \\
   (2\overline{\lambda_2} t_6 - 6t_5\overline{v} - 6t_6\overline{u} - 6t_5t_6)p_1 + (2\overline{\lambda}_2 t_5 - 6t_5 \overline{u} - 3t_5^2)p_2 \\ \mathcal{O}_{\mathcal{G}}\mathrm{conv}(\mathrm{conv}(2\overline{\lambda}_2 - 6\overline{u},\mathcal{O}_{\mathcal{G}}^{-1}t_7) - 6\mathrm{conv}(t_5 ,\mathcal{O}_{\mathcal{G},N}^{-1} \overline{w}) - 6 \mathrm{conv}(t_5,\mathcal{O}_{\mathcal{G}}^{-1}t_7),p_1) + \mathcal{O}_{\mathcal{G}}\mathrm{conv}(2\overline{\lambda_2} t_5 - 6t_5 \overline{u}- 3t_5^2 ,\mathcal{O}_{\mathcal{G}}^{-1}p_3) \\
   (2\overline{\lambda}_2 t_8 - 6\overline{u} t_8 - 6t_5 \overline{h} -6t_5 t_8 - 12t_6\overline{v} - 6t_6^2)p_1 + 2(2\overline{\lambda}_2 t_6 - 6t_5\overline{v} - 6t_6\overline{u} - 6t_5 t_6)p_2 + (2\overline{\lambda}_2 t_5 - 6t_5 \overline{u} - 3t_5^2)p_4
\end{bmatrix}\right\|_{2,\omega}
\end{align}}
We now use Lemmas \ref{lem : conv_estimates} and \ref{lem : gen young} to estimate each term
{\footnotesize\begin{align}
    &\|(2\overline{\lambda_2} t_5 - 6t_5 \overline{u}- 3t_5^2 )p_1\|_{2,\omega} \leq 2\kappa \|\overline{\lambda}_2 - 3\overline{u}\|_{1,\sqrt{\omega}} r + 3\kappa^2 r^2, \\
    &\|(2\overline{\lambda_2} t_6 - 6t_5\overline{v} - 6t_6\overline{u} - 6t_5t_6)p_1 + (2\overline{\lambda}_2 t_5 - 6t_5 \overline{u} - 3t_5^2)p_2\|_{2,\omega} \leq  6\kappa \|\overline{v}\|_{1,\sqrt{\omega}}r +  4\kappa \|\overline{\lambda}_2 - 3\overline{u}\|_{1,\sqrt{\omega}}r + 9\kappa^2 r^2. \end{align}}
We now move to the fourth term.
    {\footnotesize\begin{align}
    &\|(2\overline{\lambda}_2 t_8 - 6\overline{u} t_8 - 6t_5 \overline{h} -6t_5 t_8 - 12t_6\overline{v} - 6t_6^2)p_1 + 2(2\overline{\lambda}_2 t_6 - 6t_5\overline{v} - 6t_6\overline{u} - 6t_5 t_6)p_2 + (2\overline{\lambda}_2 t_5 - 6t_5 \overline{u} - 3t_5^2)p_4\|_{2,\omega} \\
    &\leq 8\kappa  \|\overline{\lambda}_2 - 3\overline{u}\|_{1,\sqrt{\omega}}r + 24\kappa \|\overline{v}\|_{1,\sqrt{\omega}}r+ 6\kappa \|\overline{h}\|_{1,\sqrt{\omega}}r + 27\kappa^2 r^2.
\end{align}}
Finally we estimate the last term, which requires us to use here $\mathcal{O}_{\mathcal{G}}$.
{\scriptsize\begin{align}
    &\| \mathcal{O}_{\mathcal{G}}\mathrm{conv}(\mathrm{conv}(2\overline{\lambda}_2 - 6\overline{u},\mathcal{O}_{\mathcal{G}}^{-1}t_7) - 6\mathrm{conv}(t_5 ,\mathcal{O}_{\mathcal{G},N}^{-1} \overline{w}) - 6 \mathrm{conv}(t_5,\mathcal{O}_{\mathcal{G}}^{-1}t_7),p_1) + \mathcal{O}_{\mathcal{G}}\mathrm{conv}(2\overline{\lambda_2} t_5 - 6t_5 \overline{u}- 3t_5^2 ,\mathcal{O}_{\mathcal{G}}^{-1}p_3)\|_{2,\omega} \\
    &\leq \kappa \|\mathcal{O}_{\mathcal{G}}\|_{\mathcal{B}(\ell^2_{\mathcal{G},\omega})} ( \|2\overline{\lambda}_2 - 6\overline{u}\|_{1,\sqrt{\omega}} \|\mathcal{O}_{\mathcal{G}}^{-1} t_7\|_{2,\omega} + 6 \|t_5\|_{2,\omega} \|\mathcal{O}_{\mathcal{G},N}^{-1} \overline{w}\|_{1,\sqrt{\omega}} + 6\kappa \|t_5\|_{2,\omega} \|\mathcal{O}_{\mathcal{G}}^{-1} t_7\|_{2,\omega}) \|p_1\|_{2,\omega} \\
    &\hspace{+3cm}+ \kappa\|\mathcal{O}_{\mathcal{G}}\|_{\mathcal{B}(\ell^2_{\mathcal{G},\omega})}( \|2\overline{\lambda}_2 - 6\overline{u}\|_{1,\sqrt{\omega}}r + 3\kappa r^2) \|\mathcal{O}_{\mathcal{G}}^{-1} p_3\|_{2,\omega} \\
    &\leq \kappa \mathscr{O}_{\mathrm{max}}(4\|\overline{\lambda}_2 - 3\overline{u}\|_{1,\sqrt{\omega}} r + 6\|\mathcal{O}_{\mathcal{G},N}^{-1} \overline{w}\|_{1,\sqrt{\omega}} r + 9\kappa r^2).
\end{align}}
Hence, we obtain
{\footnotesize\begin{align}
    &\|DG(0,\overline{\lambda_2},u,v,w,h)- DG(\overline{\mathbf{x}})\|_{\mathcal{B}((\ell^2_{\mathcal{G},\omega})^4)} \\
    &\leq (14\kappa + 4 \mathscr{O}_{\mathrm{max}})  \|\overline{\lambda}_2 - 3\overline{u}\|_{1,\sqrt{\omega}}r+ 30\kappa \|\overline{v}\|_{1,\sqrt{\omega}}r + 6\kappa \|\overline{h}\|_{1,\sqrt{\omega}}r+ 6\kappa \mathscr{O}_{\mathrm{max}}\|\mathcal{O}_{\mathcal{G},N}^{-1} \overline{w}\|_{1,\sqrt{\omega}}r + (39+9\mathscr{O}_{\mathrm{max}})\kappa^2 r^2 \\
    &= ((14\kappa + 4\mathscr{O}_{\mathrm{max}}) \|\overline{\lambda}_2 - 3\overline{u}\|_{1,\sqrt{\omega}}+ 30\kappa \|\overline{v}\|_{1,\sqrt{\omega}} + 6\kappa \|\overline{h}\|_{1,\sqrt{\omega}} + 6\kappa \mathscr{O}_{\mathrm{max}}\|\mathcal{O}_{\mathcal{G},N}^{-1} \overline{w}\|_{1,\sqrt{\omega}} + (39+9\mathscr{O}_{\mathrm{max}})\kappa^2 r)r \\
    &\bydef Z_{2,5}(r)r.
\end{align}}
Combining everything we estimated from \eqref{Z2 split up}, we obtain
\begin{align}
    \|A(DF(\mathbf{x}) - DF(\overline{\mathbf{x}}))\|_{\mathcal{B}(X_{\mathcal{G},\omega})} &\leq (\|A^N\|_{\mathcal{B}(X_{\mathcal{G},\omega})} + \mathcal{L}_{\infty}) \sum_{k = 1}^5 Z_{2,k}(r)r \bydef Z_2(r) r
\end{align}
as desired.
\end{proof}
\subsection{Proof of Lemma \ref{lem : Z1 full}}\label{apen : Z1}
We first introduce
{\small\begin{align}
&\overline{q}_4 \bydef \overline{q}_2 \overline{v},~(\mathscr{z}_1)_k \bydef (e_k\star -6\overline{v}^2),\overline{w})_{2}, ~ (\mathscr{z}_2)_k \bydef ((e_k\star (4\overline{\lambda}_2 \overline{v} - 12\overline{u}\overline{v})),\overline{w})_{2},
\end{align}}
where $e_k$ is defined as in \eqref{def : ek}.
We also observe an alternative expression for $DG(\overline{\mathbf{x}})$ and write
\begin{align}
    DG(\overline{\mathbf{x}}) = \begin{bmatrix}
        \mathbb{M}_{\overline{q}_1} & 0 & 0 & 0 \\
        \mathbb{M}_{\overline{q}_2} & \mathbb{M}_{\overline{q}_1} & 0 & 0 \\
        \mathbb{Q}_{\mathcal{O}_{\mathcal{G},2N}\overline{q}_0} & 0 & \mathbb{M}_{\overline{q}_1}^* & 0 \\
        \mathbb{M}_{\overline{q}_3} & 2\mathbb{M}_{\overline{q}_2} & 0 & \mathbb{M}_{\overline{q}_1}
    \end{bmatrix}, ~ DG^N(\overline{\mathbf{x}}) \bydef \begin{bmatrix}
        \mathbb{M}_{\overline{q}_1^N} & 0 & 0 & 0 \\
        \mathbb{M}_{\overline{q}_2^N} & \mathbb{M}_{\overline{q}_1^N} & 0 & 0 \\
        \mathbb{Q}_{\mathcal{O}_{\mathcal{G},N}\overline{q}_0^N} & 0 & \mathbb{M}_{\overline{q}_1^N}^* & 0 \\
        \mathbb{M}_{\overline{q}_3^N} & 2\mathbb{M}_{\overline{q}_2^N} & 0 & \mathbb{M}_{\overline{q}_1^N}
    \end{bmatrix}.
\end{align}
We also define
\begin{align}
    &M(\overline{\mathbf{x}}) \bydef \begin{bmatrix}
        0 & 0 & 0 & 0 & 0 & 2\overline{v}^* & 0 & 0 \\
        0 & 0 & 0 & 0 & 0 & \overline{w}^* & \overline{v}^* & 0 \\
        0 & 0 & 0 & 0 & 0 & 0 & \overline{h}^* & \overline{w}^* \\
        0 & 0 & 0 & (2\overline{v}^2,\overline{w})_{2} & \mathscr{z}_1^* & \mathscr{z}_2^* &  \overline{q}_4^* & 0 \\
        0 & 0 & -\overline{u} &  \overline{u}^2 & \mathbb{M}_{\overline{q}_1} & 0 & 0 & 0 \\
        \overline{v} & 0 & -\overline{v} & 2 \overline{u}\overline{v} & \mathbb{M}_{\overline{q}_2} &   \mathbb{M}_{\overline{q}_1} & 0 & 0 \\
        0 & 0 & -\overline{w} & 2\mathcal{O}_{\mathcal{G},2N}(\overline{u}\star (\mathcal{O}_{\mathcal{G},N}^{-1}\overline{w})) & \mathbb{Q}_{\mathcal{O}_{\mathcal{G},2N}\overline{q}_0} & 0 & \mathbb{M}_{\overline{q}_1}^* & 0 \\
        0 & \overline{v} & -\overline{h} & 2\overline{u}\overline{h} + 2\overline{v}^2 & \mathbb{M}_{\overline{q}_3} & 2\mathbb{M}_{\overline{q}_2} & 0 & \mathbb{M}_{\overline{q}_1}
    \end{bmatrix}, \end{align}
{\footnotesize\begin{align}    &M^N(\overline{\mathbf{x}}) \bydef \begin{bmatrix}
        0 & 0 & 0 & 0 & 0 & 2(\overline{v})^* & 0 & 0 \\
        0 & 0 & 0 & 0 & 0 & (\overline{w})^* & (\overline{v})^* & 0 \\
        0 & 0 & 0 & 0 & 0 & 0 & (\overline{h})^* & (\overline{w})^* \\
        0 & 0 & 0 & (2\overline{v}^2,\overline{w})_{2} & (\mathscr{z}_1^{2N})^* & (\mathscr{z}_2^{2N})^* &  (\overline{q}_4^{2N})^* & 0 \\
        0 & 0 & -\overline{u} &  \Pi^{\leq N} \overline{u}^2 & \mathbb{M}_{\overline{q}_1^N} & 0 & 0 & 0 \\
        \overline{v} & 0 & -\overline{v} & 2\Pi^{\leq N} \overline{u}\overline{v} & \mathbb{M}_{\overline{q}_2^N} &   \mathbb{M}_{\overline{q}_1^N} & 0 & 0 \\
        0 & 0 & -\overline{w} & 2\Pi^{\leq N}\mathcal{O}_{\mathcal{G},2N}(\overline{u}\star (\mathcal{O}_{\mathcal{G},N}^{-1}\overline{w}))& \mathbb{Q}_{\mathcal{O}_{\mathcal{G},N}\overline{q}_0^N} & 0 & \mathbb{M}_{\overline{q}_1^N}^* & 0 \\
        0 & \overline{v} & -\overline{h} & \Pi^{\leq N} (2\overline{u}\overline{h} + 2\overline{v}^2) & \mathbb{M}_{\overline{q}_3^N} & 2\mathbb{M}_{\overline{q}_2^N} & 0 & \mathbb{M}_{\overline{q}_1^N}
    \end{bmatrix},
\end{align}}
Note that $\mathscr{z}_j = \Pi^{\leq 3N} \mathscr{z}_j$ and $\overline{q}_4 = \Pi^{\leq 3N} \overline{q}_4$ by definition.
We now prove Lemma \ref{lem : Z1 full}.
\begin{proof}
First, we let
\begin{align}
    \overline{L}_{\mathbf{0},\overline{\blambda}} \bydef \begin{bmatrix}
        0 & 0 \\
        0 & L_{\mathbf{0},\overline{\blambda}}
    \end{bmatrix}.\label{L bar}
\end{align}
To begin, we introduce $M(\overline{\mathbf{x}})$ and $M^N(\overline{\mathbf{x}})$,
\begin{align}
    \|I_d - ADF(\overline{\mathbf{x}})\|_{\mathcal{B}(X_{\mathcal{G},\omega})} &= \|I_d - A(\overline{L}_{\mathbf{0},\overline{\blambda}} + M(\overline{\mathbf{x}}))\|_{\mathcal{B}(X_{\mathcal{G},\omega})} \\
    &\leq \|I_d - A(\overline{L}_{\mathbf{0},\overline{\blambda}} + M^N(\overline{\mathbf{x}}))\|_{\mathcal{B}(X_{\mathcal{G},\omega})} + \|A(M(\overline{\mathbf{x}}) - M^N(\overline{\mathbf{x}}))\|_{\mathcal{B}(X_{\mathcal{G},\omega})} \\
    &\hspace{-2.8cm}\leq \|I_d - A(\overline{L}_{\mathbf{0},\overline{\blambda}} + M^N(\overline{\mathbf{x}}))\|_{\mathcal{B}(X_{\mathcal{G},\omega})}  + (\|A^N\|_{\mathcal{B}(X_{\mathcal{G},\omega})} + \mathcal{L}_{\infty})\|M(\overline{\mathbf{x}}) - M^N(\overline{\mathbf{x}})\|_{\mathcal{B}(X_{\mathcal{G},\omega})}\label{Split into Z1 and Zinfty}
\end{align}
where the last step followed from the properties of $A$. We now further estimate the second term. Observe that
{\scriptsize\begin{align}
    &\|M(\overline{\mathbf{x}}) - M^N(\overline{\mathbf{x}})\|_{\mathcal{B}(X_{\mathcal{G},\omega})} \leq 
     \|\overline{q}_4 - \overline{q}_4^{2N}\|_{2} + \|\overline{u}^2 - \Pi^{\leq N} \overline{u}^2\|_{2,\omega} + 2\|\overline{u}\overline{v} - \Pi^{\leq N} \overline{u} \overline{v}\|_{2,\omega}  \\&\hspace{+0.5cm}+2\|\mathcal{O}_{\mathcal{G},2N}(\mathrm{conv}(\overline{u}, \mathcal{O}_{\mathcal{G},N}^{-1}\overline{w}) - \Pi^{\leq N} \mathrm{conv}(\overline{u},\mathcal{O}_{\mathcal{G},N}^{-1} \overline{w}))\|_{2,\omega} + 2\|\overline{u}\overline{h} + \overline{v}^2 - \Pi^{\leq N} (\overline{u} \overline{h} + \overline{v}^2)\|_{2,\omega} + 3\|\mathbb{M}_{\overline{q}_1} - \mathbb{M}_{\overline{q}_1^N}\|_{\mathcal{B}(\ell^2_{\mathcal{G},\omega})} \\
&\hspace{+1cm}+3\|\mathbb{M}_{\overline{q}_2} - \mathbb{M}_{\overline{q}_2^N}\|_{\mathcal{B}(\ell^2_{\mathcal{G},\omega})} + \|\mathbb{M}_{\overline{q}_3} - \mathbb{M}_{\overline{q}_3^N}\|_{\mathcal{B}(\ell^2_{\mathcal{G},\omega})} + \|\mathbb{M}_{\overline{q}_1}^* - \mathbb{M}_{\overline{q}_1^N}^*\|_{\mathcal{B}(\ell^2_{\mathcal{G},\omega})} + \|\mathbb{Q}_{\mathcal{O}_{\mathcal{G},2N}\overline{q}_0} - \mathbb{Q}_{\mathcal{O}_{\mathcal{G},N}\overline{q}_0^N}\|_{\mathcal{B}(\ell^2_{\mathcal{G},\omega})} \\
&\hspace{+1.5cm} + \|\mathscr{z}_1 - \mathscr{z}_1^{2N}\|_{2} + \|\mathscr{z}_2 - \mathscr{z}_2^{2N}\|_{2}.
\end{align}}
Note that the $\|\overline{q}_4 - \overline{q}_4^{2N}\|_{2}, \|\mathscr{z}_1 - \mathscr{z}_1^{2N}\|_{2},$ and $\|\mathscr{z}_2 - \mathscr{z}_2^{2N}\|_{2}$ entries are $2$-norms because it would be multiplied by an element in $\ell^2_{\mathcal{G},\omega}$ resulting in a dot product. Hence, we use Cauchy-Schwarz and get only a $2$-norm rather than a $2,\omega$-norm.
The estimate can be further simplified. Let $t \in \ell^2_{\mathcal{G},\omega}$ such that $\|t\|_{2,\omega} = 1$. Then, using Lemma \ref{lem : gen young}, we estimate
\begin{align}
    \|\mathbb{M}_{\overline{q}_k} - \mathbb{M}_{\overline{q}_k^N}\|_{\mathcal{B}(\ell^2_{\mathcal{G},\omega})} =  \|\mathrm{conv}(\overline{q}_k - \overline{q}_k^N,t)\|_{2,\omega} \leq  \|t\|_{2,\omega} \|\overline{q}_k - \overline{q}_k^N\|_{1,\sqrt{\omega}}=  \|\overline{q}_k - \overline{q}_k^N\|_{1,\sqrt{\omega}}
\end{align}
for $k = 1,2,3$. Also, 
\begin{align}
    \|\mathbb{M}_{\overline{q}_1}^* - \mathbb{M}_{\overline{q}_1^N}^*\|_{\mathcal{B}(\ell^2_{\mathcal{G},\omega})} = \|\mathcal{O}_{\mathcal{G}}(\mathbb{M}_{\overline{q}_1} - \mathbb{M}_{\overline{q}_1^N})\mathcal{O}_{\mathcal{G}}^{-1}\|_{\mathcal{B}(\ell^2_{\mathcal{G},\omega})} &\leq \|\mathcal{O}_\mathcal{G}^{-1}\|_{\mathcal{B}(\ell^2_{\mathcal{G},\omega})} \|\mathcal{O}_{\mathcal{G}}\|_{\mathcal{B}(\ell^2_{\mathcal{G},\omega})} \|\mathbb{M}_{\overline{q}_1} - \mathbb{M}_{\overline{q}_1^N}\|_{\mathcal{B}(\ell^2_{\mathcal{G},\omega})} \\
    &\leq \kappa \mathscr{O}_{\mathrm{max}}\|\overline{q}_1 - \overline{q}_1^N\|_{2,\omega}.
\end{align}
We use a similar idea and \eqref{bbQ action} to get 
\begin{align}
    \|\mathbb{Q}_{\mathcal{O}_{\mathcal{G},2N}\overline{q}_0} - \mathbb{Q}_{\mathcal{O}_{\mathcal{G},N}\overline{q}_0^N}\|_{\mathcal{B}(\ell^2_{\mathcal{G},\omega})} &= \|\mathcal{O}_{\mathcal{G}}\mathrm{conv}(\mathcal{O}_{\mathcal{G},2N}^{-1}\mathcal{O}_{\mathcal{G},2N}(\overline{q}_0 - \overline{q}_0^N),t)\|_{2,\omega} \\
    &\leq \|\mathcal{O}_{\mathcal{G}}\|_{\mathcal{B}(\ell^2_{\mathcal{G},\omega})} \|\mathrm{conv}(\overline{q}_0 - \overline{q}_0^N,t)\|_{2,\omega} \\
    &\leq  \|\mathcal{O}_{\mathcal{G}}\|_{\mathcal{B}(\ell^2_{\mathcal{G},\omega})} \|\overline{q}_0 - \overline{q}_0^N\|_{1,\sqrt{\omega}} \|t\|_{2,\omega} \\
    &\leq  \mathscr{O}_{\mathrm{max}}\|\overline{q}_0 -\overline{q}_0^N\|_{1,\sqrt{\omega}}
\end{align}
Hence, we find that
\begin{align}
    \|M^N(\overline{\mathbf{x}}) - M(\overline{\mathbf{x}})\|_{\mathcal{B}(X_{\mathcal{G},\omega})} \leq  Z_{\infty}.
\end{align}
We can now return to \eqref{Split into Z1 and Zinfty} and get
\begin{align}
    \|I_d - ADF(\overline{\mathbf{x}})\|_{\mathcal{B}(X_{\mathcal{G},\omega})} &\leq \|I_d - A(\overline{L}_{\mathbf{0},\overline{\blambda}} + M^N(\overline{\mathbf{x}}))\|_{\mathcal{B}(X_{\mathcal{G},\omega})} + Z_{\infty}.
\end{align}
We now let $\mathcal{M} = \overline{L}_{\mathbf{0},\overline{\blambda}} + M^N(\overline{\mathbf{x}})$. We then introduce projections{\scriptsize\begin{align}
    I_d - A\mathcal{M}&= \obpi^{\leq N} (I_d - A\mathcal{M})\obpi^{\leq 2N} + \obpi^{\leq N}(I_d - A\mathcal{M})\obpi^{>2N} + \obpi^{>N} (I_d - A\mathcal{M})\obpi^{\leq 2N} + \obpi^{> N}(I_d - A\mathcal{M})\obpi^{> 2N} \\
    &= \obpi^{\leq N}(I_d - A\mathcal{M})\obpi^{\leq 2N} - A^N \mathcal{M} \obpi^{> 2N} + \obpi^{>N} \obpi^{\leq 2N} - \obpi^{>N} A \mathcal{M} \obpi^{ \leq 2N} + \obpi^{>N} - \obpi^{>N} A\mathcal{M} \obpi^{> 2N} \\
    &= \obpi^{\leq N}(I_d - A\mathcal{M})\obpi^{\leq 2N} - A^N \mathcal{M} \obpi^{> 2N} + \obpi^{\leq 2N} - \obpi^{\leq N} - \obpi^{>N} A \mathcal{M} \obpi^{\leq 2N} + \obpi^{>N} - \obpi^{>N} A\mathcal{M} \obpi^{>2N}.
\end{align}}
The first term in the above is $Z_{1,1}$ by definition. For the second term, since $\obpi^{\leq N} \overline{L}_{\mathbf{0},\overline{\blambda}} \obpi^{>2N} = 0$, observe that
{\scriptsize\begin{align}
    \|-A^N \mathcal{M} \obpi^{>2N}\|_{\mathcal{B}(X_{\mathcal{G},\omega})} = \|A^N (L_{\mathbf{0},\overline{\blambda}} + M^N(\overline{\mathbf{x}}))\obpi^{>2N}\|_{\mathcal{B}(X_{\mathcal{G},\omega})}
    &= \|A^N M^N(\overline{\mathbf{x}})\obpi^{>2N}\|_{\mathcal{B}(X_{\mathcal{G},\omega})} \\
    &\leq \|A^N\|_{\mathcal{B}(X_{\mathcal{G},\omega})} \|\obpi^{\leq N}M^N(\overline{\mathbf{x}})\obpi^{>2N}\|_{\mathcal{B}(X_{\mathcal{G},\omega})} \\
    &=\|A^N\|_{\mathcal{B}(X_{\mathcal{G},\omega})} \|\bpi^{\leq N}DG^N(\overline{\mathbf{x}})\bpi^{>2N}\|_{\mathcal{B}((\ell^2_{\mathcal{G},\omega})^4)}.
\end{align}}
Now, let $\mathbf{t} = (t_1,t_2,t_3,t_4) \in (\ell^2_{\mathcal{G},\omega})^4, \|\mathbf{t}\|_{2,\omega} = 1$. Then, observe that
{\small\begin{align}
    \|\bpi^{\leq N}DG^N(\overline{\mathbf{x}})\bpi^{>2N}\|_{\mathcal{B}((\ell^2_{\mathcal{G},\omega})^4)} &= \left\| \begin{bmatrix}
        \Pi^{\leq N} \mathbb{M}_{\overline{q}_1^N} \Pi^{> 2N}t_1 \\
        \Pi^{\leq N} \mathbb{M}_{\overline{q}_2^N} \Pi^{> 2N}t_1 + \Pi^{\leq N} \mathbb{M}_{\overline{q}_1^N}  \Pi^{> 2N}t_2 \\
        \Pi^{\leq N} \mathbb{Q}_{\mathcal{O}_{\mathcal{G},N}\overline{q}_0^N} \Pi^{> 2N}t_1 + \Pi^{\leq N} \mathbb{M}_{\overline{q}_1^N}^* \Pi^{> 2N}t_3 \\
        \Pi^{\leq N} \mathbb{M}_{\overline{q}_3^N} \Pi^{> 2N}t_1 + 2\Pi^{\leq N} \mathbb{M}_{\overline{q}_2^N}  \Pi^{> 2N}t_2 + \Pi^{\leq N} \mathbb{M}_{\overline{q}_1^N}  \Pi^{> 2N}t_4
    \end{bmatrix}\right\|_{2,\omega}.\label{applying the derivative to try to show 0 in Z12}
\end{align}}
Now, let us show that each type of term in \eqref{applying the derivative to try to show 0 in Z12} is $0$. To begin,
\begin{align}
    &\Pi^{\leq N} \mathbb{M}_{\overline{q}_k^N} \Pi^{> 2N}t_j = \Pi^{\leq N} (\overline{q}_k^N \star \Pi^{> 2N} t_j) = \Pi^{\leq N} \Pi^{> N} (\overline{q}_k^N \star t_j) = 0 \end{align}
Next, we examine the adjoint multiplication term. We use the definition and the fact that $\Pi^{\leq N} \mathcal{O}_{\mathcal{G}}^{-1} = \mathcal{O}_{\mathcal{G},N}^{-1} \Pi^{\leq N}$ since $\mathcal{O}_{\mathcal{G}}$ is diagonal to get
\begin{align}
    \Pi^{\leq N} \mathbb{M}_{\overline{q}_1^N}^* \Pi^{> 2N}t_3 =  \Pi^{\leq N}\mathcal{O}_{\mathcal{G}} \mathrm{conv}(\overline{q}_1^N,\mathcal{O}_{\mathcal{G}}^{-1} \Pi^{> 2N}t_3) &= \mathcal{O}_{\mathcal{G},N} \Pi^{ \leq N} \mathrm{conv}(\overline{q}_1^N, \Pi^{> 2N} \mathcal{O}_{\mathcal{G}}^{-1} t_3) \\
    &= \mathcal{O}_{\mathcal{G},N} \Pi^{\leq N} \Pi^{> N} \mathrm{conv}(\overline{q}_1^N,\mathcal{O}_{\mathcal{G}}^{-1} t_3) \\
    &= 0.
\end{align}
Finally, we examine the action of $\mathbb{Q}$. Using \eqref{bbQ action}, notice that
\begin{align}
    \Pi^{\leq N} \mathbb{Q}_{\mathcal{O}_{\mathcal{G},N}\overline{q}_0^N} \Pi^{> 2N}t_1 = \Pi^{\leq N} \mathcal{O}_{\mathcal{G}} \mathrm{conv}(\overline{q}_0^N, \Pi^{> 2N} t_1) 
    &= \mathcal{O}_{\mathcal{G},N} \Pi^{\leq N} \mathrm{conv}(\overline{q}_0^N,\Pi^{> 2N} t_1) \\
    &= \mathcal{O}_{\mathcal{G},N}\Pi^{\leq N} \Pi^{> N}\mathrm{conv} (\overline{q}_0^N,t_1) \\
    &= 0.
\end{align}
Hence, we have shown that \eqref{applying the derivative to try to show 0 in Z12} is $0$.
We now move to the third term where
{\footnotesize\begin{align}
    \|\obpi^{\leq 2N} - \obpi^{\leq N} - \obpi^{> N} A \mathcal{M}\obpi^{\leq 2N}\|_{\mathcal{B}(X_{\mathcal{G},\omega})}
    &= \|\obpi^{\leq 2N} - \obpi^{\leq N} - \obpi^{>N} A(\overline{L}_{\mathbf{0},\overline{\blambda}} + M^N(\overline{\mathbf{x}}))\obpi^{\leq 2N}\|_{\mathcal{B}(X_{\mathcal{G},\omega})} \\
    &\hspace{-1cm}= \|\obpi^{\leq 2N} - \obpi^{\leq N} - \obpi^{> N}\obpi^{\leq 2N} - \obpi^{>N} AM^N(\overline{\mathbf{x}})\obpi^{\leq 2N}\|_{\mathcal{B}(X_{\mathcal{G},\omega})} \\
    &\hspace{-1.5cm}= \|\obpi^{\leq 2N} - \obpi^{\leq N} - (\obpi^{\leq 2N} - \obpi^{\leq N}) - \obpi^{>N} AM^N(\overline{\mathbf{x}})\obpi^{\leq 2N}\|_{\mathcal{B}(X_{\mathcal{G},\omega})}\\
    &= \|\obpi^{>N} AM^N(\overline{\mathbf{x}})\obpi^{\leq 2N}\|_{\mathcal{B}(X_{\mathcal{G},\omega})} \end{align}}
We now perform further estimates to remove the $\obpi^{> N} A$. 
{\footnotesize\begin{align}
    \|\obpi^{>N} AM^N(\overline{\mathbf{x}})\obpi^{\leq 2N}\|_{\mathcal{B}(X_{\mathcal{G},\omega})}&\leq \|\obpi^{> N} A\|_{\mathcal{B}(X_{\mathcal{G},\omega})} \|\obpi^{>N}M^N(\overline{\mathbf{x}}) \obpi^{\leq 2N}\|_{\mathcal{B}(X_{\mathcal{G},\omega})}
    \\
    &\leq \mathcal{L}_{\infty} \|DG^N(\overline{\mathbf{x}})\|_{\mathcal{B}((\ell^2_{\mathcal{G},\omega})^4)} \\
    &\hspace{-0.5cm}\leq  \mathcal{L}_{\infty} (3 \|\overline{q}_{1}^N\|_{1,\sqrt{\omega}} + 3 \|\overline{q}_{2}^N\|_{1,\sqrt{\omega}} +  \|\overline{q}_3^N\|_{1,\sqrt{\omega}} + \|\mathbb{M}_{\overline{q}_1^N}^*\|_{\mathcal{B}(\ell^2_{\mathcal{G},\omega})} + \|\mathbb{Q}_{\mathcal{O}_{\mathcal{G},N}\overline{q}_0^N}\|_{\mathcal{B}(\ell^2_{\mathcal{G},\omega})})\\&\leq  \mathcal{L}_{\infty} \left(\left(3+\mathscr{O}_{\mathrm{max}}\right)\|\overline{q}_{1}^N\|_{1,\sqrt{\omega}} + 3\|\overline{q}_{2}^N\|_{1,\sqrt{\omega}} + \|\overline{q}_3^N\|_{1,\sqrt{\omega}} + \mathscr{O}_{\mathrm{max}}\|\overline{q}_0^N\|_{1,\sqrt{\omega}}\right)\\
    &\bydef \frac{Z_{1,2}}{2}.\label{first Z12N}
\end{align}}
We now move to the final term and estimate.
\begin{align}
    \|\obpi^{>N} - \obpi^{>N} A \mathcal{M} \obpi^{> 2N} \|_{\mathcal{B}(\ell^2_{\mathcal{G},\omega})} &= \|\obpi^{>N} - \obpi^{>N} A (\overline{L}_{\mathbf{0},\overline{\blambda}} + M^N(\overline{\mathbf{x}})) \obpi^{>2N} \|_{\mathcal{B}(X_{\mathcal{G},\omega})} \\
    &=\|\obpi^{>N} - \obpi^{>N}\obpi^{>2N} - \obpi^{>N}A M^N(\overline{\mathbf{x}}) \obpi^{>2N} \|_{\mathcal{B}(X_{\mathcal{G},\omega})} \\
    &= \|\obpi^{>N} A M^N(\overline{\mathbf{x}})\obpi^{>2N}\|_{\mathcal{B}(X_{\mathcal{G},\omega})}.
\end{align}
We now perform similar steps as those used in \eqref{first Z12N}. That is,
{\footnotesize\begin{align}
    \|\obpi^{>N} A M^N(\overline{\mathbf{x}})\obpi^{>2N}\|_{\mathcal{B}(X_{\mathcal{G},\omega})} \leq \|\obpi^{>N} A \|_{\mathcal{B}(X_{\mathcal{G},\omega})} \|\obpi^{>N} M^N(\overline{\mathbf{x}}) \obpi^{>2N}\|_{\mathcal{B}(X_{\mathcal{G},\omega})}
    &\leq \mathcal{L}_{\infty} \|DG^N(\overline{\mathbf{x}})\|_{\mathcal{B}((\ell^2_{\mathcal{G},\omega})^4)} \\
    &\hspace{-1cm}\leq \frac{Z_{1,2}}{2}.\label{second Z12N}
    \end{align}}
This concludes the proof.
\end{proof}
\renewcommand{\theequation}{C.\arabic{equation}}
\setcounter{equation}{0}
\section{Proofs for the Gershgorin enclosure}\label{apen : gershgorin}

We prove Lemma~\ref{lem : computable radii} and Proposition~\ref{prop : gershgorin enclosure} of Section~\ref{sec : gershgorin}, for a general system of $\mathscr{p}$ equations. Throughout, sums over $k$ run over $\mathcal{Z}_{\mathrm{red}}(\mathcal{G})$ and $i,j$ over $\{1,\dots,\mathscr{p}\}$.

\subsection{Proof of Lemma~\ref{lem : computable radii}}\label{apen : computable radii}
\begin{proof}
\textbf{(a) The inflation constant.} Fix $n$ and $j$. Adding and subtracting $\overline{\mathcal{D}}$ termwise,
\[
\mathscr{r}_{n,j} + \bigl|(\mathcal{D}_{j,j})_{n,n}-(\overline{\mathcal{D}}_{j,j})_{n,n}\bigr|
\leq \overline{\mathscr{r}}_{n,j} + \frac{1}{|\mathrm{orb}_{\mathcal{G}}(n)|}\sum_{i}\sum_{k}\bigl|(\mathcal{D}_{i,j})_{k,n}-(\overline{\mathcal{D}}_{i,j})_{k,n}\bigr|\,|\mathrm{orb}_{\mathcal{G}}(k)| ,
\]
and the last sum is the $\ell^1_{\mathcal{G}}$-norm of the $(n,j)$ column of $\mathcal{D}-\overline{\mathcal{D}}$, divided by $|\mathrm{orb}_{\mathcal{G}}(n)|$; it is therefore bounded by $\|\mathcal{D}-\overline{\mathcal{D}}\|_{\mathcal{B}(\ell^1_{\mathcal{G}})}$. By \eqref{def : D gersh} and \eqref{def : eps r0},
\[
\|\mathcal{D}-\overline{\mathcal{D}}\|_{\mathcal{B}(\ell^1_{\mathcal{G}})}
= \bigl\|P^{-1}\bigl(D_uf(\tilde{\blambda},\tilde{\mathbf{u}})-D_uf(\overline{\blambda},\overline{\mathbf{u}})\bigr)P\bigr\|_{\mathcal{B}(\ell^1_{\mathcal{G}})}
\leq \max(1,\|(P^N)^{-1}\|_{\mathcal{B}(\ell^1_{\mathcal{G}})})\max(1,\|P^N\|_{\mathcal{B}(\ell^1_{\mathcal{G}})})\,\varepsilon_{r_0} = \mathscr{r}^{\infty},
\]
using $P = P^N+\Pi^{>N}$ and $P^{-1}=(P^N)^{-1}+\Pi^{>N}$. Hence $\mathscr{r}_{n,j}\leq\overline{\mathscr{r}}_{n,j}+\mathscr{r}^{\infty}$ and, by the triangle inequality, $B_{n,j}\subset\{z : |z-(\overline{\mathcal{D}}_{j,j})_{n,n}|\leq\overline{\mathscr{r}}_{n,j}+\mathscr{r}^{\infty}\}$. It remains to bound $\overline{\mathscr{r}}_{n,j}$ on each of the three index regions.

\textbf{(b) $n \in I^N$.} Splitting the sum at $I^N$ and using $\overline{\mathcal{D}} = P^{-1}(l_{\overline{\blambda}}+Dg)P$ together with $\Pi^{>N}P^N=0$, the terms with $k \notin I^N$ reduce to those of $Dg\,P^N$; since $\overline{\mathbf{u}} = \obpi^{\leq N}\overline{\mathbf{u}}$, the sequences $\overline{q}_{i,j}$ are supported in $I^{2N}$, so $(Dg\,P^N)_{k,n}=0$ once $k \notin I^{3N}$ and $n \in I^N$. This leaves exactly $\overline{\mathscr{r}}^{\leq N}_{n,j}$.

\textbf{(c) $n \in I^{3N}\setminus I^N$.} Here $P^{-1}$ acts as $(P^N)^{-1}$ on $I^N$ and as the identity elsewhere, so
\[
\overline{\mathscr{r}}_{n,j} = \frac{1}{|\mathrm{orb}_{\mathcal{G}}(n)|}\Bigl(\sum_{k\in I^N}\sum_i|(((P^N)^{-1}Dg)_{i,j})_{k,n}|\,|\mathrm{orb}_{\mathcal{G}}(k)| + \sum_{k \notin I^N,\,(k,i)\neq(n,j)}|((Dg)_{i,j})_{k,n}|\,|\mathrm{orb}_{\mathcal{G}}(k)|\Bigr).
\]
For the second sum we use that, for a bounded operator $A$ on $\ell^1_{\mathcal{G}}$, the unfolded basis vector $e_n$ has $\|e_n\|_1 = |\mathrm{orb}_{\mathcal{G}}(n)|$, so that the $\ell^1_{\mathcal{G}}$-operator norm is the supremum of the \emph{normalised} column sums:
\begin{equation}\label{eq : column norm}
  \|A\|_{\mathcal{B}(\ell^1_{\mathcal{G}})} = \sup_{n}\frac{1}{|\mathrm{orb}_{\mathcal{G}}(n)|}\sum_{k}|A_{k,n}|\,|\mathrm{orb}_{\mathcal{G}}(k)| .
\end{equation}
Applying this to $A = \mathbb{M}_{\overline{q}_{i,j}}$, whose norm is $\|\overline{q}_{i,j}\|_1$, and summing over $i$ gives
\[
\frac{1}{|\mathrm{orb}_{\mathcal{G}}(n)|}\sum_{i}\sum_{k}\bigl|((Dg)_{i,j})_{k,n}\bigr|\,|\mathrm{orb}_{\mathcal{G}}(k)| \;\leq\; \sum_i\|\overline{q}_{i,j}\|_1 = Q_j ,
\]
and discarding the (non-negative) terms already counted over $I^N$ yields $\overline{\mathscr{r}}_{n,j}\leq\overline{\mathscr{r}}^{3N\setminus N}_{n,j}$. We stress that it is the \emph{normalised} column sum that is bounded by $Q_j$: the unnormalised one can exceed it by as much as a factor $\mathscr{O}_{\mathrm{max}}$, which is why $Q_j$ appears in $\overline{\mathscr{r}}^{3N\setminus N}_{n,j}$ outside the factor $1/|\mathrm{orb}_{\mathcal{G}}(n)|$.

\textbf{(d) $n \notin I^{3N}$.} Here $P^{-1}$ and $P$ act as the identity, so $\overline{\mathcal{D}}_{i,j} = (l_{\overline{\blambda}})_{i,j}+(Dg)_{i,j}$ on the relevant columns and \eqref{eq : column norm} gives directly $\overline{\mathscr{r}}_{n,j}\leq Q_j$, the diagonal part $l_{\overline{\blambda}}$ contributing nothing off the diagonal.
\end{proof}

\subsection{Proof of Proposition~\ref{prop : gershgorin enclosure}}\label{apen : gershgorin enclosure}
\begin{proof}
Write $\overline{B}_{n,j} \bydef \{z : |z-(\overline{\mathcal{D}}_{j,j})_{n,n}|\leq\rho_{n,j}\}$, so that $B_{n,j}\subset\overline{B}_{n,j}$ by Lemma~\ref{lem : computable radii}; by Lemma~\ref{lem : gershgorin} it suffices to argue with the disks $\overline{B}_{n,j}$.

A disk $\overline{B}_{n,j}$ meets the imaginary axis if and only if its centre is at distance at most $\rho_{n,j}$ from that axis: minimising $|z-(\overline{\mathcal{D}}_{j,j})_{n,n}|$ over $z = i\varsigma$, $\varsigma\in\R$, at $\varsigma = \mathrm{Im}\,(\overline{\mathcal{D}}_{j,j})_{n,n}$ gives the distance $|\mathrm{Re}\,(\overline{\mathcal{D}}_{j,j})_{n,n}|$. Hypothesis (i) therefore says that, among the disks with $n \in I^{3N}$, only $\overline{B}_{n_0,j_0}$ can meet the imaginary axis.

For $n \notin I^{3N}$, every $z \in \overline{B}_{n,j}$ satisfies $\mathrm{Re}(z) \leq \mathrm{Re}\,(\overline{\mathcal{D}}_{j,j})_{n,n} + \rho_{n,j} = \mathrm{Re}\,(\overline{\mathcal{D}}_{j,j})_{n,n} + Q_j + \mathscr{r}^{\infty}$, which is negative by hypothesis (ii); so all tail disks lie in the open left half-plane.

Hence the imaginary axis meets $\overline{B}_{n_0,j_0}$ only. By hypothesis (iii) that disk is disjoint from all the others, so by Lemma~\ref{lem : gershgorin} it contains exactly one eigenvalue of $D_uf(\tilde{\blambda},\tilde{\mathbf{u}})$, counted with algebraic multiplicity. Since $D_uf(\tilde{\blambda},\tilde{\mathbf{u}})\mathbf{v}=0$ with $\mathbf{v}\neq0$, the value $0$ is an eigenvalue, and it must be the one in $\overline{B}_{n_0,j_0}$; it is algebraically simple, and every other eigenvalue lies off the imaginary axis. Finally, each remaining disk lies entirely in one open half-plane, so the eigenvalues with positive real part are exactly those contained in the disks lying in $\{\mathrm{Re}(z)>0\}$. Under (iv), every $z$ in a disk with $n \in I^{3N}$, $(n,j)\neq(n_0,j_0)$, has $\mathrm{Re}(z) \leq \mathrm{Re}\,(\overline{\mathcal{D}}_{j,j})_{n,n}+\rho_{n,j}<0$, and the tail disks are already in the left half-plane by (ii); hence no disk meets $\{\mathrm{Re}(z)>0\}$ and $k=0$.
\end{proof}

\section{Computational details for Gray-Scott}
In this appendix, we provide the computational details for the bounds required for the Gray-Scott system of PDEs. That is, we prove Lemmas \ref{lem : Z2 GS} and \ref{lem : Z1 full GS}. We begin with Lemma \ref{lem : Z2 GS}.
\subsection{Proof of Lemma \ref{lem : Z2 GS}}\label{apen : Z2 GS}
We define $L_{\mathbf{s},\boldsymbol{\lambda}}$ as in \eqref{def L G and Phi} but with the linear part for Gray-Scott, and 
{\footnotesize\begin{align}
    G(\mathbf{x}) \bydef \begin{bmatrix}
        (u_2 + 1)u_1^2 \\
        -(u_2 + 1) u_1^2 \\
        2(u_2 + 1)u_1 v_1 + u_1^2 v_2 \\
        -2(u_2 + 1)u_1 v_1 - u_1^2 v_2 \\
        \mathcal{O}_{\mathcal{G}}((2(u_2 + 1)u_1)\star (\mathcal{O}_{\mathcal{G}}^{-1}(w_1 - w_2))) \\
        \mathcal{O}_{\mathcal{G}}(u_1^2\star (\mathcal{O}_{\mathcal{G}}^{-1} (w_1 - w_2))) \\
        2(u_2 + 1) u_1 h_1 + u_1^2 h_2 + 2(u_2 + 1)v_1^2 + 4u_1 v_1 v_2 \\
        -2(u_2 + 1) u_1 h_1 - u_1^2 h_2 - 2(u_2 + 1)v_1^2 - 4u_1 v_1 v_2
    \end{bmatrix}, ~ \varphi(\mathbf{x}) \bydef \begin{bmatrix}
        (\mathbf{v},\mathbf{v})_{2} - 1 \\
        (\mathbf{v},\mathbf{w})_{2} - 1 \\
        (\mathbf{h},\mathbf{w})_{2} \\
        (2(u_2 + 1)v_1^2 + 4 u_1 v_1 v_2,w_1-w_2)_{2}
    \end{bmatrix},
\end{align}}
so that
\begin{align}
    F(\mathbf{x}) = \begin{bmatrix}
        \varphi(\mathbf{x}) \\
        L_{\mathbf{s},\boldsymbol{\lambda}} (u,v,w,h) + G(\mathbf{x})
    \end{bmatrix}.
\end{align}
We will now prove Lemma \ref{lem : Z2 GS}.
\begin{proof}
Similarly as to what was done in Lemma \ref{lem : Z2}, we simplify the computation by \begin{align}
    \|A(DF(\mathbf{x}) - DF(\overline{\mathbf{x}}))\|_{\mathcal{B}(X_{\mathcal{G},\omega})} &\leq \|A\|_{\mathcal{B}(X_{\mathcal{G},\omega})} \|DF(\mathbf{x}) - DF(\overline{\mathbf{x}})\|_{\mathcal{B}(X_{\mathcal{G},\omega})} \\
    &\leq (\|A^N\|_{\mathcal{B}(X_{\mathcal{G},\omega})} + \mathcal{L}_{\infty})\|DF(\mathbf{x}) - DF(\overline{\mathbf{x}})\|_{\mathcal{B}(X_{\mathcal{G},\omega})}.
\end{align}
First, for simplicity, let $\mathcal{F}(\mathbf{x}) \bydef L_{\mathbf{s},\blambda}(u,v,w,h) + G(\mathbf{x})$
Now, observe that
{\small\begin{align}
    &\|DF(\mathbf{x}) - DF(\overline{\mathbf{x}})\|_{\mathcal{B}(X_{\mathcal{G},\omega})} = \left\| \begin{bmatrix}
        0 & D\varphi(\mathbf{x}) - D\varphi(\overline{\mathbf{x}}) \\
        \partial_{(\mathbf{s},\blambda)}\mathcal{F}(\mathbf{x}) - \partial_{(\mathbf{s},\blambda)}\mathcal{F}(\overline{\mathbf{x}}) & L_{\mathbf{s},\blambda} + DG(\mathbf{x}) - L_{\mathbf{0},\overline{\blambda}} - DG(\overline{\mathbf{x}})
    \end{bmatrix}\right\|_{\mathcal{B}(X_{\mathcal{G},\omega})} \\
    &\leq  \|D\varphi(\mathbf{x}) - D\varphi(\overline{\mathbf{x}})\|_{\mathcal{B}((\ell^2_{\mathcal{G},\omega})^8,\mathbb{R}^4)} + \|\partial_{(\mathbf{s},\blambda)}\mathcal{F}(\mathbf{x}) - \partial_{(\mathbf{s},\blambda)}\mathcal{F}(\overline{\mathbf{x}})\|_{\mathcal{B}(\mathbb{R}^4,(\ell^2_{\mathcal{G},\omega})^8)} \\
    &\hspace{+1cm}+ \|L_{\mathbf{s},\blambda} + DG(\mathbf{x}) - L_{\mathbf{0},\overline{\blambda}} - DG(\overline{\mathbf{x}})\|_{\mathcal{B}((\ell^2_{\mathcal{G},\omega})^8)} \\
    &\leq \|D\varphi(\mathbf{x}) - D\varphi(\overline{\mathbf{x}})\|_{\mathcal{B}((\ell^2_{\mathcal{G},\omega})^8,\mathbb{R}^4)} + \|\partial_{(\mathbf{s},\blambda)}\mathcal{F}(\mathbf{x}) - \partial_{(\mathbf{s},\blambda)}\mathcal{F}(\overline{\mathbf{x}})\|_{\mathcal{B}(\mathbb{R}^4,(\ell^2_{\mathcal{G},\omega})^8)} \\
    &\hspace{+1cm}+ \|L_{\mathbf{s},\blambda} - L_{\mathbf{0},\overline{\blambda}}\|_{\mathcal{B}((\ell^2_{\mathcal{G},\omega})^8)} + \|DG(u,v,w,h) - DG(\overline{u},\overline{v},\overline{w},\overline{h})\|_{\mathcal{B}((\ell^2_{\mathcal{G},\omega})^8)},\label{Z2 split up GS}
\end{align}}
where we used the fact that $DG(\mathbf{x}) = DG(u,v,w,h)$ as there is no dependency on the parameters. We now examine each term individually. To begin, we write $\mathbf{x} = \mathbf{t} + \overline{\mathbf{x}}$ for some $\mathbf{t} = (t_1,t_2,\dots,t_{12}) \in B_r(0)$. Then, observe that
\begin{align}
    D\varphi(\mathbf{t} + \overline{\mathbf{x}}) - D\varphi(\overline{\mathbf{x}}) = \begin{bmatrix}
        0 & 0 & 2t_7^* & 2t_8^* & 0 & 0 & 0 & 0 \\
        0 & 0 & t_9^* & t_{10}^* & t_7^* & t_8^* & 0 & 0 \\
        0 &0 & 0 & 0 & t_{11}^* & t_{12}^* & t_{9}^* & t_{10}^* \\
        \mathscr{Z}_1^* & \mathscr{Z}_2^* & \mathscr{Z}_3^* & \mathscr{Z}_4^* & \mathscr{Z}_5^* & -\mathscr{Z}_5^* & 0 & 0 
    \end{bmatrix},
\end{align}
where
{\footnotesize\begin{align}
    &(\mathscr{Z}_1)_k \bydef 4(\mathrm{conv}(e_k, \overline{v}_1 t_8 + \overline{v}_2 t_7 + t_7 t_8),\overline{w}_1 - \overline{w}_2)_{2} + 4(\mathrm{conv}(e_k, \overline{v}_1\overline{v}_2 + \overline{v}_1 t_8 + \overline{v}_2 t_7 + t_7t_8),t_9-t_{10})_{2}, \\
    &(\mathscr{Z}_2)_k \bydef 2(\mathrm{conv}(e_k,2\overline{v}_1t_7 + t_7^2),\overline{w}_1-\overline{w}_2)_{2} + 2(\mathrm{conv}(e_k,\overline{v}_1^2 + 2\overline{v}_1 t_7 + t_7^2),t_{9} - t_{10})_{2}, \\
    &(\mathscr{Z}_3)_k \bydef 4(\mathrm{conv}(e_k,(\overline{u}_2+1)t_7 + \overline{v}_1 t_6 + t_6 t_7 + \overline{u}_1 t_8 + \overline{v}_2 t_5 + t_5 t_8),\overline{w}_1 - \overline{w}_2)_{2} \\
    &\hspace{+1cm}+ 4(\mathrm{conv}(e_k,(\overline{u}_2 + 1)\overline{v}_1 + (\overline{u}_2+1)t_7 + \overline{v}_1 t_6 + t_6 t_7 + \overline{u}_1\overline{v}_2 + \overline{u}_1 t_8 + \overline{v}_2 t_5 + t_5 t_8),t_9 - t_{10})_{2}, \\
    &(\mathscr{Z}_4)_k \bydef 4(\mathrm{conv}(e_k,\overline{u}_1t_7 + \overline{v}_1 t_5 + t_5 t_7),\overline{w}_1 - \overline{w}_2)_{2} + 4(\mathrm{conv}(e_k,\overline{u}_1\overline{v}_1 + \overline{u}_1 t_7 + \overline{v}_1 t_5 + t_5 t_7),t_9 - t_{10})_{2}, \\
    &\mathscr{Z}_5 \bydef 4(\overline{u}_2 + 1)\overline{v}_1 t_7 + 2\overline{v}_1^2 t_6 + 4\overline{u}_1 \overline{v}_1 t_8 + 4\overline{u}_1 \overline{v}_2 t_7 + 4\overline{v}_1 \overline{v}_2 t_5 +2(\overline{u}_2 + 1)t_7^2 + 4\overline{v}_1 t_6 t_7 + 4\overline{u}_1 t_7 t_8 + 4\overline{v}_1 t_5 t_8 + 4\overline{v}_2 t_5 t_7\\
    &\hspace{+1cm}+ 2t_6 t_7^2 + 4t_5 t_7 t_8.
\end{align}}
For the operator norm, suppose we have $\mathbf{p} = (p_1,p_2,\dots,p_8) \in (\ell^2_{\mathcal{G},\omega})^8, \|\mathbf{p}\|_{2,\omega} = 1$ such that the equality holds. We now rely on the inequalities from \eqref{ineq list} and \eqref{ineq list 2} to get
{\scriptsize\begin{align}
    &\|D\varphi(\mathbf{x}) - D\varphi(\overline{\mathbf{x}})\|_{\mathcal{B}((\ell^2_{\mathcal{G},\omega})^8,\mathbb{R}^4)} = 2|(p_3,t_7)_{2}+(p_4,t_8)| + |(p_3,t_9)_{2} + (p_4,t_{10})_{2} + (p_5,t_7)_{2} + (p_6,t_8)_{2}| \\
    &\hspace{+5cm}+ |(p_5,t_{11})_{2} + (p_6,t_{12})_{2} + (p_7,t_9)_{2} + (p_8,t_{10})_{2}|\\
    &\hspace{+5.2cm}+|(p_1,\mathscr{Z}_1)_{2} + (p_2,\mathscr{Z}_2)_{2} + (p_3,\mathscr{Z}_3)_{2} + (p_4,\mathscr{Z}_4)_{2} + (p_5-p_6,\mathscr{Z}_5)_{2}|\\
    &\leq 4r + 4r + 4r + \kappa_0 \|\mathscr{Z}_1\|_{\infty}\|p_1\|_{2,\omega} + \kappa_0 \|\mathscr{Z}_2\|_{\infty}\|p_2\|_{2,\omega} + \kappa_0 \|\mathscr{Z}_3\|_{\infty}\|p_3\|_{2,\omega} + \kappa_0 \|\mathscr{Z}_4\|_{\infty}\|p_4\|_{2,\omega} + \|\mathscr{Z}_5\|_{2}(\|p_5\|_{2} + \|p_6\|_{2}) \\
    &=12r + \kappa_0\|\mathscr{Z}_1\|_{\infty} + \kappa_0\|\mathscr{Z}_2\|_{\infty} + \kappa_0\|\mathscr{Z}_3 \|_{\infty} + \kappa_0 \|\mathscr{Z}_4\|_{\infty} + 2\|\mathscr{Z}_5\|_{2}.\label{GS before scrZ estimates}
\end{align}}
Then, observe that
{\small\begin{align}
    |(\mathscr{Z}_1)_k| &\leq 4\|\mathrm{conv}(e_k,\overline{v}_1t_8 + \overline{v}_2 t_7 + t_7 t_8)\|_{2} \|\overline{w}_1 - \overline{w}_2\|_{2} + 4\|\mathrm{conv}(e_k,\overline{v}_1 \overline{v}_2 + \overline{v}_1 t_8 + \overline{v}_2 t_7 + t_7 t_8)\|_{2} \|t_9 - t_{10}\|_{2} \\
    &\leq 4(\|\overline{v}_1\|_{1}r + \|\overline{v}_2\|_{1} r + \kappa r^2) \|\overline{w}_1 - \overline{w}_2 \|_{2} + 4(\|\overline{v}_1 \overline{v}_2\|_{2} + \|\overline{v}_1\|_{1}r + \|\overline{v}_2\|_{1}r + \kappa r^2)(r + r) \\
    &= \left(4(\|\overline{v}_1\|_{1} + \|\overline{v}_2\|_{1} + \kappa r)\|\overline{w}_1 - \overline{w}_2\|_{2} + 8(\|\overline{v}_1\overline{v}_2\|_{2} + \|\overline{v}_1\|_{1} r + \|\overline{v}_2\|_{1}r + \kappa r^2)\right)r.
\end{align}}
Hence, 
\begin{align}
    \|\mathscr{Z}_1\|_{\infty} \leq \left((4\|\overline{v}_1\|_{1} + 4\|\overline{v}_2\|_{1} + 4\kappa r)\|\overline{w}_1 - \overline{w}_2\|_{2} + 8\|\overline{v}_1\overline{v}_2\|_{2} + 8\|\overline{v}_1\|_{1} r + 8\|\overline{v}_2\|_{1}r + 8\kappa r^2\right)r.
\end{align}
Moving to $\mathscr{Z}_2$, we get
\begin{align}
    |(\mathscr{Z}_2)_k| &\leq 2\|\mathrm{conv}(e_k,2\overline{v}_1 t_7 + t_7^2)\|_{2} \|\overline{w}_1 - \overline{w}_2\|_{2} + 2\|\mathrm{conv}(e_k,\overline{v}_1^2 + 2\overline{v}_1 t_7 + t_7^2)\|_{2} \|t_9 - t_{10}\|_{2} \\
    &\leq 2\left((2\|\overline{v}_1\|_{1} r + \kappa r^2)\|\overline{w}_1 - \overline{w}_2\|_{2} + (\|\overline{v}_1^2\|_{2} + 2\|\overline{v}_1\|_{1} r + \kappa r^2)(r+r)\right) \\
    &= 2\left((2\|\overline{v}_1\|_{1} + \kappa r)\|\overline{w}_1 - \overline{w}_2\|_{2} + 2(\|\overline{v}_1^2\|_{2} + 2\|\overline{v}_1\|_{1}r + \kappa r^2)\right)r.
\end{align}
Hence,
\begin{align}
    \|\mathscr{Z}_{2}\|_{\infty} &\leq \left((4\|\overline{v}_1\|_{1} + 2\kappa r)\|\overline{w}_1 - \overline{w}_2\|_{2} + 4\|\overline{v}_1^2\|_{2} + 8\|\overline{v}_1\|_{1}r + 4\kappa r^2\right)r.
\end{align}
Moving to $\mathscr{Z}_3$, we get
{\small\begin{align}
    |(\mathscr{Z}_3)_k| &\leq 4\biggl( \|\mathrm{conv}(e_k,(\overline{u}_2 + 1)t_7 + \overline{v}_1 t_6 + t_6 t_7 + \overline{u}_1 t_8 + \overline{v}_2 t_5 + t_5 t_8)\|_{2} \|\overline{w}_1 - \overline{w}_2\|_{2} \\
    &\hspace{+0.1cm}+\|\mathrm{conv}(e_k,(\overline{u}_2 + 1)\overline{v}_1 + (\overline{u}_2 + 1)t_7 + \overline{v}_1 t_6 + t_6 t_7 + \overline{u}_1 \overline{v}_2+ \overline{u}_1 t_8 + \overline{v}_2 t_5 + t_5 t_8)\|_{2} \|t_9 - t_{10}\|_{2}\biggr) \\
    &\leq 4\biggl( (\|\overline{u}_2 + 1\|_{1} r + \|\overline{v}_1\|_{1}r + \kappa r^2 + \|\overline{u}_1\|_{1}r + \|\overline{v}_2\|_{1} r + \kappa r^2)\|\overline{w}_1 - \overline{w}_2\|_{2} \\
    &\hspace{+0.1cm}+ (\|(\overline{u}_2 + 1)\overline{v}_1\|_{2} + \|\overline{u}_2 + 1\|_{1} r + \|\overline{v}_1\|_{1}r + \kappa r^2 + \|\overline{u}_1 \overline{v}_2\|_{1} + \|\overline{u}_1\|_{1}r + \|\overline{v}_2\|_{1}r + \kappa r^2)(r + r)\biggr) \\
    &\leq 4\biggl( (\|\overline{u}_2 + 1\|_{1}  + \|\overline{v}_1\|_{1} + 2\kappa r + \|\overline{u}_1\|_{1} + \|\overline{v}_2\|_{1} )\|\overline{w}_1 - \overline{w}_2\|_{2} \\
    &\hspace{+0.1cm}+ 2(\|(\overline{u}_2 + 1)\overline{v}_1\|_{2} + \|\overline{u}_2 + 1\|_{1} r + \|\overline{v}_1\|_{1}r + 2\kappa r^2 + \|\overline{u}_1 \overline{v}_2\|_{1} + \|\overline{u}_1\|_{1}r + \|\overline{v}_2\|_{1}r)\biggr)r.
\end{align}}
Hence,
{\small\begin{align}
    \|\mathscr{Z}_3\|_{\infty} &\leq \biggl( (4\|\overline{u}_2 + 1\|_{1}  + 4\|\overline{v}_1\|_{1} + 8\kappa r + 4\|\overline{u}_1\|_{1} + 4\|\overline{v}_2\|_{1} )\|\overline{w}_1 - \overline{w}_2\|_{2}\\
    &\hspace{+0.5cm}+ 8\|(\overline{u}_2 + 1)\overline{v}_1\|_{2} + 8\|\overline{u}_2 + 1\|_{1} r + 8\|\overline{v}_1\|_{1}r + 16\kappa r^2 + 8\|\overline{u}_1 \overline{v}_2\|_{1} + 8\|\overline{u}_1\|_{1}r + 8\|\overline{v}_2\|_{1}r\biggr)r.
\end{align}}
Moving to $\mathscr{Z}_4$, we get
{\footnotesize\begin{align}
    |(\mathscr{Z}_4)_k| &\leq 4\|\mathrm{conv}(e_k,\overline{u}_1 t_7 + \overline{v}_1 t_5 + t_5t_7)\|_{2} \|\overline{w}_1 - \overline{w}_2\|_{2} + 4\|\mathrm{conv}(e_k,\overline{u}_1 \overline{v}_1 + \overline{u}_1 t_7 + \overline{v}_1 t_5 + t_5 t_7)\|_{2} \|t_9 - t_{10}\|_{2} \\
    &\leq 4\left((\|\overline{u}_1\|_{1}r + \|\overline{v}_1\|_{1}r + \kappa r^2)\|\overline{w}_1 - \overline{w}_2\|_{2} + (\|\overline{u}_1 \overline{v}_1\|_{2} + \|\overline{u}_1\|_{1}r + \|\overline{v}_1\|_{1}r + \kappa r^2)(r + r)\right) \\
    &= 4\left((\|\overline{u}_1\|_{1} + \|\overline{v}_1\|_{1} + \kappa r)\|\overline{w}_1 - \overline{w}_2\|_{2} + 2(\|\overline{u}_1 \overline{v}_1\|_{2} + \|\overline{u}_1\|_{1}r + \|\overline{v}_1\|_{1}r + \kappa r^2)\right)r.
\end{align}}
Hence,
\begin{align}
    \|\mathscr{Z}_4\|_{\infty} &\leq \left((4\|\overline{u}_1\|_{1} + 4\|\overline{v}_1\|_{1} + 4\kappa r)\|\overline{w}_1 - \overline{w}_2\|_{2} + 8\|\overline{u}_1 \overline{v}_1\|_{2} + 8\|\overline{u}_1\|_{1}r + 8\|\overline{v}_1\|_{1}r + 8\kappa r^2\right)r.
\end{align}
Finally, $\mathscr{Z}_5$ is a more classical estimate.
\begin{align}
    \|\mathscr{Z}_5\|_{2} &\leq 4\|(\overline{u}_2 + 1)\overline{v}_1\|_{1}  r + 2\|\overline{v}_1^2\|_{1}  r + 4\|\overline{u}_1 \overline{v}_1\|_{1} r + 4\|\overline{u}_1 \overline{v}_2\|_{1} r + 4\|\overline{v}_1 \overline{v}_2\|_{1} r + 2\|\overline{u}_2 + 1\|_{1} \kappa r^2 \\
    &\hspace{+0.1cm}+ 4\|\overline{v}_1\|_{1}\kappa r^2 + 4\|\overline{u}_1\|_{1}\kappa r^2 + 4\|\overline{v}_1\|_{1}\kappa r^2 + 4\|\overline{v}_2\|_{1} \kappa r^2 + 2\kappa^2 r^3 + 4\kappa^2 r^3 \\
    &\leq \biggl(4\|(\overline{u}_2 + 1)\overline{v}_1\|_{1} + 2\|\overline{v}_1^2\|_{1} + 4\|\overline{u}_1 \overline{v}_1\|_{1} + 4\|\overline{u}_1 \overline{v}_2\|_{1} + 4\|\overline{v}_1 \overline{v}_2\|_{1} + 2\|\overline{u}_2 + 1\|_{1} \kappa r \\
    &\hspace{+0.1cm}+ 8\|\overline{v}_1\|_{1}\kappa r +4\|\overline{v}_2\|_{1}\kappa r + 4\|\overline{u}_1\|_{1}\kappa r +   6\kappa^2 r^2\biggr)r.
\end{align}
Returning to \eqref{GS before scrZ estimates}, we estimate

{\small\begin{align}
    &\|D\varphi(\mathbf{x}) - D\varphi(\overline{\mathbf{x}})\|_{\mathcal{B}((\ell^2_{\mathcal{G},\omega})^8,\mathbb{R}^4)} \leq \biggl(12 + \kappa_0 (18\kappa r + 16\|\overline{v}_1\|_{1} + 8\|\overline{v}_2\|_{1} + 8\|\overline{u}_1\|_{1} + 4\|\overline{u}_2 + 1\|_{1})\|\overline{w}_1 - \overline{w}_2\|_{2}\\
    &\hspace{+0.2cm}+ (36\kappa_0\kappa + 30\kappa^2) r^2 + 8\kappa_0\|\overline{v}_1 \overline{v}_2\|_{2} + 4\|\overline{v}_1\overline{v}_2\|_{1} + (16\kappa + 32\kappa_0)\|\overline{v}_1\|_{1}r + (8\kappa + 16\kappa_0) \|\overline{v}_2\|_{1}r + 4\kappa_0\|\overline{v}_1^2\|_{2} \\
    &\hspace{+0.7cm}+ 4\|\overline{v}_1^2\|_{1} + 8\kappa_0\|(\overline{u}_2 + 1)\overline{v}_1\|_{2} + 8\|(\overline{u}_2 + 1)\overline{v}_1\|_{1} + (4\kappa + 8\kappa_0)\|\overline{u}_2 + 1\|_{1}r \\
    &\hspace{+1.0cm}+ (8\kappa_0 + 8)\|\overline{u}_1 \overline{v}_2\|_{1} + (16\kappa_0 + 12\kappa)\|\overline{u}_1\|_{1}r + (8\kappa_0+8)\|\overline{u}_1 \overline{v}_1\|_{1}\biggr)r \\
    &\bydef Z_{2,1}(r)r.
\end{align}}
Moving to the second term of \eqref{Z2 split up GS}, we have
\begin{align}
    &\|\partial_{(\mathbf{s},\blambda)} \mathcal{F}(\mathbf{x}) - \partial_{(\mathbf{s},\blambda)} \mathcal{F}(\overline{\mathbf{x}})\|_{\mathcal{B}(\mathbb{R}^4,(\ell^2_{\mathcal{G},\omega})^8)} &= \left\| \begin{bmatrix}
    0 & 0 & -t_5 & 0 \\
    0 & 0 & 0 & -t_6 \\
    t_7 & 0 & -t_7 & 0 \\
    t_8 & 0 & 0 & -t_8 \\
    0 & 0 & - t_9 & 0 \\
    0 & 0 & 0 & -t_{10} \\
    0 & t_7 & -t_{11} & 0 \\
    0 & t_8 & 0 & -t_{12}
    \end{bmatrix}\right\|_{\mathcal{B}(\mathbb{R}^4,(\ell^2_{\mathcal{G},\omega})^8)} \leq 12r.
\end{align}
For the third term of \eqref{Z2 split up GS}, 
\begin{align}
    \|L_{\mathbf{s},\blambda} - L_{\mathbf{0},\overline{\blambda}}\|_{\mathcal{B}((\ell^2_{\mathcal{G},\omega})^8)} &= \left\|\begin{bmatrix}
        -t_3 & 0 & 0 & 0 & 0 & 0 & 0 & 0 \\
        0 & -t_4 & 0 & 0 & 0 & 0 & 0 & 0 \\
        0 & 0 & t_1 - t_3 & 0 & 0 & 0 & 0 & 0 \\
        0 & 0 & 0 & t_1 - t_4 & 0 & 0 & 0 & 0 \\
        0 & 0 & 0 & 0 & -t_3 & 0 & 0 & 0 \\
        0 & 0 & 0 & 0 & 0 & -t_4 & 0 & 0 \\
        0 & 0 & t_2 & 0 & 0 & 0 & -t_3 & 0 \\
        0 & 0 & 0 & t_2 & 0 & 0 & 0 & -t_4
    \end{bmatrix}\right\|_{\mathcal{B}((\ell^2_{\mathcal{G},\omega})^8)} \leq 12r.
\end{align}
We now move to the fourth and final term of \eqref{Z2 split up GS}. Observe that
{\footnotesize\begin{align}
    &\|DG(\mathbf{x}) - DG(\overline{\mathbf{x}})\|_{\mathcal{B}((\ell^2_{\mathcal{G},\omega})^8)} = \|(DG(\mathbf{x}) - DG(\overline{\mathbf{x}}))\mathbf{p}\|_{2,\omega}
     \\
    &\leq 6\kappa \|2u_1(u_2 + 1) - 2\overline{u}_1(\overline{u}_2 + 1)\|_{2,\omega} + 6\kappa \|u_1^2 - \overline{u}_1^2\|_{2,\omega} \\
    &\hspace{+0.1cm}+ 6\kappa \|2(u_2 + 1)v_1 + 2u_1 v_2 - 2(\overline{u}_2 + 1)\overline{v}_1 - 2\overline{u}_1 \overline{v}_2\|_{2,\omega} + 12\kappa \|u_1 v_1 - \overline{u}_1  \overline{v}_1\|_{2,\omega}\\
    &\hspace{+0.2cm}+ 2\kappa \|2(u_2 + 1)h_1 + 2u_1 h_2 + 4v_1 v_2 - 2(\overline{u}_2 + 1)\overline{h}_1 - 2\overline{u}_1 \overline{h}_2 - 4\overline{v}_1 \overline{v}_2\|_{2,\omega} + 2\kappa \|2u_1 h_1 + 2v_1^2 - 2\overline{u}_1 \overline{h}_1 - 2\overline{v}_1^2\|_{2,\omega} \\
    &\hspace{+0.2cm}+2\|\mathbb{M}_{2u_1(u_2 + 1) - 2\overline{u}_1(\overline{u}_2 + 1)}^*(p_5 - p_6)\|_{2,\omega} + 2\|\mathbb{M}_{u_1^2 - \overline{u}_1^2}^*(p_5 - p_6)\|_{2,\omega} \\
    &\hspace{+0.3cm}+\|\mathbb{Q}_{\mathcal{O}_{\mathcal{G}}\mathrm{conv}(2(u_2 + 1),\mathcal{O}_{\mathcal{G}}^{-1}(w_1-w_2)) - \mathcal{O}_{\mathcal{G}}\mathrm{conv}(2(\overline{u}_2 + 1),\mathcal{O}_{\mathcal{G}}^{-1}(\overline{w}_1-\overline{w}_2))}p_1\|_{2,\omega} \\
    &\hspace{+0.4cm}+ \|\mathbb{Q}_{\mathcal{O}_{\mathcal{G}}\mathrm{conv}(2u_1, \mathcal{O}_{\mathcal{G}}^{-1}(w_1 - w_2)) - \mathcal{O}_{\mathcal{G}}\mathrm{conv}(2\overline{u}_1,\mathcal{O}_{\mathcal{G}}^{-1}(\overline{w}_1 - \overline{w}_2))}p_2\|_{2,\omega} \\
    &\hspace{+0.5cm}+ \|\mathbb{Q}_{\mathcal{O}_{\mathcal{G}}\mathrm{conv}(2u_1, \mathcal{O}_{\mathcal{G}}^{-1}(w_1 - w_2)) - \mathcal{O}_{\mathcal{G}}\mathrm{conv}(2\overline{u}_1, \mathcal{O}_{\mathcal{G}}^{-1}(\overline{w}_1 - \overline{w}_2))}p_1\|_{2,\omega}.
\end{align}}
Now, observe that for some $a,b,c \in \ell^2_{\mathcal{G},\omega}$ and $\|b\|_{2,\omega},\|c\|_{2,\omega} \leq 1$, we have
\begin{align}
    \|\mathbb{M}_a^* (b-c)\|_{2,\omega} &= \|\mathcal{O}_{\mathcal{G}} \mathrm{conv}(a, \mathcal{O}^{-1} (b-c))\|_{2,\omega} \\
    &\leq \|\mathcal{O}_{\mathcal{G}}\|_{\mathcal{B}(\ell^2_{\mathcal{G},\omega})} \|\mathrm{conv}(a, \mathcal{O}^{-1} (b-c))\|_{2,\omega} \\
    &\leq \kappa \mathscr{O}_{\mathrm{max}} \|a\|_{2,\omega} \|\mathcal{O}^{-1} (b-c)\|_{2,\omega} \\
    &\leq \kappa \mathscr{O}_{\mathrm{max}} \|a\|_{2,\omega} \|\mathcal{O}_{\mathcal{G}}^{-1}\|_{\mathcal{B}(\ell^2_{\mathcal{G},\omega})}\| b-c\|_{2,\omega} \\
    &\leq \kappa \mathscr{O}_{\mathrm{max}} \|a\|_{2,\omega}(\|b\|_{2,\omega} + \|c\|_{2,\omega}) \\
    &= 2\kappa \mathscr{O}_{\mathrm{max}} \|a\|_{2,\omega}.
\end{align}
Also we have
\begin{align}
    \|\mathbb{Q}_{\mathcal{O}_{\mathcal{G}}a} b\|_{2,\omega} &= \|\mathcal{O}_{\mathcal{G}}\mathrm{conv}(\mathcal{O}_{\mathcal{G}}^{-1}\mathcal{O}_{\mathcal{G}}a, b)\|_{2,\omega} \\
    &\leq \|\mathscr{O}_{\mathcal{G}}\|_{\mathcal{B}(\ell^2_{\mathcal{G},\omega})} \|\mathrm{conv}(a,b)\|_{2,\omega} \\
    &\leq \kappa \mathscr{O}_{\mathrm{max}} \|a\|_{2,\omega} \|b\|_{2,\omega} \\
    &= \kappa \mathscr{O}_{\mathrm{max}} \|a\|_{2,\omega}
\end{align}
Hence, we must estimate
\begin{align}
    &\|DG(\mathbf{x}) - DG(\overline{\mathbf{x}})\|_{\mathcal{B}((\ell^2_{\mathcal{G},\omega})^8)} \\
    &\leq \kappa\biggl( (12+8\mathscr{O}_{\mathrm{max}})\|u_1(u_2 + 1) - \overline{u}_1(\overline{u}_2 + 1)\|_{2,\omega} +  (6+4\mathscr{O}_{\mathrm{max}})\|u_1^2 - \overline{u}_1^2\|_{2,\omega} \\
    &\hspace{+0.1cm}+ 12\|(u_2 + 1)v_1 + u_1 v_2 - (\overline{u}_2 + 1)\overline{v}_1 - \overline{u}_1 \overline{v}_2\|_{2,\omega} + 12\|u_1 v_1 - \overline{u}_1  \overline{v}_1\|_{2,\omega}\\
    &\hspace{+0.2cm}+ 4\|(u_2 + 1)h_1 + u_1 h_2 + 2v_1 v_2 - (\overline{u}_2 + 1)\overline{h}_1 - \overline{u}_1 \overline{h}_2 - 2\overline{v}_1 \overline{v}_2\|_{2,\omega} \\
    &\hspace{+0.3cm}+ 4\|u_1 h_1 + v_1^2 - \overline{u}_1 \overline{h}_1 - \overline{v}_1^2\|_{2,\omega} \\
    &\hspace{+0.4cm}+2\mathscr{O}_{\mathrm{max}}\|\mathrm{conv}(u_2 + 1, \mathcal{O}_{\mathcal{G}}^{-1}(w_1-w_2)) - \mathrm{conv}(\overline{u}_2 + 1, \mathcal{O}_{\mathcal{G},N}^{-1}(\overline{w}_1-\overline{w}_2))\|_{2,\omega} \\
    &\hspace{+0.5cm}+ 4\mathscr{O}_{\mathrm{max}}\|\mathrm{conv}(u_1, \mathcal{O}_{\mathcal{G}}^{-1}(w_1 - w_2)) - \mathrm{conv}(\overline{u}_1,\mathcal{O}_{\mathcal{G},N}^{-1}(\overline{w}_1 - \overline{w}_2))\|_{2,\omega}\biggr).
\end{align}
We now estimate term by term. The first term,
\begin{align}
    \|u_1(u_2 + 1) - \overline{u}_1(\overline{u}_2 + 1)\|_{2,\omega} &= \| \overline{u}_1 t_6 +  (\overline{u}_2+1) t_5 + t_5 t_6\|_{2,\omega} \\
    &\leq  \| \overline{u}_1\|_{1,\sqrt{\omega}} \|t_6\|_{2,\omega} +  \|(\overline{u}_2+1)\|_{1,\sqrt{\omega}} \|t_5\|_{2,\omega} + \kappa \|t_5\|_{2,\omega}\| t_6\|_{2,\omega}\\
    &\leq  \|\overline{u}_1\|_{1,\sqrt{\omega}}r + \|\overline{u}_2+1\|_{1,\sqrt{\omega}}r + \kappa r^2. \end{align}
   
The second term is next,
\begin{align}
    &\|u_1^2 - \overline{u}_1^2\|_{2,\omega} = \|2\overline{u}_1t_5 + t_5^2\|_{2,\omega} \leq 2\|\overline{u}_1\|_{1,\sqrt{\omega}} \|t_5\|_{2,\omega} + \kappa \|t_5\|_{2,\omega}^2 \leq  2\|\overline{u}_1\|_{1,\sqrt{\omega}}r + \kappa r^2.\end{align}
Now for the third term, 
{\tiny\begin{align}
    \|(u_2 + 1)v_1 + u_1 v_2 - (\overline{u}_2 + 1)\overline{v}_1 - \overline{u}_1 \overline{v}_2\|_{2,\omega} &= \|(\overline{u}_2+1) t_7 + \overline{v}_1 t_6 + t_6 t_7 + \overline{u}_1 t_8 + \overline{v}_2 t_5 + t_5 t_8\|_{2,\omega}  \\&\hspace{-5cm}\leq \|(\overline{u}_2+1)\|_{1,\sqrt{\omega}} \|t_7\|_{2,\omega} + \|\overline{v}_1\|_{1,\sqrt{\omega}} \|t_6\|_{2,\omega} + \kappa \|t_6\|_{2,\omega} \|t_7\|_{2,\omega} + \|\overline{u}_1\|_{1,\sqrt{\omega}} \|t_8\|_{2,\omega} + \|\overline{v}_2\|_{1,\sqrt{\omega}}\|t_5\|_{2,\omega} + \kappa \|t_5\|_{2,\omega} \|t_8\|_{2,\omega} \\
    &\leq   \|\overline{u}_2+1\|_{1,\sqrt{\omega}}r +  \|\overline{v}_1\|_{1,\sqrt{\omega}} r + 2\kappa r^2 +  \|\overline{u}_1\|_{1,\sqrt{\omega}}r +  \|\overline{v}_2\|_{1,\sqrt{\omega}}r.
\end{align}}
Moving to the fourth term,
\begin{align}
    \|u_1v_1 - \overline{u}_1 \overline{v}_1\|_{2,\omega} &= \|\overline{u}_1 t_7 + \overline{v}_1 t_5 + t_5 t_7\|_{2,\omega} \\
    &\leq \|\overline{u}_1\|_{1,\sqrt{\omega}} \|t_7\|_{2,\omega} + \|\overline{v}_1\|_{1,\sqrt{\omega}} \|t_5\|_{2,\omega} + \kappa \|t_5\|_{2,\omega} \|t_7\|_{2,\omega} \\
    &\leq  \|\overline{u}_1\|_{1,\sqrt{\omega}}r +  \|\overline{v}_1\|_{1,\sqrt{\omega}}r + \kappa r^2.
\end{align}
We examine the fifth term next.
\begin{align}
    &\|(u_2 + 1)h_1 + u_1 h_2 + 2v_1v_2 - (\overline{u}_2 + 1)\overline{h}_1 - \overline{u}_1 \overline{h}_2 - 2\overline{v}_1 \overline{v}_2\|_{2,\omega} \\
    &= \|t_{11} + \overline{u}_2 t_{11} + \overline{h}_1 t_6 + t_6 t_{11} + \overline{u}_1 t_{12} + \overline{h}_2 t_5 + t_5 t_{12} + 2\overline{v}_1 t_8 + 2\overline{v}_2 t_7 + 2t_7t_8\|_{2,\omega} \\
    &\leq   \|\overline{u}_2+1\|_{1,\sqrt{\omega}} r +  \|\overline{h}_1\|_{1,\sqrt{\omega}}r + 4\kappa r^2 +  \|\overline{u}_1\|_{1,\sqrt{\omega}}r +  \|\overline{h}_2\|_{1,\sqrt{\omega}} r + 2 \|\overline{v}_1\|_{1,\sqrt{\omega}}r + 2\|\overline{v}_2\|_{1,\sqrt{\omega}}r.
\end{align}
The sixth term will follow,
{\scriptsize\begin{align}
    \|u_1h_1 + v_1^2 - \overline{u}_1 \overline{h}_1 - \overline{v}_1^2\|_{2,\omega} &= \|\overline{u}_1 t_{11} + \overline{h}_1 t_5 + t_5 t_{11} + 2\overline{v}_1 t_7 + t_7^2\|_{2,\omega} \leq  \|\overline{u}_1\|_{1,\sqrt{\omega}}r +  \|\overline{h}_1\|_{1,\sqrt{\omega}}r + 2\|\overline{v}_1\|_{1,\sqrt{\omega}} + 2\kappa r^2.
\end{align}}
Now we come to the seventh term, where we must handle the operator $\mathcal{O}_{\mathcal{G}}^{-1}$.
{\small\begin{align}
    &\|\mathrm{conv}(u_2 + 1, \mathcal{O}_{\mathcal{G}}^{-1}(w_1-w_2)) - \mathrm{conv}(\overline{u}_2 + 1,\mathcal{O}_{\mathcal{G},N}^{-1}(\overline{w}_1-\overline{w}_2))\|_{2,\omega} \\
    &= \|\mathrm{conv}(\overline{u}_2 + t_6 + 1, \mathcal{O}_{\mathcal{G}}^{-1}(\overline{w}_1 - \overline{w}_2 + t_9 - t_{10})) - \mathrm{conv}(\overline{u}_2 + 1,\mathcal{O}_{\mathcal{G},N}^{-1}(\overline{w}_1-\overline{w}_2))\|_{2,\omega} \\
    &= \|\mathrm{conv}(t_6,\mathcal{O}_{\mathcal{G},N}^{-1}(\overline{w}_1-\overline{w}_2)) + \mathrm{conv}(\overline{u}_2 + 1,\mathcal{O}_{\mathcal{G}}^{-1}(t_9 - t_{10})) + \mathrm{conv}(t_6,\mathcal{O}_{\mathcal{G}}^{-1}(t_9 - t_{10}))\|_{2,\omega} \\
    &\leq \|t_6\|_{2,\omega}\|\mathcal{O}_{\mathcal{G},N}^{-1}(\overline{w}_1-\overline{w}_2)\|_{1,\sqrt{\omega}} + \|\overline{u}_2 + 1\|_{1,\sqrt{\omega}} \|\mathcal{O}_{\mathcal{G}}^{-1}(t_9 - t_{10})\|_{2,\omega} + \kappa \|t_6\|_{2,\omega} \|\mathcal{O}_{\mathcal{G}}^{-1}(t_9 - t_{10})\|_{2,\omega} \\
    &\leq  \|\mathcal{O}_{\mathcal{G},N}^{-1}(\overline{w}_1-\overline{w}_2)\|_{1,\sqrt{\omega}}r + \|\overline{u}_2 + 1\|_{1,\sqrt{\omega}} \|\mathcal{O}_{\mathcal{G}}^{-1}\|_{\mathcal{B}(\ell^2_{\mathcal{G},\omega})}\|(t_9 - t_{10})\|_{2,\omega} +   \kappa  \|\mathcal{O}_{\mathcal{G}}^{-1}\|_{\mathcal{B}(\ell^2_{\mathcal{G},\omega})}\|(t_9 - t_{10})\|_{2,\omega}r \\
    &\leq \|\mathcal{O}_{\mathcal{G},N}^{-1}(\overline{w}_1-\overline{w}_2)\|_{1,\sqrt{\omega}}r + \|\overline{u}_2 + 1\|_{1,\sqrt{\omega}}( r+ r) +  \kappa r( r+ r) \\
    &= \|\mathcal{O}_{\mathcal{G},N}^{-1}(\overline{w}_1-\overline{w}_2)\|_{1,\sqrt{\omega}}r + 2 \|\overline{u}_2 + 1\|_{1,\sqrt{\omega}}r + 2\kappa r^2.
\end{align}}
The eighth and final term will also require us to handle $\mathcal{O}_{\mathcal{G}}^{-1}$, which we do similarly to what was done with the seventh term,
\begin{align}
    &\|\mathrm{conv}(u_1, \mathcal{O}_{\mathcal{G}}^{-1}(w_1 - w_2)) - \mathrm{conv}(\overline{u}_1,\mathcal{O}_{\mathcal{G},N}^{-1}(\overline{w}_1 - \overline{w}_2))\|_{2,\omega} \\
    &= \|\mathrm{conv}(\overline{u}_1 + t_5,\mathcal{O}_{\mathcal{G}}^{-1}(\overline{w}_1 - \overline{w}_2 + t_9 - t_{10})) - \mathrm{conv}(\overline{u}_1,\mathcal{O}_{\mathcal{G},N}^{-1}(\overline{w}_1 - \overline{w}_2))\|_{2,\omega} \\
    &= \|\mathrm{conv}(t_5, \mathcal{O}_{\mathcal{G},N}^{-1}(\overline{w}_1 - \overline{w}_2)) + \mathrm{conv}(\overline{u}_1,\mathcal{O}_{\mathcal{G}}^{-1}(t_9  - t_{10})) + \mathrm{conv}(t_5, \mathcal{O}_{\mathcal{G}}^{-1}(t_9 - t_{10}))\|_{2,\omega} \\
    &\leq \|t_5\|_{2,\omega} \|\mathcal{O}_{\mathcal{G},N}^{-1}(\overline{w}_1 - \overline{w}_2)\|_{1,\sqrt{\omega}} + \|\overline{u}_1\|_{1,\sqrt{\omega}} \|\mathcal{O}_{\mathcal{G}}^{-1}(t_9  - t_{10})\|_{2,\omega} + \kappa \|t_5\|_{2,\omega} \|\mathcal{O}_{\mathcal{G}}^{-1}(t_9  - t_{10})\|_{2,\omega} \\
    &\leq \|\mathcal{O}_{\mathcal{G},N}^{-1}(\overline{w}_1 - \overline{w}_2)\|_{1,\sqrt{\omega}}r + \|\overline{u}_1\|_{1,\sqrt{\omega}} \|\mathcal{O}_{\mathcal{G}}^{-1}\|_{\mathcal{B}(\ell^2_{\mathcal{G},\omega})} \|t_9 - t_{10}\|_{2,\omega} +\kappa \|\mathcal{O}_{\mathcal{G}}^{-1}\|_{\mathcal{B}(\ell^2_{\mathcal{G},\omega})} \|t_9 - t_{10}\|_{2,\omega}r \\
    &\leq  \|\mathcal{O}_{\mathcal{G},N}^{-1}(\overline{w}_1 - \overline{w}_2)\|_{1,\sqrt{\omega}}r + \|\overline{u}_1\|_{1,\sqrt{\omega}} ( r +  r) +\kappa r( r+ r) \\
    &=\|\mathcal{O}_{\mathcal{G},N}^{-1}(\overline{w}_1 - \overline{w}_2)\|_{1,\sqrt{\omega}}r + 2 \|\overline{u}_1\|_{1,\sqrt{\omega}} r +2\kappa r^2.
\end{align}
Combining everything, we obtain
{\footnotesize\begin{align}
    \|DG(\mathbf{x}) - DG(\overline{\mathbf{x}})\|_{\mathcal{B}((\ell^2_{\mathcal{G},\omega})^8)} &\leq  (78+24\mathscr{O}_{\mathrm{max}})\kappa^2 r^2 +  (56 + 24\mathscr{O}_{\mathrm{max}})\kappa \|\overline{u}_1\|_{1,\sqrt{\omega}}r + \kappa (28 + 12\mathscr{O}_{\mathrm{max}})\|\overline{u}_2+1\|_{1,\sqrt{\omega}}r \\
    &\hspace{-2cm}+ 40\kappa \|\overline{v}_1\|_{1,\sqrt{\omega}}r + 20\kappa \|\overline{v}_2\|_{1,\sqrt{\omega}}r + 8\kappa \|\overline{h}_1\|_{1,\sqrt{\omega}}r + 4\kappa \|\overline{h}_2\|_{1,\sqrt{\omega}}r +6\kappa \mathscr{O}_{\mathrm{max}} \|\mathcal{O}_{\mathcal{G},N}^{-1}(\overline{w}_1 - \overline{w}_2)\|_{1,\sqrt{\omega}}r \\
    &=\biggl((78+24\mathscr{O}_{\mathrm{max}})\kappa^2 r + (56 + 24\mathscr{O}_{\mathrm{max}})\kappa \|\overline{u}_1\|_{1,\sqrt{\omega}} + (28 + 12\mathscr{O}_{\mathrm{max}})\kappa \|\overline{u}_2+1\|_{1,\sqrt{\omega}} \\
    &\hspace{-2cm}+ 40\kappa \|\overline{v}_1\|_{1,\sqrt{\omega}} + 20\kappa \|\overline{v}_2\|_{1,\sqrt{\omega}} + 8\kappa \|\overline{h}_1\|_{1,\sqrt{\omega}} + 4\kappa \|\overline{h}_2\|_{1,\sqrt{\omega}} +6\kappa \mathscr{O}_{\mathrm{max}} \|\mathcal{O}_{\mathcal{G},N}^{-1}(\overline{w}_1 - \overline{w}_2)\|_{1,\sqrt{\omega}}\biggr)r \\
    &\bydef Z_{2,2}(r)r
\end{align}}
as desired.
\end{proof}
\subsection{Proof of Lemma \ref{lem : Z1 full GS}}\label{apen : Z1 GS}
As done for Swift Hohenberg, we define $M(\overline{\mathbf{x}})$ and $M^N(\overline{\mathbf{x}})$ in a similar way, but for the Gray-Scott model. We omit their exact definitions. In this Appendix, we prove Lemma \ref{lem : Z1 full GS}.
\begin{proof}
We begin in the same fashion as that for Lemma \ref{lem : Z1 full}. We use \eqref{L bar} to get
\begin{align}
    \|I_d - ADF(\overline{\mathbf{x}})\|_{\mathcal{B}(X_{\mathcal{G},\omega})} &= \|I_d - A(\overline{L}_{\mathbf{0},\overline{\blambda}} + M(\overline{\mathbf{x}}))\|_{\mathcal{B}(X_{\mathcal{G},\omega})} \\
    &\leq \|I_d - A(\overline{L}_{\mathbf{0},\overline{\blambda}} + M^N(\overline{\mathbf{x}}))\|_{\mathcal{B}(X_{\mathcal{G},\omega})} + \|A(M(\overline{\mathbf{x}}) - M^N(\overline{\mathbf{x}}))\|_{\mathcal{B}(X_{\mathcal{G},\omega})} \\
    &\hspace{-2.8cm}\leq \|I_d - A(\overline{L}_{\mathbf{0},\overline{\blambda}} + M^N(\overline{\mathbf{x}}))\|_{\mathcal{B}(X_{\mathcal{G},\omega})}  + (\|A^N\|_{\mathcal{B}(X_{\mathcal{G},\omega})} + \mathcal{L}_{\infty})\|M(\overline{\mathbf{x}}) - M^N(\overline{\mathbf{x}})\|_{\mathcal{B}(X_{\mathcal{G},\omega})}\label{Split into Z1 and Zinfty GS}
\end{align}
where the last step followed from the properties of $A$. 
Now, by using similar steps to those performed in the proof of Lemma \ref{lem : Z1 full} to handle the terms involving the adjoint multiplication, we get
the estimate $\|M(\overline{\mathbf{x}}) - M^N(\overline{\mathbf{x}})\|_{\mathcal{B}(X_{\mathcal{G},\omega})} \leq Z_{\infty}$. The remainder of the computation follows exactly the same steps as those done for Lemma \ref{lem : Z1 full}. The difference is that we must estimate
\begin{align}
    \|DG^N(\overline{\mathbf{x}})\|_{\mathcal{B}((\ell^2_{\mathcal{G},\omega})^8)} &\leq (6+2\mathscr{O}_{\mathrm{max}})\| \overline{q}_1^N\|_{1,\sqrt{\omega}} + (6+2\mathscr{O}_{\mathrm{max}})\|\overline{q}_2^N\|_{1,\sqrt{\omega}} + 6\| \overline{q}_3^N\|_{1,\sqrt{\omega}} + 6\| \overline{q}_4^N\|_{1,\sqrt{\omega}} \\
    &\hspace{+0.5cm}+ 2\| \overline{q}_5^N\|_{1,\sqrt{\omega}} + 2\| \overline{q}_6^N\|_{1,\sqrt{\omega}}+ \mathscr{O}_{\mathrm{max}}\|\overline{q}_{-1}^N\|_{1,\sqrt{\omega}} +2\mathscr{O}_{\mathrm{max}}\|\overline{q}_0^N\|_{1,\sqrt{\omega}},
\end{align}
which is required to estimate the $Z_{1,2}$ bound.
\end{proof}
\renewcommand{\theequation}{C.\arabic{equation}}
\setcounter{equation}{0}
\renewcommand{\theequation}{F.\arabic{equation}}
\setcounter{equation}{0}

\section{Convolution Estimates for Polynomial Weights}\label{apen : algebra estimates}

The purpose of this appendix is to provide explicit upper bounds for the constants $\kappa$ and $\kappa_0$ that appear in the convolution estimates of Section~\ref{sec:conv_estimates}. Specifically, Lemma~\ref{lem : conv_estimates} shows that $\ell^2_\omega$ is a Banach algebra under convolution with constant $\kappa$, and Lemma~\ref{lem : 1 bounded by 2 nu} gives the embedding $\ell^2_\omega \hookrightarrow \ell^1$ with constant $\kappa_0$, where from~\eqref{eq:kappa_def_and_assumption}
\[
\kappa \bydef \sqrt{\sup_{n \in \mathbb{Z}^m} \sum_{k \in \mathbb{Z}^m} \frac{\omega_n}{\omega_{n-k}\,\omega_k}}, \qquad
\kappa_0 \bydef \sqrt{\sum_{n \in \mathbb{Z}^m} \frac{1}{\omega_n}}.
\]
Throughout this appendix, we specialise to the product polynomial weights
\begin{equation} \label{eq:tensor_weights}    
\omega_n \bydef \prod_{j=1}^m (1+|n_j|)^s
\end{equation}
for some $s > 1$. In this appendix, we verify that the weights \eqref{eq:tensor_weights} are admissible, derive an explicit expression for $\kappa_0$ (see \eqref{eq:kappa_0_general_m}), and establish the upper bound \eqref{eq:kappa_bound} for $\kappa$. Admissibility follows from Lemma~\ref{lem:kappa_0_general_m} below, the derivation of $\kappa<\infty$ in Section~\ref{sec:kappa_general_m} and from
\[
\omega_n
=
\prod_{j=1}^{m}(1+|n_j|)^s
\le
\prod_{j=1}^{m}
(1+|n_j-k_j|)^s
(1+|k_j|)^s
=
\omega_{n-k}\omega_k.
\]

The constant $\kappa_0$ is straightforward to compute for general $m$.

\begin{lemma}[{\bf The bound \boldmath{$\kappa_0$} for general \boldmath{$m$}}] \label{lem:kappa_0_general_m}
Let $s>1$ and denote by $\zeta(s)$ the Riemann zeta function. Then $\kappa_0 = \kappa_0(m,s)$ is given by
\begin{equation} \label{eq:kappa_0_general_m}
        \kappa_0 \bydef \left( 2 \zeta(s)-1 \right)^{\frac{m}{2}} < \infty.
\end{equation}
\end{lemma}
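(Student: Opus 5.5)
The identity follows from the product structure of the weights \eqref{eq:tensor_weights} together with a one-dimensional zeta-function computation. By definition $\kappa_0^2=\sum_{n\in\mathbb{Z}^m}\omega_n^{-1}$, and all terms are positive.

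\textbf{Step 1: factor the sum.} Since $\omega_n^{-1}=\prod_{j=1}^m(1+|n_j|)^{-s}$ and $\mathbb{Z}^m$ is a Cartesian product, Tonelli's theorem (all terms non-negative) lets us factor the sum:
\[
\sum_{n\in\mathbb{Z}^m}\frac{1}{\omega_n}=\prod_{j=1}^m\sum_{n_j\in\mathbb{Z}}\frac{1}{(1+|n_j|)^s}=\Bigl(\sum_{k\in\mathbb{Z}}(1+|k|)^{-s}\Bigr)^m .
\]
This is valid whether or not the factors are finite.

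\textbf{Step 2: evaluate the one-dimensional sum.} Split $\mathbb{Z}$ into $\{0\}$, the positive integers and the negative integers.
\begin{itemize}
\item The term $k=0$ contributes $1$.
\item Each of $k\geq1$ and $k\leq-1$ contributes $\sum_{k\geq1}(1+k)^{-s}=\sum_{j\geq2}j^{-s}=\zeta(s)-1$.
\end{itemize}
Adding these gives
\[
\sum_{k\in\mathbb{Z}}(1+|k|)^{-s}=1+2(\zeta(s)-1)=2\zeta(s)-1 .
\]
This quantity is finite for $s>1$ by convergence of the Dirichlet series defining $\zeta$.

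\textbf{Step 3: conclude.} Combining the two steps, $\kappa_0^2=(2\zeta(s)-1)^m$. Taking the square root yields \eqref{eq:kappa_0_general_m}. Finiteness follows from $s>1$, which also confirms property (iv) of Definition~\ref{def:admissible_weights}.

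There is no real obstacle here. The only point requiring care is the bookkeeping of the $k=0$ term, so that the $1$ is counted once and not twice; this is what gives $2\zeta(s)-1$ rather than $2\zeta(s)$.
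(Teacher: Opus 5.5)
Your proposal is correct and follows the paper's argument exactly: the paper also computes the one-dimensional sum as $1+2(\zeta(s)-1)=2\zeta(s)-1$ and then factors the $m$-dimensional sum using the product structure of the weights. Your explicit appeal to Tonelli's theorem for the factorization is a small justification that the paper leaves implicit.
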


\begin{proof}
For $m=1$, we have
\[
\sum_{n \in \mathbb{Z}} \frac{1}{\omega_n} = \sum_{n \in \mathbb{Z}} \frac{1}{(1+|n|)^s}
= 1 + 2 \sum_{n \ge 1} \frac{1}{(1+n)^s} = 1 + 2 \sum_{j \ge 2} \frac{1}{j^s} = 1 + 2 (\zeta(s)-1) =
2 \zeta(s)-1.
\]
For general $m \ge 1$, the conclusion follows by writing
\[
\sum_{n \in \mathbb{Z}^m} \frac{1}{\omega_n} = 
        \sum_{n \in \mathbb{Z}^m} \prod_{j=1}^m \frac{1}{(1+|n_j|)^s} = 
        \prod_{j=1}^m \left( \sum_{n_j \in \mathbb{Z}} \frac{1}{(1+|n_j|)^s} \right)
        =  \left( 2 \zeta(s)-1 \right)^m. \qquad \qedhere
\]
\end{proof}

The computation of $\kappa$ is more involved and is carried out in the next subsection.

\subsection{The constant \texorpdfstring{\boldmath{$\kappa$}}{kappa}} \label{sec:kappa_general_m}

To simplify the presentation, given $m \ge 1$ and $n \in \mathbb{Z}^m$, denote 
\[
S_m(n) \bydef \sum_{k \in \mathbb{Z}^m} \frac{\omega_n}{\omega_k \omega_{n-k}}.
\]
Using the tensor product weights \eqref{eq:tensor_weights}, we obtain
\[
S_m(n) = \prod_{j=1}^m \sum_{k_j \in \mathbb{Z}} \frac{(1+|n_j|)^s}{(1+|k_j|)^s(1+|n_j-k_j|)^s}
= \prod_{j=1}^m S_1(n_j).
\]

Hence, it suffices to bound $S_1(n)$ for $n \in \mathbb{Z}$; the general case $m \ge 2$ then follows by the product structure. Indeed, given any constant $C_s$ satisfying
\begin{equation} \label{eq:definition_C_s_general}
    \sup_{n \in \mathbb{Z}} S_1(n) \le C_s,
\end{equation}
one obtains, for any $m \ge 2$,
\begin{equation}\label{eq:tensor_pre_kappa_bound}
\sup_{n \in \mathbb{Z}^m} S_m(n) = \sup_{n \in \mathbb{Z}^m} \prod_{j=1}^m S_1(n_j) \le
\prod_{j=1}^m \left( \sup_{n_j \in \mathbb{Z}} S_1(n_j) \right) \le C_s^m.
\end{equation}
Combining \eqref{eq:tensor_pre_kappa_bound} with the definition of $\kappa = \kappa(m,s)$ then yields the bound
\begin{equation}\label{eq:kappa_bound}
\kappa = \sqrt{\sup_{n \in \mathbb{Z}^m} \sum_{k \in \mathbb{Z}^m} \frac{\omega_n}{\omega_{n-k}\,\omega_k}} \le  C_s^{\frac{m}{2}}.
\end{equation}

The remainder of this appendix is devoted to finding a constant $C_s$ satisfying \eqref{eq:definition_C_s_general}. We present two such bounds. The first, valid for all $s>1$ and stated in Lemma~\ref{lem:exp_bound_Cs_exp}, yields a simple bound that is exponential in $s$; while not sharp, it is well suited to small values of $s$. The second, stated in Lemma~\ref{lem:Cs_comp} and valid for $s \ge 2$, is sharper and relies on a computer-assisted approach.

\begin{lemma}[\bf Exponential bound]\label{lem:exp_bound_Cs_exp}
For $s > 1$,
\begin{equation}\label{eq:exp_bound_Cs_exp}
    \sup_{n \in \mathbb{Z}} S_1(n) \le
    C_s^{\mathrm{exp}}
    \bydef 2^s(2\zeta(s)-1).
\end{equation}
\end{lemma}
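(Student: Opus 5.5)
The plan is to bound each summand of $S_1(n)$ by splitting the sum over $k$ according to which of $|k|$ and $|n-k|$ is larger. The key elementary fact is that, for $n,k\in\mathbb{Z}$, the triangle inequality $|n|\le |k|+|n-k|\le 2\max(|k|,|n-k|)$ gives
\[
1+|n| \le 1 + 2\max(|k|,|n-k|) \le 2\bigl(1+\max(|k|,|n-k|)\bigr).
\]
Raising this to the power $s$ yields $(1+|n|)^s \le 2^s (1+\max(|k|,|n-k|))^s$.

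For each $k$, let $M_k = \max(|k|,|n-k|)$ and $m_k=\min(|k|,|n-k|)$. Then
\[
\frac{(1+|n|)^s}{(1+|k|)^s(1+|n-k|)^s} = \frac{(1+|n|)^s}{(1+M_k)^s(1+m_k)^s} \le \frac{2^s}{(1+m_k)^s}.
\]
The first attempt is therefore to sum $2^s(1+m_k)^{-s}$ over $k$.

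The next step is to bound $\sum_k (1+m_k)^{-s}$. Since $(1+m_k)^{-s} = \max\bigl((1+|k|)^{-s},(1+|n-k|)^{-s}\bigr)$, a crude bound by the sum of the two terms would give $2(2\zeta(s)-1)$, which is a factor $2$ too large. To reach exactly $2\zeta(s)-1$, I will instead partition $\mathbb{Z}$ into $K_1=\{k : |k|\le |n-k|\}$ and its complement $K_2$. On $K_1$ the summand is at most $2^s(1+|k|)^{-s}$, and on $K_2$ it is at most $2^s(1+|n-k|)^{-s}$.

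The obstacle is that the two pieces together still seem to double-count. To handle this, I use the symmetry $k\mapsto n-k$ of $S_1(n)$. This involution maps $K_1=\{|k|\le|n-k|\}$ onto $\{|n-k|\le|k|\}$, so the contributions over $K_1$ and over its image are equal. It then suffices to bound the full sum by
\[
\sum_{k\in K_1} 2^s(1+|k|)^{-s} + \sum_{k\in K_2} 2^s(1+|n-k|)^{-s}.
\]
Substituting $j=n-k$ in the second sum sends $K_2$ to $\{j : |n-j|<|j|\}$, which is disjoint from $K_1$ when written in the variable $j$. The two index sets therefore together cover each integer at most once, and the total is at most $2^s\sum_{j\in\mathbb{Z}}(1+|j|)^{-s}=2^s(2\zeta(s)-1)$, using the computation from Lemma~\ref{lem:kappa_0_general_m}. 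Since this bound is uniform in $n$, taking the supremum gives \eqref{eq:exp_bound_Cs_exp}.

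The main step to check carefully is this disjointness after the change of variables, since it is the only thing that removes the factor $2$. Ties $|k|=|n-k|$ cause no trouble because they are assigned to $K_1$ only.
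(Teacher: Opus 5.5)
Your pointwise bound $\frac{(1+|n|)^s}{(1+|k|)^s(1+|n-k|)^s}\le 2^s(1+m_k)^{-s}$ is correct. The step meant to remove the factor $2$, however, fails.

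Under $j=n-k$, the condition $k\in K_2$, i.e.\ $|k|>|n-k|$, becomes $|n-j|>|j|$. So $K_2$ is sent to $\{j : |j|<|n-j|\}$, which is \emph{contained in} $K_1$ rather than disjoint from it; you have the inequality reversed.

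In fact the intermediate claim $\sum_{k\in\mathbb{Z}}(1+m_k)^{-s}\le 2\zeta(s)-1$ is false for every $n\neq 0$. For $n=1$ one has $K_1=\{k\le 0\}$ and $K_2=\{k\ge 1\}$, and the sum equals $\zeta(s)+\zeta(s)=2\zeta(s)$. For large $|n|$ it tends to $2(2\zeta(s)-1)$. So the route through $2^s\sum_k(1+m_k)^{-s}$ can give at best $2^{s+1}(2\zeta(s)-1)$. The factor $2$ is already lost in the pointwise step, where you replace $(1+|n|)^s$ by $2^s(1+\max(|k|,|n-k|))^s$, and no rearrangement of the sum recovers it. The involution $k\mapsto n-k$ does not help either, since it only shows the two halves of the sum are comparable.

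The missing ingredient is to use the average rather than the maximum. By convexity of $x\mapsto x^s$, $(a+b)^s\le 2^{s-1}(a^s+b^s)$ for $a,b\ge 0$. Apply this with $a=1+|k|$ and $b=1+|n-k|$ (note $1+|n|\le a+b$) and divide by $a^s b^s$ to get
\[
\frac{(1+|n|)^s}{(1+|k|)^s(1+|n-k|)^s}\le 2^{s-1}\Bigl((1+|n-k|)^{-s}+(1+|k|)^{-s}\Bigr).
\]
Each of the two resulting sums over $k\in\mathbb{Z}$ equals $2\zeta(s)-1$ (use the substitution $j=n-k$ for the first). The total is therefore $2^{s-1}\cdot 2(2\zeta(s)-1)=2^s(2\zeta(s)-1)$, uniformly in $n$. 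This is exactly the paper's argument. Since $\tfrac12(x+y)\le\max(x,y)$, this pointwise bound beats yours by precisely the factor you were trying to recover at the summation stage.
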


\begin{proof}
Convexity of $x \mapsto x^s$ on $[0,\infty)$ (for $s \ge 1$) gives $(a+b)^s \leq 2^{s-1}(a^s+b^s)$ for all $a,b \geq 0$. Applied to the triangle inequality $1+|n| \leq (1+|k|)+(1+|n-k|)$, this yields
\[
    (1+|n|)^s \leq 2^{s-1}\bigl((1+|k|)^s + (1+|n-k|)^s\bigr).
\]
Dividing both sides by $(1+|k|)^s(1+|n-k|)^s$ gives
\[
    \frac{(1+|n|)^s}{(1+|k|)^s(1+|n-k|)^s} \leq 2^{s-1} \left( \frac{1}{(1+|n-k|)^s} + \frac{1}{(1+|k|)^s} \right).
\]
Summing over $k \in \mathbb{Z}$ and using translation invariance, for every $n \in \mathbb{Z}$,
\[
S_1(n) = \sum_{k \in \mathbb{Z}}
      \frac{(1+|n|)^s}{(1+|k|)^s(1+|n-k|)^s} \leq 2^{s-1} \cdot 2 \sum_{k \in \mathbb{Z}} \frac{1}{(1+|k|)^s} = 2^s(2\zeta(s)-1). \qquad \qedhere
\]
\end{proof}

Since the bound of Lemma~\ref{lem:exp_bound_Cs_exp} deteriorates for large $s$, we develop a computer-assisted approach for the regime $s \ge 2$, following the strategy of \cite{MR2443030,MR3077902,MR3125637}. The method combines a finite sum evaluated via interval arithmetic with an explicit analytic tail bound. We fix the notation needed to state the lemma.

Fix integers $N \ge 3$ and $M \ge 2N$.

\noindent\textbf{Computer-assisted bound for $|n| \leq N$.}
For each $|n| \leq N$, decompose
\[
    S_1(n) = S_{\mathrm{core}}(n,M) + S_{\mathrm{tail}}(n,M),
\]
where
\[
    S_{\mathrm{core}}(n,M) \bydef \sum_{|k| \leq M} \frac{(1+|n|)^s}{(1+|k|)^s(1+|n-k|)^s},
    \qquad
    S_{\mathrm{tail}}(n,M) \bydef \sum_{|k|>M} \frac{(1+|n|)^s}{(1+|k|)^s(1+|n-k|)^s}.
\]
The finite sum $S_{\mathrm{core}}(n,M)$ is evaluated rigorously via interval arithmetic; let $U_{n,M}$ denote the resulting upper bound. For the tail, note that $|n-k| \geq |k|-|n| \ge M + 1 - |n| \geq N+1$ for $|k|>M$, giving $(1+|n-k|)^{-s} \leq (2+M-|n|)^{-s}$. Hence, defining the $n$-dependent prefactor
\[
    A(n,M) \bydef \left(\frac{1+|n|}{1+M-|n|}\right)^{\!s},
\]
we obtain $S_{\mathrm{tail}}(n,M) \leq A(n,M)\sum_{|k|>M} (1+|k|)^{-s}$. Bounding this sum by an integral gives
\[
    \sum_{|k|>M} \frac{1}{(1+|k|)^{s}}
    = 2\sum_{k>M} \frac{1}{(1+k)^{s}}
    \leq 2\int_M^\infty (1+x)^{-s}\,dx
    = \frac{2(1+M)^{1-s}}{s-1}.
\]
Combining these estimates, for each $|n| \leq N$ we define
\begin{equation}\label{eq:app:CsCAPn}
C_s^{\mathrm{CAP}}(n) \bydef U_{n,M} + A(n,M) \frac{2(1+M)^{1-s}}{s-1},
\end{equation}
and observe that $C_s^{\mathrm{CAP}}(n) \ge S_1(n)$.

\noindent\textbf{Uniform analytic bound for $|n| > N$ (valid for $s \geq 2$).}
Since $S_1(-n)=S_1(n)$, it suffices to treat $n > N$. Decompose
\[
    S_1(n)
    = \underbrace{\sum_{k \leq 0} \cdots}_{S_-(n)}
    + \underbrace{\sum_{k=1}^{n-1} \cdots}_{S_0(n)}
    + \underbrace{\sum_{k \geq n} \cdots}_{S_+(n)}.
\]

\emph{Exterior contributions.}
For $k \leq 0$: $(1+|n-k|) = 1+n-k \geq 1+n$, so $(1+n)^s/(1+n-k)^s \leq 1$ and $S_-(n) \leq \sum_{k \leq 0} (1+|k|)^{-s} = \zeta(s)$. The substitution $j=k-n$ gives $S_+(n) \leq \zeta(s)$ by the same argument.

\emph{Interior contribution.}
For $1 \leq k \leq n-1$, set $m_0 \bydef n+2$ and $j \bydef k+1$, so that $j$ ranges over $\{2,\ldots,m_0-2\}$ and $1+n-k = m_0-j$. Then,
\begin{align*}
    S_0(n) &= \sum_{k=1}^{n-1} \frac{(1+n)^s}{(1+k)^s(1+n-k)^s}
     = \sum_{j=2}^{n} \frac{(1+n)^s}{j^s(2+n-j)^s}
     = \sum_{j=1}^{n+1} \frac{(1+n)^s}{j^s(2+n-j)^s} - 2  \\
    & = \sum_{j=1}^{m_0-1} \frac{(m_0-1)^s}{j^s(m_0-j)^s} - 2
    \leq \sum_{j=1}^{m_0-1} \frac{m_0^s}{j^s(m_0-j)^s} - 2.
\end{align*}
By Lemma~24 of~\cite{MR2443030} (applied with $k = m_0 = n+2 > N+2 > 5$), letting $s^* \bydef \lfloor s \rfloor$,
\begin{equation}\label{eq:app:gamma}
    \sum_{j=1}^{m_0-1} \frac{m_0^s}{j^s(m_0-j)^s} \le \gamma_{m_0}
    \bydef 2\!\left(\frac{m_0}{m_0-1}\right)^{\!s}
    + \left(\frac{4\ln(m_0-2)}{m_0}+\frac{\pi^2-6}{3}\right)
      \!\left(\frac{2}{m_0}+\frac{1}{2}\right)^{\!s^*-2}.
\end{equation}
The map $m_0 \mapsto \gamma_{m_0}$ is non-increasing for $m_0 \geq 6$ (see Remark~26 of~\cite{MR2443030}). Taking the supremum over $n > N$ and using $m_0 = n+2 \geq N+3$ yields
\begin{equation}\label{eq:app:largen_m1}
    \sup_{|n|>N} S_1(n)
    \leq 2\zeta(s) - 2 + \gamma_{N+3}, \qquad s \geq 2.
\end{equation}

The bound can now be stated precisely.

\begin{lemma}[{\bf Computer-assisted bound for \boldmath{$s\ge 2$}}] \label{lem:Cs_comp}
Let $N \ge 3$, $s \ge 2$, and let $C_s^{\mathrm{CAP}}(n)$ and $\gamma_{N+3}$ be as defined in \eqref{eq:app:CsCAPn} and \eqref{eq:app:gamma}. Setting
\begin{equation}\label{eq:Cscomp_explicit}
    C_s^{\mathrm{comp}}
    \bydef \max \left\{ \max_{|n| \leq N} C_s^{\mathrm{CAP}}(n),\; 2\zeta(s) - 2 + \gamma_{N+3} \right\},
\end{equation}
we have $\sup_{n \in \mathbb{Z}} S_1(n) \le C_s^{\mathrm{comp}}$.
\end{lemma}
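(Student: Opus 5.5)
The claim is $\sup_{n\in\mathbb{Z}} S_1(n)\le C_s^{\mathrm{comp}}$. Since $S_1(-n)=S_1(n)$ (substitute $k\mapsto -k$), we split $\mathbb{Z}$ into the finite range $|n|\le N$ and the tail $|n|>N$, bound $S_1$ on each by the corresponding entry of the maximum in \eqref{eq:Cscomp_explicit}, and take the larger of the two. All ingredients have already been assembled in the text preceding the lemma, so the proof consists of checking that each piece is a valid upper bound.

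\textbf{Step 1: $|n|\le N$.} For such $n$, write $S_1(n)=S_{\mathrm{core}}(n,M)+S_{\mathrm{tail}}(n,M)$. The core sum is finite, and its enclosure $U_{n,M}$ is obtained by interval arithmetic.

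For the tail, take $|k|>M$. Since $M\ge 2N$, we have $|n-k|\ge |k|-|n|\ge M+1-|n|\ge N+1\ge 1$. Hence
\[
(1+|n-k|)^{-s}\le (2+M-|n|)^{-s}\le (1+M-|n|)^{-s},
\]
which gives $S_{\mathrm{tail}}\le A(n,M)\sum_{|k|>M}(1+|k|)^{-s}$.

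Because $x\mapsto (1+x)^{-s}$ is decreasing, we have $(1+k)^{-s}\le\int_{k-1}^{k}(1+x)^{-s}\,dx$. Summing over $k>M$ bounds the tail sum by $2(1+M)^{1-s}/(s-1)$. Therefore $S_1(n)\le C_s^{\mathrm{CAP}}(n)$ for every $|n|\le N$.

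\textbf{Step 2: $n>N$.} Split the sum as $S_-+S_0+S_+$.

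For $k\le 0$ we have $1+n-k\ge 1+n$, so the ratio $(1+n)^s/(1+n-k)^s$ is at most $1$. This gives $S_-\le\sum_{k\le0}(1+|k|)^{-s}=\zeta(s)$. By the symmetry $k\mapsto n-k$, which exchanges the roles of $k$ and $n-k$, we also get $S_+\le\zeta(s)$.

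For the interior, reindex by $j=k+1$ and $m_0=n+2$. Completing the sum with its two endpoint terms $j=1$ and $j=m_0-1$, each equal to $1$, gives the identity shown in the text. Then replace $(m_0-1)^s$ by $m_0^s$ in the numerator.

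Next apply Lemma~24 of \cite{MR2443030} with $m_0\ge N+3\ge 6$. This is where $s\ge 2$ is needed, because the exponent $s^*-2$ must be nonnegative. Combining with the monotonicity of $\gamma_{m_0}$ for $m_0\ge 6$ (Remark~26 there), we obtain $S_0(n)\le \gamma_{N+3}-2$. Adding the three pieces yields \eqref{eq:app:largen_m1}.

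\textbf{Step 3: conclusion.} Take the maximum of the bounds from Steps 1 and 2.

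\textbf{Main obstacle.} The step I expect to need the most care is verifying that the hypotheses of the cited Lemma~24 and Remark~26 match exactly:
\begin{itemize}
\item the range of $m_0$, which requires $N\ge3$;
\item the convention $s^*=\lfloor s\rfloor$;
\item the direction of the monotonicity of $\gamma_{m_0}$.
\end{itemize}
If the cited lemma were stated only for integer $s$, I would first try to reduce to $s^*$ by bounding the ratio $m_0/(j(m_0-j))\le 1$ for interior $j$, and then absorb the fractional power.
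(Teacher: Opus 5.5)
Your proposal is correct and follows essentially the same route as the paper. The paper's proof combines $S_1(n)\le C_s^{\mathrm{CAP}}(n)$ for $|n|\le N$ with the tail estimate \eqref{eq:app:largen_m1} for $|n|>N$, both derived in the text just before the lemma exactly as you re-derive them (core/tail splitting with the integral bound, and the decomposition $S_-+S_0+S_+$ with Lemma~24 and Remark~26 of \cite{MR2443030}); your fallback for non-integer $s$ is not needed, since the paper applies the cited lemma directly with $s^*=\lfloor s\rfloor$.
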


\begin{proof}
For $|n| \leq N$, the bound $S_1(n) \leq C_s^{\mathrm{CAP}}(n)$ follows from \eqref{eq:app:CsCAPn}. For $|n| > N$, the bound $S_1(n) \leq 2\zeta(s) - 2 + \gamma_{N+3}$ follows from \eqref{eq:app:largen_m1}. Taking the maximum of the two bounds gives $\sup_{n \in \mathbb{Z}} S_1(n) \leq C_s^{\mathrm{comp}}$.
\end{proof}

\nocite{julia_interval}
\nocite{julia_blanco_D3D6}
\nocite{julia_cadiot_blanco_GS}
\bibliographystyle{abbrv}
\bibliography{biblio}
\end{document}